\newtheorem{theorem}{Theorem}
\newtheorem{lemma}[theorem]{Lemma}
\newtheorem{proposition}[theorem]{Proposition}
\newcommand{\R}{{\mathbb R }}
\newcommand{\C}{{\mathbb C }}
\renewcommand{\S}{{\mathbb S }}
\newcommand{\la}{\langle}
\newcommand{\ra}{\rangle}
\renewcommand{\la}{\langle}
\renewcommand{\ra}{\rangle}
\newtheorem{thm}{Theorem}[section]
\newtheorem{lem}[thm]{Lemma}
\newtheorem{prop}[thm]{Proposition}
\newcommand{\case}[2]{\noindent\textbf{Case #1:}(\emph{#2})}
\newcommand{\step}[2]{\noindent\textbf{Step #1:}(\emph{#2})}
\newcommand{\lp}[2]{\Vert \, #1 \, \Vert_{#2}}
\newcommand{\td}{\widetilde}
\newcommand{\dint}{ {\int\!\!\!\!\int} }
\newcommand{\tint}{ {\int\!\!\!\!\int\!\!\!\!\int} }
\newcommand{\snabla}{ {\slash\!\!\!\!\nabla} }
\newcommand{\Diff}{\text{Diff}}
\newcommand{\ret}{\vspace{.1cm}}
\begin{document}
\title[The MKG equation]{ Global well-posedness for the Maxwell-Klein
  Gordon equation in $4+1$ dimensions. Small energy.}


\author{Joachim\ Krieger} \address{B\^{a}timent des Math\'ematiques,
  EPFL, Station 8, CH-1015 Lausanne, Switzerland}
\email{joachim.krieger@epfl.ch}

\author{Jacob\ Sterbenz}
\address{Department of Mathematics, University of California San
  Diego, La Jolla, CA 92093-0112} \email{jsterben@math.ucsd.edu}

\author{Daniel\ Tataru} \address{Department of Mathematics, The
  University of California at Berkeley, Evans Hall, Berkeley, CA
  94720, U.S.A.}  \email{tataru@math.berkeley.edu}

\thanks{The first author was partially supported by the Swiss National
  Science Foundation. The second  author was partially
  supported by the NSF grant DMS-1001675 . 
The third author  was partially supported by the NSF grant 
DMS-0801261 and by the Simons Foundation.
The first author would
  like to express his thanks to the University of California,
  Berkeley, for hosting him during the summer 2011 and 2012.}

\begin{abstract}
  We prove that the critical Maxwell-Klein Gordon equation on
  $\R^{4+1}$ is globally well-posed for smooth initial data which are
  small in the energy. This reduces the problem of global regularity
  for large, smooth initial data to precluding concentration of
  energy.
\end{abstract}

\maketitle


\section{Introduction}

Let $\R^{4+1}$ be the five dimensional Minkowski space equipped with
the standard Lorentzian metric $g =
\text{diag}(1,-1,-1,-1,-1)$. Denote by $\phi: \R^{4+1}\rightarrow \C$
a scalar function, and by $A_{\alpha}: \R^{4+1}\rightarrow \R$,
$\alpha = 1\ldots,4$, a real valued connection form, interpreted as
taking values in $isu(1)$. Introducing the curvature tensor
\[
F_{\alpha\beta}: = \partial_{\alpha}A_{\beta}
- \partial_{\beta}A_{\alpha}
\]
as well as the covariant derivative
\[
D_{\alpha}\phi:= (\partial_{\alpha} + iA_{\alpha})\phi
\]
the {\it{Maxwell-Klein Gordon system}} are the Euler-Lagrange
equations associated with the formal Lagrangian action functional\footnote{
The Lagrangian below should also contain a mass term, which we choose
to neglect here; thus the system under consideration might be 
more aptly named ``the Maxwell-scalar field system''. For historical reasons
we have chosen to retain the  ``Maxwell-Klein Gordon'' terminology.}
\[
\mathcal{L}(A_{\alpha}, \phi): =
\frac{1}{2}\int_{\R^{4+1}}\big(\frac{1}{2}D_{\alpha}\phi\overline{D^{\alpha}\phi}
+ \frac{1}{4}F_{\alpha\beta} F^{\alpha\beta} \big)\,dxdt;
\]
here we are using the standard convention for raising indices.
Introducing the covariant wave operator
\[
\Box_{A}: = D^{\alpha}D_{\alpha}
\]
we can write the Maxwell-Klein Gordon system in the following form
\begin{equation}\label{eq:MKGgeneral}
\begin{split}
  & \partial^{\beta}F_{\alpha\beta} = -J_\alpha:= \Im(\phi\overline{D_\alpha \phi}) ,
\\
& \Box_A\phi = 0
\end{split}
\end{equation}

One key feature of this system is the underlying {\it{Gauge
    invariance}}, which is manifested by the fact that if
$(A_{\alpha}, \phi)$ is a solution, then so is
$(A_{\alpha} - \partial_{\alpha}\chi, e^{i\chi}\phi)$. This allows us
to impose an additional Gauge condition, and we shall henceforth
impose the {\it{Coulomb Gauge condition}} which requires
\begin{equation}\label{eq:Coulomb}
  \sum_{j=1}^4\partial_j A_j = 0
\end{equation}
The MKG-CG system can be written explicitly in the following form
\begin{subequations}
  \begin{align}
    \Box A_i \ &= \ \mathcal{P}_i J_x 
     \label{MKG10}\\
    \Box_A \phi \ &= \ 0   \label{MKG20}\\
    \Delta A_0 \ &= \  J_0  \label{MKG30}\\
    \Delta \partial_t A_0 \ &= \ \nabla^i J_i
    , \label{MKG40}
  \end{align}
\end{subequations}
where the operator $\mathcal{P}$ denotes the Leray projection onto divergence
free vector fields,
\[
 \mathcal{P} \ = \ I -\nabla\Delta^{-1}\nabla
\]
The last equation \eqref{MKG40} is a consequence of \eqref{MKG30} due
to the divergence free condition on the moments $\partial^\alpha
J_\alpha = 0$, which in turn follows from \eqref{MKG20}. However, it
plays a role in the sequel so it is added here for convenience.

 Here it is assumed that the data $A_j(0, \cdot)$ satisfy the
 vanishing divergence relation \eqref{eq:Coulomb}.  We remark that
 given an arbitrary finite energy data set for the MKG problem, one
 can find a gauge equivalent data set of comparable size which
 satisfies the Coulomb gauge condition.  This argument only involves
 solving linear elliptic pde's, and is omitted. The key
question now is to decide whether a family of data
\begin{align*}
  (A, \phi)[0]: = (A_{\alpha}(0, \cdot), \partial_t
  A_{\alpha}(0, \cdot), \phi(0, \cdot), \partial_t\phi(0, \cdot))
\end{align*}
which satisfy the compatibility conditions required by the two
equations for $A_0$ above can be extended to a global-in-time solution
for the Maxwell-Klein-Gordon system. The key for deciding this
question is the criticality character of the system. Note that the
{\it{energy}}
\[
E(A,\phi): = \int_{\R^4}\big(\frac{1}{4}\sum_{\alpha,\beta}F_{\alpha\beta}^2 +
\frac{1}{2}\sum_{\alpha}|D_{\alpha}\phi|^2\big)\,dx
\]
is preserved under the flow \eqref{eq:MKGgeneral}, and in our
$4+1$-dimensional setting it is also invariant under the {\it{natural
    scaling}}
\[
\phi(t, x) \rightarrow \lambda \phi(\lambda t, \lambda
x),\,A_{\alpha}(t, x)\rightarrow \lambda A(\lambda t, \lambda x)
\]
This means the $4+1$-MKG system is {\it{energy critical}}, and in
recent years a general approach to the large data Cauchy problem
associated with energy critical wave equations has emerged. The first
key step in this approach consists in establishing an essentially
optimal global well-posedness result for data which are small in the
energy norm, which is usually the optimal small-data global
well-posedness result achievable. In this paper, we set out to prove
this for the $4+1$-dimensional Maxwell-Klein-Gordon system.

\begin{theorem}\label{thm:Main}
a) Let $({A}, \phi)[0]$ be a   $C^\infty$ Coulomb data set satisfying
  \begin{equation}
  E({A}, \phi)<\epsilon_{*}
 \label{small-energy} \end{equation}
  for a sufficiently small universal constant $\epsilon_*>0$. Then the
  system \eqref{eq:MKGgeneral} admits a unique global smooth solution
  $({A}, \phi)$ on $\R^{4+1}$ with these data. 

  b) In addition, the data to solution operator extends continuously\footnote{here the continuity is locally in time}
on the set \eqref{small-energy}  to a map 
\[
\dot H^1(\R^4) \times L^2(\R^4) \ni ({A}, \phi)[0] \to ({A}, \phi)
\in C(\R; \dot H^1(\R^4)) \cap \dot C^1(\R, L^2(\R^4))
\]
\end{theorem}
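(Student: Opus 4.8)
\emph{Proof plan.} The strategy follows the framework developed for energy critical geometric wave equations — wave maps and Yang--Mills — specialized to the MKG-CG system \eqref{MKG10}--\eqref{MKG40}. Since smooth, finite energy data produce, by standard energy methods, a unique smooth solution on a maximal time interval, and since the norms controlled below dominate every Sobolev norm, the whole theorem reduces to an \emph{a priori bound}: any smooth solution on a slab $[0,T]$ whose data satisfy \eqref{small-energy} stays bounded in a suitable iteration space $S^1$, with bound depending only on $\epsilon_*$; a continuity/bootstrap argument in $T$ then yields global existence. The first task is to fix this functional framework. As in the wave map theory, $\phi$ and the dynamic part of $A_i$ are placed in frequency-localized $S^1$ spaces combining $X^{s,b}$-type control, square-summed Strichartz norms and null-frame/square-function norms, with inhomogeneous terms measured in dual $N$ spaces and an $\ell^1$ Besov summation over dyadic frequencies. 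The connection is split as $A_i = A_i^{\mathrm{free}} + A_i^{\mathrm{nl}}$ with $A_i^{\mathrm{free}}$ the free evolution of the data (Strichartz control) and $A_i^{\mathrm{nl}} = \Box^{-1}\mathcal P_i J_x$ estimated bilinearly, while $A_0$ and $\partial_t A_0$ are recovered from \eqref{MKG30}--\eqref{MKG40}, which in the Coulomb gauge are favorable.

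The principal obstruction is that the expansion
\[
\Box_A \phi \ = \ \Box \phi + 2i A^j \partial_j \phi + i (\partial^j A_j)\phi - A^j A_j \phi
\]
contains the term $2iA^j\partial_j\phi$ which, even in the Coulomb gauge (where $\partial^j A_j = 0$), is \emph{not} perturbative at the critical level: its low-frequency, nearly-parallel piece cannot be estimated by iteration in any fixed space. This is handled by a microlocal renormalization in the spirit of Tao's work on wave maps: one builds a phase $\psi_{<k}$, localized to frequencies below the frequency $2^k$ of $\phi$, approximately solving a paradifferential transport equation driven by $A^j$, and conjugates the covariant wave operator so that $e^{i\psi_{<k}}\Box_A\big(e^{-i\psi_{<k}}\phi\big)$ equals $\Box\phi$ plus terms that \emph{are} perturbative. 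The technical heart of the argument — the step I expect to dominate the work — is then the construction of a parametrix for the renormalized operator: one must show that the operators $e^{-i\psi_{<k}}\Box^{-1}e^{i\psi_{<k}}$ and their truncated variants map $N$ boundedly into $S^1$, uniformly in the (small) $A$-data. This rests on an oscillatory integral representation of the conjugated propagator together with stationary-phase and frequency/angle decomposition estimates controlling all error terms; the low-frequency and small-angle regimes, and the interaction of the various pieces of the $S^1$ norm, are where the analysis is most delicate.

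Granting the renormalization, the remaining work is multilinear: (i) bilinear null-form estimates for the current $J_\alpha = \Im(\phi\overline{D_\alpha\phi})$ and its Leray projection $\mathcal P_i J_x$, exploiting the null structure that the Coulomb gauge exposes in \eqref{MKG10}; (ii) estimates for the perturbative remainder in $\Box_A\phi$ after conjugation, including the cubic term $A^jA_j\phi$ and the commutator errors generated by the renormalization; (iii) elliptic and trace estimates placing $A_0$, $\partial_t A_0$ and $A_i^{\mathrm{nl}}$ in the spaces where (i)--(ii) apply. Inserting these bounds into the fixed-point scheme in $S^1$ — with the energy smallness \eqref{small-energy} converting into smallness of the relevant norms — closes the a priori estimate, giving part (a).

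For part (b) I would rerun the same estimates on differences: the difference of two small-energy solutions solves the linearized system whose coefficients are controlled in $S^1$, so the difference estimates close locally in time, with frequency envelopes used to accommodate the critical regularity. A standard limiting argument — weak convergence together with convergence of the $S^1$ norms — then upgrades this to genuine continuity of the data-to-solution map from $\dot H^1(\R^4)\times L^2(\R^4)$ into $C(\R;\dot H^1(\R^4))\cap \dot C^1(\R;L^2(\R^4))$ on the set \eqref{small-energy}.
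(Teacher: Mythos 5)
Your overall architecture (frequency--localized $S^1/N$ spaces, the splitting $A_i=A_i^{\mathrm{free}}+A_i^{\mathrm{nl}}$, a pseudodifferential renormalization of the paradifferential magnetic term, then multilinear null--form estimates to close a small--data scheme, with frequency envelopes and difference bounds for continuous dependence) matches the paper, and the difference between your ``a priori bound plus bootstrap in $T$'' and the paper's Picard--type iteration \eqref{MKG1-it}--\eqref{MKG3-it} is inessential. But there is a genuine missing idea in the step you dismiss as ``the remaining work is multilinear/perturbative.'' The renormalization can only be built from the \emph{free} part of $A_j$ (the parametrix construction and all the symbol/decomposability bounds for $\psi_{\pm}$ use $\Box A=0$ and the Coulomb condition in an essential way; Theorem~\ref{main_linear_thm} is stated precisely for $\Box A^{\mathrm{free}}=0$). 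After conjugating away $A^{\mathrm{free}}$, the low--high magnetic interaction $\sum_k A^{\mathrm{nl}}_{j,<k-C}\partial_j\phi_k$ is \emph{not} perturbative by bilinear $S^1\times S^1\to N$ null--form estimates: the portion of $A^{\mathrm{nl}}_x$ generated by $high\times high\to low$ interactions of $\phi$ at low output modulation (the piece $\mathcal H A_x$ in \eqref{HA_def}) defeats both the $S^1$ bound and even the stronger $Z^{hyp}$ bound used in \eqref{gen_phi_Best}. The paper closes this only by pairing it with the corresponding piece of the electric term $A_0\partial_t\phi$, where a cancellation (going back to Machedon--Sterbenz) turns the sum into the trilinear expressions \eqref{quad_null1}--\eqref{quad_null3}, estimated by the quadrilinear null--form bounds behind \eqref{quad_null_est}. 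Your plan treats $A_0$ purely through ``favorable elliptic estimates'' and never combines it with the magnetic term, so this worst interaction is left unestimated and the iteration/bootstrap would not close at critical regularity in $4+1$ dimensions.

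If instead you intended to renormalize with the \emph{full} $A^j$ (free plus nonlinear), the gap moves to the parametrix itself: $A^{\mathrm{nl}}$ is not a free wave, so the phase $\psi$ built from it does not satisfy the dispersive, decomposable and characteristic--energy bounds on which the conjugation and mapping estimates rest, and no substitute argument is offered. Either way, you need an explicit additional mechanism --- in the paper, the secondary decomposition of $A^{\mathrm{nl}}_x$ and $A_0$ into ``good'' parts (controlled in $Z^{hyp}$, $Z^{ell}$, $\ell^1 L^2\dot H^{3/2}$) and the ``bad'' parts $\mathcal H A_x$, $\mathcal H A_0$ whose contributions are re--expanded in terms of the previous iterate $\phi$ and estimated jointly via the trilinear cancellation --- before the scheme can close.
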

We remark that the same result holds in all higher dimensions for small 
data in the scale invariant space $\dot H^{\frac{n}2-1} \times \dot H^{\frac{n}2-2}$.
This has already been known in dimensions $n\geq 6$, see \cite{RT_MKG}.
We have chosen to restrict our exposition to the more difficult case
$n = 4$ in order to keep the notations simple, but our analysis easily carries 
over to dimension $n =5$. On the other hand, we do not know whether
a similar result holds in dimension $n = 3$.

Before explaining some more details of our approach, we recall here
earlier developments on this problem, and how our approach relates to
these. Considering the case of general spatial dimension $n$ and
denoting the {\it{critical Sobolev exponent}} by $s_c = \frac{n}{2}-1$
(thus $s_c = 1$ for $n=4$, corresponding to the energy), a global
regularity result for data which are smooth and small in
$\dot{H}^{s_c}$ was established in dimensions $n\geq 6$ in the work
\cite{RT_MKG} which served as inspiration to the present work. We note
in passing that a result analogous to \cite{RT_MKG} was established in
\cite{KrSte} for the Yang-Mills system in dimensions $n\geq 6$, and
the present work most likely also admits a corresponding analogue for
the $4+1$-dimensional Yang-Mills problem.  The global regularity
question for the physically relevant $n=3$ case of the Yang-Mills
problem had been established earlier in the groundbreaking work
\cite{EarMon}. Observe that this problem is {\it{energy
    sub-critical}}.

In the context of MKG, the result \cite{RT_MKG} had been preceded by a
number of works which aimed at improving the {\it{local
    wellposedness}} of MKG in the $n = 3$ case, beginning with
\cite{KM0}, followed by \cite{Cu}, and more recently \cite{MaSte}; the
latter in particular established an essentially optimal local
well-posedness result by exploiting a subtle cancellation feature of
MKG, which also plays a role in the present work.
We also mention the recent result \cite{KeRoTao} which establishes
global regularity for energy sub-critical data in the $n=3$ case, in
the spirit of earlier work by Bourgain \cite{Bo}.

In the higher dimensional case $n\geq 4$, an essentially optimal local
well-posedness result for a model problem closely related to MKG was
obtained in \cite{KlainTat}. This model problem does not display the
crucial cancellation feature of the precise MKG-system which enable us
here to go all the way to the critical exponent and global regularity.
We mention also that essentially optimal local well-posedness for the exact MKG-system was obtained in \cite{Se}. 
Finally, the recent work \cite{Ste} established global regularity of
equations of MKG-type with data small in a weighted but scaling
invariant Besov type space, in the case $n = 4$.

The present paper follows a similar strategy as \cite{RT_MKG} : one
observes that the spatial Gauge connection components $A_{j}$,
$j=1,2,3$, which are governed by the first equation \eqref{MKG10}, may
in fact be decomposed into a free wave part and an inhomogeneous term
(the second term in the Duhamel formula for $A$) which in fact obeys a
better $l^1$-Besov type bound (while energy corresponds to $l^2$)
\[
A_j = A^{\text{free}}_j + A^{\text{nonlin}}_j
\]
This is important for handling the key difficulty of the MKG-system ,
which is the equation for $\phi$, i. e. the second equation
\eqref{MKG20}. In fact, we shall verify that the contribution of the
term $ A^{\text{nonlin}}$ to the difficult magnetic interaction term
$2iA_j\partial_j\phi$ {\it{in the low-high frequency interaction
    case}} can be suitably bounded when combined with the term
$2iA_0\partial_t\phi$, an observation coming from \cite{MaSte}.
However, the contribution of the free term $A^{\text{free}} $ to the
magnetic interaction term is nonperturbative and cannot be handled in
this manner. Thus, following the example set in \cite{RT_MKG}, we
retain this term into the covariant wave operator\footnote{Here we set
  $A^{\text{free}}_0=0$.}  $\Box_{A^{\text{free}}}$.  More precisely,
we shall define a suitable paradifferential wave operator $\Box_A^p$
which incorporates the 'leading part' of $\Box_{A^{\text{free}}}$
while relegating the rest to the source terms on the right.

 The key novelty of this paper then is the development of a functional
 calculus, involving in particular $X^{s,b}$-type as well as atomic
 null-frame spaces developed in other contexts, for solutions of the
 general inhomogeneous 'covariant' wave equation
\[
\Box_A^p = f
\]
This refined functional calculus is necessary to control the nonlinear
interaction terms, which become significantly more delicate in the
critical dimension than in the setting studied in \cite{RT_MKG}. In
particular, the Strichartz norms themselves appear far from sufficient
to handle the present situation.  The above covariant wave equation
will be solved by means of a suitable approximate parametrix, and we
show that this parametrix satisfies many of the same bounds as the
usual free wave parametrix, in particular encompassing refined square
sum type microlocalized Strichartz norms as well as null-frame
spaces. We expect the calculus developed here to be of fundamental
importance in other contexts, such as the regularity question of the
critical Yang-Mills system and related problems from mathematical
physics.

\section{Technical preliminaries}

Throughout the sequel we shall rely on Littlewood-Paley calculus, both
in space as well as space-time. In particular, we constantly invoke
the standard Littlewood-Paley localizers $P_k$, $k\in \mathbf{Z}$,
which are defined by
\[
\widehat{P_k f} = \chi(\frac{|\xi|}{2^k})\hat{f}(\xi)
\]
for functions $f$ defined on $\R^4$. Here $\chi$ is a smooth bump
function, supported on $[\frac{1}{4},4]$, which satisfies the key
condition $\sum_{k\in \mathbf{Z}}\chi(\frac{\xi}{2^k}) = 1$ if
$\xi>0$. To measure proximity of the space-time Fourier support to the
light cone, we use the concept of {\it{modulation}}. Thus we introduce
the multipliers $Q_j$, $j\in \mathbf{Z}$, via
\[
\widehat{Q_j f}(\tau, \xi) = \chi(\frac{\big||\tau| -
  |\xi|\big|}{2^j})\hat{f}(\tau, \xi)
\]
with the same $\chi$ as before, where $\hat{}$ in this context denotes
the space-time Fourier transform. We then refer to $2^j$ as the
modulation of the function.  On occasion we shall also use multipliers $S_l$, which restrict the {\it{space-time frequency}} to size $\sim 2^l$. 
These multipliers allow us to introduce a
variety of norms. In particular, for any $p\in [1,\infty)$, we set for
any norm $\|\cdot\|_{S}$
\[
\big\| F\big\|_{l^p S}: = \big(\sum_{k\in \mathbf{Z}}\big\|P_k
F\big\|_{S}^p\big)^{\frac{1}{p}}
\]
We also have the following $X^{s,b}$-type norms, applied to functions
localized to spatial frequency $\sim 2^k$:
\[
\big\|F\|_{X_p^{s,r}}: = 2^{sk}\big(\sum_{j\in
  \mathbf{Z}}\big[2^{rj}\big\|Q_j
F\big\|_{L_{t,x}^2}\big]^p\big)^{\frac{1}{p}},\,p\in [1,\infty),
\]
with the obvious analogue
\[
\big\|F\|_{X_\infty^{s,r}}: = 2^{sk}\sup_{k\in
  \mathbf{Z}}2^{rj}\big\|Q_j F\big\|_{L_{t,x}^2}
\]
For more refined norms, we shall also have to use multipliers
$P^{\omega}_{l}$, which localize the homogeneous variable
$\frac{\xi}{|\xi|}$ to caps $\omega\subset S^3$ of diameter $2^l$, by
means of smooth cutoffs. In these situations, we shall assume that for
each $l$ a uniformly (in $l$) finitely overlapping covering of $S^3$
by caps $\omega$ has been chosen with appropriate cutoffs subordinate
to these caps. Similar comments apply to the multipliers
$P_{\mathcal{C}_{k'}^{l'}}$ which localize to rectangular boxes and
will be defined below.\\
Given a norm $\|\cdot\|_{S}$ with corresponding space $S$, we denote by $S_k$ the space of functions in $S$ which are localized to frequency $\sim 2^k$. Furthermore, we denote by 
\[
S_{k,\pm}
\]
the subspace of functions in $S_k$ with Fourier support in the half-space $\tau><0$, with $\tau$ the Fourier variable dual to $t$. 

\section{Function Spaces}

There are three function spaces we work with: $N$, $N^*$, and
$S$. These are set up to that their dyadic subspaces $N_k$, $N^*_k$, and
$S_k$ satisfy the following relations:
\begin{equation}
  N_k \ = \ L^1(L^2) +  X_1^{0,-\frac{1}{2}} \ , 
  \qquad X_1^{0,\frac{1}{2}}
  \subseteq S_k\subseteq N^*_k \ , \label{basic_norms}
\end{equation}
Then define:
\begin{equation}
  \lp{F}{N}^2 \ = \ \sum_k \lp{P_k F}{N_k}^2 \ . \notag
\end{equation}

We also define $S_k^\sharp$ by
\[
\| u \|_{S_k^\sharp} = \| \Box u\|_{N_k} + \|\nabla u\|_{L^\infty L^2}
\]
On occasion we need to separate the two characteristic cones
$\{ \tau = \pm |\xi|\}$. Thus we
define
\[
N_{k,\pm}, \qquad N_k = N_{k,+} \cap  N_{k,-}
\]
\[
S_{k,\pm}^\sharp, \qquad S_k^\sharp = S_{k,+}^\sharp + S_{k,-}^\sharp
\]
\[
N^*_{k,\pm}, \qquad N^*_k = N^*_{k,+} + N_{k,-}^*
\]

Our space $S_k$ scales like $L^2$ free waves, and is defined by:
\begin{equation}
  \lp{\phi}{S_k}^2 \ = \ \lp{\phi}{S^{str}_k}^2 + \lp{\phi}{S^{ang}_k}^2
  +  \lp{\phi}{X_\infty^{0,\frac{1}{2}}}^2  \ , \notag
\end{equation}
where:
\begin{equation}
  S^{str}_k \ = \ \cap_{\frac{1}{q}+ \frac{3/2}{r}\leqslant \frac{3}{4}} 
  2^{(\frac{1}{q}+\frac{4}{r}-2)k}L^q(L^r) \ , \qquad
  \lp{\phi}{S^{ang}_k}^2 \ = \ 
  \sup_{l<0} \sum_{\omega}\lp{P^\omega_l
    Q_{<k+2l}\phi}{S_k^\omega(l)}^2 \ , \label{str_and_defn}
\end{equation}
The angular sector norms $S_k^\omega(l)$ are essentially the same as
in the wave maps context, see e.g.
\cite{T2}, and defined shortly.  Our space of solutions scales like
free waves with $\dot{H}^1$ data, with a high modulational gain
as in \cite{StTat}. Thus  we set:
\begin{equation}
  \lp{\phi}{S^1}^2 \ = \ \sum_k \lp{\nabla_{t,x}P_k\phi }{S_k}^2 
  + \lp{\Box\phi}{\ell^1 L^2(\dot{H}^{-\frac{1}{2}})}^2 
  \ . \label{S1-def}
\end{equation}
For later reference, we shall also use the norms
\[
\|\phi\|_{S^N}: = \|\nabla_{t,x}^{N-1}\phi\|_{S^1},\,N\geq 2
\]
Returning to $S_k^\omega(l)$, we define these as usual except that we
need to add additional square information over smaller radially
directed blocks $\mathcal{C}_{k'}(l')$ dimensions
$2^{k'}\times(2^{k'+l'})^3$ with appropriate dyadic gains.  First
define:
\begin{align}
  \lp{\phi}{P\!W^\pm_\omega(l)} \ &=\ \inf_{\phi=\int \!\!
    \phi^{\omega'} } \int_{|\omega-\omega'|\leqslant 2^{l}}
  \lp{\phi^{\omega'} }{L^2_{\pm\omega' }(L^\infty_{(\pm\omega')^\perp}
    )} d\omega' \ , \notag\\
  \lp{\phi}{N\!E} \ &= \ \sup_\omega \lp{\snabla_\omega
    \phi}{L^\infty_{ \omega} (L^2_{\omega^\perp})} \ , \notag
\end{align}
where the norms are with respect to $\ell_\omega^\pm = t\pm
\omega\cdot x$ and the transverse variable, while $\snabla_\omega$
denotes spatial differentiation in the $(\ell^+_\omega)^\perp$ plane.
Now set:
\begin{multline}
  \lp{\phi}{S_k^\omega(l)}^2 \ = \ \lp{ \phi}{S_k^{str}}^2 +
  2^{-2k}\lp{\phi}{N\!E}^2 + 2^{-3k}\sum_\pm
  \lp{Q^\pm  \phi}{P\!W^\mp_\omega(l) }^2 \\
  + \sup_{\substack{k'\leqslant k ,   l'\leqslant 0\\
      k+2l\leqslant k'+l'\leqslant k+l }} \sum_{\mathcal{C}_{k'}(l') }
  \Big( \lp{P_{\mathcal{C}_{k'}(l')} \phi}{S_k^{str}}^2
  + 2^{-2k}\lp{P_{\mathcal{C}_{k'}(l')} \phi}{N\!E}^2\\
  + 2^{-2k'-k}\lp{P_{\mathcal{C}_{k'}(l')} \phi}{L^2(L^\infty)}^2 +
  2^{-3(k'+l')}\sum_\pm \lp{Q^\pm P_{\mathcal{C}_{k'}(l')}
    \phi}{P\!W^\mp_\omega(l) }^2 \Big) \ . \label{Sl_def}
\end{multline}
We remark that an important feature of these norms is that the
time-like oriented $L^2(L^\infty)$ block norm gains dyadically from
the length of $\mathcal{C}_{k'}(l')$ in the radial direction, while
the null oriented $L^2_\omega(L^\infty_{\omega^\perp})$ norms gain
from the size of $\mathcal{C}_{k'}(l')$ in the angular direction. We
also remark that a useful feature of this setup is:
\begin{equation}
  \big(\sum_{\mathcal{C}_{k'}} \lp{P_{\mathcal{C}_{k'}} P_k \phi }{L^2(L^\infty)}^2\big)^\frac{1}{2}
  \ \lesssim \ 
  2^{k'} 2^{\frac{1}{2}k} \lp{P_k \phi}{S^{ang}_k\cap X_\infty^{0,\frac{1}{2}}} \ , \label{L2Linfty_sqsum}
\end{equation}
where $\mathcal{C}_{k'}$ are a finitely overlapping set of cubes of
side length $2^{k'}$.  This follows by splitting
$P_k\phi=Q_{<k'}P_k\phi+Q_{\geqslant k'}P_k\phi$ and using the
$S_k^\omega\big(\frac{1}{2}(k'-k)\big)$ norm for the first term and
the $X_{\infty}^{0,\frac{1}{2}}$ norm and Bernstein's inequality for
the second.

Next we describe some auxiliary spaces of $L^1(L^\infty)$ which
will be useful for decomposing the non-linearity. The first is for the
hyperbolic part of the solution:
\begin{equation}
  \lp{\phi}{Z^{hyp}} \ =\  \sum_k  \lp{P_k\phi }{Z_k^{hyp}} \ , \ \	
  \lp{\phi}{Z_k^{hyp}}^2 \ =\  \sup_{l<C} 
  \sum_{\omega }2^{l}\lp{P^\omega_lQ_{k+2l} \phi }{L^1(L^\infty)}^2
  \ . \notag
\end{equation}
Note that as defined this space already scales like $\dot{H}^1$ free
waves. In addition, note the following useful embedding which is a direct
consequence of Bernstein's inequality:
\begin{equation}
  \Box^{-1} \ell^1 L^1(L^2) \ \subseteq \ Z^{hyp} \ . \label{B_embed}
\end{equation}
The second is for the elliptic part of the solution:
\begin{equation}
  \lp{A}{Z^{ell}_k} \ = \ \sum_{ j<k+C} \lp{Q_j A}{L^1(L^\infty)}
  \ , \qquad \lp{A}{Z^{ell}} \ = \  \sum_k \lp{P_k A}{Z^{ell}_k} \ . \notag
\end{equation}
We remark that the only purpose of this last norm is to handle sums
over $Q_j$ in product estimates involving $A$ in $L^1(L^\infty)$.

Finally, the function spaces for $A_0$ are much easier to describe 
as the $A_0$ equation is elliptic:
\begin{equation}
\|A_0\|_{Y^1}^2 = \| \nabla_{x,t} A_0\|_{L^\infty L^2}^2 + \| A_0\|_{L^2_t 
\dot H^{\frac32}_x}^2 + \| \partial_t A_0\|_{ L^2_t 
\dot H^{\frac12}_x}^2  \ . \notag
\end{equation}
We also have the derivative norms
\[
\|A_0\|_{Y^N}: = \|\nabla_{t,x}^{N-1}A_0\|_{Y^1},\,N\geq 2
\]


\section{Decomposing the non-linearity;
 Statement and use of the core 
multilinear estimates}

Recalling the definition of the currents $J_\alpha = - \Im
(\phi\overline{D_{\alpha} \phi})$ we write the MKG-CG system again
here as:
\begin{subequations}
  \begin{align}
    \Box A_i \ &= \ \mathcal{P}_i J_x \ ,
  \label{MKG1}\\
    \Box_A \phi \ &= \ 0  \ , \label{MKG2}\\
    \Delta A_0 \ &= \  J_0 \ , \label{MKG3}
  \end{align}
\end{subequations}
This system will be solved iteratively using the following scheme.
We initialize 
\[
A_i^{(1)} = A_i^{\text{free}},  \qquad 
A_0^{(1)} = 0, \qquad \phi^{(1)} = \phi^{\text{free}}, 
\]
where $A_i^{\text{free}}$ and $\phi^{\text{free}}$ solve the flat wave 
equation with initial data $A_i[0]$, respectively $\phi[0]$. 
Given $A_\alpha^{(m)}$, $\phi_\alpha^{(m)}$ and their 
associated currents $J_\alpha^{(m)}$, we define the  next iteration
via the equations
  \begin{subequations}
  \begin{align}
    \Box A_i^{(m+1)}  \ &= \ \mathcal{P}_i J_x^{(m)} \ ,
  \label{MKG1-it}\\ 
  \Box_{A^{(m)}} \phi^{(m+1)} \ &= \ 0  \ , \label{MKG2-it} \\
 \Delta A_0^{(m+1)} \ &= \  J_0^{(m)} \ , \label{MKG3-it} 
  \end{align}
\end{subequations}
with the same initial data $A_i[0]$, respectively $\phi[0]$.  Assuming
small energy for the initial data $A_i[0]$ and  $\phi[0]$ 
as in \eqref{small-energy}, we 
will prove that this Picard type iteration converges in the space
$S^1$.  Our starting point is the linear bound
\begin{equation}\label{picard:1}
\| A_x^{(1)}\|_{S^1} + \|\phi^{(1)}\|_{S^1} \leq C_0 \epsilon_* 
\end{equation}
Then we will inductively establish the bound
\begin{equation} \label{picard}
\| A^{(m+1)}_x -A^{(m)}_x \|_{l^1 S^1} + \|\phi^{(m+1)} -\phi^{(m)} \|_{S^1}
+ \|A_0 ^{(m+1)} -A^{(m)}_0 \|_{Y^1}
\leq (C \epsilon_*)^m 
\end{equation}
for a universal constant $C \geq 2 C_0$.  

Assuming this holds, passing to the limit as $n \to \infty$ we obtain
a Coulomb solution $(A,\phi)$ to the MKG equation which satisfies
the bound 
\begin{equation} 
\| A^{\text{nonlin}}_x  \|_{l^1 S^1} + \|\phi \|_{S^1}
+ \|A_0  \|_{Y^1}
\lesssim \epsilon_* 
\end{equation}
The same argument proves uniqueness. If two solutions
$(A^{(0)},\phi^{(0)})$ and $(A^{(1)},\phi^{(1)})$
have the same Cauchy data for $A_x$, then the
same $A_x^{\text{free}}$ is used for both.  Thus applying the same
series of estimates in this section to the difference of the
two exact solutions rather than two approximate solutions
we obtain the bound
\begin{equation} 
\| A^{(0)}_x- A^{(1)}_x \|_{l^1 S^1} + \|\phi^{(0)}-\phi^{(1)} \|_{S^1}
+ \|A_0^{(0)}-A_0^{(1)}\|_{Y^1} \lesssim \|  \phi^{(0)}[0]-\phi^{(1)}[0]\|_{\dot H^1 \times L^2} 
\end{equation}
This bound proves both uniqueness and Lipschitz dependence 
of the solution with respect to $\phi[0]$. The continuous dependence 
with respect to $A_x[0]$ is a more delicate issue and will be explained in section~\ref{Sec:reg}.

The estimate \eqref{picard}$(m)$ will follow from \eqref{picard}$(<m)$. Summing up 
\eqref{picard}$(<m)$ and \eqref{picard:1}, we can easily add to our
induction hypothesis the bound
\begin{equation} \label{picard-sum}
\| A^{(n)}_x - A^{\text{free}}_x \|_{l^1 S^1} + \|\phi^{(n)} \|_{S^1}
+ \|A_0 ^{(n)}\|_{Y^1}
\leq  2C_0 \epsilon_*, \qquad n \leq m 
\end{equation}
provided that $\epsilon_*$ is small enough.

A simple but very useful observation is that, since all iterated
$\phi^{(m)}$ solve covariant wave equations, it follows that the
associated moments are divergence free, $D^\alpha J_\alpha^{(m)} = 0$.
Then, differentiating the equation \eqref{MKG3-it}, we obtain 
\begin{equation} \label{MKG4-it} 
 \Delta \partial_t A_0^{(m+1)} \ = \  \partial^i J_i^{(m)} 
\end{equation}
which will be used to estimate the high modulations of $A_0$.
This is the reason why we have completely 
avoided using $\phi^{(m)}$ in \eqref{MKG2-it}, even though 
some of the terms in there will be treated perturbatively.

A second fact to keep in mind is that while in terms of 
formulas this is a one step iteration, in terms of estimates this 
is really a two step iteration. Precisely, in order to obtain 
good bounds for $\phi^{(m+1)}$ we will need to reiterate 
``bad'' portions of $A_\alpha$ in terms of $\phi^{(m-1)}$ (but not  $A^{(m-1)}$).
All this is explained in detail below.

To decompose the non-linearity, the following device will be useful.
If $\mathcal{M}(D_{t,x},D_{t,y})$ is any bilinear operator we set:
\begin{align}
  \mathcal{H}_k \mathcal{M}(\phi,\psi) \ &= \ \sum_{j<k+C}
  Q_{j} P_k \mathcal{M}(Q_{<j-C}\phi, Q_{<j-C}\psi ) \ , \notag\\
  \mathcal{H}^*_k \mathcal{M}(\phi,\psi) \ &= \ \sum_{j<k+C} Q_{<j-C}
  \mathcal{M}(Q_{j}P_k \phi, Q_{<j-C}\psi ) \ , \notag
\end{align}
Now we describe the function spaces and multilinear estimates for each
iterative piece of the non-linearity.  In this Section we only state
the main estimates, and reduce them to appropriate 
dyadic bounds. These will be  proved later in the paper.


\subsection{Estimates for the $A_i$}

We split the spatial potentials  into homogeneous and inhomogeneous
parts with respect to $t=0$ as follows, $A_i^{(m)}=A_i^{\text{free}}+
A_i^{\text{nonlin},(m)}$ where the second part solves the linear equation
\begin{equation}\label{am-nonlin}
\Box A_i^{\text{nonlin},(m+1)} = - \mathcal{P}_i \big(\phi^{(m)}\nabla_x\phi^{(m)}+
  |\phi^{(m)}|^2 A^{(m)}_x \big), \qquad  A_i^{nonlin}[0] \ = \ 0 \ 
\end{equation}
In order to establish the $l^1S^1$ bound in the first term of
\eqref{picard} it suffices to work directly with this equation.
However, $A^{(m)}$ also appears in the $\phi^{(m+1)}$ equation
\eqref{MKG2-it}, and in order to be able to treat its contribution
perturbatively there we also need to control its $Z^{hyp}$ norm (see
the subsection below devoted to $\phi$). This works for the most part,
but there is a part of $A_i^{\text{nonlin},(m)}$ which requires a more
delicate treatment, which is:
\begin{equation}
  \mathcal{H} A_i^{\text{nonlin},(m)} \ =\ -\sum_{\substack{k,k_i:\\
      k<\min\{k_1,k_2\}-C}} \Box^{-1} \mathcal{H}_k
  \mathcal{P}_i  (\phi^{(m-1)}_{k_1}{\nabla_x \phi^{(m-1)}_{k_2}}) \ . \label{HA_def} \end{equation}
For the good part of  $A_i^{\text{nonlin},(m)}$ we will establish the
difference bound
\begin{equation}\label{good-aj-diff}
\| (A_i^{\text{nonlin},(n)} -  \mathcal{H} A_i^{\text{nonlin},(n)} ) 
-  (A_i^{\text{nonlin},(n-1)} -  \mathcal{H} A_i^{\text{nonlin},(n-1)} )\|_{Z^{hyp}}  \leq
(C \epsilon_*)^{n-1}, \qquad 3 \leq n \leq m+1
\end{equation}
as well as the summed estimate
\begin{equation}\label{good-aj}
\| A_i^{\text{nonlin},(n)} -  \mathcal{H} A_i^{\text{nonlin},(n)}  \|_{Z^{hyp}}  \lesssim
 C_0^2 \epsilon_*^2, \qquad 2 \leq n \leq m+1
\end{equation}
On the other hand the bad part $\mathcal{H} A_i^{\text{nonlin},(m)}$ will be 
considered later in combination with the similar part of $A_0$.

As seen above,  we need estimates not only for the equation
\eqref{am-nonlin}, but also for differences of two consecutive
iterations. Thus we are led to consider a more general equation of the
form
\begin{equation}
  \Box B_i \ = \ \mathcal{P}_i \big(\phi^{(1)}\nabla_x\phi^{(2)}+
  \phi^{(3)}\phi^{(4)} A^{(1)}_i \big) \ , 
  \quad B_i^{nonlin}[0] \ = \ 0 \ . \label{Ai_it1}
\end{equation}
and its corresponding bad part,
\begin{equation}
  \mathcal{H} B_i \ =\ -\sum_{\substack{k,k_i:\\
      k<\min\{k_1,k_2\}-C}} \Box^{-1} \mathcal{H}_k
  \mathcal{P}_i  (\phi^{(1)}_{k_1}{\nabla_x \phi^{(2)}_{k_2}}) \ . \label{HAa_def}
\end{equation}

Then we will show:
\begin{prop}
  One has the following iterative estimates for the functions 
$B_i$ and $  \mathcal{H} B_i $ defined above:
  \begin{align}
    \lp{B_i}{\ell^1 S^1} \ &\lesssim \
    \lp{\phi^{(1)}}{S^1}\lp{\phi^{(2)}}{S^1} + \lp{A^{(1)}_i}{S^1}
    \lp{\phi^{(3)}}{S^1}\lp{\phi^{(4)}}{S^1}
    \ , \label{AiS_est1}\\
    \lp{B_i-\mathcal{H} B_i}{Z^{hyp}} \ &\lesssim \
    \lp{\phi^{(1)}}{S^1}\lp{\phi^{(2)}}{S^1} \ . \label{AiB_est1}
  \end{align}
\end{prop}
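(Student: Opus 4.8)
The plan is to write $\Box B_i = F$ with $F = \mathcal{P}_i\big(\phi^{(1)}\nabla_x\phi^{(2)} + \phi^{(3)}\phi^{(4)}A^{(1)}_i\big)$ and zero Cauchy data, and to reduce both \eqref{AiS_est1} and \eqref{AiB_est1} to frequency-localized bilinear (and, for the cubic term, trilinear) estimates on $F$. For \eqref{AiS_est1} I would invoke the energy estimate for the $S_k$ spaces, $\|\nabla_{t,x}P_k u\|_{S_k} \lesssim \|\nabla_{t,x}u(0)\|_{L^2} + \|\Box u\|_{N_k}$, together with the definition \eqref{S1-def}; since the data vanish this gives $\|B_i\|_{\ell^1 S^1} \lesssim \|F\|_{\ell^1 N} + \|F\|_{\ell^1 L^2(\dot H^{-\frac12})}$, so it suffices to bound the right side by $\|\phi^{(1)}\|_{S^1}\|\phi^{(2)}\|_{S^1} + \|A^{(1)}_i\|_{S^1}\|\phi^{(3)}\|_{S^1}\|\phi^{(4)}\|_{S^1}$ -- note that it is the $\ell^1$ (rather than $\ell^2$) Besov summation over output frequencies that is the delicate gain to be extracted here. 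For \eqref{AiB_est1}, after subtracting off exactly the part of the source that produces $\mathcal{H}B_i$ in \eqref{HAa_def}, I would either put the remainder into $\ell^1 L^1(L^2)$ and invoke the embedding \eqref{B_embed}, or, in the regimes where that is too lossy, estimate the $Z^{hyp}$-defining norms $\sum_\omega 2^l\|P^\omega_l Q_{k+2l}(\cdot)\|_{L^1(L^\infty)}^2$ directly.

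The core is then the dyadic bilinear bound for $P_k\mathcal{P}_i(\phi^{(1)}_{k_1}\nabla_x\phi^{(2)}_{k_2})$, organized by Littlewood-Paley trichotomy. Put $k_{\max} = \max(k_1,k_2)$. In the high-low and low-high cases ($|k_1-k_2|>C$, so the output frequency is $\sim 2^{k_{\max}}$) the Leray projection supplies a genuine null structure: expanding $\mathcal{P}_i(\phi\nabla_j\phi) = \phi\nabla_i\phi - \partial_i\Delta^{-1}\partial_j(\phi\nabla_j\phi)$, the symbol collapses to the component of $\nabla\phi^{(2)}$ orthogonal to the output frequency, which carries the angular gain $\angle(\xi_1,\xi_2)\,2^{k_{\max}}/2^{k_{\mathrm{out}}}$; together with the modulation localizations this is the familiar $Q_{ij}$-type null form, and it is absorbed by the Strichartz norms $S^{str}_k$ and the angular-sector norms $S_k^\omega(l)$ of the two factors, with the frequency separation providing the $\ell^1$ summation in $k$. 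In the high-high case ($|k_1-k_2|\le C$, $k < k_{\max}-C$) the subregime "output modulation $\ll 2^k$, both inputs on the cone'' is precisely the \emph{resonant} piece isolated as $\mathcal{H}_k$. For \eqref{AiS_est1} we must still bound it, which succeeds because the output sits at low frequency $2^k$, so the $\nabla_{t,x}$ weight in $S^1$ is small and a Bernstein gain from the two high-frequency inputs recovers the $\ell^1$ summation; one then places the output in $X_1^{0,-\frac12} + L^1(L^2) \subseteq N_k$ using the $X_\infty^{0,\frac12}$, $N\!E$ and $P\!W$ components of the inputs' $S_{k_i}$ norms.

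For \eqref{AiB_est1} the piece $\mathcal{H}_k$ has been subtracted, so in the high-high case what remains is (a) output modulation $\gtrsim 2^k$ (the output lies off the cone, hence non-resonant), or (b) at least one input with modulation comparable to the output modulation (that input off the cone). In either subcase the corresponding $X^{0,\pm\frac12}$ factor beats the $2^{k+2l}$ modulation restriction built into $Z^{hyp}$, so one passes to $\Box^{-1}\ell^1 L^1(L^2)$ and concludes via \eqref{B_embed}; in the high-low and low-high cases one does the same, or, when necessary, matches the output cap $\omega$ of size $2^l$ to the near-antipodal input directions by a Whitney decomposition and bounds $\|P^\omega_l Q_{k+2l}(\phi^{(1)}_{k_1}\nabla_x\phi^{(2)}_{k_2})\|_{L^1 L^\infty}$ by a product of a plane-wave norm $P\!W^\mp_\omega(l)$ of one factor and an $L^2(L^\infty)$ block norm of the other, both of which appear in $S_k^\omega(l)$. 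The cubic source term $\phi^{(3)}\phi^{(4)}A^{(1)}_i$ is strictly easier: after a Littlewood-Paley split one distributes $|\phi^{(3)}||\phi^{(4)}|$ in Strichartz norms and $A^{(1)}_i$ in its $L^\infty L^2$ and Strichartz norms to land directly in $\ell^1 L^1(L^2)$, which gives its contribution to \eqref{AiS_est1} and, through \eqref{B_embed}, its (easy) contribution on the $Z^{hyp}$ side as well.

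The step I expect to be the main obstacle is precisely the high-high interaction in \eqref{AiB_est1}: having removed exactly $\mathcal{H}_k$ -- no more and no less -- one must verify that every leftover contribution, including the part of the on-cone, small-modulation resonance that $\mathcal{H}_k$ does not capture, fits into the narrow $L^1(L^\infty)$ angular-sector space $Z^{hyp}$. This forces the numerology of two competing mechanisms to close simultaneously: the modulation gain coming from the $X^{0,\pm\frac12}$ norms whenever something is off the cone, and the angular Whitney gain pairing the output cap with the antipodal inputs whenever everything is on the cone; one must check that the cutoff defining $\mathcal{H}_k$ is exactly such that the complement always falls into one of these two favorable situations. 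Once this bookkeeping is arranged, the high-low interactions, the $\ell^1 L^2(\dot H^{-\frac12})$ accounting in \eqref{AiS_est1}, and the cubic term are comparatively routine given the function-space machinery set up in the preceding sections.
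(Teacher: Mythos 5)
Your plan is essentially the paper's: reduce via the flat linear theory to bounds on the source, exhibit the null structure supplied by the Leray projection (which the paper records as the exact identity \eqref{A_null_formula}, $\mathcal{P}_j(\phi^{(1)}\nabla_x\phi^{(2)})=\Delta^{-1}\nabla^i\mathcal{N}_{ij}(\phi^{(1)},\phi^{(2)})$), estimate the cubic term in $\ell^1L^1(L^2)$, control the high modulations in $\ell^1L^2(\dot H^{-\frac12})$, and send $B_i-\mathcal{H}B_i$ into $Z^{hyp}$ mostly through $\ell^1L^1(L^2)\subset \Box Z^{hyp}$ (i.e.\ \eqref{B_embed}) plus direct angular-sector estimates, which is precisely the paper's reduction to \eqref{hi_mod1}, \eqref{hi_mod2}, \eqref{cubic_str}, \eqref{Ai_null_est}, \eqref{Ai_null_Best} and thence to the dyadic bounds \eqref{mult1}, \eqref{mult4}, \eqref{mult5}. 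The bookkeeping you flag as the main obstacle closes exactly as you suspect, with the one simplification that output modulations above $2^{k+C}$ require no $L^1(L^2)$ bound at all, since the $Q_{k+2l}$ cutoffs defining $Z^{hyp}$ do not see them.
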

In combination with our induction hypothesis, the estimate
\eqref{AiS_est1} leads to the $l^1S^1$ bound for the first term in
\eqref{picard}. Similarly, the estimate \eqref{AiB_est1} yields the $Z^{hyp}$
bounds for the good part of $A_i^{\text{nonlin},(m)}$ in \eqref{good-aj-diff}
and \eqref{good-aj}.

\begin{proof}[Decomposition of the proof of estimates \eqref{AiS_est1}
  and \eqref{AiB_est1}]
  For estimate \eqref{AiS_est1} we begin with the high modulational
  bounds:
  \begin{align}
    \lp{\phi^{(1)}\nabla_{t,x}\phi^{(2)}}{\ell^1L^2(\dot{H}^{-\frac{1}{2}})}
    \ &\lesssim \ \lp{\phi^{(1)}}{S^1}\lp{\phi^{(2)}}{S^1} \ , \label{hi_mod1}\\
    \lp{\phi^{(3)}\phi^{(4)}\phi^{(5)}
    }{\ell^1L^2(\dot{H}^{-\frac{1}{2}})} \ &\lesssim \
    \lp{\phi^{(3)}}{S^1}\lp{\phi^{(4)}}{S^1}\lp{\phi^{(5)}}{S^1} \
    . \label{hi_mod2}
  \end{align}

  Next, for the cubic terms in $A_i^{nonlin}$ both \eqref{AiS_est1}
  and \eqref{AiB_est1} follow once we can show:
  \begin{equation}
    \lp{\phi^{(3)}\phi^{(4)}\phi^{(5)} }{\ell^1L^1(L^2) }
    \ \lesssim \ \lp{\phi^{(3)}}{S^1}\lp{\phi^{(4)}}{S^1}\lp{\phi^{(5)}}{S^1} \ . \label{cubic_str}
  \end{equation}

  Returning to the quadratic part, we employ the notation:
  \begin{equation}
    \mathcal{N}_{ij}(\phi^{(1)},\phi^{(2)}) \ =\  
    \partial_i\phi^{(1)}\partial_j\phi^{(2)} - \partial_j\phi^{(1)}\partial_i\phi^{(2)}
    \ , \notag
  \end{equation}
  which allows us to write:
  \begin{equation}
    \mathcal{P}_j (\phi^{(1)} \nabla_x \phi^{(2)}) \ = \ 
    \Delta^{-1}\nabla^i \mathcal{N}_{ij}(\phi^{(1)},\phi^{(2)})
    \ . \label{A_null_formula}
  \end{equation}
  Then for the quadratic part of \eqref{AiS_est1} it suffices to show:
  \begin{equation}
    \lp{\nabla_x \Delta^{-1} \mathcal{N}(\phi^{(1)},\phi^{(2)})}{\ell^1 N} \ \lesssim \ 
    \lp{\phi^{(1)}}{S^1}\lp{\phi^{(2)}}{S^1} \ , \label{Ai_null_est}
  \end{equation}
  where $\mathcal{N}$ denotes any instance of the $\mathcal{N}_{ij}$.
  Finally, making an analogous definition to \eqref{HAa_def} for each
  term in \eqref{A_null_formula} we need to show:
  \begin{equation}
    \lp{(I-\mathcal{H})\nabla_x \Delta^{-1} \mathcal{N}(\phi^{(1)},\phi^{(2)})}{\Box Z^{hyp}}
    \ \lesssim \ 
    \lp{\phi^{(1)}}{S^1}\lp{\phi^{(2)}}{S^1} \ . \label{Ai_null_Best}
  \end{equation}

\end{proof}


\subsection{Estimates for $A_0$ and $\partial_t A_0$}
The estimates for temporal potentials are analogous to those
above, but using instead the equations \eqref{MKG3-it} and \eqref{MKG4-it}.
In expanded form these are written as 
\[
\begin{split}
 \Delta A_0^{(m+1)} \ &=  - \Im(\phi^{(m)} \partial_t \bar \phi^{(m)}) - A_0^{(m)} |\phi^{(m)}|^2
\\
 \Delta \partial_t A_0^{(m+1)} \ &=  - \partial^i \Im(\phi^{(m)} \partial_i \bar \phi^{(m)}) - 
\partial^i (A_i^{(m)} |\phi^{(m)}|^2)
\end{split}
\]
In a first approximation, from $A_0^{(m)}$ we isolate to output 
of bilinear $high \times high \to low$ interactions, namely
\begin{equation}
  A_0^{(m+1),hh} \ = \ \sum_{\substack{k,k_i:\\
      k<\min\{k_1,k_2\}-C}} \Delta^{-1}  
P_k(\phi_{k_1}^{(m)} \partial_t \phi_{k_2}^{(m)}) 
\ . \notag
\end{equation}
To bring the analysis to the same level as in the case of the $A_j$'s, 
in analogy with \eqref{HA_def}, we also define the component $ \mathcal{H} A_0^{(m+1)}$ of 
$ A_0^{(m+1),hh}$, 
\begin{equation}
  \mathcal{H} A_0^{(m+1)} \ =\ -\sum_{\substack{k,k_i:\\
      k<\min\{k_1,k_2\}-C}} \Delta^{-1} \mathcal{H}_k
  (\phi^{(m)}_{k_1}{\partial_t \phi^{(m)}_{k_2}}) \ . \label{HA00_def}
\end{equation}
In addition to the estimates included in \eqref{picard}, we also need 
the some similar  bounds for $A^{(m+1),hh)}$ as well as bounds 
for the good differences for $3 \leq n \leq m+1$:
\begin{equation} \label{A0parts}
\begin{split}
 \| A_0^{(n),hh} - A_0^{(n-1),hh}\|_{\ell^1 L^2(\dot{H}^\frac{3}{2})}  \leq  & \ 
(C\epsilon)^{n-1}
\\
  \| (A_0^{(n)} - A_0^{(n),hh}) - (A_0^{(n-1)}- A_0^{(n-1),hh})\|_{\ell^1 L^1(L^\infty)}
  \leq &  \ 
(C\epsilon)^{n-1}
\\
  \| (  \mathcal{H} A_0^{(n)} - A_0^{(n),hh}) - (  \mathcal{H} A_0^{(n-1)}- A_0^{(n-1),hh})\|_{Z^{ell}}   \leq  & \ 
(C\epsilon)^{n-1}
\end{split}
\end{equation}

Passing to differences we arrive at a system of equations with multilinear
inhomogeneities of the form:
\begin{align}
  \Delta B_0 \ &= \ \phi^{(1)}\partial_t \phi^{(2)} + \phi^{(3)}\phi^{(4)} A^{(1)}_0 \ , \label{A0_it1}\\
  \Delta \partial_t B_0 \ &= \ \nabla_x \big(\phi^{(1)}\nabla_x
  \phi^{(2)} + \phi^{(3)} \phi^{(4)} A^{(1)}_i\big) \ . \label{A0_it2}
\end{align}
 As above  we write:
\begin{equation}
  B^{hh}_0 \ = \ \sum_{\substack{k,k_i:\\
      k<\min\{k_1,k_2\}-C}} P_k(\phi_{k_1}^{(1)} \partial_t \phi_{k_2}^{(2)}) \ . \notag
\end{equation}
respectively 
\begin{equation}
  \mathcal{H} B_0 \ =\ -\sum_{\substack{k,k_i:\\
      k<\min\{k_1,k_2\}-C}} \Delta^{-1} \mathcal{H}_k
  (\phi^{(1)}_{k_1}{\partial_t \phi^{(2)}_{k_2}}) \ . \label{HA0_def}
\end{equation}
Then we will show:
\begin{prop}
  One has the following  estimates for $B_0$ and $\partial_t
  B_0$ defined above:
  \begin{align}
    \lp{\nabla_{t,x}B_0 }{L^\infty(L^2)} \ &\lesssim \
    \lp{\phi^{(1)}}{S^1}\lp{\phi^{(2)}}{S^1} +
    \lp{A^{(1)}_\alpha}{L^\infty(\dot{H}^1)}
    \lp{\phi^{(3)}}{S^1}\lp{\phi^{(4)}}{S^1}
    \ , \label{A0_energy}\\
    \lp{(B_0,B_0^{hh})}{\ell^1 L^2(\dot{H}^\frac{3}{2})} \ &\lesssim \
    \lp{\phi^{(1)}}{S^1}\lp{\phi^{(2)}}{S^1} +
    \lp{A^{(1)}_0}{L^2(\dot{H}^\frac{3}{2})}
    \lp{\phi^{(3)}}{S^1}\lp{\phi^{(4)}}{S^1}
    \ , \label{A0_est1}\\
    \lp{\partial_t B_0}{\ell^1 L^2(\dot{H}^\frac{1}{2})} \ &\lesssim \
    \lp{\phi^{(1)}}{S^1}\lp{\phi^{(2)}}{S^1} + \lp{A^{(1)}_i}{S^1}
    \lp{\phi^{(3)}}{S^1}\lp{\phi^{(4)}}{S^1}
    \ , \label{A0_est2}\\
    \lp{B_0-B_0^{hh}}{\ell^1 L^1(L^\infty)} \ &\lesssim \
    \lp{\phi^{(1)}}{S^1}\lp{\phi^{(2)}}{S^1} +
    \lp{A^{(1)}_0}{L^2(\dot{H}^\frac{3}{2})}
    \lp{\phi^{(3)}}{S^1}\lp{\phi^{(4)}}{S^1}
    \ , \label{A0B_est1}\\
    \lp{B_0^{hh}-\mathcal{H} B_0}{Z^{ell}} \ &\lesssim \
    \lp{\phi^{(1)}}{S^1}\lp{\phi^{(2)}}{S^1} \ . \label{A0B_est2}
  \end{align}
\end{prop}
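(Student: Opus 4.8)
The plan is to mirror the treatment of the $A_i$ estimates in the previous subsection, exploiting that here the inverse Laplacian $\Delta^{-1}$ is a benign elliptic multiplier supplying two full derivatives at the output frequency, so that — in contrast with the $A_i$ equation — no sharp null structure adapted to the light cone is needed. First I would reduce the Proposition to dyadic bilinear and trilinear estimates by splitting each right-hand side into its quadratic part $\phi^{(1)}\nabla_{t,x}\phi^{(2)}$ and its cubic part $\phi^{(3)}\phi^{(4)}A^{(1)}_\alpha$, and then decomposing the quadratic part according to the frequency interaction type: high$\times$high$\to$low (which by definition generates $B_0^{hh}$), high$\times$low$\to$high, and the diagonal case. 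The Besov $\ell^1$ summation appearing in \eqref{A0_est1}--\eqref{A0B_est1} is then supplied by the $2^{-2k}$ gain of $\Delta^{-1}$ at the low output frequency $2^k$ in the high-high case, and by genuine exponential off-diagonal gains in the high-low case; the energy bound \eqref{A0_energy} needs no such summation and only uses the product being in $L^\infty(L^2)$ after $\Delta^{-1}(\nabla)$.

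For the cubic terms I would argue exactly as for \eqref{cubic_str}: by H\"older in time together with the Strichartz norms built into $S^1$ and the given control $\|A^{(1)}_\alpha\|_{L^\infty(\dot H^1)}$, resp.\ $\|A^{(1)}_0\|_{L^2(\dot H^{3/2})}$ and $\|A^{(1)}_i\|_{S^1}$, one places $\phi^{(3)}\phi^{(4)}A^{(1)}_\alpha$ in a space such as $\ell^1 L^1(L^2)$ (or, after using $L^q(L^\infty)$-type Strichartz norms for the $\phi$'s, $\ell^1 L^1(L^r)$ with $r$ large), after which $\Delta^{-1}$ or $\Delta^{-1}\nabla$ lands it comfortably in the target spaces, including $\ell^1 L^1(L^\infty)$ for \eqref{A0B_est1}; the two spatial derivatives of $\Delta^{-1}$ leave room to spare and the frequency summation is automatic.

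The quadratic part is where the actual work lies. In the high$\times$low$\to$high regime the output frequency matches the high input, so \eqref{A0_energy}--\eqref{A0B_est1} reduce to bilinear bounds of the schematic form $\lp{|\nabla|^\sigma \Delta^{-1}\nabla_{t,x}^a(\phi^{(1)}_{\text{low}}\nabla_{t,x}\phi^{(2)}_{\text{high}})}{L^q(L^r)}$, where one has a surplus of derivatives from $\Delta^{-1}$ but must be careful about the bare $\partial_t$ landing on $\phi^{(2)}$ in the $\partial_t B_0$ equation: this is handled, as anticipated by the very inclusion of \eqref{MKG4-it}, by writing $\Delta = \partial_t^2 - \Box$ and trading the resulting $\Box\phi^{(2)}$ against the $\ell^1 L^2(\dot H^{-\frac12})$ component of $\|\phi^{(2)}\|_{S^1}$, while for the remaining contributions the energy and Strichartz norms in $S^1$ suffice. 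In the high$\times$high$\to$low regime — which produces $B_0^{hh}$ — the factor $2^{-2k}$ from $\Delta^{-1}$ at the low output frequency overwhelms the single derivative loss, so a crude bilinear Strichartz bound is enough; one uses only that a low-frequency output forces the two high-frequency waves to be nearly anti-parallel (hence carry opposite $\pm$ characteristic signs) in order to harvest an off-diagonal gain in $2^{k-k_1}$ for the $k_1$-summation. Assembling these pieces gives \eqref{A0_energy}, \eqref{A0_est1}, \eqref{A0_est2}, and \eqref{A0B_est1} is the same analysis restricted to the non-high-high interactions — which is exactly $B_0-B_0^{hh}$ — now landing in $L^1(L^\infty)$ by invoking the $L^q(L^\infty)$-type Strichartz norms in $S^1$ for the extra time integrability.

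Finally, for \eqref{A0B_est2} I would observe that $B_0^{hh}-\mathcal H B_0$ is $\Delta^{-1}$ applied to the portion of the high-high source \emph{not} retained by the paradifferential truncation $\mathcal H_k$, namely terms of the form $Q_j P_k\big(Q_{\ge j-C}\phi^{(1)}_{k_1}\,\partial_t\phi^{(2)}_{k_2}\big)$ and $Q_j P_k\big(Q_{<j-C}\phi^{(1)}_{k_1}\,Q_{\ge j-C}\partial_t\phi^{(2)}_{k_2}\big)$, together with the part of $B_0^{hh}$ carrying output modulation $\gtrsim 2^{k}$ — and this last part, as well as the same-sign $(\pm,\pm)$ high-high interactions (which are automatically of large modulation), is invisible to $Z^{ell}_k$, which only sees the multipliers $Q_j$ with $j<k+C$. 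For the surviving terms one of the inputs carries modulation $\gtrsim 2^{j-C}$, so $\lp{Q_{\ge j-C}\phi_{k_i}}{L^2_{t,x}}\lesssim 2^{-j-k_i}\lp{\Box\phi_{k_i}}{L^2_{t,x}}$, which simultaneously converts the remaining factor and the output localization into an $L^1(L^\infty)$ bound via Bernstein, produces the $2^{-j}$ needed to sum the modulations in $Z^{ell}_k$, and — combined with the $2^{-2k}$ of $\Delta^{-1}$ — yields the $\ell^1$ summation over the output frequency, the $\dot H^{-\frac12}$ component of $\|\phi\|_{S^1}$ absorbing the spatial-frequency weights. I expect the main obstacle to be, as in the $A_i$ case, the bookkeeping of these simultaneous Besov summations (over output frequency, over the high frequency, and — for \eqref{A0B_est2} — over modulation), while ensuring that the one exposed $\partial_t$ is always either absorbed into the derivative surplus of $\Delta^{-1}$ or traded through $\Box\phi$; the underlying $L^q(L^r)$ product estimates themselves are routine consequences of the Strichartz norms contained in $S_k$.
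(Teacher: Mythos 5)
Your treatment of \eqref{A0_energy}--\eqref{A0B_est1} is essentially the paper's: everything there is done with H\"older, Bernstein at the low output frequency, Sobolev, and the Strichartz embeddings $S^1\subseteq L^2(\dot W^{6,\frac16})$ and $L^2(\dot H^{\frac32})\subseteq L^2(\dot W^{6,\frac16})$, packaged in the generic dyadic bounds \eqref{gen_prod1}--\eqref{gen_prod2}; the off-diagonal gain in the $high\times high\to low$ case comes purely from Bernstein at the output frequency, so your appeal to ``nearly anti-parallel waves carrying opposite $\pm$ signs'' is both unnecessary and not robust for general $S^1$ inputs (which have arbitrary modulation), and your worry about a bare $\partial_t$ in the $\partial_t B_0$ equation is moot since \eqref{A0_it2} already has only spatial derivatives on its right-hand side, which is exactly why $\|A^{(1)}_i\|_{S^1}$ appears in \eqref{A0_est2}.

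The genuine gap is in \eqref{A0B_est2}. Your identification of $B_0^{hh}-\mathcal{H}B_0$ as the terms $Q_jP_k(Q_{\ge j-C}\phi^{(1)}_{k_1}\,\partial_t\phi^{(2)}_{k_2})$ and $Q_jP_k(Q_{<j-C}\phi^{(1)}_{k_1}\,Q_{\ge j-C}\partial_t\phi^{(2)}_{k_2})$ is correct, but the mechanism you propose cannot close the estimate: the $Z^{ell}_k$ norm is $\sum_{j<k+C}\|Q_j\cdot\|_{L^1(L^\infty)}$, so the modulation sum runs down to $j=-\infty$ and what is required is a \emph{gain} as $j$ decreases, i.e.\ a factor of the form $2^{c(j-k)}$ with $c>0$; a factor ``$2^{-j}$'' is the wrong direction and makes the sum diverge. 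Concretely, trading the high-modulation input through $X^{0,\frac12}_\infty$ (or through $\Box\phi\in L^2\dot H^{-\frac12}$) and putting the other factor in an $L^2(L^\infty)$-type Strichartz norm, followed by Bernstein and $\Delta^{-1}$ at the output, yields a dyadic bound of size roughly $2^{-\frac12(j-k_1)}$ (up to frequency weights), which grows as $j\to-\infty$ and also has no off-diagonal decay in $k_1-k$. This is exactly why the paper flags \eqref{A0B_est2} as ``more microlocal'' and proves it via \eqref{mult11}: one decomposes the high-frequency inputs into antipodally paired radial blocks of angular size $2^{l'}$ with $l'=\frac12(j-k_1)$ and uses the square-summed block bounds --- Lemma \ref{L2L6_lem} for the low-modulation factor and the analogous block Bernstein bound for the $Q_{\ge j-C}$ factor measured in $X^{0,\frac12}_\infty$ --- to obtain $\|Q_jP_{k_1}T\|_{L^1(L^\infty)}\lesssim 2^{2k_1}2^{\frac12(j-k_1)}2^{\frac16(k_1-k_2)}\|\phi^{(1)}\|_{S^1}\|\phi^{(2)}\|_{S^1}$, which is summable both over $j<k_1+C$ and over $k_1\le k_2-C$. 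Without this angular-sector (cone-adapted) input, which your outline explicitly sets aside, the $Z^{ell}$ bound does not follow.
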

The bound for the last term in \eqref{picard} follows from 
\eqref{A0_energy}, \eqref{A0_est1} and \eqref{A0_est2}.
The three bounds in \eqref{A0parts} are consequences of
\eqref{A0_est1}, \eqref{A0B_est1} and \eqref{A0B_est2}.

\begin{proof}[Outline of proof of estimates \eqref{A0_est1}--\eqref{A0B_est1}]
  Estimate \eqref{A0_energy} follows from Sobolev's embedding, while
  the proofs of \eqref{A0_est1} and \eqref{A0_est2} either follow
  immediately from \eqref{hi_mod1} and \eqref{hi_mod2} above, or in
  the case of $A_0^{hh}$ from the estimates used to produce those
  bounds. Estimate \eqref{A0B_est1} follows from similar
  considerations. In detail, the estimates \eqref{A0_energy} - \eqref{A0B_est1} will be proved in subsection~\ref{subse:lp}.
 On the other hand \eqref{A0B_est2} is more
  microlocal in nature and follows from calculations similar to those
  in the proof of \eqref{Ai_null_Best} listed above. It will be proved as a consequence of Theorem~\ref{prod_thm} below. 
\end{proof}


\subsection{Estimates for $\phi$}

We now turn to the heart of the matter, which is the covariant wave
equation \eqref{MKG4-it} for $\phi^{(m+1)}$. This cannot be viewed 
as an equation of type $\Box \phi = perturbative$, and therein lies
the difficulty. To address this issue we identify the nonperturbative 
part, and add it to the main operator $\Box$ to obtain a paradifferential
type magnetic d'Alembertian. 
Precisely, we define the leading order paradifferential approximation
to the covariant $\Box_A$ equation using only the free part
$A_i^{\text{free}}$ of $A_i^{(m)}$, which is independent of $n$:
\begin{equation}
  \Box_A^p \ = \ \Box - 2 i \sum_k P_{<k-C}A^{\text{free},j}P_k \partial_j \ , 
\label{para_wave}
\end{equation}
Fortunately, this operator does not depend on $m$.
Then the equation for $\phi^{(m+1)}$ takes the form
\begin{equation}\label{phieq-nonlin}
  \Box_A^p \phi^{(m+1)} = \mathcal M(A^{(m)}, \phi^{(m+1)})
\end{equation}
where $ \mathcal M(A^{(m)}, \phi^{(m+1)})$ is given by 
\begin{equation}\label{Mdef}
\begin{split}
\mathcal  M(A^{(m)}, \phi^{(m+1)})  = &   \ 2 i \big(
  A^{{(m)},\alpha}\partial_\alpha \phi^{(m+1)} - \sum_k
  P_{<k-C}A^{\text{free},j}\partial_j P_k \phi^{(m+1)}\big)
\\ & \ - \big( i \partial_t
  A_0^{(m)} \phi^{(m+1)}
  + A^{(m),\alpha} A_\alpha^{(m)} \phi^{(m+1)}\big)
\\ := & \  \mathcal  M^1(A^{(m)}, \phi^{(m+1)}) +  \mathcal  M^2(A^{(m)}, \phi^{(m+1)}) 
\end{split}
\end{equation}
We further write 
\[ 
\mathcal  M^1 =  \mathcal  N + \mathcal N_0
\]
where $\mathcal  N$ contains the terms of the form $A^j \partial_j \phi$ and 
$ \mathcal N_0$ contains the terms of the form $A^0 \partial_0 \phi$.
We  remark that  $ \mathcal  N$ exhibits a null structure.
Indeed, using the divergence free character of $A_i$ we write:
  \begin{equation}
    A_j \ = \ \nabla^i \Delta^{-1} F_{ij} \ , \qquad
    F_{ij} \ = \ \nabla_i A_j - \nabla_j A_i \ , \notag
  \end{equation}
  so that:
  \begin{equation}
 \mathcal  N (A_x,\phi) =    A^{i}\partial_i \phi \ = \ \frac{1}{2} \sum_{i<j}
    \mathcal{N}_{ij}(\nabla_i \Delta^{-1} A_j,\phi) \ . \notag
  \end{equation}
From here on we take the last expression as the definition of $\mathcal N$.
This will allow us to retain the null form while discarding the divergence
free condition on finer decompositions of $A_x$.

  Now the idea is to first produce a parametrix for $ \Box_A^p $, and
  then to estimate the right hand side of \eqref{phieq-nonlin}
  perturbatively. The linear estimate for the magnetic wave operator $
  \Box_A^p $ is one of the key points of the paper, and has the
  following form:

\begin{thm}[Linear estimates for $\phi$]\label{main_linear_thm}
  Let $\Box_A^p$ be the paradifferential gauge-covariant wave operator
  defined on line \eqref{para_wave}, and suppose that $\Box
  A^{free}=0$ with $\lp{A^{free}[0]}{\dot{H}^1\times L^2}\leqslant
  \epsilon$. If $\epsilon$ is sufficiently small then we have:
  \begin{equation}
    \lp{\phi}{S^1} \ \lesssim \ \lp{\phi[0]}{\dot{H}^1\times L^2} +
 \lp{\Box_A^p\phi}{N\cap
      l^1L^2(\dot{H}^{-\frac{1}{2}})} \ . \label{main_linear_est}
  \end{equation}
\end{thm}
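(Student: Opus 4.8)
The plan is to construct an approximate parametrix for $\Box_A^p$ and show it inherits essentially all the bounds carried by the free wave parametrix: Strichartz bounds $S^{str}_k$, the angular/null-frame bounds $S^\omega_k(l)$ together with the $PW$ and $NE$ components, and the $X_\infty^{0,\frac12}$ modulational gain. The natural approach, following \cite{RT_MKG} adapted to the critical dimension, is a pseudodifferential renormalization: for each dyadic frequency $2^k$ one gauges away the leading magnetic interaction $-2iP_{<k-C}A^{\text{free},j}P_k\partial_j$ by conjugating $\Box$ with a microlocal multiplier $e^{-i\psi_{<k}^\pm(t,x,D)}$ built from $A^{\text{free}}$, separately on each half of the cone $\tau\gtrless 0$; here $\psi_{<k}^\pm$ solves a transport equation along the null directions of the cone, so that $\Box(e^{-i\psi}u) = e^{-i\psi}(\Box_A^p u + \text{error})$, with the error terms being lower order in the frequency gap $C$ and in modulation. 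First I would set up this renormalization and the mapping properties of $e^{\pm i\psi}$ on each of the building-block spaces comprising $S_k$ and $N_k$; this reduces \eqref{main_linear_est} to the corresponding estimate for the flat wave equation $\Box u = f$, namely $\lp{u}{S^1}\lesssim \lp{u[0]}{\dot H^1\times L^2}+\lp{\Box u}{N\cap \ell^1 L^2(\dot H^{-1/2})}$, which is the standard energy/Strichartz/null-frame package for the free wave equation (the $S_k$ norm is, by construction, dominated by such data plus source bounds).

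The key steps, in order, are: (i) establish the fixed-time bounds on $A^{\text{free}}$ coming from $\lp{A^{\text{free}}[0]}{\dot H^1\times L^2}\le \epsilon$, in particular the dispersive decay and the square-summability of its frequency pieces, which is what makes the phase $\psi_{<k}$ well-defined and small; (ii) solve the eikonal/transport equation for $\psi_{<k}^\pm$ microlocally near the relevant piece of the cone, with quantitative control of all derivatives of $\psi$ in terms of $\epsilon$; (iii) prove the core conjugation identity, carefully tracking the error operators — the genuinely new difficulty over \cite{RT_MKG} is that in $n=4$ the Strichartz norms alone do not close, so the errors must be estimated in the full $S_k$/$N_k$ calculus including the atomic null-frame pieces $PW^\pm_\omega(l)$ and the block norms over $\mathcal C_{k'}(l')$; (iv) show $e^{\pm i\psi_{<k}^\pm}$ is bounded on $S_k$, on $N_k$, and preserves the $X_\infty^{0,\frac12}$ and $Z^{hyp}$ structure, up to $O(\epsilon)$ perturbations that can be absorbed; (v) assemble the pieces, sum in $k$ in $\ell^2$ (using that the phase construction is diagonal in frequency up to rapidly decaying tails), and bootstrap away the $O(\epsilon)$ errors using smallness of $\epsilon$.

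The main obstacle I expect is step (iii)–(iv): controlling the error operators in the renormalization when measured in the null-frame and square-sum block norms, rather than just in Strichartz spaces. This is where the subtle cancellation structure exploited in \cite{MaSte} must be used — specifically, one must see that the commutator $[\Box, e^{-i\psi_{<k}}]$ minus the intended magnetic term is not merely lower order by frequency localization but actually carries an extra null structure (morally because the worst interactions are tangent to the cone, where the transport equation for $\psi$ is designed to kill the leading term). Concretely, the hard estimate is bounding $e^{-i\psi}\Box e^{i\psi} - \Box_A^p$ as a map from $S_k$ into $N_k$ with a small constant; all the refined microlocal norms in \eqref{Sl_def} are introduced precisely to make this possible, and verifying the parametrix respects them is the technical core of the argument. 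Once that is in hand, the passage to \eqref{main_linear_est} is the routine free-wave estimate combined with the mapping properties of the renormalization, followed by summation and a perturbative bootstrap in $\epsilon$.
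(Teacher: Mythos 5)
Your plan follows essentially the same route as the paper: the paper renormalizes $\Box_A^p$ with half-cone pseudodifferential phases $e^{\pm i\psi_\pm}_{<k}(t,x,D)$ built from $A^{\text{free}}$ (with $\psi_\pm = -L^\omega_\pm\Delta^{-1}_{\omega^\perp}(\Pi^\omega_{>\delta k}A_k\cdot\omega)$, i.e.\ the microlocal solution of the conjugation equation on $\tau\approx\pm|\xi|$), proves exactly the mapping properties you list ($N\to N$, $N^*\to N^*$, $S^\sharp\to S$ including the square-summed Strichartz, $PW$ and $NE$ components, plus smallness of the conjugation error), and then assembles the frequency-localized parametrices, splits the source into hyperbolic/elliptic modulation parts, and closes by an $\epsilon$-bootstrap. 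Your outline is consistent with that argument, with only implementation-level details (the small-angle cutoff in $\psi$, division by the symbol for the elliptic part of $f$) left unspecified.
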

Section ~\ref{s:para-lin} is devoted to the proof of this result.

\bigskip

In order to solve the 
equation \eqref{phieq-nonlin} it
 remains to estimate the the right hand side of
\eqref{phieq-nonlin}, 
\begin{equation}\label{mm}
 \| \mathcal  M(A^{(m)}, \phi^{(m+1)}) \|_{N \cap L^2(\dot H^{-\frac12})} 
\lesssim \epsilon_* \|  \phi^{(m+1)}\|_{S^1}
\end{equation}
which is applied with $\phi = \phi^{(m)}$.
In order to estimate the difference $\phi^{(m+1)}-\phi^{(m)}$ we need 
in addition to show that 
\begin{equation}\label{mm-diff}
 \| \mathcal  M(A^{(m)},\phi) -  \mathcal  M(A^{(m-1)}, \phi) 
\|_{N \cap L^2(\dot H^{-\frac12})} \lesssim  (C\epsilon)^{m-1} \|\phi\|_{S} 
\end{equation}
which is then applied to $\phi = \phi^{(m)}$. 
To prove \eqref{mm} and \eqref{mm-diff} we peel off some easier cases
before we arrive at the heart of the matter. 

 \step{1} {The $ \mathcal  M^2$ term.}
For this it suffices to have the following estimates:
  \begin{align}
    \lp{\partial_t A_0 \phi}{\ell^1
      L^2(\dot{H}^{-\frac{1}{2}})\cap L^1(L^2)} \ \lesssim \
    \lp{\partial_t A_0}{ L^2(\dot{H}^\frac{1}{2})\cap
      L^\infty(L^2)} \lp{\phi}{S^1}
    \ , \label{phi_cubic_est1}\\
    \lp{ A_0^{(1)} A_0^{(2)} \phi}{\ell^1
      L^2(\dot{H}^{-\frac{1}{2}})\cap L^1(L^2)} \ \lesssim \
    \prod_{i=1,2} \lp{A_0^{(i)}}{L^2(\dot{H}^\frac{3}{2})\cap
      L^\infty(\dot{H}^1)} \lp{\phi}{S^1} \
    . \label{phi_cubic_est2}
  \end{align}
which are proved using only global Strichartz type bounds
and Sobolev embeddings.

\step{2}{High modulation bounds  for $\mathcal M^1$} To establish the
$L^2(\dot{H}^{-\frac{1}{2}})$ bound for $\mathcal M^1$ we use
\eqref{hi_mod1} and the following two estimates:
  \begin{align}
    \sum_k \lp{P_{<k-C}A^{free}_i\, \nabla_x P_k
      \phi}{L^2(\dot{H}^{-\frac{1}{2}})}
    \ &\lesssim \ \lp{A^{free}_i}{S^1} \lp{\phi}{S^1} \ , \label{quadfree_phi_himod}\\
    \lp{A_0\partial_t \phi}{\ell^1
      L^2(\dot{H}^{-\frac{1}{2}})} \ &\lesssim \
    \lp{A_0}{L^2(\dot{H}^{\frac{3}{2}})}\lp{\phi}{S^1} \
    . \label{quad0_phi_himod}
  \end{align}
It remains to prove the $N$ bounds  for $\mathcal M^1$.
At first we separately consider $\mathcal N$ and  $\mathcal N_0$.

\step{3}{Peeling off the good parts of $\mathcal N$} The first step
will be to show that we can restrict our attention to $low \times
high$ interactions in $\mathcal N$.  For this we define the component 
$\mathcal{N}^{lowhi}$ by
\[
  \mathcal{N}^{lowhi}(A_x,\phi) = \ \sum_k
    \mathcal{N}( P_{<k-C}A_i, P_k \phi) \ ,
\]
To estimate  the difference $  \mathcal{N} -   \mathcal{N}^{lowhi}$
 we will  prove the null form estimate
 \begin{equation}
    \lp{ \mathcal{N}(A_x, \phi )
      -\mathcal{N}^{lowhi}(A_x, \phi ) }{N}
    \ \lesssim \ \lp{A_x}{S^1}\lp{\phi}{S^1} \ . \label{quadphi_HL_null}
  \end{equation}
  We remark that once this is done, the free part $A_x^{\text{free}}$
  has been taken care of and will no longer appear.

  For the low-high interactions in $\mathcal N$ we still have the null
  structure to use, but the balance of frequencies is
  unfavourable. For the most part we can still bound $\mathcal
  N^{lowhi}$ via a null form bilinear $S^1 \times S^1$
  estimate; the exception to this is the case of $high \times low \to low$ 
modulation interactions. We group these in an expression denoted 
by  $\mathcal H^* \mathcal N^{lowhi}$,  which is given by 
\begin{equation}
    \mathcal{H}^* \mathcal{N}^{lowhi}(A_j, \phi) \ = \ 
    \sum_{\substack{k,k':\\ k'<k-C}}\mathcal{H}_{k'}^*  
    \mathcal{N} (A_j, P_k\phi) \ , \notag
 \end{equation}
For the difference  we will prove the bilinear null form estimate
\begin{equation}
  \lp{\mathcal{N}^{lowhi}(A_x,\phi)-
\mathcal{H}^* \mathcal{N}^{lowhi}(A_x,\phi) }{N} \
  \lesssim \ \lp{A^{(1)}_i}{\ell^1 S^1}\lp{\phi^{(1)}}{S^1}
  \ , \label{gen_phi_lohi_quadest}
\end{equation}
in the last section of the paper.

The $high \times low \to low$ modulation interactions contained 
in $\mathcal{H}^* \mathcal{N}^{lowhi}$ can no longer be estimated 
using the $S^1$ norm of $A_x$, instead we need the stronger 
$Z^{hyp}$ norm.  This leads us to introduce the expression 
\[
\mathcal{H}^* \mathcal{N}^{lowhi} (\mathcal H A_x, \phi)
\]
In view of the estimates \eqref{AiB_est1}, \eqref{good-aj}
and \eqref{good-aj-diff},  the $N$ bound for the expression
\[
\mathcal{H}^* \mathcal{N}^{lowhi} ( A^{(m)}_x, \phi) - 
\mathcal{H}^* \mathcal{N}^{lowhi} (\mathcal H A^{(m-1)}_x, \phi),
\]
as well as the corresponding differences, is a consequence of the
following
\begin{equation}
  \lp{\mathcal{H}^*\mathcal{N}^{lowhi}( A_x,\phi)}{N} \ \lesssim \
    \lp{A_x}{Z^{hyp}}\lp{\phi}{S^1}
    \ . \label{gen_phi_Best}
\end{equation}
After peeling all the good contributions, the only part of $\mathcal N$
which has not been estimated so far is
\[
\mathcal{H}^*\mathcal{N}^{lowhi}( \mathcal H A^{(m)}_x,\phi^{(m+1)})
\]

\step{4}{Peeling off the good parts of $\mathcal N_0$} 
The arguments here follow the same lines as before. However, 
since $A_0$ solves an elliptic equation, a larger portion of 
all cases can be treated in a direct fashion via Strichartz estimates
and Sobolev embeddings, which leaves less in terms of bilinear 
estimates to be proved later. Recall that $\mathcal N_0(A_0,\phi) = 
A_0 \partial_t \phi$. Modifying slightly the setup  before we set 
\[
  \mathcal{N}_0^{lowhi}(A_0,\phi) = \ \sum_k
    \mathcal{N}_0( P_{<k-C} Q_{<k-C} A_0, P_k \phi) \ ,
\]
Then the difference is estimated via 
\begin{equation}\label{N0-lh}
\|  \mathcal{N}_0(A_0,\phi) - \mathcal{N}_0^{lowhi}(A_0,\phi)\|_{L^1 L^2}
\lesssim \|A_0\|_{Y^1} \|\phi\|_{S}
\end{equation}

The extra step we can take in the case of $ \mathcal{N}_0$ is to
replace $A_0^{(n)}$ by $A_0^{(n),hh}$.  By \eqref{A0_est1} and the second part 
of \eqref{A0parts} the difference is estimated using
\begin{equation}\label{N0-hh}
\|  \mathcal{N}_0^{lowhi}(A_0,\phi) - \mathcal{N}_0^{lowhi}(A_0^{hh},\phi)\|_{L^1 L^2}
\lesssim \|A_0-A_0^{hh}\|_{\ell^1 L^1 L^\infty} \|\phi\|_{S}
\end{equation}

The last two steps are similar to the case of $A_x$. First we introduce
\begin{equation}
    \mathcal{H}^* \mathcal{N}_0^{lowhi}(A_0^{hh},\phi) \ = \ 
    \sum_{\substack{k,k':\\ k'<k-C}}\mathcal{H}_{k'}^*  
    \mathcal{N} (A_0^{hh}, P_k\phi) \ , \notag
 \end{equation}
In view of \eqref{A0_est1} and the first part of \eqref{A0parts},
the corresponding differences are estimated using 
\begin{equation} 
 \lp{\mathcal{N}^{lowhi}_0(A_0^{hh},\phi) -\mathcal{H}^* 
\mathcal{N}^{lowhi}_0(A_0^{hh},\phi) }{N}
    \lesssim \lp{A^{hh}_0}{\ell^1 L^2(\dot{H}^{\frac{3}{2}})}
    \lp{\phi}{S^1} \ , \label{gen_phi_lohi_quadest0}
\end{equation}

Finally, replace this by $ \mathcal{H}^* \mathcal{N}_0^{lowhi}
(\mathcal H A_0,\phi)$. By \eqref{A0B_est2} and the third part of 
\eqref{A0parts} for the difference it suffices to have the estimate
\begin{equation} 
 \lp{\mathcal{H}^*\mathcal{N}^{lowhi}_0(A_0^{hh},\phi) -
      \mathcal{H}^*\mathcal{N}^{lowhi}_0( \mathcal{H} A_0,\phi)}{N} \lesssim
    \lp{A_0^{(1),hh}-\mathcal{H}
      A^{(1)}_0}{Z^{ell}}\lp{\phi^{(1)}}{S^1} \ .
    \label{quadphi_HL_Z0}
\end{equation}
After peeling all the good contributions, the only part of $\mathcal N_0$
which has not been estimated so far is
\[
\mathcal{H}^*\mathcal{N}^{lowhi}_0( \mathcal H A^{(m)}_0,\phi^{(m+1)})
\]

  \step{5}{Reduction of the remaining terms to quadrilinear null form
    bounds} So far we have one portion of $\mathcal N$ and one portion 
 of $\mathcal N_0$ left to estimate. We collect the two together 
in the expression 
\[
R^{(n+1)} =-    
\mathcal{H}^*\mathcal{N}^{lowhi}_0( \mathcal H A^{(m)}_x,\phi^{(m+1)})  +
\mathcal{H}^*\mathcal{N}^{lowhi}( \mathcal H A^{(m)}_0,\phi^{(m+1)})  
\]
Here we recall that $\mathcal H A^{(m)}_x$ and $\mathcal H A^{(m)}_0$ 
are bilinear expressions in $\phi^{(m-1)}$.

The key idea is that there is a cancellation that occurs between the
two terms above, which is why they need to be treated together rather
than separately. The trilinear estimate that needs to be proved in
this case for the trilinear expression 
\[
R(\phi^{(1)}, \phi^{(2)},\phi^{(3)}) 
 = -\mathcal{H}^*\mathcal{N}^{lowhi}_0( \mathcal H A_x(\phi^{(1)}, \phi^{(2)}),
\phi^{(3)})  +
\mathcal{H}^*\mathcal{N}^{lowhi}( \mathcal H A_0(\phi^{(1)}, \phi^{(2)}),\phi^{3})  
 \]
is 
 \begin{equation}
    \lp{R(\phi^{(1)}, \phi^{(2)},\phi^{(3)})  }{N} \ \lesssim \ 
    \lp{\phi^{(1)} }{S^1}\lp{\phi^{(2)} }{S^1}\lp{\phi^{(3)} }{S^1} \ , 
\label{quad_null_est}
\end{equation}
 Expanding everything into $\phi^{(i)}$, and
  performing some algebraic manipulations\footnote{See the Appendix
    for a calculation done without the clutter of abstract index
    notation.}  we have:
  \begin{equation}
R(\phi^{(1)}, \phi^{(2)},\phi^{(3)})    \ = \ 
    \mathcal{Q}_1- \mathcal{Q}_2 -\mathcal{Q}_3	\ , \notag
  \end{equation}
  where 
  \begin{align}
    \mathcal{Q}_1 \ &= \ \mathcal{H}^*\big(
    \Box^{-1}\mathcal{H}(\phi^{(1)}\partial_\alpha\phi^{(2)})\cdot
    \partial^\alpha \phi^{(3)}\big)
    \ , \label{quad_null1}\\
    \mathcal{Q}_2 \ &= \ \mathcal{H}^*\big(
    \Delta^{-1}\Box^{-1}\partial_t\partial_\alpha
    \mathcal{H}(\phi^{(1)}\partial^\alpha\phi^{(2)})\cdot
    \partial_t \phi^{(3)}\big)
    \ , \label{quad_null2}\\
    \mathcal{Q}_3 \ &= \ \mathcal{H}^*\big( \Delta^{-1}
    \Box^{-1}\partial_\alpha \partial^i
    \mathcal{H}(\phi^{(1)}\partial_i\phi^{(2)})\cdot
    \partial^\alpha \phi^{(3)}\big) \ . \label{quad_null3}
  \end{align}
  For each of these we prove \eqref{quad_null_est} separately, which
  concludes our estimates for $\mathcal{M}^{quad}$.


\subsection{Proof of the $L^p$ product estimates}\label{subse:lp}

Before continuing with the main thrust of the paper, we pause here to
dispense with the easiest cases of the multilinear estimates listed
above.  These are \eqref{hi_mod1}, \eqref{hi_mod2}, \eqref{cubic_str},
\eqref{A0_energy}, \eqref{A0_est1}, \eqref{A0_est2}, \eqref{A0B_est1},
\eqref{quadfree_phi_himod}, \eqref{quad0_phi_himod},
\eqref{N0-hh}, \eqref{phi_cubic_est1}, and
\eqref{phi_cubic_est2}.  For the most part these can be broken down
into the following estimates, which are immediate consequences of
H\"older's and Bernstein's inequalitites:

\begin{lem}[Core generic product estimates]
  One has the following dyadic bounds:
  \begin{align}
    \lp{P_k (A_{k_1} \phi_{k_2})}{L^1(L^2)} \ &\lesssim \ 2^{\delta
      (k-\max\{k_i\})}2^{-\delta|k_1-k_2|}
    \lp{A_{k_1}}{L^2(\dot{H}^\frac{1}{2})}\lp{\phi_{k_2}}{L^2(\dot{W}^{6,\frac{1}{6}})} \ , \label{gen_prod1}\\
    \lp{P_k (\phi_{k_1}^{(1)}
      \phi_{k_2}^{(2)})}{L^2(\dot{H}^{-\frac{1}{2}})} \ &\lesssim \
    2^{\delta (k-\max\{k_i\})} 2^{-\delta|k_1-k_2|}
    \lp{\phi_{k_1}^{(1)}}{L^\infty(L^2)}\lp{\phi_{k_2}^{(2)}}{L^2(\dot{W}^{6,\frac{1}{6}})}
    \ , \label{gen_prod2}
  \end{align}
\end{lem}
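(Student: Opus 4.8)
The plan is to reduce both \eqref{gen_prod1} and \eqref{gen_prod2} to the Littlewood--Paley trichotomy together with H\"older's inequality in $t$ and $x$ and Bernstein's inequality; the two dyadic gain factors $2^{\delta(k-\max\{k_i\})}$ and $2^{-\delta|k_1-k_2|}$ come entirely from the slack in the choice of intermediate Lebesgue exponents.

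Concretely, for either estimate I split the product into the regimes $k_1\leq k_2-C$, $|k_1-k_2|\leq C$, and $k_1\geq k_2+C$, recording that the output frequency satisfies $k=\max\{k_i\}+O(1)$ in the two unbalanced regimes and is bounded above by $\max\{k_i\}+O(1)$ (but otherwise unconstrained) in the balanced one. For a generic frequency-localized product $u_{k_1}v_{k_2}$ I then: (i) apply H\"older in time, writing $L^1_t=L^2_t\cdot L^2_t$ for \eqref{gen_prod1} and $L^2_t=L^\infty_t\cdot L^2_t$ for \eqref{gen_prod2}, which matches the time structure of the right-hand sides; (ii) apply Bernstein to $P_k$ to pass from $L^r_x$, with $\frac{1}{r}=\frac{1}{r_1}+\frac{1}{r_2}\geq\frac{1}{2}$, down to $L^2_x$, and in the case of \eqref{gen_prod2} also absorb the $\dot{H}^{-\frac{1}{2}}$ weight; (iii) apply H\"older in $x$ to split into $\|u_{k_1}\|_{L^{r_1}_x}\|v_{k_2}\|_{L^{r_2}_x}$; (iv) apply Bernstein to each factor to raise $L^2_x$ to $L^{r_1}_x$ (so $r_1\geq 2$) and $L^6_x$ to $L^{r_2}_x$ (so $r_2\geq 6$), which recovers the $\dot{H}^{\frac{1}{2}}$ (resp.\ $L^\infty(L^2)$) and $\dot{W}^{6,\frac{1}{6}}$ norms on the right, up to explicit powers of $2^{k_1}$ and $2^{k_2}$. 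Collecting exponents of $2$ one is left with an elementary inequality in $k,k_1,k_2$, and since the admissible window $r_1\geq 2$, $r_2\geq 6$, $\frac{1}{r_1}+\frac{1}{r_2}\geq\frac{1}{2}$ has nonempty interior, in each regime one can choose $(r_1,r_2)$ inside it so that the resulting dyadic weight is dominated by $2^{\delta(k-\max\{k_i\})}2^{-\delta|k_1-k_2|}$ for any sufficiently small universal $\delta>0$; its precise value is immaterial. The endpoint choice $(r_1,r_2)=(\frac{8}{3},8)$ with $r=2$ for \eqref{gen_prod1}, and $(r_1,r_2)=(2,8)$ with $r=\frac{8}{5}$ for \eqref{gen_prod2}, already gives the gain-free version, so the $\delta$-gain is a harmless perturbation off it.

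The only genuine bookkeeping point --- and the closest thing here to an obstacle --- is keeping straight which combinations of $2^{k}$-powers are ``free'' in each regime. In an unbalanced regime $k$ is pinned to $\max\{k_i\}$, so the Bernstein weight on $P_k$ must be combined with the intrinsic weight of the higher-frequency factor before being compared against the separation penalty $2^{-\delta|k_1-k_2|}$. In the balanced regime $k$ can be arbitrarily small, so the coefficient of $2^{k}$ in the bound must be at least $\delta$ in order to be controlled by $2^{\delta\max\{k_i\}}$ --- equivalently, one needs a genuine $\text{high}\times\text{high}\to\text{low}$ gain, which is exactly what the $L^2_x$-Bernstein on the product produces once $r<2$. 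Once these three cases are written out, everything comes down to inspecting the sign of a couple of linear combinations of $\frac{1}{r_1}$ and $\frac{1}{r_2}$, which is immediate.

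Finally, with the dyadic Lemma in hand, the remaining $L^p$-type estimates flagged in this subsection --- \eqref{hi_mod1}, \eqref{hi_mod2}, \eqref{cubic_str}, and the $A_0$ bounds --- follow from the same H\"older/Bernstein scheme, iterating the bilinear bound once for the cubic terms with the extra factor placed in a Strichartz norm controlled by $S^1$; after this the dyadic gains $2^{\delta(k-\max\{k_i\})}2^{-\delta|k_1-k_2|}$ are summable in $k$, $k_1$, $k_2$ and produce the stated $\ell^1$-type norms on the left.
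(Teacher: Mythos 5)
Your proposal is correct, and it is the same argument the paper intends: the paper dispatches this lemma in one line as an ``immediate consequence of H\"older's and Bernstein's inequalities,'' and your write-up simply fills in the standard trichotomy/exponent bookkeeping (the high-high case gain from Bernstein on $P_k$ with $r<2$, the $2^{-\delta|k_1-k_2|}$ penalty from Bernstein on the low-frequency factor), which checks out. No substantive difference in approach.
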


\begin{proof}[Proof of estimate \eqref{hi_mod1}]
  This follows at once from summing over \eqref{gen_prod2} and the
  inclusion:
  \begin{equation}
    S^1 \ \subseteq \ L^2(\dot{W}^{6,\frac{1}{6}}) \ . \label{S1_L2L6}
  \end{equation}
\end{proof}

\begin{proof}[Proof of estimate \eqref{hi_mod2}]
  This follows from the inclusion $S^1\cdot S^1\subseteq
  L^\infty(L^2)$, a simple consequence of energy estimates and
  $\dot{H}^1\subseteq L^4$, and then summing over \eqref{gen_prod2}.
\end{proof}

\begin{proof}[Proof of estimate \eqref{cubic_str}]
  This follows from summing over \eqref{gen_prod1} after using
  \eqref{S1_L2L6} and the bilinear embedding:
  \begin{equation}
    B\cdot B\subseteq L^2(\dot{H}^\frac{1}{2}) \ , 
    \qquad B \ = \ L^2(\dot{W}^{6,\frac{1}{6}})\cap L^\infty(\dot{H}^1) \ . \label{B_bilin}
  \end{equation}
  This last estimate is a straightforward application of trichotomy, putting
  the high frequency term in the energy norm.  The standard details
  are left to the reader.
\end{proof}

\begin{proof}[Proof of estimate \eqref{A0_energy}]
  This is immediate from Sobolev embeddings involving
  $L^4,L^{\frac{4}{3}},L^2$.
\end{proof}

\begin{proof}[Proof of estimates \eqref{A0_est1}, \eqref{A0_est2}, \eqref{quadfree_phi_himod}, and
\eqref{quad0_phi_himod}]
These follow from \eqref{gen_prod2} as in estimates \eqref{hi_mod1}
and \eqref{hi_mod2} above. We use:
\begin{equation}
  L^2(\dot{H}^\frac{3}{2}) \ \subseteq \ L^2(\dot{W}^{6,\frac{1}{6}}) \ , \label{L2_L2L6}
\end{equation}
as a replacement for \eqref{S1_L2L6} when necessary.
\end{proof}

\begin{proof}[Proof of estimate \eqref{A0B_est1}]
  First notice that the desired bound for the cubic terms follows from
  \eqref{gen_prod1} and estimates similar to those above, and
  $\Delta^{-1} \ell^1L^1(L^2)\subseteq \ell^1 L^1(L^\infty)$. On the
  other hand, for the quadratic term one has:
  \begin{equation}
    \lp{P_k (\phi_{k_1}^{(1)} \partial_t \phi_{k_2}^{(2)})}{L^1(L^\infty)} \ \lesssim \ 
    2^{2k}2^{-\frac{1}{2}|k_1-k_2|}  \lp{\phi_{k_1}^{(1)}}{L^2(\dot{W}^{6,\frac{1}{6}})}
    \lp{\partial_t \phi_{k_1}^{(1)}}{L^2(\dot{W}^{6,\frac{1}{6}})} \ . \notag
  \end{equation}
  when $k = \max\{k_i\}+O(1)$.
\end{proof}

\begin{proof}[Proof of estimate \eqref{phi_cubic_est1}]
  This is again an immediate consequence of \eqref{gen_prod1},
  \eqref{gen_prod2}, and \eqref{S1_L2L6}
\end{proof}

\begin{proof}[Proof of estimate \eqref{phi_cubic_est2}]
  For the $L^2(\dot{H}^{-\frac{1}{2}})$ we can use \eqref{gen_prod2}
  once we place the product of $A_0$ in $L^\infty (L^2)$ via
  $\dot{H}^1\subseteq L^4$. On the other hand the $L^1(L^2)$ estimate
  comes from \eqref{gen_prod1} and using \eqref{B_bilin} for the
  produce of $A_0$ and \eqref{S1_L2L6} for $\phi$.
\end{proof}

\begin{proof}[Proof of estimate \eqref{N0-lh}]
  First decompose:
  \begin{equation}
    A_0\partial_t \phi -\mathcal{N}_0^{lowhi}(A_0,\phi) \ = \ T_1 + T_2  \ , \notag
  \end{equation}
  where:
  \begin{align}
    T_1 \ &= \ \sum_k P_{\geqslant k-C}A_0 \partial_t
    \phi_k
    \ , \notag\\
    T_2 \ &= \ P_{< k-C}Q_{>k-C}A_0 \partial_t \phi_k
  \end{align}
  We'll show each of $T_i\in L^1(L^2)$. For $T_1$ this follows from
  \eqref{gen_prod1} and using:
  \begin{equation}
    \lp{P_{\geqslant k-C}A_0}{L^2(\dot{H}^\frac{1}{2})} \ \lesssim \ 
    \sum_{k'} 2^{-k'} \lp{P_{k'}A_0}{L^2(\dot{H}^\frac{3}{2})} \ , \quad
    \lp{\partial_t \phi_k}{L^2(\dot{W}^{6,\frac{1}{6}})} \ \lesssim \ 
    2^{k}\lp{\phi_k }{S^1} \ . \notag
  \end{equation}
  The bound for $T_2$ is similar except we use:
  \begin{equation}
    \lp{P_{< k-C}Q_{\geqslant k-C}A_0}{L^2(\dot{H}^\frac{1}{2})} \ \lesssim \ 
    2^{-k} \lp{\partial_t A_0}{L^2(\dot{H}^\frac{1}{2})}  \ . \notag
  \end{equation}
  The bound for $\|T_2\|_{L^1 L^2}$ follows from \eqref{A0B_est1}, placing the second factor in $L^\infty(L^2)$.
\end{proof}

\begin{proof}[Proof of estimate \eqref{N0-hh}]
This follows from \eqref{A0B_est1} by placing again
  the second factor in $L^\infty(L^2)$.
\end{proof}

We remark that at this point we have reduced the proof of our main
Theorem \ref{thm:Main} to the demonstration of the linear estimates in Theorem
\ref{main_linear_thm} in Section~\ref{s:para-lin}, 
and then proving the multilinear estimates
\eqref{Ai_null_est}, \eqref{Ai_null_Best}, \eqref{A0B_est2},
\eqref{quadphi_HL_null}, \eqref{gen_phi_lohi_quadest},
\eqref{gen_phi_Best}, \eqref{gen_phi_lohi_quadest0},
\eqref{quadphi_HL_Z0}, and finally \eqref{quad_null_est} for each of
the expressions \eqref{quad_null1}--\eqref{quad_null3}. These bounds
are dealt with in the last section of the paper.


\section{Higher regularity and 
continuous dependence}\label{Sec:reg}

In the previous section we have constructed solutions which are small
in $S^1$ for all small energy data.  Here we complete the proof of
Theorem~\ref{thm:Main} by discusing the remaining issues, namely 
higher regularity and continuous dependence. The ideas here 
are fairly standard, and in particular closely resemble the similar
proof for wave maps, see \cite{Tat-wm}. For that reason we merely outline
the arguments that follow.

\subsection{Higher regularity}
The goal here is to establish the bound
\begin{equation}
\| A_x^{\text{nonlin}}\|_{l^1 S^N} +\|\phi\|_{S^N} + \| A_0\|_{Y^N} \lesssim 
\|(A_x[0],\phi[0])\|_{\dot H^N \times \dot H^{N-1}} 
\end{equation}
for all small energy data MKG solutions. This is done by a standard iterative 
procedure, and we explain here how to obtain bounds on $\nabla_x A_x$, 
$\nabla_x A_0$, $\nabla_x\phi$. In fact, this is accomplished by using the 
same estimates as before. Commencing with $A_x, A_0$, recall that we have 
\[
\Box A_i^{\text{nonlin},(m+1)} = - \mathcal{P}_i \big(\phi^{(m)}\nabla_x\phi^{(m)}+
  |\phi^{(m)}|^2 A^{(m)}_x \big)
 \]
 whence we see that $\nabla A_i^{\text{nonlin},(m+1)}$ satisfies a
 wave equation whose source term is a multilinear expression like the
 one for $\Box A_i^{\text{nonlin},(m+1)}$ but with one factor
 $\phi^{(m)}$ or $A^{(m)}_x$ replaced by $\nabla_x\phi^{(m)}$,
 respectively $\nabla_x A^{(m)}_x$. Using the same multilinear
 estimates as before, one then concludes that
\[
\big\|\nabla_x A_i^{\text{nonlin},(m+1)} - \nabla_x
A_i^{\text{nonlin},(m)} \big\|_{l^1S^1} \lesssim (C\epsilon_*)^{m},
\]
\[
\big\|(1-\mathcal{H})\nabla_x A_i^{\text{nonlin},(m+1)} - (1-\mathcal{H})\nabla_x A_i^{\text{nonlin},(m)} \big\|_{Z^{hyp}}\lesssim (C\epsilon_*)^{m}
\]
with implicit constant depending on $\|\nabla_x\phi[0,
\cdot]\|_{\dot{H}^1\times L^2} + \|\nabla_x A_i[0]\|_{\dot{H}^1\times
  L^2}$. A similar argument applies to $\nabla_x A_0$,
$\nabla_x\partial_t A_0$.  It remains to bound
$\nabla_x\phi^{(m+1)}$. Here we recall \eqref{phieq-nonlin}
\[
\Box_A^p \phi^{(m+1)} = \mathcal M(A^{(m)}, \phi^{(m+1)}),
\]
which leads to 
\[
\Box_A^p(\nabla_x\phi^{(m+1)}) = \nabla_x\mathcal M(A^{(m)}, \phi^{(m+1)}) + 2i\sum_k\nabla_x P_{<k-C}A^{free, j}P_k\partial_j\phi^{(m+1)}
\]
From \eqref{mult1}, we find with $k_1<k_2-C$
\[
\big\|\nabla_x P_{k_1}A^{free, j}\partial_jP_{k_2}\phi^{(m+1)}\big\|_{N}\lesssim 2^{-\delta|k_1-k_2|}\|P_{k_1}A^{free, j}\|_{S^1}\|\nabla_x P_{k_2}\phi^{(m+1)}\|_{S^1}, 
\]
from which we easily infer 
\[
\big\|2i\sum_k\nabla_x P_{<k-C}A^{free, j}P_k\partial_j\phi^{(m+1)}\big\|_{N}\lesssim \|A^{free}_x\|_{S^1}\|\nabla_x\phi\|_{S^1}
\]
On the other hand, the term 
\[
\nabla_x\mathcal M(A^{(m)}, \phi^{(m+1)})
\]
is again a multilinear expression in $\phi^{(k)}, A^{(k)},
\nabla_x\phi^{(k)}, \nabla_xA^{(k)}$, $k \in \{m, m-1\}$, with at most
one derivative factor in each monomial. Thus the multilinear estimates
from above apply.

\subsection{Frequency envelope bounds}

The $S^1$ bounds for the small data solutions can be supplemented with
corresponding frequency envelope bounds in a standard manner.
Precisely, suppose that $\{c_k\}$ is a $\dot H^1 \times L^2$ frequency
envelope for the data, in the sense that
\begin{equation}
 \|P_k (A_x[0],\phi[0])\|_{\dot H^1 \times L^2} \leq c_k, \qquad 
\|(A_x[0],\phi[0])\|_{\dot H^1 \times L^2} \approx \|c_k\|_{l^2} \leq \epsilon \ll 1
\label{fe-data}\end{equation}
Assume also that $\{c_k\}$ is slowly varying, 
\[
|c_k /c_j| \leq 2^{\delta|j-k|}, \qquad \delta \ll 1
\]

Then we have a similar bound for the solutions, 
\begin{equation}
\| P_k A_x^{\text{nonlin}}\|_{S^1} \lesssim c_k^2, \qquad 
\| P_k \phi\|_{S^1} + \| P_k A_0\|_{Y^1} \lesssim c_k
\label{fe-sln}\end{equation}
The proof of these bounds is a straightforward consequence 
of the estimates in the previous section, since we have 
off-diagonal decay in all of our multilinear estimates.

\subsection{Weak Lipschitz dependence on data}

While establishing Lipschitz dependence on data in the energy norm
would be desirable, this does not seem to hold because of the free 
wave component of the magnetic wave equation. Instead, we have a weaker
bound for the difference of two solutions $(A^{(1)}_x,\phi^{(1)})$ and 
 $(A^{(2)}_x,\phi^{(2)})$, of the form 
\begin{equation}
\|  A_x^{(1)} -A_x^{(2)} \|_{S^{1-\delta}} +
\|\phi^{(1)}-\phi^{(2)})\|_{S^{1-\delta}} + \| A_0^{(1)} - A_0^{(2)}\|_{Y^{1-\delta}}
 \lesssim \| (A^{(1)}_x-A^{(2)}_x,\phi^{(1)}-\phi^{(2)})[0]\|_{\dot H^{1-\delta} \times \dot H^{-\delta}} 
\label{wk-lip}\end{equation}
for small positive $\delta$. 

This is equivalent to a similar bound for the linearized equation. All
the components of the nonlinearity for which we have off-diagonal
decay in the multilinear estimates cause no difficulty, and impose no
sign restriction on $\delta$. The only difficulty arises in the paradifferential 
magnetic wave equation
\[
\Box_A^p \phi = 0
\]
Denoting by $(B,\psi)$ the corresponding linearized variables, we are 
led to consider the equation
\[
\Box_A^p \psi = - \sum_k B_{j,<k} \partial_j \phi_k
\]
where $B$ is an $H^{1-\delta}$ free wave. By the bilinear estimate
\eqref{gen_phi_lohi_quadest}, for the right hand side we have the bound
\[
\|\sum_k B_{j,<k} \partial_j \phi_k\|_{N^{-\delta}} \lesssim 
\|B[0]\|_{\dot H^{1-\delta} \times \dot H^{-\delta}} \| \phi\|_{S^1}, \qquad \delta > 0
\]
which leads to the desired linearized bound
\[
\|\psi\|_{S^{1-\delta}}  \lesssim \|\psi[0]\|_{\dot H^{1-\delta} \times \dot H^{-\delta}}
+\|B[0]\|_{\dot H^{1-\delta} \times \dot H^{-\delta}} \| \phi\|_{S^1}, \qquad \delta > 0
\]

\subsection{Approximation by smooth solutions}

Consider a small energy Coulomb initial data $(A_x, \psi)[0]$
and its regularizations $ (A_x^{(m)}, \psi^{(m)})[0] = P_{<m} (A_x, \psi)[0]$.
Denote by $(A_x,\phi)$, respectively $(A_x^{(m)}, \psi^{(m)})$ the corresponding 
solutions. Our aim here is to prove the following:

\begin{lemma}
Let $\{c_k\}$ be a slowly varying frequency envelope for  $(A_x, \psi)[0] $ in the energy norm $\dot H^1 \times L^2$. 
Then 
\begin{equation}\label{reg-app}
\| (A_x^{(m)} - A_x   , \psi^{(m)} -\psi)\|_{S^1}^2 \lesssim \sum _{k > m} c_k^2
\end{equation}
\end{lemma}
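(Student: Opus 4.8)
The plan is to run the same Picard iteration used to construct the solutions, but now for the \emph{difference} of the full solution $(A_x,\phi)$ and its regularized counterpart $(A_x^{(m)},\psi^{(m)})$, while carefully tracking the frequency-localized sizes via the envelope $\{c_k\}$. First I would invoke the frequency envelope bounds \eqref{fe-sln} for both solutions: since $(A_x^{(m)},\psi^{(m)})[0] = P_{<m}(A_x,\psi)[0]$, a slowly varying envelope for the regularized data is $c_k^{(m)} \approx c_k$ for $k < m$ and $c_k^{(m)} \approx c_m 2^{-\delta(k-m)}$ for $k \geq m$ (truncation does not increase the envelope, and we only need an upper bound). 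Thus both solutions obey \eqref{fe-sln} with their respective envelopes, and in particular the high-frequency tail $\sum_{k>m} c_k^2$ controls the $S^1$ norm of the high-frequency part of each solution separately.

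The core of the argument is a difference estimate run along the iteration. Writing $\delta A = A_x - A_x^{(m)}$, $\delta\phi = \phi - \psi^{(m)}$, etc., and similarly for each Picard iterate, one subtracts the equations \eqref{MKG1-it}–\eqref{MKG3-it} for the two data sets. The multilinear estimates from Section~4 — in particular Propositions as in \eqref{AiS_est1}, \eqref{AiB_est1}, \eqref{A0_energy}–\eqref{A0B_est2}, the linear estimate Theorem~\ref{main_linear_thm}, and the bound \eqref{mm}–\eqref{mm-diff} for $\mathcal M$ — all apply verbatim to differences, since they are genuinely multilinear. Schematically, each difference iterate satisfies
\[
\| \delta(A_x,\phi)^{(j+1)} \|_{S^1} \lesssim \| \delta(A_x,\phi)[0]\|_{\dot H^1 \times L^2} + \epsilon_* \| \delta(A_x,\phi)^{(j)}\|_{S^1} + (\text{cross terms}),
\]
and the data difference is exactly $P_{\geq m}(A_x,\phi)[0]$, whose energy norm squared is $\sum_{k \geq m} c_k^2$. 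The smallness of $\epsilon_*$ absorbs the linear-in-difference term, giving geometric convergence of the difference iteration to $\delta(A_x,\phi)$ with the stated bound. To make this rigorous one should again carry the frequency-envelope version of the difference estimate: the difference of the two solutions has a frequency envelope dominated by $d_k := c_k$ for $k \geq m$ and $d_k := c_m 2^{-\delta(m-k)}$ for $k < m$ (the low-frequency leakage of the difference comes only from the nonlinear high$\times$high$\to$low interactions, which are governed by the off-diagonal decay already present in all our estimates), and $\|d_k\|_{\ell^2}^2 \lesssim \sum_{k>m} c_k^2$.

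The step I expect to be the main obstacle is controlling the low-frequency part of the difference: a priori, $\delta A_x$ and $\delta\phi$ could have substantial low-frequency content generated by high$\times$high $\to$ low nonlinear interactions, even though the data difference is purely high-frequency. This is where the off-diagonal decay $2^{\delta(k - \max k_i)}$ in estimates such as \eqref{gen_prod1}–\eqref{gen_prod2}, together with the $\ell^1$-Besov ($l^1 S^1$, $Z^{hyp}$, $Z^{ell}$) structure of the $A$-norms, is essential: it ensures that a low-frequency output at scale $2^k$ with $k < m$ picks up a gain $2^{\delta(k-m)}$ from the two high-frequency inputs, so the induced low-frequency envelope of the difference decays below scale $m$ and contributes only $O(\sum_{k>m}c_k^2)$ to the square sum. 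One must check this bookkeeping is consistent through the two-step nature of the iteration (the reiteration of the bad parts $\mathcal H A_x$, $\mathcal H A_0$ in terms of $\phi^{(m-1)}$), but since those pieces also enjoy the same off-diagonal decay, no new difficulty arises. The remaining details — convergence of the difference Picard iteration, passing to the limit, and summing the envelope — are routine given the machinery already in place.
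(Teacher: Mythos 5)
Your plan to rerun the Picard/difference estimates at the full $S^1$ regularity has a genuine gap, and it is exactly the obstruction the paper flags when it says that Lipschitz dependence in the energy norm ``does not seem to hold because of the free wave component of the magnetic wave equation.'' Since the regularization changes $A_x[0]$, the two solutions are driven by \emph{different} free waves $A_x^{\mathrm{free}}$ and $A_x^{(m),\mathrm{free}}$, so when you subtract the two $\phi$-equations the paradifferential magnetic term produces $\sum_k B_{j,<k}\partial_j\phi_k$ with $B=A_x^{\mathrm{free}}-A_x^{(m),\mathrm{free}}$, which must be treated perturbatively (it cannot be absorbed into a single operator $\Box_A^p$). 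The relevant bilinear bounds give no off-diagonal gain in this low--high configuration: \eqref{mult2} carries no factor $2^{-\delta|k_1-k_2|}$, and \eqref{gen_phi_lohi_quadest} requires the connection factor in $\ell^1 S^1$, whereas the free-wave difference $B$ is controlled only in the $\ell^2$-based energy/$S^1$ norm with envelope $\{c_{k_1}\}_{k_1\gtrsim m}$. Closing your envelope bookkeeping at frequency $k$ would therefore require $\sum_{k_1\geq m}c_{k_1}\lesssim\epsilon$, i.e.\ an $\ell^1$ smallness of the envelope that the hypothesis (smallness in $\ell^2$) does not provide; a flat envelope over many dyadic scales defeats the estimate. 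So your identification of the ``main obstacle'' (high$\times$high$\to$low leakage, handled by off-diagonal decay) is not the real one; the dangerous term is the low-frequency free-wave difference hitting $\partial\phi$ at matching regularity, and your cross-term absorption step fails there.

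The paper's proof avoids this entirely and is much shorter: split the difference in frequency at $2^m$. For frequencies $k>m$ one does not estimate the difference at all, but bounds each solution separately by the frequency envelope bound \eqref{fe-sln}, which gives $\sum_{k>m}\|P_k(\cdot)\|_{S^1}^2\lesssim\sum_{k>m}c_k^2$ for both $(A_x,\psi)$ and $(A_x^{(m)},\psi^{(m)})$. For frequencies $k\leq m$ one uses the weak Lipschitz bound \eqref{wk-lip} at regularity $1-\delta$: the data difference is $P_{\geq m}(A_x,\psi)[0]$, whose $\dot H^{1-\delta}\times\dot H^{-\delta}$ norm is $\lesssim 2^{-\delta m}\bigl(\sum_{k\geq m}c_k^2\bigr)^{1/2}$, and converting $S^{1-\delta}$ back to $S^1$ on frequencies $k<m$ costs at most $2^{\delta m}$, so the two exponential factors cancel and the low-frequency part of the difference is again $\lesssim\bigl(\sum_{k\geq m}c_k^2\bigr)^{1/2}$. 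The $\delta$-gap in \eqref{wk-lip} is precisely what replaces the missing $\ell^1$/off-diagonal structure in the paradifferential term, which is why the detour through $S^{1-\delta}$ is not optional. If you want to keep the spirit of your argument, you must quote (or reprove) \eqref{wk-lip} for the low frequencies rather than attempt a same-regularity difference iteration.
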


The proof of the lemma is straightforward, by using \eqref{wk-lip} to bound 
the fequencies less tham $m$ for the difference, and by using \eqref{fe-sln}
for the higher frequencies of each of the terms.

\subsection{Continuous dependence}

Given a convergent sequence of small data $(A_x^k, \psi^{k})[0] \to  
(A_x, \psi)[0]$ in $\dot H^1 \times L^2$, we consider their frequency 
envelopes $\{c^{k,m}\}$ respectively $c^k$ in $l^2$. Due to the 
above convergence, it follows that 
\begin{equation}\label{l2-small}
\lim_{m_0 \to \infty} \sum_{m > m_0} |c^{k,m}|^2 = 0, \qquad \text{ uniformly in $k$}
\end{equation}

We consider the corresponding regularized data $(A_x^{k,(m)}, \psi^{k,(m)})[0] $
and the corresponding solutions $(A^{k,(m)}, \psi^{k,(m)}) $.  By the relation 
\eqref{l2-small} and \eqref{reg-app} it follows that 
\[
(A^{k,(m)}, \psi^{k,(m)}) \to (A^{k}, \psi^{k}) \ \ \text{in} \ S^1, \qquad
\text{uniformly in k}
\]
On the other hand by the well-posedness theory for smooth data we have 
the convergence 
\[
(A^{k,(m)}, \psi^{k,(m)}) \to (A^{(m)}, \psi^{(m)}) \ \ \text{in} \ H^N_{loc}
\]
Combining the two we obtain the desired local in time convergence 
\[
(A^{k}, \psi^{k}) \to (A, \psi) \ \ \text{in} \ S^1_{loc}.
\]


\section{Renormalization}
\label{s:para-lin}
In what follows we work with the selfadjoint paradifferential
covariant wave operator
\begin{equation}\label{para}
  \Box_A^{p}=\Box - 2i \sum_{k}  A^j_{<k-C}\partial_j P_k
\end{equation}
Here $\nabla_x\cdot A=0$ are Coulomb gauge potentials which solve the
free wave equation $\Box A=0$.  Our first goal is to prove estimates
and construct a parametrix for the frequency localized evolution
\begin{equation}\label{para1}
  \Box^p_A \phi = f, \qquad (\phi(0),\phi_t(0)) = (g,h)
\end{equation}
with all functions $\phi$, $f$, $g$, $h$ localized at frequency $1$.

In general $A\neq 0$, so one cannot in addition have $dA=0$; in other
words the derivative interaction cannot be removed via a physical
space gauge transformation.  The situation changes drastically if one
views the gauge potentials as pseudo-differential operators. This
stems from the fact that when viewed microlocally all connections have
zero curvature, because they contain only one component for each fixed
direction in phase space.  To write the gauge interaction in terms of
a potential we need to solve:
\begin{equation}
  A\cdot \xi \ = \ - \tau \psi_t + \xi \psi_x . \notag
\end{equation}
This is impossible, but if we further restrict ourselves to the region
where $\tau\approx \pm |\xi|$, we are left with the exact solution:
\begin{equation}
  A\cdot \xi \ = \ d\psi_\pm\cdot (\mp|\xi|,\xi) \ , \qquad
  \psi_\pm(t,x,\xi) \ = \
  - L^\omega_{\pm} \Delta^{-1}_{\omega^\perp}(A\cdot \omega)
  \ . \notag
\end{equation}
Here $\psi_{\pm}$ are real valued and
\[
\omega = \frac{\xi}{|\xi|}, \qquad
L^\omega_{\pm}=\partial_t\pm\omega\cdot\nabla_x, \qquad
\Delta_{\omega^\perp} =\Delta-(\omega\cdot\nabla_x)^2,
\]
Also we have used the relation $L^\omega_+L^\omega_- =\Box+
\Delta_{\omega^\perp}$.  As defined $\psi_+$ is associated to the
upper cone $\{\tau = |\xi|\}$ and $\psi_-$ is associated to the lower
cone $\{\tau = -|\xi|\}$.

Quantizing this, one can write the reduced covariant wave equation
approximately as:
\begin{equation}
  \Box^p_A \ \approx \ \Box - 2i (\partial^\alpha \psi_\pm)(t,x,D)
  \partial_\alpha \ ,   \qquad \pm\tau > 0  \ .\notag
\end{equation}
which suggests that one should be be able to remove the gauge
interaction through the pseudodifferential conjugation:
\begin{equation}
  \Box^p_A \ \approx e^{-i\psi_\pm}(t,x,D)\Box e^{-i\psi_\pm}(D,y,s) \ 
  \qquad \pm\tau > 0 \ . \notag
\end{equation}

Applying this algorithm directly does not work well for two reasons.
First, the symbol we obtain is not localized at frequency $\ll 1$.
Secondly, the symbol $\Psi$ defined above is too singular due to
degeneracy of the operator $\Delta_{\omega^\perp}^{-1}$; this
corresponds exactly to parallel frequencies in $A$ and $\phi$.

To remedy the latter issue we take advantage of the fact that in the 
bilinear null form estimates there is a small angle gain.
This allows us to tightly cut off the small angle interactions
between $A$ and $\phi$ in the construction of $\Psi$, 
though not uniformly with respect to the $A$ frequencies.  Precisely, we
define the dyadic portions of $\psi$ by
\begin{equation}\label{defpsi}
  \psi_{k,\pm}(t,x,\xi) \ = \
  - L^\omega_{\pm} \Delta^{-1}_{\omega^\perp}(\Pi^{\omega}_{>\delta k}
  A_k\cdot \omega)
\end{equation}
Then the full $\psi$'s are defined by
\[
\psi_{\pm}: = \psi_{< 0, \pm}
\]
For the renormalization we will use frequency localized versions of
$e^{\pm i \psi_{\pm}}$, namely the operators
\[
e^{- i\psi_\pm}_{<0} (t,x,D), \qquad e^{ i\psi_\pm}_{<0} (D,y,s)
\]
Here we use $P(x,D)$ for the left quantization, and $P(D,y)$ for the
right quantization. Also the subscript $<0$ stands for the space-time
frequency localization at frequencies $\ll 1$.

Our main goal now is to show that the renormalizations are compatible
with the $S$ and $N$ spaces. Below we use the notation
\[
\Box^p_{A_{<0}}: = \Box - 2iA_{<-C}^j\partial_jP_0
\]
and analogously for $\Box^p_{A_{<k}}$.

\begin{theorem}\label{main_linear_ests}
  The frequency localized renormalization operators have the following
  mapping properties with $Z \in \{ N_0,L^2, N^*_0\}$:
  \begin{align}
    e^{\pm i\psi_{\pm}}_{<0}(t,x,D): \quad & Z \to Z \ , \label{N_mapping}\\
    \partial_t e^{\pm i\psi_\pm}_{<0}(t,x,D) : \quad & Z
    \to Z \ , \label{dtS_mapping} \\
    e^{-i\psi_{\pm}}_{<0}(t,x,D)e^{i\psi_{\pm}}_{<0}(D,y,s)-I: \quad &
    Z \to\epsilon Z,
    \  \label{S_mapping}\\
    e^{-i\psi_{\pm}}_{<0}(t,x,D) \Box -
    \Box^p_{A_{<0}}e^{-i\psi_{\pm}}_{<0}(t,x,D)
    : \quad& N^*_{0,\pm}\to \epsilon N_{0,\pm} \ . \label{error_ests}\\
    e^{- i\psi_{\pm}}_{<0}(t,x,D): \quad &S_0^\sharp \to S_0 \ ,
    \label{parametrix_bound}
  \end{align}

\end{theorem}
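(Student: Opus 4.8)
The plan is to deduce all five mapping properties from a single package of symbol estimates for the phases $\psi_{k,\pm}$ of \eqref{defpsi}, together with the exact algebraic identity $A\cdot\xi=d\psi_\pm\cdot(\mp|\xi|,\xi)$, valid on the cone $\pm\tau>0$ precisely because $\Box A=0$. So the first task is to establish these symbol bounds. Starting from $\|A[0]\|_{\dot H^1\times L^2}\le\epsilon$ and Bernstein, and using the angular truncation $\Pi^\omega_{>\delta k}$ (which keeps $\Delta^{-1}_{\omega^\perp}$ nondegenerate) together with the fact that $L^\omega_\mp$ is the \emph{good} null derivative on the cone $\pm$, one shows that each $\psi_{k,\pm}$ is a real-valued symbol supported at space-time frequency $\sim 2^k\ll1$ which obeys $O(\epsilon)$ bounds — in the relevant $L^\infty$, $L^2$ and mixed norms carrying the natural dyadic and angular weights — on $\partial_{t,x}\psi_{k,\pm}$, on its good derivative $L^\omega_\mp\psi_{k,\pm}$ (which by construction and $\Box A=0$ equals $-\Pi^\omega_{>\delta k}A_k\cdot\omega$, hence inherits the bounds of $A_k$ together with a small-angle gain), and on $\partial_t$ of the same; moreover $\Box\psi_{k,\pm}=0$ identically, since $\Box$ commutes with the constant-coefficient factors $L^\omega_\pm$, $\Delta^{-1}_{\omega^\perp}$, $\Pi^\omega$. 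I stress that $\psi_{<0,\pm}$ itself need not be small in $L^\infty$; what matters is that it is a real-valued, low-frequency, cone-adapted phase with $O(\epsilon)$ derivatives, and that the frequency-truncated operator $e^{\pm i\psi_\pm}_{<0}(t,x,D)$ differs from multiplication by $e^{\pm i\psi_\pm}(t,x)$ only by terms costing a derivative of $\psi_\pm$, hence of size $O(\epsilon)$.

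Granting this, \eqref{N_mapping} and \eqref{dtS_mapping} follow by checking that multiplication by the cone-adapted unimodular phase preserves each atomic building block of $N_0$, $L^2$ and $N^*_0$ up to a constant, and that the $O(\epsilon)$ pseudodifferential corrections do likewise: on $L^2$ it is an isometry up to the correction; for the $L^1L^2$ atoms one uses Minkowski's inequality after a fixed-time kernel bound; for the $X^{s,b}$-type and null-frame atoms one uses that $e^{\pm i\psi_\pm}$ sits at modulation $\ll1$ and is transported along the null directions of its own cone up to the $O(\epsilon)$ error $L^\omega_\mp\psi_\pm$, so it barely disturbs the cone geometry. For \eqref{dtS_mapping} one writes $\partial_t e^{\pm i\psi_\pm}_{<0}(t,x,D)=\pm i(\partial_t\psi_\pm)\,e^{\pm i\psi_\pm}_{<0}(t,x,D)+(\text{low-frequency correction})$ and uses the $O(\epsilon)$ bound for $\partial_t\psi_\pm$. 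Then \eqref{S_mapping} is a symbol-calculus computation: the composition of the left quantization of $e^{-i\psi_\pm}$ with the right quantization of $e^{i\psi_\pm}$ has principal symbol $e^{-i\psi_\pm(t,x,\xi)}e^{i\psi_\pm(t,x,\xi)}=1$, and every term of the remainder expansion carries at least one $x$- or $\xi$-derivative of $\psi_\pm$, hence is $O(\epsilon)$ on each $Z$ by the estimates just recorded.

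The heart of the matter is the conjugation identity \eqref{error_ests}. Writing the operator as $\big(e^{-i\psi_\pm}_{<0}(t,x,D)\,\Box-\Box\, e^{-i\psi_\pm}_{<0}(t,x,D)\big)+2iA^j_{<-C}\partial_jP_0\, e^{-i\psi_\pm}_{<0}(t,x,D)$ and expanding the commutator, the leading contribution is the one where $\Box$ falls once on the phase, namely $2i(\partial^\alpha\psi_\pm)\,e^{-i\psi_\pm}_{<0}(t,x,D)\,\partial_\alpha$ (the pure $\Box\psi_\pm$ term vanishes identically, and the term quadratic in $\partial\psi_\pm$ is $O(\epsilon^2)$). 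Acting on a function with Fourier support in $\pm\tau>0$ — i.e. on $N^*_{0,\pm}$ — one substitutes $\partial_t\mapsto\pm i|\xi|$, so that, by the identity $d\psi_\pm\cdot(\mp|\xi|,\xi)=A\cdot\xi$, this term exactly cancels the explicit magnetic term $2iA^j_{<-C}\partial_jP_0\, e^{-i\psi_\pm}_{<0}(t,x,D)$, modulo four error terms, each of which must map $N^*_{0,\pm}\to\epsilon N_{0,\pm}$: (i) the higher-order terms of the expansion, quadratic in $\psi_\pm$ or carrying a second derivative, hence gaining an extra factor $\epsilon$; (ii) the off-cone discrepancy between $\partial_t\mapsto\pm i|\xi|$ and the true time frequency, which is proportional to the modulation and is absorbed by pairing the $X_1^{0,-\frac12}$ piece of $N_{0,\pm}$ against the $X_\infty^{0,\frac12}$ piece of $N^*_{0,\pm}$; (iii) the frequency-truncation mismatches among $\psi_{<0,\pm}$, the cutoffs $P_{<k-C}$ and $P_0$; and (iv) the small-angle piece $\Pi^\omega_{\le\delta k}A^j_{<-C}\partial_jP_0$ that the angular truncation removed from $\psi_\pm$, which carries a sharp angular null-form gain and is estimated directly as a bilinear term. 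Each of (i)--(iv) is then handled with the symbol estimates above and the bilinear and null-frame machinery of the preceding sections; I expect (ii) and (iv) to be the main obstacles, being the two places where one must genuinely pair the sharp spaces $N^*$ and $N$ against each other rather than simply draw smallness from the $O(\epsilon)$-sized symbol.

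Finally, \eqref{parametrix_bound} follows by combining the previous items. Given $u$ at frequency $1$ with $\|\Box u\|_{N_0}+\|\nabla u\|_{L^\infty L^2}$ finite, the flat linear theory already gives $u\in S_0$ with $\|u\|_{S_0}\lesssim\|u\|_{S_0^\sharp}$ (write $u$ as a free wave with energy-bounded data plus $\Box^{-1}$ of the $N_0$ term, and invoke the Strichartz and null-frame estimates for the flat wave equation). It then remains to show that $e^{-i\psi_\pm}_{<0}(t,x,D)$ maps $S_0$ to $S_0$ with an $O(1)$ constant, which is proved exactly as in \eqref{N_mapping}: multiplication by the cone-adapted phase, and the $O(\epsilon)$ pseudodifferential correction, preserve each component of the $S_0$ norm — the Strichartz norm $S^{str}_0$ (isometry under a unimodular multiplier), the square-summed angular norm $S^{ang}_0$, and $X_\infty^{0,\frac12}$ — the last two again because $\psi_\pm$ lives at modulation $\ll1$ near its own cone, so multiplication by $e^{-i\psi_\pm}$ neither spoils the angular square-sum nor raises the modulation.
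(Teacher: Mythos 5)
Your high-level skeleton for \eqref{error_ests} (the designed cancellation of the high-angle part of $A\cdot\xi$ against $d\psi_\pm\cdot(\mp|\xi|,\xi)$, with the small-angle remnant estimated as a genuine bilinear term) does match the paper, but the proposal has a central misconception that undermines the other four bounds. You treat $e^{\pm i\psi_\pm}_{<0}(t,x,D)$ as essentially multiplication by a unimodular function of $(t,x)$ plus $O(\epsilon)$ corrections. The phase \eqref{defpsi} depends on $\xi$ through $\omega$ and the angular truncation $\Pi^\omega_{>\delta k}$ -- this is the whole point of the construction, since the interaction cannot be removed by a physical-space gauge change -- so the operator is irreducibly pseudodifferential: it is not an approximate $L^2$ isometry, it does not trivially preserve the $L^qL^r$ components of $S_0$, and the composition remainders are \emph{not} $O(\epsilon)$ merely because they carry a derivative of $\psi_\pm$. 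Each $\xi$-derivative of $\psi^{(\theta)}_k$ costs a factor $\theta^{-1}$ and the truncation only removes angles below $2^{\delta k}$, so $\nabla_\xi\psi_{<0}$ is not even uniformly bounded (this is what \eqref{symbol3} records). Accordingly the paper proves even the fixed-time $L^2$ bound and the near-inverse property \eqref{S_mapping} through oscillatory-integral kernel estimates for $e^{-i\psi_\pm}(t,x,D)a(D)e^{\pm i(t-s)|D|}e^{i\psi_\pm}(D,y,s)$, obtaining only $\epsilon|\log\epsilon|$-type smallness in \eqref{almost-ortho}, and the $N_0$, $N^*_0$, $X^{0,\frac12}$ statements hinge on the modulation-localized gain $\lp{Q_{l} e^{\pm i\Psi}_{k'}Q_{<0}P_0}{N^*\to X^{0,\frac{1}{2}}_1}\lesssim 2^{\delta(l-k')}\epsilon$ of Proposition~\ref{mod_loc_prop}, itself a multi-step decomposable-symbol argument. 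None of this is recoverable from ``a cone-adapted unimodular phase barely disturbs the cone geometry.''

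Two further concrete gaps. For \eqref{parametrix_bound} you replace the stated bound $S_0^\sharp\to S_0$ by the much stronger claim that $e^{-i\psi_\pm}_{<0}$ is bounded on $S_0$, asserting it ``as in \eqref{N_mapping}''; but the Strichartz, square-summed $L^2L^\infty$, plane-wave and null-energy components are exactly where the paper must use the $S^\sharp$ structure -- foliating the input into free waves via the $N_0$ control of $\Box u$ and running $TT^*$ arguments on the conjugated evolution $e^{-i\psi_\pm}(t,x,D)e^{\pm it|D|}$ with the kernel bounds \eqref{wave-decay0}, \eqref{wave-decay-cube0}, \eqref{off-angle0}, plus commutator and angular-separation arguments for the null-energy piece. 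Boundedness of the renormalization on $S_0$ itself is neither proved nor needed anywhere, and your multiplier heuristic cannot produce it. For \eqref{error_ests}, your error term (ii), i.e.\ the contribution $2\,\partial_t e^{-i\psi_\pm}_{<0}(t,x,D)\,(i\partial_t\pm|D|)$, does not close by pairing $X^{0,-\frac12}_1$ against $X^{0,\frac12}_\infty$: the symbol can lower the output modulation, so the gain $2^{j/2}$ from the input cannot be converted into an $X^{0,-\frac12}_1$ bound for the output, and no $L^1L^2$ fallback is available. The paper dispatches this term by using the $S^\sharp$ structure of the functions to which the bound is actually applied ($i\partial_t\pm|D|:S^\sharp_\pm\to N$, followed by \eqref{symm_NN-dt}), which an $N^*$-only argument of the kind you propose does not have at its disposal.
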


The above theorem allows us to construct an approximate solution for
\eqref{para} as follows:
\begin{equation}\label{eq:parametrix} \begin{split}
    \phi_{app} = & \ \frac12 \sum_{\pm} e^{- i\psi_\pm}_{<0} (t,x,D)
    \frac{1}{|D|}e^{\pm it|D|} e^{ i\psi_\pm}_{<0} (D,y,0) (|D| g \pm
    h) \\ & \ + e^{- i\psi_\pm}_{<0} (t,x,D) \frac{1}{|D|} K^\pm e^{
      \pm i\psi_\pm}_{<0} (D,y,s)f
  \end{split}
\end{equation}
where
\[
K^\pm f(t) = \int_{0}^t e^{\pm i(t-s)|D|} f(s) ds
\]
represents the solution to
\[
(\partial_t \mp i|D|) u = f, \qquad u(0) = 0
\]

Here if we drop the $e^{\pm i \phi_{\pm}}$ operators we simply have
the expression of the exact solution for the constant coefficient wave
equation. Thus the idea is that these operators approximately
conjugate the covariant wave flow to the constant coefficient wave
flow.

\begin{theorem}\label{thm:paraeqn_0}
  Assume that $f,g,h$ are localized at frequency $1$, and also that
  $f$ is localized at modulation $\lesssim 1$. Then $\phi_{app}$ is an
  approximate solution for $\Box^p_{A_{<0}}\phi = f,\,\phi(0) = (f, g),$ in
  the sense that
  \begin{equation}\label{Sbound}
    \| \phi_{app}\|_{S_0} \lesssim \|f\|_{N_0} + \|g\|_{L^2} + \|h\|_{L^2} 
  \end{equation}
  and
  \begin{equation}\label{small-err}
    \| \phi_{app}[0] - (g,h)\|_{L^2}+
    \|   \Box^p_{A_{<0}}\phi_{app}-f \|_{N_0} \ll \|f\|_{N_0} + \|g\|_{L^2} + \|h\|_{L^2}
  \end{equation}
\end{theorem}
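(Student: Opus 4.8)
The plan is to regard $\phi_{app}$ as $\sum_{\pm} e^{-i\psi_\pm}_{<0}(t,x,D)\, v_\pm$, where $v_\pm$ solves the flat half-wave problem $(\partial_t \mp i|D|) v_\pm = \tfrac{\pm 1}{2i|D|}\, e^{i\psi_\pm}_{<0}(D,y,s)\, f$ with data $v_\pm(0) = \tfrac1{2|D|}\, e^{i\psi_\pm}_{<0}(D,y,0)\,(|D| g \pm h)$, so that (up to the customary $\pm i$ normalization of the half-wave data, which we suppress) $\sum_\pm v_\pm$ is exactly the flat solution of $\Box v = e^{i\psi}_{<0} f$ with Cauchy data $e^{i\psi}_{<0}(g,h)$, and $\phi_{app}$ is the expression \eqref{eq:parametrix}. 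Everything then reduces to Theorem~\ref{main_linear_ests} together with classical constant-coefficient wave bounds. Two standing remarks: (i) since $f,g,h$ are at spatial frequency $1$ and $f$ at modulation $\lesssim 1$, while the renormalization operators have space-time symbols at frequency $\ll 1$, the quantities $e^{\pm i\psi_\pm}_{<0}(\cdot)\, f$ remain at frequency $\sim 1$ and modulation $\lesssim 1$ (the mild fattening absorbed by a fattened cutoff), so $\partial_t$, $|D|^{\pm1}$ and $(\partial_t\pm i|D|)$ act as bounded space-time multipliers on $N_0$ and on $L^2$; and (ii) the right quantizations $e^{i\psi_\pm}_{<0}(D,y,s)$ obey the same $N_0\to N_0$, $L^2\to L^2$, $N^*_0 \to N^*_0$ bounds as in \eqref{N_mapping}, by passing to adjoints (recall $\psi_\pm$ is real).

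First I would prove \eqref{Sbound}. By \eqref{parametrix_bound} it suffices to bound $\|v_\pm\|_{S_0^\sharp}=\|\Box v_\pm\|_{N_0}+\|\nabla v_\pm\|_{L^\infty L^2}$. The homogeneous part of $v_\pm$ is annihilated by $\Box$, and $\Box v_\pm=(\partial_t\pm i|D|)\tfrac{\pm1}{2i|D|}e^{i\psi_\pm}_{<0}(D,y,s)f$, which by remarks (i)--(ii) is bounded in $N_0$ by $\|f\|_{N_0}$. At frequency $1$ one has $\|\nabla v_\pm\|_{L^\infty L^2}\approx\|v_\pm\|_{L^\infty L^2}$; the homogeneous part contributes $\lesssim\|g\|_{L^2}+\|h\|_{L^2}$ by unitarity of $e^{\pm it|D|}$ and the $L^2$-boundedness of $e^{i\psi_\pm}_{<0}(D,y,0)$, while for the Duhamel part we use $\|K^\pm F\|_{L^\infty L^2}\lesssim\|F\|_{L^1 L^2}$ together with the $X^{s,b}$ energy inequality $\|K^\pm F\|_{L^\infty L^2}\lesssim\|F\|_{X^{0,-1/2}_1}$ and $N_0=L^1L^2+X^{0,-1/2}_1$ to get $\lesssim\|f\|_{N_0}$. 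Hence $\|v_\pm\|_{S_0^\sharp}\lesssim\|f\|_{N_0}+\|g\|_{L^2}+\|h\|_{L^2}$, which gives \eqref{Sbound}; as a byproduct $v_\pm$ lies in $N^*_{0,\pm}$ with the same bound (the $\pm$ half-wave flow being concentrated near the $\pm$ cone, after the harmless $Q^\pm$-decomposition of $f$ if needed), which is used below.

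Next, the error bound \eqref{small-err}. Applying $\Box^p_{A_{<0}}$ and using \eqref{error_ests} in the form $\Box^p_{A_{<0}}e^{-i\psi_\pm}_{<0}(t,x,D)=e^{-i\psi_\pm}_{<0}(t,x,D)\Box-\mathrm{Err}_\pm$ with $\|\mathrm{Err}_\pm u\|_{N_{0,\pm}}\le\epsilon\|u\|_{N^*_{0,\pm}}$, we obtain
\[
\Box^p_{A_{<0}}\phi_{app}-f=\sum_\pm\Big(e^{-i\psi_\pm}_{<0}(t,x,D)\,\Box v_\pm-\mathrm{Err}_\pm v_\pm\Big)-f.
\]
The crucial ingredient is the exact flat identity: in the constant-coefficient case ($\psi_\pm\equiv0$) one has $\sum_\pm\Box v_\pm=f$, so after inserting the renormalization the remaining discrepancy is a sum of the approximate-identity error $e^{-i\psi_\pm}_{<0}(t,x,D)e^{i\psi_\pm}_{<0}(D,y,s)-I$ from \eqref{S_mapping} and of commutators of the multipliers $\tfrac1{|D|}$ and $\partial_t\pm i|D|$ (acting on frequency-$1$ functions) with $e^{\pm i\psi_\pm}_{<0}$; the latter gain from one derivative falling on the $\ll 1$-frequency symbols, and all map $N_0\to\epsilon N_0$ by the symbol calculus underlying Theorem~\ref{main_linear_ests}. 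Combined with \eqref{error_ests} and $\|v_\pm\|_{N^*_{0,\pm}}\lesssim\|f\|_{N_0}+\|g\|_{L^2}+\|h\|_{L^2}$ this yields $\|\Box^p_{A_{<0}}\phi_{app}-f\|_{N_0}\lesssim\epsilon(\|f\|_{N_0}+\|g\|_{L^2}+\|h\|_{L^2})$. For the data error, note $\phi_{app}\in S_0\hookrightarrow C_tL^2_x$, so $\phi_{app}[0]$ is well defined, and $f$, being at modulation $\lesssim 1$, is band-limited in $\tau$ and has a value $f(0)$ with $\|f(0)\|_{L^2_x}\lesssim\|f\|_{N_0}$. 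Using that $e^{-i\psi_\pm}_{<0}(t,x,D)$ is approximately local in time (symbol at space-time frequency $\ll 1$) with $O(\epsilon)$ remainder governed by $\partial_t\psi_\pm$, together with \eqref{S_mapping} and \eqref{dtS_mapping} restricted to $t=0$, one computes $\phi_{app}(0)=\sum_\pm\tfrac1{2|D|}(|D|g\pm h)+O_{L^2_x}(\epsilon(\cdots))=g+O_{L^2_x}(\epsilon(\cdots))$ (the $h$-terms cancel, the Duhamel part of $v_\pm$ vanishing at $t=0$), and likewise $\partial_t\phi_{app}(0)=h+O_{L^2_x}(\epsilon(\cdots))$ (now the $|D|g$-terms cancel by the opposite signs $\pm$, and the surviving Duhamel contributions cancel up to the $O(\epsilon)$ difference $e^{i\psi_+}_{<0}-e^{i\psi_-}_{<0}$). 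This is \eqref{small-err}.

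I expect the main obstacle to be the reconciliation of the genuinely space-time renormalization operators with the fixed-time Cauchy data in the last step: one must justify that $\phi_{app}$ has a well-defined trace at $t=0$, that $\big(e^{-i\psi_\pm}_{<0}(t,x,D)v_\pm\big)(0)$ may be replaced by the frozen-time quantity $e^{-i\psi_\pm}_{<0}(0,x,D)v_\pm(0)$ up to an $O(\epsilon)$ error, and that the approximate-inverse property \eqref{S_mapping} persists after restriction to $\{t=0\}$ — so that the cancellations between the $+$ and $-$ half-waves can be exploited. The remaining work — the frequency/modulation bookkeeping, the passage between left and right quantizations, and the extraction of the $\epsilon$-gains — is routine given Theorem~\ref{main_linear_ests}.
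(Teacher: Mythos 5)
Your proposal is correct and follows essentially the same route as the paper: write $\phi_{app}=\sum_\pm e^{-i\psi_\pm}_{<0}(t,x,D)v_\pm$ with $v_\pm$ the renormalized flat half-waves, get \eqref{Sbound} by combining \eqref{N_mapping} (for either quantization) with flat energy/$\Box^{-1}$ bounds and \eqref{parametrix_bound}, and get \eqref{small-err} from the conjugation bound \eqref{error_ests} applied to $v_\pm\in N^*_{0,\pm}$ together with \eqref{S_mapping}, \eqref{dtS_mapping}, the flat $\pm$ cancellation, and $\|f(0)\|_{L^2}\lesssim\|f\|_{N_0}$ from the modulation localization. The only cosmetic point is that the left and right quantizations act fixed-time, so the trace at $t=0$ is exactly (not approximately) $e^{-i\psi_\pm}_{<0}(0,x,D)v_\pm(0)$ and the fixed-time $L^2$ versions of \eqref{S_mapping}, \eqref{dtS_mapping} apply directly, just as in the paper.
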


We remark that $\phi_{app}$ constructed above has the same
localization at frequency $1$ and modulation $\lesssim 1$.  We also
remark that the actual parametrix construction only requires the
estimates \eqref{N_mapping}-\eqref{parametrix_bound}, but yields a weaker
form of \eqref{Sbound} with $S_0$ replaced by $N_0^*$.  Then the
estimate \eqref{parametrix_bound} serves to provide the additional
$S_0$ regularity. An easy consequence of this is the following

\begin{theorem}\label{thm:paraeqn}
  For all $f \in N$ and $(g,h) \in \dot H^1 \times L^2$ the solution
  to the paradifferential covariant wave equation \eqref{para1} is
  defined globally and satisfies
  \begin{equation}
    \| \phi\|_{S^1} \lesssim \|f\|_{N\cap l^1L^2\dot{H}^{-\frac{1}{2}}} + \|g\|_{\dot H^1} + \|h\|_{L^2}
  \end{equation}

\end{theorem}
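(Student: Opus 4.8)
The plan is to assemble a global parametrix for \eqref{para1} from the single-frequency parametrices supplied by Theorem~\ref{thm:paraeqn_0}, and then to upgrade it to an exact solution by a convergent Neumann iteration. First I would decompose the data and forcing into Littlewood--Paley pieces and, for each frequency $2^k$, further split $P_k f$ into its low-modulation part $Q_{<k}P_k f$ and its high-modulation part $Q_{\geq k}P_k f$. Rescaling by $(t,x)\mapsto 2^{-k}(t,x)$ — which carries the $\dot H^1\times L^2$, $N_k$ and $S_k$ norms to their $k=0$ versions and preserves $\|A^{\text{free}}[0]\|_{\dot H^1\times L^2}\leq\epsilon$ since the energy is scale invariant — Theorem~\ref{thm:paraeqn_0} produces a frequency-$2^k$ approximate solution $\phi_{app,k}$ for $(Q_{<k}P_k f, P_k g, P_k h)$ with $\|\phi_{app,k}\|_{S_k}$ controlled by $\|Q_{<k}P_k f\|_{N_k}+\|P_k g\|_{\dot H^1}+\|P_k h\|_{L^2}$ and with an error in $\Box^p_{A_{<k}}\phi_{app,k}-Q_{<k}P_k f$ and in the Cauchy data that is smaller than this by a fixed factor (cf. \eqref{Sbound} and \eqref{small-err}). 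The high-modulation forcing is handled directly: $-\Box^{-1}Q_{\geq k}P_k f$ lies in $X_\infty^{0,\frac12}$ and in the energy space by the modulation gain of $\Box^{-1}$, with $S_k$-norm bounded by $\|Q_{\geq k}P_k f\|_{N_k}$, and its paradifferential correction $2iA^{\text{free},j}_{<k-C}\partial_j P_k(\cdot)$ is perturbatively small in $N_k$.

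Summing these pieces over $k$ gives a global approximate solution $\Phi_1$ with $\|\Phi_1\|_{S^1}\lesssim \|f\|_{N\cap\ell^1 L^2\dot H^{-\frac12}}+\|g\|_{\dot H^1}+\|h\|_{L^2}$. The key algebraic point is that $\Box^p_A \Phi_1 - f$ splits into the small errors already provided by Theorem~\ref{thm:paraeqn_0} and the high-modulation corrections, plus the ``localization'' discrepancies $(\Box^p_A-\Box^p_{A_{<k}})\phi_{app,k}$; the latter involve only frequency-tail and high$\times$high interactions of the free potential $A^{\text{free}}$ against a frequency-$2^k$ function, hence can be bounded in $N$ by $\epsilon\,\|\Phi_1\|_{S^1}$ using the low-high bilinear estimate \eqref{quadfree_phi_himod} together with the null-form bound \eqref{quadphi_HL_null}. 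For $\epsilon$ small this makes $\|\Box^p_A\Phi_1-f\|_{N\cap\ell^1 L^2\dot H^{-\frac12}}+\|\Phi_1[0]-(g,h)\|_{\dot H^1\times L^2}$ at most half of the right-hand side. Feeding the residual back into the same construction and iterating then yields a geometric series converging in $S^1$ to the exact solution $\phi$ of \eqref{para1}, with the claimed bound; uniqueness follows from energy estimates for $\Box^p_A\phi=0$. To close the bound for the last term of the $S^1$ norm in \eqref{S1-def} I would use $\Box\phi = f + 2i\sum_k A^{\text{free},j}_{<k-C}\partial_j P_k\phi$: the first term contributes $\|f\|_{\ell^1 L^2(\dot H^{-\frac12})}$, and \eqref{quadfree_phi_himod} bounds the second by $\epsilon\|\phi\|_{S^1}$, which is absorbed. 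Global existence needs no separate argument, since the single-frequency parametrix \eqref{eq:parametrix}, built from $e^{\pm it|D|}$ and $K^\pm$, is defined for all $t$, hence so is every iterate.

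The main obstacle I anticipate is not the Neumann iteration, which is routine once Theorem~\ref{thm:paraeqn_0} is available, but the bookkeeping that shows the localization discrepancies $(\Box^p_A-\Box^p_{A_{<k}})\phi_{app,k}$ really are perturbative: one must verify that commuting $P_k$ past the paradifferential term of $\Box^p_A$ leaves only bilinear expressions with a favourable frequency balance, so that they are controlled by $\epsilon\|\phi\|_{S^1}$ with off-diagonal gains summable in $k$. This is precisely what the bilinear and null-form estimates \eqref{quadfree_phi_himod} and \eqref{quadphi_HL_null} provide, and here they are used as a black box. A secondary subtlety is tracking the scale-dependent weights of the $S_k$, $X_p^{s,r}$ and $N_k$ norms through the rescaling — this is also the reason the forcing is measured in the stronger $N\cap\ell^1 L^2\dot H^{-\frac12}$ norm rather than in $N$ alone.
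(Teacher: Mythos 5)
Your proposal is correct and follows essentially the same route as the paper: frequency localization, splitting each $P_k f$ into a low-modulation (hyperbolic) part handled by the rescaled Theorem~\ref{thm:paraeqn_0} and a high-modulation (elliptic) part handled by division by the symbol via $\Box^{-1}$, treating the magnetic term acting on the elliptic piece perturbatively, and then removing the residual error by iteration. The one step you assert rather than argue --- the smallness in $N_k$ of $2iA^{j}_{<k-C}\partial_j \Box^{-1}f^{ell}_k$ --- is exactly the paper's short direct estimate $\lp{A_{<k}}{L^2L^\infty}\lp{\Box^{-1}f^{ell}_k}{L^2_{t,x}\cap L^\infty L^2}$, which relies crucially on the high modulation of $\Box^{-1}f^{ell}_k$ (a generic low-high free-wave interaction would not be perturbative), rather than on the bilinear bounds \eqref{quadfree_phi_himod}, \eqref{quadphi_HL_null} that you correctly reserve for the localization discrepancies.
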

\noindent
Again, using only \eqref{N_mapping}-\eqref{parametrix_bound} suffices but
yields a weaker form of \eqref{Sbound} with $S$ replaced by $N^*$.

For the remainder of the section we use Theorem~\ref{main_linear_ests}
to prove Theorems~\ref{thm:paraeqn_0},~\ref{thm:paraeqn}. The proof 
of Theorem~\ref{main_linear_ests} is completed in Sections~\ref{s:l2-gauge},
\ref{s:N-error}, after some preliminaries in Section~\ref{s:decomp}.

\begin{proof}[Proof of Theorem~\ref{thm:paraeqn_0}]

  The estimate \eqref{Sbound} follows directly by concatenating
  \eqref{N_mapping} and \eqref{parametrix_bound}.

  It remains to consider \eqref{small-err}. We begin with the initial
  data.  For the position we have
  \[
  \begin{split}
    \phi_{app}(0)-g = & \ \frac12 \sum_{\pm} e^{- i\psi_\pm}_{<0}
    (0,x,D) \frac{1}{|D|} e^{ i\psi_\pm}_{<0} (D,y,0) (|D| g \pm h) -
    g \\ = & \ \frac12 \sum_{\pm} \left[ e^{- i\psi_\pm}_{<0} (0,x,D)
      \frac{1}{|D|} e^{ i\psi_\pm}_{<0} (D,y,0)- \frac{1}{|D|}\right]
    (|D| g \pm h)
  \end{split}
  \]
  and we can apply the $L^2$ version of \eqref{S_mapping}.

  For the velocity we have
  \[
  \begin{split}
    \partial_t \phi_{app}(0)-h = & \ \frac12 \sum_{\pm} \pm e^{-
      i\psi_\pm}_{<0} (0,x,D) e^{ i\psi_\pm}_{<0} (D,y,0) (|D| g \pm
    h) - h \\ & + [\partial_t e^{- i\psi_\pm}_{<0}]
    (0,x,D)\frac{1}{|D|} e^{ i\psi_\pm}_{<0} (D,y,0) (|D| g \pm h)+ \\
    & + e^{- i\psi_\pm}_{<0} (0,x,D) \frac{1}{|D|} [\partial_t e^{
      i\psi_\pm}_{<0}] (D,y,0) (|D| g \pm h) \\ & \pm e^{-
      i\psi_\pm}_{<0} (0,x,D) e^{ i\psi_\pm}_{<0} (D,y,0) f(0)
  \end{split}
  \]
  The first line is rewritten as
  \[
  \frac12 \sum_{\pm} \left[ e^{- i\psi_\pm}_{<0} (0,x,D) e^{
      i\psi_\pm}_{<0} (D,y,0)- 1 \right] (\pm|D| g + h)
  \]
  and then we can use \eqref{S_mapping}. For the second and third
  lines we use \eqref{dtS_mapping}. Finally, for the last line we use
  \eqref{S_mapping} twice, along with the bound
  \[
  \|f(0)\|_{L^2} \lesssim \|f \|_{N_0}
  \]
  derived by Bernstein's inequality due to the unit modulation
  localization of $f$.

  Lastly, we consider the error estimate. We have
  \[
  \begin{split}
    \Box^p_{A_{<0}} \phi_{app} - f = & \sum_{\pm} [\Box^p_{A_{<0}} e^{-
      i\psi_\pm}_{<0} (t,x,D) - e^{- i\psi_\pm}_{<0} (t,x,D)\Box]
    \phi_{\pm} \\ & \pm \frac 12 e^{- i\psi_\pm}_{<0}
    (t,x,D)\frac{D_t\pm |D|}{|D|} e^{i\psi_\pm}_{<0}(D, y, s)f - f \\
    = &\ \sum_{\pm} [\Box^p_{A_{<0}} e^{- i\psi_\pm}_{<0} (t,x,D) - e^{-
      i\psi_\pm}_{<0} (t,x,D)\Box] \phi_{\pm}
    \\
    &+ \frac12 [ e^{- i\psi_\pm}_{<0} (t,x,D)e^{i\psi_\pm}_{<0}(D, y,
    s)- 1]f \\ & \ \pm \big[\frac12 [e^{- i\psi_\pm}_{<0}
    (t,x,D)\frac{1}{|D|} e^{i\psi_\pm}_{<0}(D, y, s)-
    \frac{1}{|D|}] \partial_t f \\ & \ + e^{- i\psi_\pm}_{<0}
    (t,x,D)\frac{1}{|D|} [\partial_t e^{i\psi_\pm}_{<0}](D, y, s))]f\big]
  \end{split}
  \]
  where
  \[
  \phi_{\pm} = \frac{1}{|D|}[ e^{\pm it|D|} e^{ i\psi_\pm}_{<0}
  (D,y,0) (|D| g \pm h) + K^\pm e^{ \pm i\psi_\pm}_{<0} (D,y,s)]
  \]
  For the first line we bound $\phi_{\pm}$ in $N^*_{\pm}$ via
  \eqref{N_mapping} and $\Box^{-1}$ estimates, and then use the
  conjugation bound \eqref{error_ests}.  For the second and third we
  use \eqref{S_mapping} in $N$. Finally for the fourth line we use
  \eqref{dtS_mapping} in $N$.
\end{proof}
\bigskip

\begin{proof}[Proof of Theorem~\ref{thm:paraeqn}] Consider first the
  frequency localized problems (with $k$ referring to spatial
  frequency)
  \[
  \Box^p_{A_{<k}} \phi_k = f_k,\,\phi_k(0) = (g_k, h_k),\,k\in
  \mathbf{Z}
  \]
  We solve each of these approximately and re-assemble the solutions
  $\phi_{app, k}$ to the full approximate solution $\phi_{app} =
  \sum_{k}\phi_{app, k}$. We cannot immediately apply the preceding
  theorem, since we make no further assumption on the space-time
  Fourier support of the source $f$. To remedy this, we split
  \[
  f_k = f_k^{hyp} + f_k^{ell},
  \]
  where $f_k^{hyp}$ is supported in the region $||\tau| -
  |\xi||\lesssim 2^k$.Then we solve the two problems
  \[
  \Box^p_{A_{<k}} \phi_k^1 = f_k^{hyp},\,\qquad \Box^p_{A_{<k}} \phi_k^2
  = f_k^{ell}
  \]
  approximately, the first by using the previous theorem, the second
  by neglecting the magnetic term $2iA_{<k}^j\partial_j\phi_k^2$. We
  then solve the second equation by 'division by the symbol', i. e. we
  set (from now on $k = 0$ by scaling invariance and we omit the
  subscripts)
  \[
  \phi^2 : = \Box^{-1}f^{ell}
  \]
  where the operator $\Box^{-1}$ is defined by multiplication with
  $\frac{1}{\tau^2 - |\xi|^2}$ on the Fourier side. Then the bound
  \[
  \|\phi^2\|_{S_0}\lesssim \|f^{ell}\|_{N_0}
  \]
  is immediate, and we reduce to solving the problem
  \[
  \Box^p_{A_{<0}} \phi^1 = f^{hyp},\qquad \phi^1(0) = (g, h) -
  \Box^{-1}f^{ell}(0)
  \]
  which we do by invoking Theorem~\ref{thm:paraeqn_0}.

  It remains to control the additional error generated by our
  approximate solution $\phi^2$, which is
  \[
  2iA^j_{<0}\partial_j\phi^2
  \]
  This is estimated by
  \[
  \|2iA^j_{<0}\partial_j\phi^2\|_{L_t^1 L_x^2\cap L^2\dot{H}^{-\frac{1}{2}}}\lesssim
  \|A^j_{<0}\|_{L_t^2 L_x^\infty}\|\phi^2\|_{L_{t,x}^2\cap L_t^\infty L_x^2}\ll
  \|f^{ell}\|_{N_0}
  \]
  It follows that our approximate solution $\phi_{app} =
  \sum_{k}\phi_{app, k}$ satisfies the conditions
  \[
  \|\Box^p_{A}\phi_{app} - f\|_{N\cap l^1L^2\dot{H}^{-\frac{1}{2}}} + \|\phi_{app}(0) - (g,
  h)\|_{\dot{H}^1\times L^2}\ll \|f\|_{N\cap l^1L^2\dot{H}^{-\frac{1}{2}}} + \|(g,
  h)\|_{\dot{H}^1\times L^2}
  \]
  Also, observe that the preceding Theorem~\ref{thm:paraeqn_0} implies
  \[
  \|\phi_{app}\|_{S^1}\lesssim \|f\|_{N\cap l^1L^2\dot{H}^{-\frac{1}{2}}} + \|(g, h)\|_{\dot{H}^1\times
    L^2}
  \]
  The proof is now completed by simple iterative application of the
  preceding to the successive errors.
\end{proof}


\section{Decomposable Spaces and Some Symbol Bounds}
\label{s:decomp}

\subsection{Review of the Basic Decomposable Calculus}

First we discuss the notion of decomposable function spaces and
estimates.  Recall that a zero homogeneous symbol $c(t,x;\xi)$ is said
to be in ``decomposable $L^q(L^r)$'' if $c=\sum_\theta c^{(\theta)}$,
$\theta\in 2^{-\mathbb{N}}$, and:
\begin{equation}
  \sum_\theta \lp{c^{(\theta)}}{D_\theta \big(L_t^{q}(L_x^{r})\big)} 
  \ < \ \infty \ , \label{decomp_sum}
\end{equation}
where:
\begin{equation}
  \lp{c^{(\theta)}}{D_\theta \big(L_t^{q}(L_x^{r})\big)}  \ = \ 
  \big\Vert \Big( \sum_{k=0}^{10n} \
  \sum_\phi \ \sup_\omega\ \lp{b^{\phi}_\theta\
    (\theta \nabla_\xi )^k \ c^{(\theta)} }
  {L_x^{r}}^2\Big)^\frac{1}{2}\big\Vert_{L_t^{q}}
  \ . \label{theta_decomp_norm}
\end{equation}
Here $b^{\phi}_\theta(\xi)$ denotes a cutoff on a solid angular sector
${\big|\xi|\xi|^{-1} - \phi\big|\leqslant \theta}$ for a fixed
$\phi\in\mathbb{S}^{n-1}$, and the sum is taken over a uniformly
finitely overlapping collection.  We define $\lp{b}{DL^q(L^r)}$ as the
infimum over all sums \eqref{decomp_sum}.  In \cite{KrSte} it is shown
that the following H\"older type inequality holds:
\begin{equation}
  \lp{\prod_{i=1}^m b_i}{DL^q(L^r)} \ \lesssim \ 
  \prod_{i=1}^m \lp{b_i}{DL^{q_i}(L^{r_i})} \ , \ \ 
  (q^{-1},r^{-1})=\sum_i (q_i^{-1},r_i^{-1}) 
  \ . \label{decomp_holder}
\end{equation}
In the sequel we only need a special case of decompositions provided
in terms of these norms:

\begin{lem}[Decomposability Lemma]
  Let $A(t,x;D)$ be any pseudodifferential operator with symbol
  $a(t,x;\xi)$.  Suppose $A$ satisfies the fixed time bound:
  \begin{equation}
    \sup_t \lp{A(t,x;D)}{L^2\to L^2} \ \lesssim \ 1 \ . \label{ab}
  \end{equation}
  Then for any symbol $c(t,x;\xi)\in DL^{q}(L^r)$ one has the
  space-time bounds:
  \begin{equation}\label{decomp1}
    \begin{split}
      \lp{(ac)(t,x;D)}{L^{q_1} L^2\to L^{q_2}(L^{r_2})} \lesssim &
      \lp{c}{DL^{q}(L^r)} \ , \\
      \frac{1}{q_1} + \frac{1}{q} = \frac{1}{q_2}, \qquad \frac12 +
      \frac1r = \frac{1}{r_2}, \qquad & 1 \leq q_1,q_2,q,r,r_2 \leq
      \infty
    \end{split}
  \end{equation}
\end{lem}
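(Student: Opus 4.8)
The plan is to reduce the space-time bound \eqref{decomp1} to the fixed-time bound \eqref{ab} combined with the structure of the decomposable norm \eqref{theta_decomp_norm}. First I would use the decomposition $c = \sum_\theta c^{(\theta)}$ and reduce, by the triangle inequality and \eqref{decomp_sum}, to establishing the estimate for a single dyadic angular scale $\theta$: namely that $(ac^{(\theta)})(t,x;D)$ maps $L^{q_1}L^2 \to L^{q_2}(L^{r_2})$ with norm $\lesssim \lp{c^{(\theta)}}{D_\theta(L^q(L^r))}$. At this fixed angular scale, the next step is to insert a finitely overlapping partition of unity $1 = \sum_\phi b^\phi_\theta(\xi)$ on the sphere into the symbol $c^{(\theta)}$, so that $c^{(\theta)} = \sum_\phi b^\phi_\theta c^{(\theta)}$. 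Since the pieces $b^\phi_\theta c^{(\theta)}$ have almost disjoint angular Fourier supports (of width $\sim \theta$), the outputs $(ab^\phi_\theta c^{(\theta)})(t,x;D)u$ are almost orthogonal in $L^2_x$ for frequency-localized $u$, and I can sum them in $\ell^2_\phi$; this is exactly what the $\ell^2_\phi$ structure inside \eqref{theta_decomp_norm} is designed to match.

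For a single angular cap $\phi$ at scale $\theta$, the heart of the matter is the following fixed-time claim: if a symbol $c$ has $\xi$-support in a solid sector of angular width $\theta$ about $\phi$ and satisfies $\sup_\omega \lp{(\theta\nabla_\xi)^k c(t,\cdot;\cdot)}{L^\infty_\xi L^r_x} \le M(t)$ for $0 \le k \le 10n$, then for any operator $A$ with $\sup_t\lp{A(t,x;D)}{L^2\to L^2}\lesssim 1$ one has the fixed-time bound $\lp{(ac)(t,x;D)u}{L^{r_2}_x} \lesssim M(t)\lp{u}{L^2_x}$ with $\tfrac12 + \tfrac1r = \tfrac1{r_2}$. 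The standard way to see this is to freeze $c$ via Fourier expansion in $\xi$ over the sector: writing $c(t,x;\xi)$ in a rescaled Fourier series adapted to the $\theta$-sector, the symbol becomes $\sum_n c_n(t,x) e_n(\xi)$ where $e_n$ are modulations on the sector and $\|c_n(t,\cdot)\|_{L^r_x}$ decays rapidly in $n$ precisely because of the $(\theta\nabla_\xi)^k$ bounds (this is where the $10n$ derivatives are spent). Then $(ac)(t,x;D) = \sum_n c_n(t,x)\cdot A_n$ where each $A_n$ is $A$ composed with a fixed Fourier-multiplier-type operator localized to the sector, still bounded on $L^2$; multiplication by $c_n(t,\cdot) \in L^r_x$ maps $L^2_x \to L^{r_2}_x$ by Hölder, and the rapid decay in $n$ sums the series.

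Finally, I would assemble the time integration: taking $L^{q_2}_t$ norms and using Hölder in $t$ with $\tfrac1{q_1} + \tfrac1q = \tfrac1{q_2}$, the $M(t)$ factor is controlled by $\lp{M}{L^q_t}$, which after the $\ell^2_\phi$-summation is exactly $\lp{c^{(\theta)}}{D_\theta(L^q(L^r))}$; summing over $\theta$ via \eqref{decomp_sum} gives $\lp{c}{DL^q(L^r)}$. The main obstacle I expect is the fixed-time sectorial claim in the previous paragraph — specifically, making the almost-orthogonality over the caps $\phi$ and the Fourier-series symbol expansion quantitatively precise, keeping track of how the anisotropic $\theta$-rescaling interacts with the $L^2\to L^2$ boundedness of $A$ (one must check that conjugating $A$ by the sector localizers does not destroy \eqref{ab}), and verifying that $10n$ derivatives genuinely suffice for the summability in both $n$ and $\phi$ in dimension $n$. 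The time-integration and the reduction steps are routine once this fixed-time estimate is in hand; indeed, much of this sectorial calculus is already developed in \cite{KrSte}, so in practice I would cite it for the core estimate and only carry out the reduction explicitly.
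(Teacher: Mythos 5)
Your overall skeleton --- reduce via \eqref{decomp_sum} to a single $\theta$, decompose into caps $b^\phi_\theta$, separate variables in each cap by a Fourier series in $\xi$ so the symbol becomes $\sum_j d^{\phi,j}_\theta(t,x)\, e^{\phi,j}_\theta(\xi)$ with rapidly decaying $L^r_x$ coefficients, and finish with H\"older in $x$ and $t$ --- is exactly the paper's proof. The gap is in how you sum over the caps $\phi$. You propose to prove a per-cap fixed-time operator bound $\|(a\,b^\phi_\theta c^{(\theta)})(t,x;D)u\|_{L^{r_2}_x}\lesssim M_\phi(t)\|u\|_{L^2_x}$ and then recover the $\ell^2_\phi$ structure of \eqref{theta_decomp_norm} by claiming that the outputs $(a\,b^\phi_\theta c^{(\theta)})(t,x;D)u$ are almost orthogonal in $L^2_x$. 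That claim is false in general and in any case unusable: $A$ is only assumed $L^2$-bounded, so $A(t,x,D)e^{\phi}_\theta(D)$ need not retain any output frequency localization (for instance $A$ could map every input onto a single fixed profile), the subsequent multiplication by $d^{\phi}_\theta(t,x)$ spreads frequencies further, and orthogonality is of no use in $L^{r_2}$ since $1/r_2=1/2+1/r$ forces $r_2\le 2$ with strict inequality unless $r=\infty$. Without some such mechanism, adding the per-cap operator bounds gives only the $\ell^1_\phi$ sum of the cap norms, which exceeds the $\ell^2_\phi$ quantity appearing in \eqref{theta_decomp_norm} by a factor of order $\theta^{-3/2}$ (the square root of the number of caps on $\mathbb{S}^3$), so \eqref{decomp1} would not follow.

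The correct mechanism --- and what the paper does --- is to keep the cap localization on the input side. After separating variables (and keeping only the leading Fourier mode, the others being handled by the rapid decay in $j$), write $(ac_\theta)(t,x,D)u=\sum_\phi d^{\phi}_\theta(t,x)\, A(t,x,D)\, e^{\phi}_\theta(D) u$. Then $\sum_\phi\|A(t,\cdot,D)e^{\phi}_\theta(D)u(t)\|_{L^2_x}^2\lesssim\|u(t)\|_{L^2_x}^2$ by \eqref{ab} combined with the finite overlap (Plancherel) of the multipliers $e^{\phi}_\theta(D)$ acting on $u$; now Cauchy--Schwarz in $\phi$ pairs this against the $\ell^2_\phi$ of $\|d^{\phi}_\theta(t)\|_{L^r_x}$ coming from \eqref{theta_decomp_norm}, and H\"older in $x$ and in $t$ finishes. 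No orthogonality of outputs is needed, and the ``conjugation of $A$ by sector localizers'' you worry about never occurs: one only composes $A$ on the right with a bounded multiplier, which trivially preserves \eqref{ab}. With this replacement your argument coincides with the paper's.
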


\begin{proof}
  Due to the $l^1$ summation over $\theta$ in \eqref{decomp_sum} it
  suffices to consider the case $c = c_\theta$ for a fixed $\theta$.
  We further decompose
  \[
  c_\theta(t,x,\xi) = \sum_\phi c_{\theta}^{\phi}(t,x,\xi), \qquad
  c_{\theta}^{\phi}(t,x,\xi):=b_\theta^\phi(\xi) c^{(\theta)}(t,x,\xi)
  \]
  By \eqref{theta_decomp_norm} each $c_{\theta}^{\phi}$ is supported
  in an angle $\theta$ sector, and it is smooth on the scale of its
  support. Thus by a Fourier series decomposition we can separate
  variables and represent
  \[
  c_{\theta}^{\phi}(t,x,\xi) = \sum_{j >0} d_{\theta}^{\phi,j}(t,x)
  e_{\theta}^{\phi,j}(\xi),
  \]
  where
  \[
  \|d_{\theta}^{\phi,j}(t,x)\|_{L^r_x}\lesssim j^{-N} \sum_{k=0}^{10n}
  \lp{b^{\phi}_\theta\ (\theta \nabla_\xi )^k \ c^{(\theta)}}{L^r_x} ,
  \qquad |e_{\theta}^{\phi,j}| \leq 1
  \]
  Due to the rapid decay with respect to $j$ it suffices to consider
  the contribution to $c_{\theta}^{\phi}$ coming from a single $j$,
  say $j=1$.  Then $c_\theta$ has the form
  \[
  c_\theta = \sum_\phi d_{\theta}^{\phi}(t,x) e_{\theta}^{\phi}(\xi),
  \]
  where
  \begin{equation} \label{dtheta} \| d_{\theta}^{\phi}\|_{L^q_t
      l^2_{\phi} L^r_x} \lesssim 1, \qquad |e_{\theta}^{\phi}| \leq 1
  \end{equation}
  Then we can represent
  \[
  (ac)(t,x,D) u = \sum_\phi d_{\theta}^{\phi}(t,x) \cdot
  A(t,x,D)e_{\theta}^{\phi}(D) u
  \]
  The second factor above inherits the $L^2$ norm from $u$ due to
  \eqref{ab} and the square summation in $\omega$ due to the sector
  decomposition.  Thus
  \[
  \|A(t,x,D)e_{\theta}^{\phi}(D) u \|_{L^{q_1}_t l^2_\omega L^2_x}
  \lesssim \|u\|_{L^2}
  \]
  The estimate for $(ac)(t,x,D) u$ follows by combining the last bound
  with \eqref{dtheta}.
\end{proof}


\subsection{A Decomposable Calculus for Pseudodifferential Products}

In the sequel it will also be useful for us treat estimates for
products of operators in a modular way. Recall that if $a(x,\xi)$ and
$b(x,\xi)$ are symbols, then $a^rb^r- (ab)^r\approx i(\partial_x
a \partial_\xi b)^r$. This formula is not exact, but it leads to an
estimate:

\begin{lem}[Decomposable product calculus]
  Let $a(x,\xi)$ and $b(x,\xi)$ be smooth symbols. Then:
  \begin{equation}
    \lp{a^rb^r- (ab)^r}{L^r(L^2)
      \to L^q(L^2)}  \lesssim  \lp{(\nabla_x a)^r}{L^r(L^2)\to L^{p_1}(L^2)}
    \lp{\nabla_\xi b}{D_1L^{p_2}(L^\infty)}  \label{spec_decomp1}
  \end{equation}
  where $q^{-1}=\sum p_i^{-1}$. Furthermore, if $b=b(\xi)$ is a smooth
  compactly supported multiplier, then for any two translation
  invariant spaces $X,Y$ one has:
  \begin{equation}
    \lp{a^rb^r- (ab)^r}{X\to Y}  \lesssim  
    \lp{(\nabla_x a)^r}{X\to Y}
    \ . \label{spec_decomp2}
  \end{equation}
\end{lem}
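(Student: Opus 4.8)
The plan is to start from the exact composition formula for the right quantizations $a^r,b^r$ and isolate the leading error via the fundamental theorem of calculus. Reducing $a^r b^r = (a\# b)^r$ to right‑quantized form, the symbol $a\# b$ is an oscillatory integral of the type
\[
(a\# b)(z,\xi) \ = \ \frac{1}{(2\pi)^n}\int\!\!\int e^{-iw\eta}\, a(z+w,\xi)\, b(z,\xi+\eta)\, dw\, d\eta,
\]
in which $a$ and $b$ are sampled at two different space/frequency locations. Subtracting the diagonal term, one writes $a(z+w,\xi)-a(z,\xi) = w\cdot\int_0^1 (\nabla_x a)(z+sw,\xi)\,ds$ and integrates by parts in $w$ using $w\,e^{-iw\eta}=i\nabla_\eta e^{-iw\eta}$ to move the displacement onto $b$, producing $\nabla_\xi b(z,\xi+\eta)$. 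Since both operations are exact, this yields an identity
\[
a^r b^r - (ab)^r \ = \ \int_0^1 E_s\, ds,
\]
where, for each fixed $s\in[0,1]$, $E_s$ is the right quantization of a symbol built from exactly one $x$-derivative of $a$ and one $\xi$-derivative of $b$, the parameter $s$ entering only through a rescaling of the cross-variable; no higher derivatives survive.

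Next I would, for each fixed $s$, factor $E_s = V_s\circ U_s$. Here $U_s$ is the right quantization of $\nabla_x a$ conjugated by a harmless dilation/translation in $x$; since mixed Lebesgue operator norms are invariant under such conjugations, $\|U_s\|_{L^r(L^2)\to L^{p_1}(L^2)}\lesssim\|(\nabla_x a)^r\|_{L^r(L^2)\to L^{p_1}(L^2)}$ uniformly in $s$. The factor $V_s$ is the (rescaled) quantization of $\nabla_\xi b$, which I would estimate exactly as in the proof of the Decomposability Lemma: decompose $\nabla_\xi b = \sum_j d^j(t,x)\,e^j(\xi)$ with $\sum_j\|d^j\|_{L^{p_2}_t L^\infty_x}\lesssim\|\nabla_\xi b\|_{D_1L^{p_2}(L^\infty)}$ and $|e^j|\le 1$, so that $V_s = \sum_j e^j(D)\circ(\text{mult.\ by }d^j)$ maps $L^{p_1}(L^2)\to L^q(L^2)$ with $q^{-1}=p_1^{-1}+p_2^{-1}$ and norm $\lesssim\|\nabla_\xi b\|_{D_1L^{p_2}(L^\infty)}$, the cutoffs $e^j(D)$ trivially satisfying the fixed-time bound \eqref{ab}; alternatively one can invoke \eqref{decomp1} directly. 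Composing via H\"older in $t$ and integrating over the bounded interval $s\in[0,1]$ gives \eqref{spec_decomp1}.

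For \eqref{spec_decomp2} the same factorization is available but much cheaper. When $b=b(\xi)$ is a fixed smooth compactly supported multiplier, $b^r=b(D)$ and $\nabla_\xi b$ is a fixed smooth compactly supported function of $\xi$ alone, so in the above $V_s$ is convolution with a fixed Schwartz kernel (uniformly in $s$), hence bounded with a fixed constant on \emph{every} translation-invariant normed space; the same is true of any residual terms. Thus the error equals a fixed translation-invariant operator composed with $(\nabla_x a)^r$, and $\|a^r b^r-(ab)^r\|_{X\to Y}\lesssim\|(\nabla_x a)^r\|_{X\to Y}$.

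The main obstacle is the bookkeeping in the factorization $E_s=V_s\circ U_s$: one must verify that after the single integration by parts the $s$-dependence is genuinely only a rescaling, so that $U_s$ inherits the operator bound of $(\nabla_x a)^r$ rather than of some higher $x$-derivative, and that $\nabla_\xi b$, read off in decomposable form, is controlled by $\|\nabla_\xi b\|_{D_1L^{p_2}(L^\infty)}$. The passage between left and right quantizations of the $\nabla_\xi b$ piece costs only strictly lower-order terms, which are handled by the same mechanism or absorbed by a short iteration.
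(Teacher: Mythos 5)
Your argument is correct and follows essentially the same skeleton as the paper's proof: an exact first\hyp{}order remainder formula (FTC plus one integration by parts, leaving exactly one $\nabla_x$ on $a$ and one $\nabla_\xi$ on $b$, with the parameter $s$ entering only as a shift/rescaling of the cross variable), followed by separation of variables and translation invariance plus H\"older. The only real divergence is in the bookkeeping of the two steps: the paper applies the FTC to $b$ in $\xi$ and integrates by parts to produce $\nabla_x a$, then separates variables by Fourier \emph{inversion} in $\xi$, which yields in one stroke the exact identity \eqref{b_iden} expressing the error as a superposition over $(s,\Xi)$ of $T_{(s-1)\Xi}(\nabla_x a)^r T_{-s\Xi}$ composed with multiplication by $\widehat{\nabla_\xi b}(\cdot,\Xi)$, with the weight integrable in $\Xi$ in $L^{p_2}(L^\infty)$; part \eqref{spec_decomp2} is then immediate because for $b=b(\xi)$ the weight is constant in the spatial variable. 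You instead put the FTC on $a$ and separate $\nabla_\xi b$ by Fourier series as in the Decomposability Lemma; this works, but one caution: for general $b(x,\xi)$ there is no exact clean factorization $E_s=V_s\circ U_s$ into one operator built from $a$ alone and one built from $b$ alone --- the two symbols stay entangled until after the separation, at which point $E_s$ becomes a \emph{superposition} (over the separated pieces and a translation parameter, with integrable weight) of translation\hyp{}conjugates of $(\nabla_x a)^r$ composed with multiplication by the coefficients $d^j$; also the multiplication by $d^j$ naturally lands on the input side in the right quantization, exactly as in \eqref{b_iden}, rather than after the $a$\hyp{}operator as you wrote. Since you carry out the separation anyway and all the pieces are handled by translation invariance and H\"older, these are presentational rather than substantive corrections, and your treatment of \eqref{spec_decomp2} (the $b$\hyp{}factor becoming a fixed convolution) matches the paper's.
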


\begin{proof}
  To prove the first estimate \eqref{spec_decomp1} we write the kernel
  $K(x,y)$ of the difference $a^rb^r- (ab)^r$ as follows:
  \begin{align}
    K(x,y) = & \ c_n\int_{\mathbb{R}^{3n}} e^{i(x-z)\cdot
      \xi}e^{i(z-y)\cdot\eta} a(z,\xi)\big(b(y,\eta)-
    b(y,\xi)\big)dzd\xi d\eta \  \notag\\
    = & \ c_n \int_0^1\int_{\mathbb{R}^{3n}} e^{i(x-z)\cdot
      \xi}e^{i(z-y)\cdot\eta} a(z,\xi) \nabla_\xi b\big(y,s\eta +
    (1-s)\xi\big)\cdot (\eta-\xi) dzd\xi d\eta ds
    \ , \notag\\
    = & \ c_n i \int_0^1\int_{\mathbb{R}^{3n}} e^{i(x-z)\cdot
      \xi}e^{i(z-y)\cdot\eta} \nabla_x a(z,\xi)\cdot \nabla_\xi
    b\big(y,s\eta + (1-s)\xi\big) dzd\xi d\eta ds
    \ , \notag\\
    = & \ i \frac{1}{(2\pi)^n}\int_0^1\int_{\mathbb{R}^n}
    T_{(s-1)\Xi}\nabla_x a^r(x,D)T_{-s\Xi} \widehat{\nabla_\xi
      b}(\cdot,\Xi) dsd\Xi \ , \label{b_iden}
  \end{align}
  where we have used Fourier inversion:
  \begin{equation}
    \nabla_\xi b\big(y,s\eta + (1-s)\xi\big) \ = \ \frac{1}{(2\pi)^n}\int_{\mathbb{R}^n}
    e^{i(s\eta + (1-s)\xi)\cdot\Xi}\
    \widehat{\nabla_\xi b}(y,\Xi)\, d\Xi \ . \notag
  \end{equation}
  From the smoothness of $b$ on the unit scale in $\xi$ we have that
  the weight function $\widehat{\nabla_\xi b}$ obeys the estimate:
  \begin{equation}
    \int_{\mathbb{R}^n} \lp{\widehat{\nabla_\xi b}(\cdot,\Xi)}{L^{p_2}(L^\infty)}d\Xi
    \ \lesssim \ \lp{b}{D_1L^{p_2}(L^\infty)} \ . \notag
  \end{equation}
  Then \eqref{spec_decomp1} is a direct consequence of H\"older's
  inequality and the the translation invariance of $L^p$ spaces.

  Note that the proof of \eqref{spec_decomp2} also follows from the
  identity \eqref{b_iden} directly.\, because in this case the weight
  function $\widehat{\nabla_\xi b}$ is a constant in $y$.
\end{proof}


\subsection{Some Symbols Bounds for Phases}

For its use in the sequel, we list out a number of decomposable
estimates for the phase $\psi(t,x;\xi)$ used to define our microlocal
gauge transformations:

\begin{lem}[Decomposable estimates for $\psi$]
  Let the phase $\psi(t,x;\xi)$ be defined as in \eqref{defpsi}, and
  its angular components $\psi^{(\theta)}=\Pi_\theta^\omega
  \psi(t,x;\xi)$, where $\omega=|\xi|^{-1}\xi$.  Then for $q \geq 2$
  and $2/q+3/r \leq 1$ one has:
  \begin{equation}
    \lp{(\psi_k^{(\theta)},2^{-k}\nabla_{t,x}\psi_k^{(\theta)})}{DL^q(L^r)} \ 
    \lesssim \ 2^{-(\frac{1}{q}+\frac{4}{r})k}\theta^{\frac{1}{2}-\frac{2}{q}-\frac{3}r}\epsilon
    , \label{psi_decomp2}
  \end{equation}
  In particular
  \begin{align}
    \lp{(\psi_k,2^{-k}\nabla_{t,x}\psi_k)} {DL^q(L^\infty)} \
    &\lesssim \ 2^{-\frac{1}{q}k}\epsilon
    \ ,  &q&>4 \ , \label{psi_decomp1}\\
    \lp{\nabla_{t,x}\psi_k} {DL^2(L^r)} \ &\lesssim \ 2^{(\frac12-
      \frac{4}{r} - \delta(\frac12+\frac3r))k}\epsilon \ , &r&\geq 6 \
    , \label{psi_decomp3}
  \end{align}
\end{lem}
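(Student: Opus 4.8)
The plan is to reduce everything to the master estimate \eqref{psi_decomp2}, since \eqref{psi_decomp1} follows by taking $r=\infty$ (so $\tfrac12-\tfrac2q-\tfrac3r = \tfrac12 - \tfrac2q > 0$ exactly when $q>4$, and summing the resulting $\theta^{1/2-2/q}$ over dyadic $\theta \in 2^{-\mathbb N}$ converges) together with summing $2^{-(1/q)k}$ geometrically is not needed here — the bound is stated at the fixed frequency $2^k$. For \eqref{psi_decomp3} one sets $q=2$ in \eqref{psi_decomp2}, reads off the exponent $\tfrac12 - \tfrac4r$ for the scaling part; the angular factor becomes $\theta^{-1/2-3/r}$, which now \emph{grows} as $\theta \to 0$, but this is offset by the fact that the phase $\psi_k$ is built from $\Pi^\omega_{>\delta k} A_k$, so the angular parameter $\theta$ is restricted to $\theta \gtrsim 2^{\delta k}$; summing $\theta^{-1/2-3/r}$ over $\theta \in [2^{\delta k},1]$ produces the loss $2^{-\delta(1/2+3/r)k}$, which is exactly the extra factor appearing in \eqref{psi_decomp3}. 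So the entire lemma is bookkeeping on top of \eqref{psi_decomp2}.

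For \eqref{psi_decomp2} itself, the strategy is: first, recall from \eqref{defpsi} that $\psi_{k,\pm}^{(\theta)} = -L^\omega_\pm \Delta^{-1}_{\omega^\perp}(\Pi^\omega_\theta \Pi^\omega_{>\delta k} A_k \cdot \omega)$, and that on the relevant angular sector $\Delta_{\omega^\perp}$ behaves like $\theta^2 |\xi|^2 = \theta^2 2^{2k}$ while $L^\omega_\pm$ is essentially of size $2^k$ on the cone (for the free wave $A_k$, modulo the modulation which we control separately). Thus, symbolically, $\psi_k^{(\theta)} \sim \theta^{-2} 2^{-k} (\text{a piece of } A_k)$ and $2^{-k}\nabla_{t,x}\psi_k^{(\theta)}$ is comparable. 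The decomposable norm $D_\theta L^q(L^r)$ of a single angular-sector piece of a frequency $2^k$ function reduces, after rescaling $\xi \to 2^{-k}\xi$, to the $L^q(L^r)$ Strichartz-type norm of the corresponding piece of $A_k$, with the derivatives $\theta\nabla_\xi$ in \eqref{theta_decomp_norm} harmless because $A_k$ is smooth on its frequency support on the scale $\theta 2^k$ in the angular directions. So one is reduced to an $L^q(L^r)$ bound for $\Pi^\omega_\theta A_k$ with $A_k$ a free wave of energy $\le \epsilon$. Here one invokes the angular-sector–refined Strichartz estimates for the wave equation — these are exactly the estimates encoded in $S^{ang}_k$ and the $S^\omega_k(l)$ norms of \eqref{str_and_defn}–\eqref{Sl_def} applied to $A^{\text{free}}$, i.e. one uses $\lp{A^{\text{free}}_k[0]}{\dot H^1 \times L^2} \le \epsilon$ and the free-wave bound $\lp{\nabla_{t,x} P_k A^{\text{free}}}{S_k} \lesssim \epsilon$. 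A cap of angular size $\theta$ contributes the gain $\theta^{(3-1)(\frac12 - \frac1r)} \cdot \theta^{\text{(from }L^2_\omega\text{)}}$ coming from the $N\!E$ / $P\!W$ / block structure of $S^\omega_k(l)$ with $l = \log_2\theta$; collecting powers of $2^k$ and $\theta$ and combining with the $\theta^{-2}2^{-k}$ gained from $\Delta^{-1}_{\omega^\perp}$ and the $2^{k}$ lost from $L^\omega_\pm$ yields precisely the exponents $-(\tfrac1q+\tfrac4r)k$ and $\tfrac12 - \tfrac2q - \tfrac3r$ in $\theta$.

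The main obstacle I expect is the precise derivation of the angular exponent $\tfrac12 - \tfrac2q - \tfrac3r$: one must carefully track how many powers of $\theta$ come from each source — $\theta^{-2}$ from $\Delta^{-1}_{\omega^\perp}$, then a $\theta^{+1/2}$ from the $L^2_\omega$-in-the-null-direction improvement of the $P\!W^\mp_\omega(l)$ norm (this is the key null-structure gain, the same one used for wave maps in \cite{T2}), and then $\theta^{2(1/2-1/r)}$ from the two transverse angular dimensions when passing between $L^r$ and $L^\infty$ on an angular block via Bernstein. Getting the interplay of the $L^q_t$ integrability (which for $q>2$ requires genuine Strichartz rather than just energy, hence the hypothesis $2/q + 3/r \le 1$, i.e. admissibility) exactly right, and confirming that the $\theta\nabla_\xi$ derivatives and the modulation-localization $Q_{<k+2l}$ implicit in $S^\omega_k(l)$ cost nothing, is where the real work lies; everything else is a rescaling and a geometric sum. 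I would organize it as: (i) reduce to a fixed sector and single Fourier mode as in the proof of the Decomposability Lemma; (ii) rescale to frequency $1$; (iii) apply the free-wave $S^\omega_k(l)$ bounds with $2^l \sim \theta$; (iv) collect exponents; (v) sum over $\theta$ for the corollaries, using $\theta \gtrsim 2^{\delta k}$ for \eqref{psi_decomp3}.
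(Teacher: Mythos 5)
Your reduction of \eqref{psi_decomp1} and \eqref{psi_decomp3} to the master bound \eqref{psi_decomp2} is exactly what the paper does (sum over dyadic $\theta$, using $\theta\gtrsim 2^{\delta k}$ from the cutoff $\Pi^\omega_{>\delta k}$ in \eqref{defpsi} to account for the $\delta$-loss when $q=2$), so that part is fine. The gap is in the heart of the matter, the derivation of \eqref{psi_decomp2}: the $\theta$-bookkeeping you propose does not close, and it is missing the one ingredient that makes the positive power $\theta^{\frac12-\frac2q-\frac3r}$ possible, namely the Coulomb gauge. Since $\nabla\cdot A=0$, the symbol fed into $\Delta^{-1}_{\omega^\perp}$ is $A\cdot\omega$ with the spatial Fourier support of $A$ restricted to a $\theta$-sector about $\omega$, so $|\widehat{A}(\eta)\cdot\omega|\lesssim\theta|\widehat A(\eta)|$; this gains a full factor of $\theta$ against the $\theta^{-2}2^{-2k}$ from $\Delta^{-1}_{\omega^\perp}$ and the $2^k$ from $L^\omega_\pm$. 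Your accounting never uses the divergence-free condition, and without it the estimate is off by one whole power of $\theta$; in particular \eqref{psi_decomp1}, which carries no $2^{\delta k}$ loss, would then fail to follow (the dyadic $\theta$-sum would diverge down to $\theta\sim 2^{\delta k}$ instead of converging at $\theta\sim 1$).

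The remaining angular factor also does not come from where you look for it. The paper does not use the null-frame $P\!W^\mp_\omega$/$N\!E$ structure here at all: after the Coulomb gain one only needs the square-summed (over $\theta$-sectors) Strichartz bound for the free wave $A_k$, obtained from the sharp admissible pair $(q,r_0)$ with $\frac2q+\frac3{r_0}=\frac32$ together with Bernstein on a sector, $\Pi^\omega_\theta P_k L^{r_0}\subseteq \theta^{3(\frac1{r_0}-\frac1r)}2^{4(\frac1{r_0}-\frac1r)k}L^r$. Note the sphere in $\R^4$ is three-dimensional, so the sector Bernstein gain is $\theta^{3(\frac1{r_0}-\frac1r)}=\theta^{\frac32-\frac2q-\frac3r}$ (this is where the $-\frac2q$ in the $\theta$-exponent comes from), not your $\theta^{2(\frac12-\frac1r)}$ over ``two transverse angular dimensions,'' and the Bernstein step runs from $L^{r_0}$ to $L^r$, not from $L^r$ to $L^\infty$. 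Multiplying $\theta^{-2}\cdot\theta\cdot\theta^{\frac32-\frac2q-\frac3r}$ and collecting the powers of $2^k$ (using $\lp{A_k}{S_k}\lesssim\epsilon$) gives precisely \eqref{psi_decomp2}; your proposed combination $\theta^{-2}\cdot\theta^{\frac12}\cdot\theta^{1-\frac2r}$ gives $\theta^{-\frac12-\frac2r}$, which is not the stated exponent, so the step you flag as ``the real work'' is indeed where the argument as written would fail.
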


\begin{proof}
  Notice that the last two estimates follow from the first by summing
  over dyadic $2^{-\delta k} \leq \theta\lesssim 1$. For the first
  bound we interchange the $t$ integration and the $\omega$ summation
  to obtain:
  \[
  \begin{split}
    \lp{(\psi_k^{(\theta)},2^{-k}\nabla_{t,x}\psi_k^{(\theta)})}{DL^q(L^r)}
    \ \lesssim \ & \theta^{-2}2^{-k} \big(\sum_{\omega}
    \lp{\Pi^\omega_\theta(D)A \cdot
      \omega}{L^q(L^r)}^2\big)^\frac{1}{2} \ \notag \\ \ \lesssim \ &
    \theta^{-1}2^{-k} \big(\sum_{\omega} \lp{\Pi^\omega_\theta(D)A
    }{L^q(L^r)}^2\big)^\frac{1}{2},
  \end{split}
  \]
  where at the second step we have used the Coulomb gauge to gain
  another factor of $\theta$.

  Now we conclude using the Strichartz estimates.  In four space
  dimensions the Strichartz sharp range is given by
  $\frac{2}{q}+\frac{3}{r_0}=\frac{3}{2}$.  Moreover, on an angular
  sector of size $\theta$ Bernstein's inequality gives the embedding
  $\Pi^\omega_\theta(D)P_k L^{r_0}\subseteq \theta^{3(\frac{1}{r_0} -
    \frac{1}{r})} 2^{4(\frac{1}{r_0}-\frac{1}{r})k}L^{r}$. Thus:
  \begin{equation}
    \big(\sum_{\omega}
    \lp{\Pi^\omega_\theta(D)A_k}{L^q(L^r)}^2\big)^\frac{1}{2} \ \lesssim \
    \theta^{\frac{3}{2}-\frac{2}{q}-\frac{3}r}2^{(1-\frac{1}{q}-\frac{4}r)k}\lp{A_k}{S_k} \ , \notag
  \end{equation}
  and the second estimate follows.
\end{proof}

We wrap this section up by proving some additional symbol type bounds
for the phases $\psi$. These involve the variation over the physical
space variables:

\begin{lem}[Additional symbol bounds for $\psi$]
  Let $\psi$ be as above. Then one has:
  \begin{align}
    |\psi_{<k}(t,x;\xi) -\psi_{<k}(s,y;\xi)|
    &\lesssim   \epsilon \log( 1+2^k (|t-s| + |x-y|)), \label{symbol1} \\
    |\psi(t,x;\xi) -\psi(s,y;\xi)|
    &\lesssim   \epsilon \log ( 1+ |t-s| + |x-y|) \label{symbol2} \\
    |\partial_\xi^\alpha( \psi(t,x;\xi) -\psi(s,y;\xi))| &\lesssim \
    \epsilon \la (t-s,x-y)\ra^{|\alpha -\frac12| \sigma}, 1 \leqslant
    \alpha \leqslant \sigma^{-1} . \label{symbol3}
  \end{align}
\end{lem}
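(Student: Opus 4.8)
The plan is to reduce all three estimates to the dyadic decomposable bounds for the phase recorded in the preceding lemma, to upgrade those to pointwise bounds by Bernstein's inequality, and then to sum a geometric series over the dyadic pieces. Throughout, fix $\xi$ with $|\xi|\sim1$; it suffices to treat each of $\psi_+$, $\psi_-$ separately, so we drop the sign and write $\psi_{<k}=\sum_{k'<k}\psi_{k'}$ and $\psi=\psi_{<0}$.

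\emph{The key point is a uniform-in-$\xi$ pointwise bound on the pieces $\psi_{k'}(\cdot;\xi)$.} Since $\Box A=0$, the space--time Fourier transform of $\psi_{k'}(\cdot;\xi)$ is supported in $\{|\eta|\sim 2^{k'},\ \tau=\pm|\eta|\}$, so space--time Bernstein applies at scale $2^{k'}$. Summing \eqref{psi_decomp2} over the angular scales $2^{\delta k'}\leqslant\theta\lesssim1$ (the sum converging at $\theta\sim1$ once $2/q+3/r<1/2$) gives $\lp{\psi_{k'}(\cdot;\xi)}{L^q_tL^r_x}\lesssim2^{-(1/q+4/r)k'}\epsilon$, and the mixed-norm Bernstein inequality then produces
\[
\lp{\psi_{k'}(\cdot;\xi)}{L^\infty_{t,x}}\ \lesssim\ \epsilon,\qquad \lp{\nabla_{t,x}\psi_{k'}(\cdot;\xi)}{L^\infty_{t,x}}\ \lesssim\ 2^{k'}\epsilon .
\]
For $\xi$-derivatives, the decomposable norm $D_\theta L^q(L^r)$ controls $\theta^\alpha\partial_\xi^\alpha$ of its argument, so each $\partial_\xi$ costs a factor $\theta^{-1}$; the $\theta$-sum is then dominated by the cutoff endpoint $\theta\sim2^{\delta k'}$, and after Bernstein (and optimizing in $q,r$) one gets, still uniformly in $\xi$,
\[
\lp{\partial_\xi^\alpha\psi_{k'}(\cdot;\xi)}{L^\infty_{t,x}}\ \lesssim\ 2^{\delta(\frac12-\alpha)k'}\epsilon,\qquad \lp{\nabla_{t,x}\partial_\xi^\alpha\psi_{k'}(\cdot;\xi)}{L^\infty_{t,x}}\ \lesssim\ 2^{k'}2^{\delta(\frac12-\alpha)k'}\epsilon .
\]

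Since each $\psi_{k'}(\cdot;\xi)$ and each $\partial_\xi^\alpha\psi_{k'}(\cdot;\xi)$ is band-limited at scale $2^{k'}$, the fundamental theorem of calculus gives $|F(z_1)-F(z_2)|\lesssim\min\{\lp{F}{L^\infty_{t,x}},\,|z_1-z_2|\lp{\nabla F}{L^\infty_{t,x}}\}$; with $d=|(t-s,x-y)|$ this yields $|\psi_{k'}(t,x;\xi)-\psi_{k'}(s,y;\xi)|\lesssim\epsilon\min\{1,2^{k'}d\}$ and $|\partial_\xi^\alpha(\psi_{k'}(t,x;\xi)-\psi_{k'}(s,y;\xi))|\lesssim2^{\delta(\frac12-\alpha)k'}\epsilon\min\{1,2^{k'}d\}$. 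Summing the first bound over $k'<k$: the terms with $2^{k'}d\leqslant1$ form a geometric series of total size $O(\epsilon\min\{1,2^kd\})$, while the at most $O(\log(1+2^kd))$ terms with $2^{k'}d>1$ each contribute $O(\epsilon)$, giving \eqref{symbol1} (using $\min\{1,x\}\lesssim\log(1+x)$); the case $k=0$ is \eqref{symbol2}. Summing the second bound over $k'<0$, the extra weight $2^{\delta(\frac12-\alpha)k'}=2^{\delta(\alpha-\frac12)|k'|}$ (recall $k'<0$) makes the sum balance at $2^{k'}\sim d^{-1}$, producing $\lesssim\epsilon\langle d\rangle^{\delta(\alpha-\frac12)}$; the geometric part converges precisely because $\alpha\leqslant\sigma^{-1}$ forces $\delta(\alpha-\tfrac12)<1$, and this is \eqref{symbol3} with $\sigma$ a fixed constant comparable to $\delta$ (for $\alpha\geqslant1$ one has $|\alpha-\tfrac12|=\alpha-\tfrac12$).

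\emph{The main obstacle} is the uniform pointwise bound of the second paragraph: one must confirm that the decomposable estimates of the previous lemma really upgrade to $L^\infty_{t,x}$ control that is uniform in $\xi$, paying exactly the right powers of $\theta$ for the angular cutoff $\Pi^\omega_{>\delta k'}$ and for each $\xi$-derivative, and that the Bernstein conversion is legitimate --- i.e.\ that $\psi_{k'}(\cdot;\xi)$ is genuinely band-limited at scale $2^{k'}$ in space--time, which is where the free-wave hypothesis $\Box A=0$ enters. Once this bookkeeping is in place, the fundamental theorem of calculus bound and the geometric summations are routine.
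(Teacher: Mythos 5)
Your proposal is correct, and its skeleton is the same as the paper's: decompose $\psi$ into dyadic frequency pieces (and angular pieces $\psi^{(\theta)}_{k'}$), prove per-piece uniform bounds $|\psi_{k'}|\lesssim\epsilon$, Lipschitz bounds $|\nabla_{t,x}\psi_{k'}|\lesssim 2^{k'}\epsilon$, and $\xi$-derivative bounds carrying $\theta^{-1}$ per derivative (hence $2^{\delta(\frac12-\alpha)k'}$ after summing $\theta\gtrsim 2^{\delta k'}$), and then sum, splitting at the scale $2^{k'}\sim d^{-1}$ -- this is exactly the paper's ``$\lesssim 2^{j}d+|k-j|$, optimize $j$'' step written as a $\min\{1,2^{k'}d\}$ summation. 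The one place you genuinely diverge is how the per-piece pointwise bounds are obtained: you pass through the decomposable estimates \eqref{psi_decomp2} and then upgrade to $L^\infty_{t,x}$ by space--time Bernstein, using $\Box A=0$ to get the temporal band limit; the paper instead bounds each angular piece directly from the definition \eqref{defpsi}, using the Coulomb gauge to gain $\theta$, fixed-time energy estimates for $A$, and spatial Bernstein on a $\theta$-sector, which gives $|\psi^{(\theta)}_{k'}|\lesssim\theta^{1/2}\epsilon$ without any temporal Fourier considerations. Both work; the paper's route is slightly more economical. One bookkeeping caveat in your version of \eqref{symbol3}: the decomposable norm \eqref{theta_decomp_norm} only records up to $10n$ symbol derivatives $(\theta\nabla_\xi)^k$, so citing it does not literally cover all $1\leqslant\alpha\leqslant\sigma^{-1}$ if $\sigma^{-1}>10n$; for those $\alpha$ you should instead differentiate the definition of $\psi^{(\theta)}_{k'}$ directly (each $\partial_\xi$ landing on $\Delta_{\omega^\perp}^{-1}$, $\Pi^\omega_{>\delta k'}$ or $\omega$ costs $\theta^{-1}$), which is what the paper does and yields the same $\theta^{\frac12-|\alpha|}$ bound with no extra work.
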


\begin{proof}
  We decompose as before
  \[
  \psi_{<k}(t,x;\xi) = \sum_{j < k} \sum_{\theta > 2^{\sigma j}}
  \psi^{(\theta)}_j(t,x,\xi)
  \]
  For each fixed $\theta$ and $j$ we have by the definition of $\psi$
  and the Coulomb gauge condition
  \[
  |\psi^{(\theta)}_j(t,x,\xi)| \lesssim \theta^{-1} 2^{-j}
  \sup_{\omega} \|\Pi_\theta^\omega A_j\|_{L^\infty}
  \]
  Then by energy estimates for $A$ and Bernstein's inequality we
  obtain
  \begin{equation} \label{psij} |\psi^{(\theta)}_j(t,x,\xi)| \lesssim
    \theta^\frac12 \| A_j[0]\|_{H^1 \times L^2}, \qquad
    |\psi_j(t,x,\xi)| \lesssim \| A_j[0]\|_{H^1 \times L^2}
  \end{equation}\label{psijx}
  A similar argument leads to
  \begin{equation}
    |\partial_{t,x} 
    \psi^{(\theta)}_j(t,x,\xi)| \lesssim  2^j \theta^\frac12 \| A_j[0]\|_{H^1 \times L^2},
    \qquad 
    |\partial_{t,x}  \psi_j(t,x,\xi)| \lesssim    2^j  \| A_j[0]\|_{H^1 \times L^2}
  \end{equation}
  Differentiating with respect to $\xi$ yields $\theta^{-1}$ factors,
  \[
  |\partial_\xi^\alpha \psi^{(\theta)}_j(t,x,\xi)| \lesssim
  \theta^{\frac12-|\alpha|} \| A_j[0]\|_{H^1 \times L^2}, \qquad
  |\partial_{x,t}
  \partial_\xi^\alpha \psi^{(\theta)}_j(t,x,\xi)| \lesssim 2^j
  \theta^{\frac12-|\alpha|} \| A_j[0]\|_{H^1 \times L^2}.
  \]
  For the bound \eqref{symbol1} we use both \eqref{psij} and
  \eqref{psijx} to write for $j \leq k$
  \[
  |\psi_{<k}(t,x;\xi) -\psi_{<k}(s,y;\xi)| \lesssim 2^j(|t-s| + |x-y|)
  + |k-j|
  \]
  and then optimize the choice of $j$.

  The proof of \eqref{symbol3} is similar.

\end{proof}


\section{ $L^2$ estimates for the  gauge transformations}
\label{s:l2-gauge}
In this section we prove three core $L^2$ based estimates for the
gauge transformations $e^{\pm i \psi_{\pm}}$. These will serve as
building blocks in later sections.


\subsection{Oscillatory integral estimates}

In order to prove various estimates involving the operators
$e^{-i\psi_{\pm}}_{<0}(t,x,D)$ and $e^{i\psi_{\pm}}_{<0}(D,y,s)$ we
need to obtain pointwise kernel bounds for operators of the form
\[
T^a = e^{-i\psi_{\pm}}(t,x,D) a(D) e^{\pm i(t-s)|D|}
e^{i\psi_{\pm}}(D,y,s)
\]
where $a$ is localized at frequency $1$.  The kernel of the operator
$T_a$ is given by the oscillatory integral
\[
K^a(t,x;s,y) = \int e^{-i\psi_{\pm}}(t,x,\xi) a(\xi) e^{\pm
  i(t-s)|\xi|} e^{i \xi(x-y)} e^{i\psi_{\pm}}(\xi,y,s) d\xi
\]
Our main estimates for such kernels are as follows:

\begin{proposition}\label{kernel_prop}
  a) Assume that $a$ is a smooth bump on the unit scale. Then the
  kernel $K_a$ satisfies
  \begin{equation} \label{wave-decay} |K_a(t,x;s,y)| \lesssim \langle
    t-s \rangle^{-\frac32} \langle |t-s| - |x-y| \rangle^{-N}
  \end{equation}

  b) Let $a = a_{C}$ be a bump function on a rectangular region $C$ of
  size $2^{k} \times (2^{k+l})^3$ with $k \leq l \leq 0$. Then
  \begin{equation}\label{wave-decay-cube}
    |K_a(t,x;s,y)| \lesssim  2^{4k+3l} \langle 2^{2(k+l)} (t-s) \rangle^{-\frac32}
    \langle 2^k (|t-s| - |x-y|) \rangle^{-N}
  \end{equation}
  If in addition $x-y$ and $C$ have a $2^{k+l}$ angular separation
  then
  \begin{equation}\label{off-angle}
    |K_a(t,x;s,y)| \lesssim  2^{4k+3l} \langle 2^{2(k+l)} |t-s| \rangle^{-N}
    \langle 2^k (|t-s| - |x-y|) \rangle^{-N}
  \end{equation}

\end{proposition}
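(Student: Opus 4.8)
The plan is to read this as a (pseudodifferentially dressed) half-wave-propagator kernel estimate, the model being the classical stationary phase analysis on the light cone in $\R^4$; the gauge phase $\psi_{\pm}$ should enter only as a lower-order perturbation, controlled by the symbol bounds \eqref{symbol1}--\eqref{symbol3}. I would begin with two reductions. First, since $\Box A=0$, the phase $\psi_{<0,\pm}$ already has space-time frequency $\lesssim 1$, so the ${}_{<0}$ localizations in the operators $e^{\mp i\psi_{\pm}}_{<0}$ are mollifications preserving all the relevant bounds and may be dropped. Second — and this is the structural point — $\psi_{\pm}$ is homogeneous of degree $0$ in $\xi$, i.e.\ it depends on $\xi$ only through $\omega=\xi/|\xi|$, hence is independent of $|\xi|$. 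Accordingly I keep the phase of $K_a$ equal to the free phase $\Phi_\pm(\xi)=\pm(t-s)|\xi|+\xi\cdot(x-y)$ and absorb the gauge factor into the amplitude, writing
\[
K_a(t,x;s,y)=\int e^{i\Phi_\pm(\xi)}\,b(\xi)\,d\xi,\qquad b(\xi)=a(\xi)\,e^{i(\psi_\pm(s,y,\xi)-\psi_\pm(t,x,\xi))}.
\]
By \eqref{symbol1}--\eqref{symbol3} and the reality of $\psi_\pm$ one has $|b|\le 1$, the $\xi$-derivatives of $b$ up to order $\lesssim\sigma^{-1}$ cost only a factor $\langle (t-s,x-y)\rangle^{O(\sigma)}$ relative to those of $a$, and, crucially, by the degree-$0$ homogeneity $\partial_{|\xi|}b$ involves no $\psi$-derivative whatsoever; this keeps the light-cone geometry of $\Phi_\pm$ exactly intact.

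After reducing to $\langle t-s\rangle\gg 1$ (otherwise $|K_a|\lesssim\int|a|$ gives \eqref{wave-decay}, resp.\ $|K_a|\lesssim |\operatorname{supp}a|=2^{4k+3l}$ for \eqref{wave-decay-cube}), I would run the standard trichotomy in polar coordinates $\xi=\rho\omega$, with $\omega_\ast=\mp(x-y)/|x-y|$ the (near-)critical angular direction of $\Phi_\pm$. \textbf{Off-cone}, $|x-y|\not\sim|t-s|$: here $|\nabla_\xi\Phi_\pm|=|\pm(t-s)\omega+(x-y)|\gtrsim |t-s|+\big||t-s|-|x-y|\big|$, and repeated integration by parts in $\xi$ — the amplitude losses $\langle (t-s,x-y)\rangle^{O(\sigma)}$ being beaten by powers of $|\nabla_\xi\Phi_\pm|^{-1}$ since $\sigma$ is small — gives $|K_a|\lesssim\langle t-s\rangle^{-3/2}\langle |t-s|-|x-y|\rangle^{-N}$; for the box version one integrates by parts either radially (using the long dimension $2^k$) when the radial part of $\nabla_\xi\Phi_\pm$ is large, or angularly along $2^{k+l}$ otherwise, exploiting $|t-s|\gtrsim 2^{-k}$ in the nontrivial range to convert powers of $|t-s|$ into the required powers of $2^k$. \textbf{Cone region}, $|x-y|\sim|t-s|$: split the $\omega$-integral into $|\omega-\omega_\ast|$ comparable to the natural angular scale — $\langle t-s\rangle^{-1/2}$ in part a), $2^{k+l}$ in part b) — and its complement. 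On the complement $|\nabla_\omega\Phi_\pm|\gtrsim |t-s|\,|\omega-\omega_\ast|$, so angular integration by parts yields rapid decay; in part b) this is precisely the source of the off-angle gain $\langle 2^{2(k+l)}|t-s|\rangle^{-N}$ in \eqref{off-angle}. Near $\omega_\ast$ the angular Hessian of $\Phi_\pm$ is nondegenerate of size $|t-s|$ and, by \eqref{symbol3}, dominates the $\psi$-contribution, so stationary phase in the three angular variables gives $\langle t-s\rangle^{-3/2}$ in part a), and $2^{3(k+l)}\langle 2^{2(k+l)}|t-s|\rangle^{-3/2}$ in part b) (a cap of width $2^{k+l}$ interpolating between the trivial $2^{3(k+l)}$ and the full $\langle t-s\rangle^{-3/2}$). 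Finally, in the remaining radial variable $\partial_{|\xi|}\Phi_\pm=\pm(t-s)+\omega\cdot(x-y)\approx\pm(|t-s|-|x-y|)$ near $\omega_\ast$, and — no $\psi$-derivatives entering — a radial integration by parts over the width $2^k$ gains $\langle |t-s|-|x-y|\rangle^{-N}$, resp.\ $\langle 2^k(|t-s|-|x-y|)\rangle^{-N}$. Multiplying the angular and radial factors with the radial measure $2^k$ produces the volume prefactor $2^{4k+3l}$ and the asserted decay.

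The hard part is not the oscillatory-integral machinery, which is classical, but verifying that $\psi_\pm$ genuinely behaves as a lower-order perturbation at \emph{every} scale entering the argument. The delicate point, flagged after \eqref{defpsi}, is that the angular derivatives of $\psi_\pm$ are not bounded uniformly in the $A$-frequencies; on the thin boxes of part b) one differentiates $b$ on an angular scale $2^{k+l}$ which may lie well below the scale of smoothness of $\psi_\pm$, and it is exactly the precisely calibrated bounds \eqref{symbol3} — with their $\langle (t-s,x-y)\rangle^{|\alpha-\frac12|\sigma}$ growth and their cutoff $\alpha\le\sigma^{-1}$ — that guarantee these losses are of the form $\langle (t-s,x-y)\rangle^{O(\sigma)}$ and hence absorbed, provided $\sigma$ is chosen small compared with $N$ and with the (finite) number of integrations by parts. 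The second, purely bookkeeping, obstacle is to arrange the angular/radial splitting so that the two decay factors in \eqref{wave-decay-cube}--\eqref{off-angle} actually multiply in every sub-regime — in particular when $2^{2(k+l)}|t-s|\gtrsim 1$, so that the curvature of the cone spreads the radial phase derivative across the box — which is handled by first localizing $\omega$ to a cap of width $\min(2^{k+l},\langle t-s\rangle^{-1/2})$ before performing the radial integration by parts.
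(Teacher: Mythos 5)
Your proposal is correct and follows essentially the same route as the paper: pass to polar coordinates, exploit the degree-zero homogeneity of $\psi_\pm$ so that the radial integral is an exact Fourier transform producing the $\langle |t-s|-|x-y|\rangle^{-N}$ (resp.\ $\langle 2^k(|t-s|-|x-y|)\rangle^{-N}$) factor, run stationary phase in the angular variables with the $\psi_\pm$-contribution controlled as a $T^{O(\sigma)}$ perturbation via \eqref{symbol3}, handle the off-cone region by nonstationary-phase integration by parts, and get \eqref{off-angle} from angular nonstationarity once $|t-s|\gtrsim 2^{-2(k+l)}$ and brute force below that threshold. The only differences (absorbing the gauge factor into the amplitude rather than the phase, and the extra cap-localization bookkeeping) are cosmetic relative to the paper's argument.
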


\begin{proof}
  a) Away from a conic neighbourhood of the cone $\{ |t-s| =
  \pm|x-y|\}$ the phase
  \[
  \Psi = \pm (t-s)|\xi| + \xi(x-y) - (\psi_{\pm}(t,x,\xi)-
  \psi_{\pm}(s,y,\xi))
  \]
  is nondegenerate due to \eqref{symbol3} with $|\alpha|=1$.  Hence
  repeated integration by parts yields
  \[
  | K^a(t,x,s,y) | \lesssim \la(t,x)-(s,y)\ra^{-N}, \qquad N \sim
  \sigma^{-1}
  \]
  Near the cone we need to be more careful. Denoting $T = |t-s|+|x-y|$
  and $R= |t-s|-|x-y|$, in suitable (polar) coordinates this takes the
  form
  \[
  K^a(t,x,s,y) = \int e^{-i( \psi_{\pm}(t,x,\xi') -
    \psi_\pm(s,y,\xi'))} e^{i R \xi_1} e^{i T \xi'^2} \tilde a(\xi)
  d\xi
  \]
  In $\xi_1$ (the former radial variable) this is a straight Fourier
  transform.  Given the bound \eqref{symbol3}, we can use stationary
  phase in $\xi'$.  While the $\xi$ derivatives of the $\psi_\pm$ part
  of the phase are not bounded, they only bring factors of $T^\sigma$,
  which is small enough not to affect the stationary phase ( this
  works up to $\sigma = \frac12$).  We obtain
  \[
  |K^a(t,x,s,y)| \lesssim T^{-\frac32} (1+ R)^{-N}
  \]

  b) Away from the cone the estimate follows easily as above since the
  phase is nondegenerate.  Near the cone we use again polar
  coordinates to express our oscillatory integral as above,
  \[
  K^C(t,x,s,y) = \int e^{-i( \psi_{\pm}(t,x,\xi') -
    \psi_\pm(s,y,\xi'))} e^{i R \xi_1} e^{i T \xi'^2} \tilde a_C(\xi)
  d\xi
  \]
  where $a_C$ is a bump function in a rectangle on the $2^k$ scale in
  the radial variable $\xi_1$ and on the $2^{k+l}$ scale in the
  angular variable $\xi'$.  Then we can separate variables in
  $(\xi_1,\xi')$. We note that this rectangle need not be centered at
  $\xi'=0$, though this is the worst case.  In $\xi_1$ this is again a
  Fourier transform, so we get the factor
  \[
  2^k \langle 2^k R \rangle^{-N}
  \]
  In $ \xi'$ we can use stationary phase to get the factor
  \[
  2^{3(k+l)} \langle 2^{2(k+l)T} \rangle^{-\frac32}
  \]
  The bound \eqref{wave-decay-cube} follows by multiplying these two
  factors.

  Finally, the estimate \eqref{off-angle} corresponds to the case when
  $a_C$ is supported in $|\xi'| > 2^l$ in the above representation.
  If $T < 2^{-2(k+l)}$ then there are no oscillations in $\xi'$ on the
  $2^{k+l}$ scale, and we just use the brute force estimate. For $T >
  2^{-2(k+l)}$ the phase is nonstationary in $\xi'$, and we obtain the
  factor
  \[
  2^{3(k+l)} (1+2^{2(k+l)} T)^{-N}
  \]
\end{proof}

While the above proposition contains all the oscillatory integral
estimates which are needed, it does not apply directly to the
frequency localized operators $e^{-i\psi_{\pm}}_{<0}(t,x,D)$ and
$e^{i\psi_{\pm}}_{<0}(D,y,s)$.  For that we need to produce similar
estimates for the kernels $K_{a,<0}$ of the operators
\[
T^a_{<0} = e^{-i\psi_{\pm}}_{<0}(t,x,D) a(D) e^{\pm i(t-s)|D|}
e^{i\psi_{\pm}}_{<0}(D,y,s)
\]
The transition to such operators is made in the next
\begin{proposition}\label{kernel_prop_loc}
  a) Assume that $a$ is a smooth bump on the unit scale. Then the
  kernel $K^a_{<0}$ satisfies
  \begin{equation} \label{wave-decay0} |K^a_{<0}(t,x;s,y)| \lesssim
    \langle t-s \rangle^{-\frac32} \langle |t-s| - |x-y| \rangle^{-N}
  \end{equation}
  In addition, the following fixed time bound holds:
  \begin{equation} \label{almost-ortho} |K^a_{<0}(t,x;t,y)- \check
    a(x-y)| \leq \epsilon |\log \epsilon|
  \end{equation}

  b) Let $a = a_{C}$ be a bump function on a rectangular region $C$ of
  size $2^{k} \times (2^{k+l})^3$ with $k \leq l \leq 0$. Then
  \begin{equation}\label{wave-decay-cube0}
    |K^a_{<0} (t,x;s,y)| \lesssim  2^{4k+3l} \langle 2^{2(k+l)} (t-s) \rangle^{-\frac32}
    \langle 2^k (|t-s| - |x-y|) \rangle^{-N}
  \end{equation}

  c) Let $a = a_{C}$ be a bump function on a rectangular region $C$ of
  size $1 \times (2^{l})^3$ with $ l \leq 0$.  Let $\omega \in \S^3$
  be at angle $l$ from $C$. Then we have the characteristic kernel
  bound
  \begin{equation}\label{off-angle0}
    \begin{split}
      |K^a_{<0}(t,x;s,y)| \lesssim 2^{3l} \langle 2^{2l} |t-s|
      \rangle^{-N} & \langle 2^l |x'-y'| \rangle^{-N}
      \\
      t-s = (x-y)\cdot \omega
    \end{split}
  \end{equation}

\end{proposition}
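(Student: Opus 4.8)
The plan is to derive every bound in Proposition~\ref{kernel_prop_loc} from its unlocalized counterpart in Proposition~\ref{kernel_prop} by exhibiting the frequency‑truncated symbols as unit‑scale averages of the exact ones. Writing $m_{<0}$ for the Schwartz, unit‑scale kernel of the space‑time projector $P^{t,x}_{<0}$, we have $e^{-i\psi_\pm}_{<0}(t,x,\xi)=\int m_{<0}(t-t',x-x')\,e^{-i\psi_\pm(t',x',\xi)}\,dt'dx'$, and likewise for $e^{i\psi_\pm}_{<0}(\xi,y,s)$. Substituting both into the oscillatory integral defining $K^a_{<0}$ and extracting the shifts through the factors $e^{\pm i(t-s)|\xi|}$ and $e^{i\xi\cdot(x-y)}$ yields
\[
K^a_{<0}(t,x;s,y)\ =\ \int m_{<0}(t-t',x-x')\, m_{<0}(s-s',y-y')\, K^{\tilde a_{\tau_0,z_0}}(t',x';s',y')\, dt'dx'ds'dy',
\]
where $\tau_0=(t-t')-(s-s')$, $z_0=(x-x')-(y-y')$, and $\tilde a_{\tau_0,z_0}(\xi)=a(\xi)e^{\pm i\tau_0|\xi|+i\xi\cdot z_0}$. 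Uniformly in $\tau_0,z_0$ this is a symbol of exactly the type treated in Proposition~\ref{kernel_prop} — a unit bump in case (a), a bump on the rectangle $C$ in case (b) — whose relevant seminorms grow at most polynomially in $1+|\tau_0|+|z_0|$. The decay profiles in \eqref{wave-decay} and \eqref{wave-decay-cube} live on the scales $1$, $2^{-k}\geq1$ and $2^{-2(k+l)}\geq1$, all coarser than the unit scale of $m_{<0}$; hence the polynomial loss and the $O(1)$ displacements $\tau_0,z_0$ are absorbed by the rapid decay of the two $m_{<0}$ factors, and integrating the estimates of Proposition~\ref{kernel_prop}(a),(b) against $m_{<0}\,m_{<0}$ reproduces \eqref{wave-decay0} and \eqref{wave-decay-cube0}.

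For the almost‑orthogonality estimate \eqref{almost-ortho} we set $t=s$ and subtract $\check a(x-y)=\int a(\xi)e^{i\xi\cdot(x-y)}\,d\xi$, so that
\[
K^a_{<0}(t,x;t,y)-\check a(x-y)\ =\ \int \big(e^{-i\psi_\pm}_{<0}(t,x,\xi)\,e^{i\psi_\pm}_{<0}(\xi,y,t)-1\big)\, a(\xi)\, e^{i\xi\cdot(x-y)}\, d\xi .
\]
Since $\psi_\pm=\psi_{<0,\pm}$ has unit‑scale frequency but only $\epsilon$‑small first derivatives in $(t,x)$ (sum \eqref{defpsi} over $j<0$), the high‑frequency tails satisfy $e^{\mp i\psi_\pm}_{<0}=e^{\mp i\psi_\pm}+O_{L^\infty_{t,x}}(\epsilon)$, with analogous bounds after $\xi$‑differentiation; the bracket is thus $e^{i(\psi_\pm(t,y,\xi)-\psi_\pm(t,x,\xi))}-1+O(\epsilon)$, which by \eqref{symbol1}--\eqref{symbol2} is $O(\epsilon\log\langle x-y\rangle)+O(\epsilon)$. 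For $|x-y|\leq\epsilon^{-1}$ this is $\lesssim\epsilon|\log\epsilon|$, and $\int|a|\lesssim1$ closes the estimate on that range; for $|x-y|>\epsilon^{-1}$ we integrate by parts repeatedly in $\xi$ against $e^{i\xi\cdot(x-y)}$, each derivative of the phase difference costing only a factor $\langle x-y\rangle^{O(\sigma)}$ by \eqref{symbol3} with $\sigma\ll\tfrac12$, so we gain decay $\langle x-y\rangle^{-N}$ beating the logarithm and that range contributes $\ll\epsilon$. Summing gives \eqref{almost-ortho}.

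Finally, for the characteristic kernel bound \eqref{off-angle0} the averaging identity again reduces matters to the unlocalized kernel $K^{\tilde a_C}$ with $\tilde a_C$ a bump on a rectangle of size $1\times(2^l)^3$ at angle $2^l$ from $\omega$; this is a refinement of \eqref{off-angle} in the case $k=0$, and is the step I expect to be the main obstacle, since — unlike (a) and (b) — it is not subsumed by Proposition~\ref{kernel_prop} by averaging alone but requires a dedicated oscillatory integral analysis on the light cone. Passing to polar coordinates $\xi=\rho\,\eta$, $\eta\in\S^3$ within angle $\lesssim 2^l$ of $\omega$, on the characteristic $t-s=(x-y)\cdot\omega$ (with the sign in $e^{\pm i(t-s)|D|}$ matched to this hyperplane) the free phase $\pm(t-s)\rho+\rho\,\eta\cdot(x-y)$ reduces to $\rho\,(\eta-\omega)\cdot(x-y)$, and splitting $x-y=((x-y)\cdot\omega)\,\omega+(x'-y')$ with $x'-y'=\Pi_{\omega^\perp}(x-y)$ gives $(\eta-\omega)\cdot(x-y)=-\tfrac12|\eta-\omega|^2(t-s)+(\eta-\omega)\cdot(x'-y')$, of size $O(2^{2l}|t-s|)+O(2^l|x'-y'|)$. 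The unit‑scale radial ($\rho$) integration is a Fourier transform, and a stationary/non‑stationary phase analysis in $\eta$ over the $2^l$‑cap — in which the quadratic term of the phase has size $2^{2l}|t-s|$ while the offset of the critical point from $C$ is measured by $2^l|x'-y'|$ — produces the product decay $\langle 2^{2l}|t-s|\rangle^{-N}\langle 2^l|x'-y'|\rangle^{-N}$; the angular measure of $C$ supplies the amplitude $2^{3l}$; and, exactly as in Proposition~\ref{kernel_prop}, the non‑small phase $\psi$ enters only through $\xi$‑derivatives that cost at most $\langle\cdot\rangle^{O(\sigma)}$ and are harmless for $\sigma\ll\tfrac12$. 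The delicate regime is the one in which $x-y$ is nearly aligned with $C$, so that $|x'-y'|\lesssim 2^l|t-s|$ and the $\eta$‑critical point lies inside or near $C$: there one must extract the transverse‑separation decay $\langle 2^l|x'-y'|\rangle^{-N}$, which the cone factor $\langle|t-s|-|x-y|\rangle^{-N}$ of \eqref{wave-decay-cube} is too weak to supply, so a genuinely sharper oscillatory estimate is needed. Transferring back through $m_{<0}\,m_{<0}$ — whose unit scale is finer than the $2^{-l},2^{-2l}$ scales present — yields \eqref{off-angle0}; a subsidiary but pervasive point, handled uniformly by the averaging device rather than by direct manipulation of the truncated symbols, is to check that the unit‑scale space‑time truncation never conflicts with the anisotropic scales $2^{-k},2^{-l},2^{-2(k+l)}$ of the cube decompositions.
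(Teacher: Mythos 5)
Your handling of parts (a), (b) and of \eqref{almost-ortho} is essentially the paper's own argument: the paper likewise writes $e^{\pm i\psi_\pm}_{<0}$ as a unit-scale average $\int m(z)\,e^{\pm i T_z\psi_\pm}\,dz$ over translates, obtains $K^a_{<0}=\int T_zT_w K^{a(z,w)}\,m(z)m(w)\,dz\,dw$ with $a(z,w)(\xi)=a(\xi)e^{i(\pm|\xi|,\xi)\cdot(z-w)}$, and absorbs the polynomial growth of the symbol seminorms into the rapid decay of $m$; for \eqref{almost-ortho} it applies \eqref{symbol2} directly to the translated phases and invokes \eqref{wave-decay0} for large $|x-y|$, whereas you instead assert that the frequency truncation perturbs the symbol only by $O_{L^\infty}(\epsilon)$ --- plausible via the mechanism behind \eqref{remainder_est}, but a step you state rather than prove.

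The genuine gap is in part (c). You claim \eqref{off-angle0} is not obtainable from Proposition \ref{kernel_prop} by averaging and that a ``genuinely sharper'' oscillatory estimate is needed in the regime where $x-y$ is nearly aligned with $C$ --- and you leave exactly that regime unresolved. In fact the geometry is the other way around. If $\angle(x-y,\omega)\approx 2^l$ (i.e.\ $x-y$ aligned with $C$), then on the characteristic $2^l|x'-y'|\approx 2^{2l}|t-s|\approx \big||t-s|-|x-y|\big|$, so \eqref{wave-decay-cube} applied with a larger $N$ already produces the product decay $\langle 2^{2l}|t-s|\rangle^{-N}\langle 2^l|x'-y'|\rangle^{-N}$; no new estimate is needed there. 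The dangerous regime is the opposite one, $\angle(x-y,\omega)\ll 2^l$, where $|t-s|\approx |x-y|$ and the cone factor in \eqref{wave-decay-cube} degenerates to $O(1)$ while $2^{2l}|t-s|$ may be huge; but there $\angle(x-y,C)\approx 2^l$, so the angular-separation bound \eqref{off-angle} --- which your sketch never invokes, even though it was proved in Proposition \ref{kernel_prop} precisely for this purpose --- supplies the required $\langle 2^{2l}|t-s|\rangle^{-N}$ decay. This is how the paper proves (c): a case comparison of $\big||x-y|-|t-s|\big|$ and $|(t-s)-(x-y)\cdot\omega|$ against $2^l|x'-y'|+2^{2l}|t-s|$ reduces everything to \eqref{wave-decay-cube} and \eqref{off-angle}, with no new stationary-phase work. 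A second point your transfer step glosses over: after averaging, the shifted arguments no longer satisfy $t-s=(x-y)\cdot\omega$, so the unlocalized bound must be proved off the characteristic with a polynomial tolerance; the paper does this by establishing $|K^a|\lesssim 2^{3l}\langle 2^{2l}|t-s|\rangle^{-N}\langle 2^l|x'-y'|\rangle^{-N}\big(1+|(t-s)-(x-y)\cdot\omega|\big)^{10N}$ and letting the rapid decay of $m$ absorb the last factor, whereas your argument assumes the characteristic relation for the inner kernel as well. As written, part (c) of your proposal does not close.
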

\begin{proof}
  a) We represent the symbol $e_{<0}^{\pm i\psi_{\pm}}$ as
  \begin{equation}
    e_{<0}^{\pm i\psi_{\pm}} = \int m(z)e^{\pm iT_z\psi_{\pm}}\,dz
  \end{equation}
  where $m(z)$ is an integrable bump function on the unit scale and
  $T_z$ denotes translation in the direction $z$, with $z$
  representing space-time coordinates. Since the wave equation is
  invariant to translations, the symbol $e^{\pm iT_z\psi_{\pm}}$ is of
  the same type as $e^{\pm i\psi_{\pm}}$.  Using this representation
  for both $\psi_{\pm}$ exponentials, the kernel $K^a_{<0}$ can be
  expressed in the form
  \[
  \begin{split}
    K^a_{<0}(t,x,s,y)= \ &\int \int e^{-i T_z \psi_{\pm}}(t,x,\xi)
    a(\xi) e^{i(\pm |\xi|, \xi) \cdot (t-s,x-y)}
    e^{i T_{w} \psi_{\pm}}(s,y,\xi)\  d\xi  \ m(z) m(w) \ dz dw \\
    = & \ \int T_z T_w \int e^{-i \psi_{\pm}}(t,x,\xi) a(\xi) e^{i(\pm
      |\xi|, \xi) \cdot (z-w)} e^{i \psi_{\pm}}(s,y,\xi)\ d\xi \ m(z)
    m(w) \ dz dw
  \end{split}
  \]
  Denoting $a(z,w) (\xi)= a(\xi) e^{i(\pm |\xi|, \xi) \cdot (z-w)}$,
  we can express the above kernel in terms of the kernels $K^a$ in the
  previous proposition, namely
  \begin{equation}\label{ka0}
    K^a_{<0}(t,x,s,y)=  \int T_z T_w  K^{a(z,w)}(t,x,s,y)   \ m(z) m(w) \ dz dw
  \end{equation}

  To prove the bound \eqref{wave-decay0} we use \eqref{wave-decay},
  together with the additional observation that the implicit constant
  in \eqref{wave-decay} depends on finitely many seminorms of $a$ (at
  most 8, to be precise) which we denote by $||| a|||$. Then
  \[
  ||| a(z,w)||| \lesssim (1+|z|+|w|)^N
  \]
  However, this growth is compensated by the rapid decay of $m$,
  therefore the bound \eqref{wave-decay} for $K^a$ transfers directly
  to $K^a_{<0}$ in \eqref{wave-decay0}.

  To prove \eqref{almost-ortho} we use the same representation as
  above to write
  \[
  K^a_{<0}(t,x,t,y) - \check a(x-y) = \int \int [e^{-i (T_z
    \psi_{\pm}(t,x,\xi) - T_{w} \psi_{\pm}(t,y,\xi))}-1] a(\xi)
  e^{i\xi(x-y)} \ d\xi \ m(z) m(w) \ dz dw
  \]
  By \eqref{symbol2} we have
  \[
  | T_z \psi_{\pm}(t,x,\xi) - T_{w} \psi_{\pm}(t,y,\xi))| \lesssim
  \epsilon \log (1+ |z|+|w|+|x-y|)
  \]
  which yields
  \[
  \begin{split}
    |K^a_{<0}(t,x,t,y) - \check a(x-y) | \lesssim &\ \epsilon \int
    \log (1+ |z|+|w|+|x-y|) |m(z)||m(w)| dz dw \\ \lesssim &\ \epsilon
    \log (2+|x-y|)
  \end{split}
  \]
  This suffices if $\log (2 + |x-y|) \lesssim |\log \epsilon|$. But
  for larger $|x-y|$ we can use \eqref{wave-decay0} directly.

  b) Using the representation \eqref{ka0}, the bound
  \eqref{wave-decay-cube0} follows from \eqref{wave-decay-cube}
  exactly by the same argument as in case (a).

  c) Using the representation \eqref{ka0}, the same argument also
  yields the bound \eqref{wave-decay} provided we have the following
  estimate for $K^a$:
  \[
  |K^a(t,x,s,y)| \lesssim 2^{3l} \langle 2^{2l} |t-s| \rangle^{-N}
  \langle 2^{l} |x'-y'| \rangle^{-N} (1+|(t-s)- (x-y) \cdot
  \omega|)^{10 N}
  \]
  To see that this is true, we consider three cases:

  (i) If $|t-s| \lesssim 2^{-2l}$ then \eqref{wave-decay-cube} applies
  directly.

  (ii) If $|t-s| \gg 2^{-2l}$ but $ ||x-y|-|t-s|| \gtrsim 2^{l}
  |x'-y'|+ 2^{2l}|t-s|$ then \eqref{wave-decay-cube} still suffices.

  (iii) If $|t-s| \gg 2^{-2l}$ and $|(t-s)- (x-y) \cdot \omega|)|
  \gtrsim 2^{l} |x'-y'|+ 2^{2l}|t-s| $ then \eqref{wave-decay-cube}
  also applies.

  (iv) Finally, if $|t-s| \gg 2^{-2l}$, but $ ||x-y|-|t-s|| \ll 2^{l}
  |x'-y'|+ 2^{2l}|t-s| $ and $|(t-s)- (x-y) \cdot \omega|)| \ll 2^{l}
  |x'-y'|+ 2^{2l}|t-s| $ then we must have $\angle (x-y,\omega) \ll
  2^{l}$, which implies that $\angle (x-y,C) \approx 2^{l}$.  Then
  \eqref{off-angle} applies.

\end{proof}

\subsection{Fixed-time $L^2$ estimates}

The following is an important application of the previous theorem
which is at the heart of our parametrix construction: it proves the
$L^2$-part of \eqref{N_mapping}, \eqref{dtS_mapping} as well as that
of \eqref{S_mapping}.

\begin{proposition}
  The following fixed time $L^2$ estimates hold for functions
  localized at frequency $1$:
  \begin{align}
    e_{<k}^{\pm i\psi_{\pm}}(t,x,D):\quad & L^2 \rightarrow L^2,\, \label{l2}\\
    e_{<0}^{-i\psi_{\pm}}(t,x,D) e_{<0}^{i\psi_{\pm}}(D,y,s) -I :
    \quad & L^2\rightarrow \epsilon^{\frac{N-4}{N}} \log \epsilon \
    L^2
    \label{l2-ortho}\\
    \partial_{x,t} e^{\pm i\psi_{\pm}}_{<0}(t,x,D) : \quad & L^2 \to
    L^2 \label{dtl2}
  \end{align}

\end{proposition}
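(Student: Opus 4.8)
The plan is to deduce all three bounds from the fixed--time ($t=s$, hence no half--wave propagator) specializations of the kernel estimates in Proposition~\ref{kernel_prop_loc}, combined with a $TT^*$ argument and Schur's test. Since every function in sight is localized at frequency $1$ we may freely insert a smooth bump $a(D)$ at frequency $1$ on the side facing the input, and for \eqref{l2} we first use the scaling symmetry of the construction (of $A^{free}$ and of the phase \eqref{defpsi}) to reduce to $k=0$. For \eqref{l2} set $T=e^{\pm i\psi_\pm}_{<0}(t,x,D)\,a(D)$; since $\psi_\pm$ is real and the space--time truncation $(\cdot)_{<0}$ commutes with taking adjoints, $T^*=a(D)\,e^{\mp i\psi_\pm}_{<0}(D,y,t)$, so that
\[
TT^*\ =\ e^{\pm i\psi_\pm}_{<0}(t,x,D)\,a(D)^2\,e^{\mp i\psi_\pm}_{<0}(D,y,t)\ .
\]
This is precisely the operator $T^a_{<0}$ of Proposition~\ref{kernel_prop_loc}(a) at $t=s$, with $\psi_\pm$ replaced by $-\psi_\pm$ and $a$ by $a^2$ --- harmless changes, since the symbol bounds \eqref{symbol1}--\eqref{symbol3} do not see the sign of $\psi$ and $a^2$ is still a unit--scale bump. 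Hence \eqref{wave-decay0} at $t=s$ gives the rapidly decaying kernel bound $|K^{a^2}_{<0}(t,x;t,y)|\lesssim\langle|x-y|\rangle^{-N}$, and Schur's test yields $\|TT^*\|_{L^2\to L^2}\lesssim 1$, hence $\|T\|_{L^2\to L^2}\lesssim 1$, which is \eqref{l2}.

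For \eqref{l2-ortho}, inserting $a(D)$ the operator in question has kernel $K^a_{<0}(t,x;t,y)-\check a(x-y)$ (taking $a\equiv 1$ near $|\xi|=1$, so that $a(D)$ is the identity on the relevant frequencies), for which we have two competing estimates: the pointwise smallness $|K^a_{<0}(t,x;t,y)-\check a(x-y)|\lesssim\epsilon|\log\epsilon|$ from \eqref{almost-ortho}, together with the decay $|K^a_{<0}(t,x;t,y)|+|\check a(x-y)|\lesssim\langle|x-y|\rangle^{-N}$ from \eqref{wave-decay0} and the rapid decay of $\check a$. Splitting the Schur integral at $|x-y|=R$ gives the bound $\lesssim R^4\,\epsilon|\log\epsilon|+R^{4-N}$; the choice $R=\epsilon^{-1/N}$ optimizes this to $\lesssim\epsilon^{\frac{N-4}{N}}|\log\epsilon|$, which is \eqref{l2-ortho}.

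For \eqref{dtl2}, note first that $\partial_t$ commutes with $(\cdot)_{<0}$, so $\partial_t e^{\pm i\psi_\pm}_{<0}(t,x,D)$ is the operator with symbol $(\pm i\,\partial_t\psi_\pm\, e^{\pm i\psi_\pm})_{<0}$, while $\partial_{x_j}e^{\pm i\psi_\pm}_{<0}(t,x,D)=e^{\pm i\psi_\pm}_{<0}(t,x,D)\,\partial_{x_j}$ --- bounded on frequency--$1$ $L^2$ by \eqref{l2} --- plus the operator with symbol $(\pm i\,\partial_{x_j}\psi_\pm\, e^{\pm i\psi_\pm})_{<0}$. Summing the pointwise bounds for $\partial_{t,x}\psi^{(\theta)}_j$ established in the proof of the symbol bounds lemma over dyadic angles, and then over $j<0$ using Cauchy--Schwarz in $j$ together with $\|A[0]\|_{\dot H^1\times L^2}\lesssim\epsilon$, gives $|\partial_{t,x}\psi_{<0}(t,x,\xi)|\lesssim\epsilon$, and $\partial_{t,x}\psi_{<0}$ remains localized at space--time frequency $\ll1$. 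Running the $TT^*$ computation from \eqref{l2} with $S=(\partial_{t,x}\psi_\pm\,e^{\pm i\psi_\pm})_{<0}(t,x,D)\,a(D)$, the kernel of $SS^*$ is the same oscillatory integral that produces \eqref{wave-decay0}, now carrying the two extra amplitude factors $\partial_{t,x}\psi_\pm(t,x,\xi)$ and $\partial_{t,x}\psi_\pm(t,y,\xi)$; these are bounded and enjoy the same type of controlled $\xi$--behaviour (governed by the increment bound \eqref{symbol3}) as $\psi$ itself, so they do not disrupt the integration by parts and stationary phase, the rapidly decaying kernel bound survives, and Schur's test gives $\|S\|_{L^2\to L^2}\lesssim1$, hence \eqref{dtl2}.

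The one genuinely delicate feature throughout is that the phases $\psi_\pm$ are not themselves bounded --- only their space--time increments are log--bounded --- and their $\xi$--derivatives can be large, so $e^{\pm i\psi_\pm}$ cannot be treated as a classical symbol and a direct Calder\'on--Vaillancourt argument is unavailable. What makes the argument work, and what is already packaged into Proposition~\ref{kernel_prop_loc} via the representation $e^{\pm i\psi_\pm}_{<0}=\int m(z)\,e^{\pm iT_z\psi_\pm}\,dz$ together with the increment bounds \eqref{symbol1}--\eqref{symbol3}, is that in each $TT^*$ kernel only the difference $\psi_\pm(t,x,\xi)-\psi_\pm(t,y,\xi)$ appears, so the oscillatory--integral analysis is stable. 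The points left to verify in detail are that the extra $\partial_{t,x}\psi$ amplitudes in \eqref{dtl2} are indeed harmless, and that the Schur optimization in \eqref{l2-ortho} lands exactly on the exponent $\frac{N-4}{N}$.
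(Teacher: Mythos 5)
Your treatment of \eqref{l2} and \eqref{l2-ortho} is essentially the paper's proof: a $TT^*$ reduction to the fixed-time kernel bounds of Proposition~\ref{kernel_prop_loc}, and for \eqref{l2-ortho} exactly the interpolation between \eqref{almost-ortho} and the $\langle x-y\rangle^{-N}$ decay, with the Schur split at $R=\epsilon^{-1/N}$ reproducing the exponent $\frac{N-4}{N}$. Where you genuinely diverge is \eqref{dtl2}: the paper does not re-run any oscillatory-integral analysis there. It discards the $<0$ truncation by translation invariance, writes $\partial_{t,x}e^{\pm i\psi_\pm}=\pm i\,\partial_{t,x}\psi_\pm\,e^{\pm i\psi_\pm}$, observes via the decomposable bounds (\eqref{psi_decomp1}/\eqref{psi_decomp3}-type, summed over $k<0$) that $\partial_{t,x}\psi_\pm$ lies in $DL^\infty(L^\infty)$ with norm $\lesssim\epsilon$, and then simply disposes of this factor using the decomposable calculus \eqref{decomp1}, reducing to \eqref{l2}. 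Your alternative --- carrying $\partial_{t,x}\psi_\pm(t,x,\xi)$ and $\partial_{t,x}\psi_\pm(t,y,\xi)$ as amplitudes in the $TT^*$ kernel --- does work, but the harmlessness you defer is not quite a consequence of \eqref{symbol3} (which controls increments of $\psi$ itself); what you actually need is boundedness of $\partial_\xi^\alpha\partial_{t,x}\psi_{<0}$ for $|\alpha|$ up to the order of the integrations by parts, which follows from the dyadic bounds $|\partial_{t,x}\partial_\xi^\alpha\psi_j^{(\theta)}|\lesssim 2^j\theta^{\frac12-|\alpha|}\|A_j[0]\|_{\dot H^1\times L^2}$ summed over $\theta>2^{\sigma j}$ and $j<0$ (convergent for $|\alpha|\lesssim\sigma^{-1}$). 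The paper's disposability argument packages this for free, which is what it buys; your route avoids invoking the decomposable calculus at the price of re-verifying symbol bounds inside the kernel estimate. One further small caveat: your reduction of \eqref{l2} from $e_{<k}$ to $k=0$ ``by scaling'' is loose, since rescaling also moves the frequency-$1$ input localization and the fixed construction $\psi_\pm=\psi_{<0,\pm}$; the paper instead writes $e_{<k}^{\pm i\psi_\pm}=\int m_k(z)\,e^{\pm iT_z\psi_\pm}\,dz$ with an $L^1$-normalized bump at scale $2^{-k}$ and uses translation invariance of the phase class, which gives uniformity in $k$ directly --- the same averaging device you already cite for Proposition~\ref{kernel_prop_loc}, so this is easily repaired.
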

\begin{proof}
  a) By the estimate \eqref{wave-decay} with $s=t$, the $TT^*$ type
  operator
  \[
  e^{\pm i\psi_{\pm}}(t,x,D) P_0^2 e^{\mp i\psi_{\pm}}(D,y,t)
  \]
  has an integrable kernel, so it is $L^2$ bounded.  Therefore $e^{\pm
    i\psi_{\pm}}(t,x,D) P_0 $ and its adjoint are $L^2$ bounded. To
  accomodate symbol localizations we observe that
  \[
  e_{<k}^{\pm i\psi_{\pm}} = \int m_k(z)e^{\pm iT_z\psi_{\pm}}_{<0}
  \,dz
  \]
  where $m(z)$ is an integrable bump function on the $2^{-k}$ scale
  and $T_z$ denotes translation in the direction $z$, with $z$
  representing space-time coordinates. Since the wave equation is
  invariant to translations, the symbol $e^{\pm iT_z\psi_{\pm}}$ is of
  the same type as $e^{\pm i\psi_{\pm}}$ and its left and right
  quantizations are also $L^2$ bounded.  Thus the bound \eqref{l2}
  follows by integration with respect to $z$.

  b) For the estimate \eqref{l2-ortho} we note that the kernel of
  $e_{<0}^{-i\psi_{\pm}}(t,x,D) a(D) e_{<0}^{i\psi_{\pm}}(D,y,t) -
  a(D)$ is given by $K^a_{<0}(t,x,t,y) - \check a(x-y)$. Combining
  \eqref{wave-decay} and \eqref{almost-ortho} we get
  \[
  |K^a_{<0}(t,x,t,y) - \check a(x-y)| \lesssim \min \{ \epsilon |\log
  \epsilon|, |x-y|^{-N}\}
  \]
  The integral of the expression on the right is about
  $\epsilon^{\frac{N-4}{N}} |\log \epsilon|$, therefore the conclusion
  follows.

  c) By translation invariance we discard the $<0$ symbol
  localization, and show that $\partial_{x,t} e^{\pm
    i\psi_{\pm}}(t,x,D) P_0$ is $L^2$ bounded.  We have
  \[
\partial_{x,t} e^{\pm i\psi_{\pm}} =\pm \partial_{x,t}\psi_{\pm}
  e^{\pm i\psi_{\pm}}.
\]
 By \eqref{psi_decomp3} we have
  $\partial_{x,t}\psi_{\pm} \in DL^\infty(L^\infty)$ therefore we can
  dispose of it and use the $L^2$ boundedness of $e^{\pm
    i\psi_{\pm}}(t,x,D) P_0$.

\end{proof}


\subsection{Modulation localized estimates}

The next order of business is to show that the fixed time $L^2$ bounds
for $e^{\pm i\Psi}_{<0}$ drastically improve to space-times $L^2(L^2)$
bounds if one selects a fixed frequency in the symbol. Precisely,


\begin{proposition}\label{mod_loc_prop}
  For $l\leqslant k'\pm O(1)$ one has the fixed frequency estimate:
  \begin{equation}
    \lp{Q_{l} e^{\pm i\Psi}_{k'}Q_{<0}P_0}
    {N^*\to X^{0,\frac{1}{2}}_1} \ \lesssim \  2^{\delta(l-k')}\epsilon
    \ . \label{kk'_mod_loc}
  \end{equation}
  In particular summing over all $(l,k')$ with $l\leqslant k$ and
  $k-O(1)\leqslant k'$ for a fixed $k\leqslant 0$ yields:
  \begin{equation}
    \lp{Q_{<k} (e^{\pm i\Psi}_{<0} - e^{\pm i\Psi}_{<k-C})Q_{<0}P_0}
    {N^*\to X^{0,\frac{1}{2}}_1} \ \lesssim \ \epsilon \ . \label{mod_loc}
  \end{equation}
\end{proposition}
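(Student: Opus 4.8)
\textbf{Reducing \eqref{mod_loc} to \eqref{kk'_mod_loc}.} The plan is to first dispose of the summation. Writing $e^{\pm i\Psi}_{<0}-e^{\pm i\Psi}_{<k-C}=\sum_{k'\geqslant k-C}e^{\pm i\Psi}_{k'}$ and $Q_{<k}=\sum_{l<k}Q_l$, and noting that on the range $l<k\leqslant k'+C$ the hypothesis $l\leqslant k'\pm O(1)$ of \eqref{kk'_mod_loc} is met, the operator norm on the left of \eqref{mod_loc} is at most
$\sum_{k'\geqslant k-C}\sum_{l<k}2^{\delta(l-k')}\epsilon\lesssim \epsilon\sum_{k'\geqslant k-C}2^{\delta(k-k')}\lesssim\epsilon$, as claimed. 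So everything reduces to \eqref{kk'_mod_loc}. Fix $u=Q_{<0}P_0u$, so that $u$ is band limited to the unit space--time frequency ball; we must estimate $2^{l/2}\lp{Q_le^{\pm i\Psi}_{k'}u}{L^2_{t,x}}$ by $2^{\delta(l-k')}\epsilon\lp{u}{N^*}$.

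\textbf{Reduction to the linear term in $\psi$.} Expand $e^{\pm i\psi_\pm}=\sum_{n\geqslant 0}\frac{(\pm i\psi_\pm)^n}{n!}$ and apply a Littlewood--Paley trichotomy to each $\psi_\pm^n$. Modulo acceptable errors the frequency $\sim 2^{k'}$ output is the composition $\psi_{k',\pm}(t,x,D)\circ e^{\pm i\Psi}_{<k'-C}(t,x,D)$: the discrepancy between this composition and the operator with product symbol $(\psi_{k',\pm}e^{\pm i\Psi}_{<k'-C})$ is controlled by the Decomposable product calculus \eqref{spec_decomp1}--\eqref{spec_decomp2} together with \eqref{psi_decomp1}, \eqref{psi_decomp3} (each $\nabla_x$ landing on $\psi$ pays $2^{k'}$, which is harmless), while the genuinely high$\times$high pieces $\sum_{k''\geqslant k'-C}\psi_{k'',\pm}\cdots$ carry two comparable--frequency factors, each contributing a decomposable norm of size $\lesssim 2^{k''}\epsilon$, hence summing with room to spare. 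Since $e^{\pm i\Psi}_{<k'-C}(t,x,D)$ has a modulus--one symbol whose $\nabla_{t,x}$--derivatives lie in $DL^\infty(L^\infty)$ with size $\lesssim\epsilon$ (cf.\ \eqref{psi_decomp1}, \eqref{symbol1}), the Decomposability Lemma shows it preserves all the relevant norms with $O(1)$ constant, and in particular the frequency $\sim 1$, modulation $\ll 1$ localization of $u$ up to rapidly decaying tails. Thus it suffices to prove $2^{l/2}\lp{Q_l\,\psi_{k',\pm}(t,x,D)\,u}{L^2_{t,x}}\lesssim 2^{\delta(l-k')}\epsilon\lp{u}{N^*}$ for $u=Q_{<0}P_0u$.

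\textbf{Null geometry and the decomposable estimate.} Because $A^{free}$, hence $\psi_{k',\pm}$, is supported at modulation $\ll 2^{k'}$ on the cone $\{\pm\tau=|\xi|\}$, while $u=\sum_{l'\leqslant 0}Q_{l'}u$, the elementary identity $(1+2^{k'})-|\xi_0+\eta|\approx\tfrac12\,2^{k'}\angle(\xi_0,\eta)^2$, valid for $|\xi_0|\sim 1$ and $|\eta|\sim 2^{k'}$, shows that the distance of the output frequency from the cone is $\approx|s_0+\tfrac12\,2^{k'}\angle(\xi_0,\eta)^2|$ with $|s_0|=2^{l'}$; hence $l'\leqslant k'+O(1)$, and for the $Q_l$ output to survive the interaction angle $\angle(\xi_0,\eta)$ is confined to a narrow dyadic window determined by $l,l',k'$. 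This is the source of the gain. Accordingly one splits $\psi_{k',\pm}$ into angular sectors $\psi^{(\theta)}_{k'}$, $2^{\delta k'}<\theta\lesssim 1$ (when $2^{\frac12(l-k')}\lesssim 2^{\delta k'}$ the built--in cutoff already leaves nothing), and $u$ into angular caps of the matching size; on each piece one uses \eqref{psi_decomp2} (which supplies the gain $\theta^{\frac12-\frac2q-\frac3r}$ with prefactor $2^{-(\frac1q+\frac4r)k'}\epsilon$), the Decomposability Lemma to make $\psi^{(\theta)}_{k'}(t,x,D)$ a bounded operator, Bernstein in the output space--time frequency (the $Q_lP_0$ localization occupies a region of measure $\sim 2^l$, producing the $2^{l/2}$ of the $X^{0,\frac12}_1$ norm), and feeds $u$ through either the energy bound $\lp{u}{L^\infty L^2}\lesssim\lp{u}{N^*}$ or the $X^{0,\frac12}_\infty$ bound $2^{l'/2}\lp{Q_{l'}u}{L^2}\lesssim\lp{u}{N^*}$. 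Summing over $\theta$, over the angular sectors, and over $l'$ then yields \eqref{kk'_mod_loc}.

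\textbf{Main obstacle.} The crux is precisely this last step. Since $e^{\pm i\Psi}_{k'}$ is pseudodifferential and not a multiplier, the heuristic ``the interaction angle is pinned by the output modulation'' must be turned into a genuine operator statement, which forces one to track the angular--sector geometry through the quantization via the decomposable calculus rather than by naive frequency counting; and one must then choose the Strichartz exponents $(q,r)$ in \eqref{psi_decomp2} and organize the simultaneous sums over $\theta$, over angular sectors, and over the input modulation $l'$ so that the angular gains, the loss from restricting $u$ to a small cap, and the $2^{-l'/2}$ loss from the $X^{0,\frac12}_\infty$ norm combine into a strictly positive power of $2^{l-k'}$. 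A secondary nuisance is that $\psi_{<0}$ itself is not bounded, only its derivatives are (cf.\ \eqref{symbol1}), which is why in the reduction step one peels off $e^{\pm i\Psi}_{<k'-C}$ as a unit rather than working with $\psi_{<k'-C}$ directly.
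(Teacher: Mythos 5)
Your outline captures the right skeleton (isolate a term linear in $\psi_{k'}$, pin the interaction angle by the output modulation, run \eqref{psi_decomp2} through the Decomposability Lemma, sum), but the pivotal reduction step is set up at the wrong frequency threshold and does not survive scrutiny. You split the exponential at the symbol frequency, writing the main term as $\psi_{k',\pm}(t,x,D)\circ e^{\pm i\Psi}_{<k'-C}(t,x,D)$, and then assert that $e^{\pm i\Psi}_{<k'-C}$ preserves the frequency $\sim 1$, modulation $\ll 1$ localization of $u$ up to rapidly decaying tails and ``all the relevant norms'' by decomposability. Neither claim is justified: the symbol of $e^{\pm i\Psi}_{<k'-C}$ carries space--time frequencies up to $2^{k'-C}$, which exceed the output modulation $2^{l}$ whenever $l\ll k'$, so $e^{\pm i\Psi}_{<k'-C}u$ acquires modulations up to $\sim 2^{k'}$ and the constraint $Q_l$ on the output no longer pins the angle between the frequency of $\psi_{k'}$ and that of the input --- the geometric identity you invoke requires every other source of space--time frequency (input modulation \emph{and} the exponential factor) to lie below $2^{l}$. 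Moreover the Decomposability Lemma only converts decomposable symbol norms into $L^{q_1}L^2\to L^{q_2}L^{r_2}$ bounds; it says nothing about preserving $N^*$ or modulation localization, and the true $N^*\to N^*$ bound \eqref{dual_NN} is proved in the paper \emph{using} \eqref{mod_loc}, so appealing to it here would be circular. The paper avoids all of this by splitting the exponential at the output-modulation scale: the main term is $(\psi_{k'}\,b)(t,x,D)$ with $b=S_{<l-\frac12 C}e^{\pm i\Psi_{<l-C}}$, for which only a fixed-time $L^2\to L^2$ bound on $b$ is needed (this is \eqref{lin_psi_loc}), while the portion of the exponential with space--time frequencies between $2^{l}$ and $2^{k'}$ is an error term killed by the smallness of \eqref{remainder_est} (the $2^{5(l-k')}$-type gain) together with \eqref{psi_decomp1}.

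Beyond this, several pieces needed for \eqref{kk'_mod_loc} are absent. The case where the input modulation is at or above the output modulation ($Q_{\geqslant l-C}u$) requires its own argument --- in the paper, Sobolev embedding in the modulation variable reducing to an $L^2(L^2)\to L^{10/7}(L^2)$ bound, handled by \eqref{psi_decomp1} and \eqref{remainder_est} --- and your remark about feeding $Q_{l'}u$ through $X^{0,\frac12}_\infty$ only gestures at it; in particular your ``Bernstein in the output region of measure $2^l$'' mechanism for producing the $2^{l/2}$ is not how the estimate closes (the $2^{l/2}$ is the $X^{0,\frac12}_1$ weight against an $L^2_{t,x}$ bound obtained from $DL^2(L^\infty)$ applied to an $L^\infty L^2$ input; no output Bernstein is available or needed). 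The quadratic and cubic terms of the expansion also require genuine trilinear decomposable bounds with angular restrictions on two factors (cf.\ \eqref{quad_psi_loc}); a blanket ``each factor contributes $\lesssim 2^{k''}\epsilon$, summing with room to spare'' does not produce the $2^{\delta(l-k')}$ decay that the summation in \eqref{mod_loc} requires. Finally, as you concede in your closing paragraph, the core quantitative step --- restricting to sectors $\theta\gtrsim 2^{\frac12(l-k')}$, choosing exponents in \eqref{psi_decomp2}, and organizing the sums over $\theta$ and the input modulation so that a positive power of $2^{l-k'}$ survives --- is left undone, and that is precisely where the proposition lives.
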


A key step in the proof of the proposition is the following result,
which we state separately since it is of independent interest:

\begin{lemma}
  Assume that $1\leqslant q \leqslant p \leqslant \infty$. Then for
  $k+C \leq l \leq 0$ we have :
  \begin{equation}
    \lp{( e^{\pm i\psi_{<k}}_{l})(t,x;D)}{L^{p}(L^2)\to L^q(L^2)} 
    \lesssim\ 
    \epsilon 2^{(\frac{1}{p}-\frac{1}{q})k}2^{5(k-l)}
    \ , \label{remainder_est}
  \end{equation}
  This holds for both left and right quantizations.
\end{lemma}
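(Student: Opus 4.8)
The plan is to reduce the operator bound to the Decomposability Lemma, but with the essential proviso that the phase exponential $e^{\pm i\psi_{<k}}$ is \emph{never} treated as a decomposable symbol --- only as an $L^2$-bounded operator. The reason this matters is that $\psi_{<k}$ itself need not lie in any space $DL^q(L^\infty)$: summing the dyadic bounds $|\psi_j|\lesssim c_j$ over $j<k$ only yields $\sum_{j<k}c_j$, which may diverge for energy-class data. By contrast, all derivatives $\partial_{t,x}^m\psi_{<k}$ with $m\ge 1$ are well controlled, since each extra $\partial_{t,x}$ costs a factor $2^{j}$ on the $j$-th dyadic piece, so that summing \eqref{psi_decomp1}--\eqref{psi_decomp3} over $j<k$ converges with a gain, giving $\lp{\partial_{t,x}^m\psi_{<k}}{DL^q(L^\infty)}\lesssim 2^{mk}2^{-k/q}\epsilon$ for $m\ge 1$ and $q$ in the admissible range (for the low exponents one replaces \eqref{psi_decomp1} by \eqref{psi_decomp2}--\eqref{psi_decomp3}). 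These are the only inputs on $\psi$ we shall use.

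First I would extract five space-time derivatives from the $S_l$-localization. Using $(\sum_j\zeta_j^2)^5=|\zeta|^{10}$ one has the identity $\chi(\zeta/2^l)=\sum_{|\alpha|=5}\binom{5}{\alpha}\chi(\zeta/2^l)(-i\zeta)^\alpha|\zeta|^{-10}\,(i\zeta)^\alpha$, which gives the symbol identity $e^{\pm i\psi_{<k}}_l=\sum_{|\alpha|=5}\tilde K^\alpha_l *_{t,x}\partial_{t,x}^\alpha e^{\pm i\psi_{<k}}$, where $\widehat{\tilde K^\alpha_l}(\zeta)=\binom{5}{\alpha}\chi(\zeta/2^l)(-i\zeta)^\alpha|\zeta|^{-10}$, so by scaling $\lp{\tilde K^\alpha_l}{L^1_{t,x}}\lesssim 2^{-5l}$. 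By Fa\`a di Bruno, $\partial_{t,x}^\alpha e^{\pm i\psi_{<k}}=e^{\pm i\psi_{<k}}\cdot g_\alpha$ with $g_\alpha=\sum_\pi c_\pi\prod_{B\in\pi}(\pm i\,\partial_{t,x}^{\alpha_B}\psi_{<k})$, the sum being over partitions of $\alpha$ into blocks with $|\alpha_B|\ge 1$, $\sum_B|\alpha_B|=5$. Since $g_\alpha$ and its $\partial_{t,x}$-derivatives are supported at frequency $\lesssim 2^k\ll 2^l$, Taylor-expanding $g_\alpha$ in the convolution variable gives $\tilde K^\alpha_l*_{t,x}(e^{\pm i\psi_{<k}}g_\alpha)=\sum_{|\beta|\le N}\tfrac{(-1)^{|\beta|}}{\beta!}(\partial_{t,x}^\beta g_\alpha)\cdot(\tilde K^{\alpha,\beta}_l*_{t,x}e^{\pm i\psi_{<k}})+\mathcal R_N$, with $\tilde K^{\alpha,\beta}_l(z)=z^\beta\tilde K^\alpha_l(z)$, so $\lp{\tilde K^{\alpha,\beta}_l}{L^1}\lesssim 2^{-(5+|\beta|)l}$, and $\mathcal R_N$ as small as desired.

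Next I would estimate the quantization of each $(\partial_{t,x}^\beta g_\alpha)\cdot(\tilde K^{\alpha,\beta}_l*_{t,x}e^{\pm i\psi_{<k}})$ by the Decomposability Lemma with $a=\tilde K^{\alpha,\beta}_l*_{t,x}e^{\pm i\psi_{<k}}$ and $c=\partial_{t,x}^\beta g_\alpha$. The operator $a(t,x;D)$ obeys \eqref{ab} with constant $\lesssim 2^{-(5+|\beta|)l}$, because by translation-invariance of $L^2$ and Young's inequality this is $\lesssim\lp{\tilde K^{\alpha,\beta}_l}{L^1}\cdot\sup_t\lp{e^{\pm i\psi_{<k}}(t,x;D)}{L^2\to L^2}$ and the last factor is $\lesssim 1$ by \eqref{l2} (whose proof is insensitive to the frequency cutoff in $\psi$). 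For the symbol $c$, the decomposable H\"older inequality \eqref{decomp_holder} with exponents $q_B=(5+|\beta|)\,r\,(|\alpha_B|+|\beta_B|)^{-1}$, where $r^{-1}=q^{-1}-p^{-1}$, together with the controlled bounds $\lp{\partial_{t,x}^{\alpha_B+\beta_B}\psi_{<k}}{DL^{q_B}(L^\infty)}\lesssim 2^{(|\alpha_B|+|\beta_B|)k}2^{-k/q_B}\epsilon$, yields $\lp{\partial_{t,x}^\beta g_\alpha}{DL^r(L^\infty)}\lesssim\sum_\pi\prod_B 2^{(|\alpha_B|+|\beta_B|)k}2^{-k/q_B}\epsilon^{|\pi|}\lesssim 2^{(5+|\beta|)k}2^{-k/r}\epsilon$. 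Feeding these into the Decomposability Lemma with $r_2=2$ (which forces the symbol space to be $DL^r(L^\infty)$) and $q_1=p$, $q_2=q$, so that $p^{-1}+r^{-1}=q^{-1}$ as required, the $(\alpha,\beta)$-contribution to $\lp{e^{\pm i\psi_{<k}}_l(t,x;D)}{L^p(L^2)\to L^q(L^2)}$ is $\lesssim 2^{-(5+|\beta|)l}\cdot 2^{(5+|\beta|)k}2^{-k/r}\epsilon=\epsilon\,2^{(1/p-1/q)k}2^{(5+|\beta|)(k-l)}$. Summing the geometric series in $|\beta|\ge 0$ (legitimate since $k-l\le-C<0$) and over the finitely many $\alpha$, and choosing $N$ large to absorb $\mathcal R_N$, produces exactly $\epsilon\,2^{(1/p-1/q)k}2^{5(k-l)}$. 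The right quantization is treated identically (by adjoints, or directly).

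The main obstacle --- and the reason the argument must be organized at the operator level rather than as a pure symbol estimate --- is precisely that neither $\psi_{<k}$ nor $e^{\pm i\psi_{<k}}$ is a decomposable symbol, so one cannot simply H\"older $e^{\pm i\psi_{<k}}$ against $g_\alpha$; the $S_l$-localization is the device that trades $e^{\pm i\psi_{<k}}$ for its bounded $L^2$-quantization at the cost of five $\partial_{t,x}$-derivatives landing on $\psi_{<k}$, and those derivatives are exactly the ``good'' objects supplied by \eqref{psi_decomp1}--\eqref{psi_decomp3}. A secondary point to handle carefully is that the commutator (Taylor) expansion of $\tilde K^\alpha_l*_{t,x}(\cdot)$ across the product $e^{\pm i\psi_{<k}}\cdot g_\alpha$ must leave $e^{\pm i\psi_{<k}}$, not $g_\alpha$, inside the convolution, so that the ``$a$''-factor stays of the same type with only an improved $L^1$ bound; the mild degeneracy of the decomposable bounds at low exponents $q$ is disposed of, as usual, by using \eqref{psi_decomp2}--\eqref{psi_decomp3} in place of \eqref{psi_decomp1} and noting that five (or, if one prefers, more) derivatives leave ample room.
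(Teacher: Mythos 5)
Your overall strategy is the paper's: trade the $S_l$ localization for five space-time derivatives (gaining $2^{-5l}$), convert those derivatives into factors of $\nabla_{t,x}\psi_{<k}$ (paying $2^{5k}$ through the decomposable bounds), treat the remaining exponential only as a fixed-time $L^2$-bounded operator as in \eqref{l2}, and conclude with the Decomposability Lemma and \eqref{decomp_holder}. The paper, however, extracts the derivatives \emph{one at a time}, iterating $S_{l}e^{\pm i\psi_{<k}}=\pm i2^{-l}S_l(\partial_t\psi_{<k}\cdot e^{\pm i\psi_{<k}})$ with nested cutoffs, so that the output is a product of five \emph{first-order} factors $\partial_t\psi_{<k}$ times the exponential; each factor can then be placed in $DL^{5r}(L^\infty)$ with $5r\geq 5>4$, where \eqref{psi_decomp1} applies with no loss, uniformly over the whole stated range $1\leq q\leq p\leq\infty$ (i.e. $1/r=1/q-1/p$ up to $1$).

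This is where your one-shot extraction followed by Fa\`a di Bruno has a genuine gap. The Fa\`a di Bruno expansion of $\partial^\alpha_{t,x}e^{\pm i\psi_{<k}}$, $|\alpha|=5$, contains the single-block term $\pm i(\partial^\alpha_{t,x}\psi_{<k})e^{\pm i\psi_{<k}}$ (and, after your Taylor expansion, blocks of order up to $5+|\beta|$). For such a block your H\"older scheme forces $q_B$ as low as $r$, and $r=1/(1/q-1/p)$ can be as small as $1$. But the decomposable bounds for $\psi$ are Strichartz-based: \eqref{psi_decomp2}--\eqref{psi_decomp3} require $q\geq 2$, and no number of extra space-time derivatives produces a global-in-time $DL^{q}(L^\infty)$ bound with $q<2$ for a quantity built from an energy-class free wave, so the claim that ``five (or more) derivatives leave ample room'' does not repair this. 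Even in the intermediate range $q_B\in[2,4]$ the $\theta$-sum in \eqref{psi_decomp2} carries the factor $\theta^{\frac12-\frac{2}{q_B}}$ with negative exponent, so summing over $\theta\gtrsim 2^{\delta k}$ costs $2^{\delta|k|(2/q_B-1/2)}$; since the target \eqref{remainder_est} has no slack in $k$ (take $l=k+C$, so $2^{5(k-l)}\sim1$ while $|k|$ is arbitrarily large), such losses cannot be absorbed. Thus your argument proves the lemma only when $1/q-1/p<1/4$ (all block exponents $>4$), not in the full stated range; in addition it silently uses higher-derivative analogues of \eqref{psi_decomp1}--\eqref{psi_decomp3} (for $\partial^m_{t,x}\psi_{<k}$, $m\geq2$) which, while plausible, are not among the stated estimates. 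The fix is exactly the paper's device: keep the extraction iterative so that every derivative lands on the exponential and only first-order factors $\partial_t\psi_{<k}$ (or $\nabla_{t,x}\psi_{<k}$, which indeed avoids the $\tau=0$ degeneracy you rightly flag) are ever produced, each then estimated at exponent $5r>4$ via \eqref{psi_decomp1}.
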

\begin{proof}

  For the symbol we iteratively write:
  \begin{align}
    S_{l} e^{\pm i \psi_{<k}} \ &= \ \pm i 2^{-l}S_{l}
    \big(\partial_t\psi_{<k}\cdot e^{\pm i \psi_{<k}}\big) \ , \notag\\
    &= \ \ldots \ = \ (\pm i )^5 2^{-5l}\prod_{j=1}^5
    [S^{(j)}_{l}\partial_t \psi_{<k}]\cdot e^{\pm i \psi_{<k}} \ ,
    \notag
  \end{align}
  where the product denotes a nested (repeated) application of
  multiplication by $S_{l}\partial_t \psi_{<k}$, for a series of
  frequency cutoffs $S^{(j+1)}_{l}S^{(j)}_{l}=S^{(j)}_{l} \approx
  S_{l}$ with expanding widths.  Disposing of these translation
  invariant cutoffs we see that \eqref{remainder_est} follows directly
  from \eqref{psi_decomp1}.

\end{proof}

\begin{proof}[Proof of Proposition \ref{mod_loc_prop}]
  We proceed in a series of steps.

  \step{1}{High modulation input} First we estimate the contribution
  of $Q_{k}e^{\pm i \Psi}_{k'}Q_{\geqslant k-C}P_0$ to line
  \eqref{kk'_mod_loc}. Using the $X_\infty^{0,\frac{1}{2}}$ bounds for
  the input, it suffices to prove the estimate:
  \begin{equation}
    \lp{Q_k e^{\pm i \Psi}_{k'}P_0}{L^2(L^2)\to L^2(L^2)} \ \lesssim \ 
    2^{\frac{1}{5}(k-k')}\epsilon \ . \notag
  \end{equation}
  By Sobolev estimates in $|\tau|\pm |\xi|$, this reduces to the
  bound:
  \begin{equation}
    \lp{e^{\pm i \Psi}_{k'}P_0}{L^2(L^2)\to L^\frac{10}{7}(L^2)} \ \lesssim \ 
    2^{-\frac{1}{5}k'}\epsilon \ . \notag
  \end{equation}
  Using continuous Littlewood-Paley resolutions to decompose the group
  element we have:
  \begin{align}
    e^{\pm i \Psi}_{k'} \ &= \ \pm i \int_{k''>k'-C} S_{k'}
    (\Psi_{k''}e^{\pm i \Psi_{<k''}})dk''
    + S_{k'}e^{\pm i\Psi_{<k'-C}} \ , \notag\\
    &= \ \mathcal{L} + \mathcal{R} \ . \label{one_iterate}
  \end{align}
  We'll treat these two terms separately.  In the case of
  $\mathcal{L}$ we use estimate \eqref{psi_decomp1} with $p=5$.  To
  estimate the contribution of $\mathcal{R}$ we use estimate
  \eqref{remainder_est}.
\bigskip

  \step{2}{Main decomposition for low modulation input} Now we
  estimate the expression $Q_{k}e^{\pm i \Psi}_{k'}Q_{< k-C}P_0u$.
  First expand the untruncated group elements as follows:
  \begin{align}
    e^{\pm i \Psi} \ &= \ e^{\pm i \Psi_{<k-C}} \pm i \int_{l>k-C}
    \Psi_l e^{\pm i \Psi_{<k-C}}dl -
    \dint_{l,l'>k-C}\hspace{-.25in}\Psi_l\Psi_{l'} e^{\pm i \Psi_{<k-C}}dl dl'\notag\\
    &\ \ \ \ \ \ \ \ \mp i \tint_{l,l',l''>k-C}\hspace{-.5in}
    \Psi_l\Psi_{l'} \Psi_{l''}
    e^{\pm i \Psi_{<l'''}}dl dl' dl'' \ , \notag\\
    &= \ \mathcal{Z} + \mathcal{L} + \mathcal{Q} + \mathcal{C} \
    . \notag
  \end{align}
  We will estimate the effect of each of these terms separately.

  \step{3}{Estimating the zero order term $\mathcal{Z}$} After
  localization via $S_{k'}$ the desired estimate follows directly from
  \eqref{remainder_est}.
\bigskip

  \step{4}{Estimating the linear term $\mathcal{L}$} First split the
  integral as follows:
  \begin{equation}
    \mathcal{L} \ = \ \int_{l>k-C}  \Psi_l S_{<k-\frac{1}{2}C}
    e^{\pm i \Psi_{<k-C}}dl + \int_{l>k-C}  \Psi_l S_{\geqslant k-\frac{1}{2}C}
    e^{\pm i \Psi_{<k-C}}dl \ . \label{Lsplit}
  \end{equation}
\bigskip

  \step{4a}{Estimating the principal linear term in $\mathcal{L}$} For
  the first term on RHS of line \eqref{Lsplit} it suffices to show the
  general estimate:
  \begin{multline}
    \lp{Q_{k}\cdot( \psi_{l}b_{<k-\frac{1}{2}C})(t,x;D)\cdot
      Q_{<k-C}P_0 }{L^\infty(L^2)\to L^2(L^2)} \\ \lesssim \
    \epsilon\, 2^{-\frac{1}{2}k}2^{\frac{1}{4}(k-l)} \sup_t
    \lp{B_{<k-\frac{1}{2}C}(t)}{L^2\to L^2} \ , \label{lin_psi_loc}
  \end{multline}
  for $l\geqslant k$, and for symbols $b(x,\xi)_{<k-\frac{1}{2}C}$
  with either the left or right quantization.  In this case the
  modulation of the output determines the angle between the spatial
  frequencies of $\psi_l(x,\xi)$ and the spatial frequency of the
  input, which is $\theta\sim 2^{\frac{1}{2}(k-l)}$. Since this is
  also the angle with $\xi$, we may restrict the symbol of $\Psi_l$ to
  $\sum_{\theta\gtrsim 2^{\frac{1}{2}(k-l)}} \psi_l^{(\theta)}$ for
  which the estimate \eqref{lin_psi_loc} follows immediately from
  \eqref{decomp1} and summing
  over \eqref{psi_decomp2}.
\bigskip
 
  \step{4b}{Estimating the frequency truncation error in
    $\mathcal{L}$} For the second term on RHS of line \eqref{Lsplit}
  we use \eqref{psi_decomp1} for $\psi_l$ with $p=6$ combined with
  \eqref{remainder_est} with $(p_2,q)=(6,3)$. If $l<k'-C$ the $S_{k'}$
  localization lands on the exponential and we gain
  $2^{-\frac{1}{2}k}2^{\frac{1}{6}(k-l)}2^{5(k-k')}$.  If $l\geqslant
  k'-C$ we can disregard $S_{k'}$ and directly get
  $2^{-\frac{1}{2}k}2^{\frac{1}{6}(k-l)}$.
  \bigskip

  \step{5}{Estimating the quadratic term $\mathcal{Q}$} We follow a
  similar procedure to \textbf{Step 4} above. First split $S_{<
    k-\frac{1}{2}C}e^{\pm i \psi_{<k-C}} + S_{\geqslant
    k-\frac{1}{2}C}e^{\pm i \psi_{<k-C}}$. For the second term one can
  proceed as in \textbf{Step 4b} above using \eqref{psi_decomp1},
  \eqref{remainder_est}, and \eqref{decomp_holder}.  Therefore we only
  need to consider the effect of the first term, for which we'll show
  the trilinear bound:
  \begin{multline}
    \lp{Q_{k}\cdot( \psi_{l}\psi_{l'}b_{<k-\frac{1}{2}C})(t,x;D)\cdot
      Q_{<k-C}P_0 }{L^\infty(L^2)\to L^2(L^2)} \\ \lesssim \
    \epsilon^2\,
    2^{-\frac{1}{2}k}2^{\frac{1}{4}(k-l)}2^{\frac{1}{6}(k-l')} \sup_t
    \lp{B_{<k-\frac{1}{2}C}(t)}{L^2\to L^2} \ , \label{quad_psi_loc}
  \end{multline}
  for $l'\geqslant l\geqslant k$. By the localization of the output,
  the symbol $\psi_l\psi_{l'}$ is localized to the sum:
  \begin{equation}
    \sum_{\theta\gtrsim 2^{\frac{1}{2}(k-l)}} 
    \psi_l^{(\theta)}\psi_{l'}
    + \sum_{\substack{\theta'\gtrsim 2^{\frac{1}{2}(k-l')}\\
        \theta\ll 2^{\frac{1}{2}(k-l')} }} 
    \psi_l^{(\theta)}\psi_{l'}^{(\theta')} \ = \ T_1 + T_2 \ . \notag
  \end{equation}
  For the term $T_1$ put the first factor in $DL^3(L^\infty)$ and the
  second in $DL^6(L^\infty)$. This gives us dyadic terms in
  LHS\eqref{quad_psi_loc}$(T_1)\sim
  2^{-\frac{1}{2}k}2^{\frac{1}{4}(k-l)}2^{\frac{1}{6}(k-l')}$.  For
  the term $T_2$ do the opposite, which yields
  a similar bound.
\bigskip

  \step{6}{Estimating the cubic term $\mathcal{C}$} In this case we
  can gain $2^{\frac{1}{6}(k-k')}$ directly through the use of
  \eqref{psi_decomp1} and three $DL^6(L^\infty)$ unless
  $l,l',l''<k'-C$. In the latter case $S_{k'}$ localizes the
  exponential and we can use a \eqref{remainder_est} instead. Further
  details are left to the reader.

\end{proof}


\section{Proof of the $N_0\to N_0$ bounds in \eqref{N_mapping} and
  \eqref{dtS_mapping}}
\label{s:N-error}

Because of use in the sequel, we'll prove a somewhat more general and
symmetric version:

\begin{prop}[Symmetric $N$ Estimates]\label{symm_N_prop}
  One has the following operator bounds for either right or left
  quantizations:
  \begin{equation}
    \lp{e^{\pm i\Psi}_{<0}}{N\to N} \ \lesssim \ 1 \ . \label{symm_NN}
  \end{equation}
  as well as
  \begin{equation}
    \lp{\partial_t e^{\pm i\Psi}_{<0}}{N\to N} \ \lesssim \ 1 \ . \label{symm_NN-dt}
  \end{equation}
  In particular, by duality one also has:
  \begin{equation}
    \lp{e^{\pm i\Psi}_{<0}}{N^*\to N^*} \ \lesssim \ 1 \ , \label{dual_NN}
  \end{equation}
  for either right or left quantizations.
\end{prop}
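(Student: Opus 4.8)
The plan is to reduce \eqref{symm_NN} and \eqref{symm_NN-dt} to a single dyadic frequency and then to treat separately the two building blocks of $N_k = L^1(L^2) + X_1^{0,-\frac12}$. By the scaling invariance of the renormalization — the frequency $2^k$ component is conjugated by the phase $\psi_{<k}$ — together with the almost-orthogonality of the outputs (the symbol of $e^{\pm i\Psi}_{<k}$ has space-time frequency $\ll 2^k$, so it maps $P_k$ into frequency $\sim 2^k$, and each $P_m$ is uniformly bounded on $N_k$), it suffices to prove the uniform single-frequency bounds $\lp{e^{\pm i\Psi}_{<0}}{N_0\to N_0}\lesssim 1$ and $\lp{\partial_t e^{\pm i\Psi}_{<0}}{N_0\to N_0}\lesssim 1$ with inputs at frequency $1$. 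The sign $\pm$ plays no role and the two quantizations are treated identically, so I write $e^{i\Psi}_{<0}$ below. Granting these, \eqref{dual_NN} is immediate: $\big(e^{i\Psi}_{<0}\big)^* = e^{-i\Psi}_{<0}$ with the roles of left and right quantization interchanged, and $\lp{T}{N^*\to N^*}=\lp{T^*}{N\to N}$.

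On an $L^1(L^2)$ atom the estimate is soft. Using the averaging representation $e^{i\Psi}_{<0} = \int m(z)\, e^{iT_z\Psi}(t,x,D)\,dz$ from the proof of Proposition~\ref{kernel_prop_loc}, with $m$ a rapidly decaying bump on the unit scale, and the uniform-in-$t$ $L^2$ boundedness \eqref{l2} of each $e^{iT_z\Psi}(t,x,D)$ (the constant growing at most polynomially in $|z|$, hence absorbed by $m$), integration in $t$ yields $e^{i\Psi}_{<0}\colon L^1(L^2)\to L^1(L^2)\subseteq N_0$. For $\partial_t e^{i\Psi}_{<0}$ I would write its symbol as $i(\partial_t\Psi)\,e^{i\Psi}$ truncated to space-time frequency $\ll 1$; since $\partial_t\Psi\in DL^\infty(L^\infty)$ with norm $\lesssim\epsilon$ (summing \eqref{psi_decomp2} over dyadic $\theta$), the extra factor is a bounded decomposable multiplier of frequency $\ll 1$, and the same argument goes through.

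The heart of the matter is an $X_1^{0,-\frac12}$ atom $u = Q_j v$ with $\lp{Q_j v}{L^2}=2^{j/2}$ (unit norm), and this is the step I expect to present the main difficulty. The idea is to decompose the output $e^{i\Psi}_{<0}u$ by its modulation $2^l$ and bound $\sum_l 2^{-l/2}\lp{Q_l e^{i\Psi}_{<0}u}{L^2}$. In the range $l\geq j-C$ — the only range that occurs when $j\geq 0$, since the symbol has space-time frequency $\ll 1$ — I would simply use $\lp{Q_l e^{i\Psi}_{<0}u}{L^2}\lesssim\lp{u}{L^2}=2^{j/2}$ from the space-time $L^2$ bound and sum the geometric series $\sum_{l\geq j-C}2^{-l/2}2^{j/2}\lesssim 1$. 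In the complementary range $l<j-C$, the modulation-lowering part, only the component of $e^{i\Psi}_{<0}$ with symbol frequency $2^{k'}\gtrsim 2^j$ can contribute; there I would invoke the modulation-localized estimate \eqref{kk'_mod_loc}, which, converting its $X_1^{0,\frac12}$ conclusion into an $L^2$ statement, gives $\lp{Q_l e^{i\Psi}_{k'}u}{L^2}\lesssim 2^{-l/2}\,2^{\delta(l-k')}\epsilon\,\lp{u}{N^*}$, together with $\lp{Q_j v}{N^*}\lesssim 2^j$ (Bernstein in $t$ plus the $X_\infty^{0,\frac12}$ norm). Summing the dyadic pieces $2^{-l}\,2^{\delta(l-k')}\epsilon\,2^j$ over $l<j-C\leq k'$, the off-diagonal factor $2^{\delta(l-k')}$ with $0<\delta<1$ is exactly what closes both geometric series, with total $\lesssim\epsilon\lesssim 1$. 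The delicacy here is essentially bookkeeping: one must match the decomposition of $e^{i\Psi}_{<0}$ into symbol-frequency pieces to the constraint $l\lesssim k'$ under which \eqref{kk'_mod_loc} holds, and check that every $(l,k')$ summation is powered solely by $2^{\delta(l-k')}$.

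Finally, the $\partial_t$ statement \eqref{symm_NN-dt} is handled identically to \eqref{symm_NN}: the extra factor $\partial_t\Psi$ is of space-time frequency $\ll 1$, hence does not shift the output modulation by more than $O(1)$ and preserves the case analysis above, and it is uniformly bounded in $DL^\infty(L^\infty)$ with norm $\lesssim\epsilon$, so it alters none of the preceding estimates. The remaining ingredients are routine consequences of the fixed-time $L^2$ bounds of the previous section.
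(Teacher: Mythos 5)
Your treatment of the $L^1(L^2)$ atoms, the reduction to frequency $1$, and the deduction of \eqref{dual_NN} by taking adjoints (which swaps left and right quantizations) are all fine and consistent with the paper. The problem is in the core case, the $X_1^{0,-\frac12}$ atom $u=Q_jv$, where your summation does not close. For the modulation-lowering range $l<j-C$ you convert \eqref{kk'_mod_loc} into $\lp{Q_le^{i\Psi}_{k'}u}{L^2}\lesssim 2^{-l/2}2^{\delta(l-k')}\epsilon\lp{u}{N^*}$ and then weight by $2^{-l/2}$ to land in $X_1^{0,-\frac12}$, producing dyadic pieces $2^{-l}2^{\delta(l-k')}\epsilon\,2^j$. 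Writing $a=j-l>C$ and $b=k'-j\geq -O(1)$, each piece is $\sim 2^{(1-\delta)a}2^{-\delta b}\epsilon$, which \emph{grows} geometrically as the output modulation decreases; the factor $2^{\delta(l-k')}$ closes the $k'$ sum but not the $l$ sum, so $\sum_{l<j-C}$ diverges. The structural reason is that \eqref{kk'_mod_loc}/\eqref{mod_loc} are $N^*\to X_1^{0,+\frac12}$ bounds, i.e.\ they are adapted to the dual problem; converting the $X^{0,+\frac12}$ conclusion into the $X^{0,-\frac12}$ component of $N$ costs a full factor $2^{-l}$, which is not summable toward low output modulations, and placing that low-modulation output in the other component $L^1(L^2)$ is not available either (there is no global-in-time $L^1_t$ gain to be had). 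Since \eqref{symm_NN-dt} is run through the same argument, it inherits the gap.

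The paper's proof avoids this by dualizing at exactly this step: for the atom $Q_jP_0F$ one estimates $\lp{e^{\pm i\Psi}_{<0}Q_jP_0F}{N}$ by pairing against $G$ with $\lp{G}{N^*}\leq 1$, which reduces matters to $\lp{Q_j\,e^{\pm i\Psi}_{<0}\,\tilde P_0 G}{L^2(L^2)}\lesssim 2^{-j/2}\lp{G}{N^*}$ for the adjoint (the other quantization). Now the \emph{output} modulation is the one frozen at $2^j$, so the weight $2^{+j/2}$ of $X_1^{0,\frac12}$ is exactly the one provided by the machinery: split $e^{\pm i\Psi}_{<0}=e^{\pm i\Psi}_{<j-C}+(e^{\pm i\Psi}_{<0}-e^{\pm i\Psi}_{<j-C})$; the first piece cannot move modulations by more than $O(1)$, so $Q_j$ forces the input to modulation $\sim 2^j$ and the bound follows from the fixed-time $L^2$ estimate \eqref{l2} together with the $X_\infty^{0,\frac12}$ part of $N^*$; the second piece is precisely \eqref{mod_loc}. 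If you reorganize your Case-2 argument this way (and then treat $\partial_t e^{\pm i\Psi}_{<0}$ as you propose, via $\partial_t\Psi\in DL^\infty(L^\infty)$ on the dualized expressions), the proof closes; as written, the low-modulation output estimate is genuinely false as a route to \eqref{symm_NN}.
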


\begin{proof}
  Applying $e^{\pm i\Psi}_{<0}$ to $F\in N_0$ we need to consider the
  following cases:

  \case{1}{$F$ is an $L^1(L^2)$ atom} This follows at once from the
  fixed
  time $L^2\to L^2$ mapping property of $e^{\pm i\Psi}_{<0}$.
\bigskip

  \case{2}{$F$ is an $X^{0,-\frac{1}{2}}_1$ atom} Fixing a modulation
  it suffices to consider $Q_k P_0 F$, and show that:
  \begin{equation}
    \lp{e^{ \pm i\Psi}_{<0}Q_k P_0 F}{N} \ \lesssim \ 2^{-\frac{1}{2}k}
    \lp{F}{L^2(L^2)} \ , \notag
  \end{equation} 
  which by duality reduces to showing:
  \begin{equation}
    \lp{Q_k e^{\pm i\Psi}_{<0} \td{P}_0 G}{L^2(L^2)} \ \lesssim \ 2^{-\frac{1}{2}k}
    \lp{G}{N^*} \ , \notag
  \end{equation} 
  for a slightly larger cutoff $\td{P}_0$. At this point the estimate
  follows the usual game of splitting $ e^{\pm i\Psi}_{<0}= e^{\pm
    i\Psi}_{<k-C} + (e^{\pm i\Psi}_{<0}- e^{\pm i\Psi}_{<k-C})$.  For
  the first term there is no modulation interference and the estimate
  is direct.  For the second use \eqref{mod_loc}.
\end{proof}

\section{Proof of the Renormalization Error Estimates}


\subsection{The $N$ and $N^*$ Space Recovery Estimate
  \eqref{S_mapping}}

We prove \eqref{S_mapping} for $N^*$. The $L^\infty L^2$ part of the
estimate \eqref{S_mapping} for $N^*$ is a direct consequence of the
similar fixed time $L^2$ bound \eqref{l2-ortho}.  It remains to
consider the $X^{0,\frac12}_\infty$ estimate. Denote
\[
R_k = e^{- i\psi_\pm}_{<k}(x,D)e^{i\psi_\pm}_{<k}(D,y)
\]
Fixing the modulation we write:
\[
Q_k (R_0-I) = Q_k (R_0-I) Q_{>k-C} + Q_k R_0 Q_{<k-C}
\]
For the first term we bound $Q_{>k-C}$ in $L^2$ and use again
\eqref{l2-ortho}.  For the second we can freely subtract the low
frequencies in the symbol
\[
\begin{split}
  Q_k R_0 Q_{<k-C} = & \ Q_k (R_0 - R_{<k-C}) Q_{<k-C} \\ = & \
  Q_k(e^{- i\psi_\pm}_{<0}(x,D)- e^{i\psi_\pm}_{<k-C}(x,D))
  e^{i\Psi_\pm}_{< 0}(D,y) Q_{<k-C} \\ & \ + Q_k
  e^{-i\psi_\pm}_{<k-C}(x,D)
  (e^{i\psi_\pm}_{<0}(D,y)-e^{i\psi_\pm}_{<k-C}(D,y))Q_{<k-C}
\end{split}
\]
We estimate the two lines separately as operators from $N^*$ to
$X^{0,\frac12}_\infty$. In the first line we can use \eqref{dual_NN}
to discard $e^{i\Psi_\pm}_{< 0}(y,D) Q_{<k-C}$, and then
\eqref{mod_loc} for the remaining factor.  In the second line we first
write
\[
Q_k e^{-i\psi_\pm}_{<k-C}(x,D) = Q_k e^{-i\psi_\pm}_{<k-C}(x,D) \tilde
Q_k
\]
and use \eqref{l2} to discard the two left factors. Then we are
reduced again to \eqref{mod_loc}.


\subsection{Proof of the conjugation estimate \eqref{error_ests}}

Recall the following product formula for left quantizations:
\begin{equation}
  a(x,D)b(x,D) \ = \ c(x,D) \ ,
  \quad c(x,\xi) \ \sim \ e^{-i\langle D_y,D_\xi\rangle }\big(
  a(x,\xi)b(y,\eta)\big)|_{\substack{x=y\\ \xi=\eta}} \ . \notag
\end{equation}
We are interested in the case where the first factor is the
d'Alembertian.  Then the above formula contains only finitely many
terms and is exact and we'll denote it by $\#_l$.  Using this to
compute both terms on line \eqref{error_ests} we have:
\begin{equation}
  \begin{split}
    (-\partial_t^2 - |\xi|^2)\#_l e^{-i\psi_\pm}_{<0} = &
    e^{-i\psi_\pm}_{<0} (-\partial_t^2 - |\xi|^2) + 2 [i\partial_t
    e^{-i\psi_\pm}_{<0}
    \partial_t - \partial_x e^{-i\psi_\pm}_{<0}\xi ] - \Box
    e^{-i\psi}_{<0} \\ = & e^{-i\psi_\pm}_{<0} (-\partial_t^2 -
    |\xi|^2) + 2 \partial_t e^{-i\psi_\pm}_{<0} (i \partial_t \pm
    |\xi|) \\ &- 2i [(\pm \partial_t \psi_\pm |\xi|-\partial_x\psi_\pm
    \xi)e^{-i\psi_\pm}]_{<0}
    \\
    & -[(|\partial_t \psi_{\pm}|^2 - |\partial_x
    \psi_{\pm}|^2)e^{-i\psi_\pm}]_{<0} \ , \notag
  \end{split}
\end{equation}
On the other hand we can write
\[
i A^j \xi_j \#_l e^{-i\psi_\pm}_{<0} = i [A^j
e^{-i\psi_\pm}]_{<0}\xi_j + A^j \partial_j e^{-i\psi_\pm}_{<0} + i
[A_j,S_{<0}] e^{-i\psi_\pm}\xi_j
\]
Hence combining all terms we can write down the exact symbol for the
conjugation operator
\[
\begin{split}
  \Diff := & \ (-\partial_t^2 - |\xi|^2 + 2 i A^j \xi_j)\#_l
  e^{-i\psi_\pm}_{<0} - e^{-i\psi_\pm}_{<0} (-\partial_t^2 - |\xi|^2)
  \\ = & \ - 2i [(\pm \partial_t \psi_\pm |\xi|-\partial_x\psi_\pm \xi
  + A^j \xi_j )e^{-i\psi_\pm}]_{<0} \\ & \ + 2 \partial_t
  e^{-i\psi_\pm}_{<0} (i \partial_t \pm |\xi|) \\ & \ -[(|\partial_t
  \psi_{\pm}|^2 - |\partial_x \psi_{\pm}|^2)e^{-i\psi_\pm}]_{<0} \\ &
  \ -2i A^j [\partial_j \psi_\pm e^{-i\psi_\pm}]_{<0} \\ &\ + 2i
  [A_j,S_{<0}] e^{-i\psi_\pm}\xi_j \\ := &\ \Diff_1 + \Diff_2 +
  \Diff_3 + \Diff_4 +\Diff_5
\end{split}
\]
It remains to estimate all five terms as pseudodifferential operators
from $S^\sharp_\pm \subset N^*_\pm$ to $N_\pm$.

{\bf The estimate for $\Diff_1$:} We recall that $\psi_{\pm}$ was
chosen exactly so that the high angle interaction part of $\Diff_1$
cancels. Using the definition of $\psi_{\pm}$ we rewrite this term as
\[
\Diff_1 = [(I-\Pi^\sigma)A \cdot \xi )e^{-i\psi_\pm}]_{<0}
\]
We can replace this by
\[
\Diff_1 = [(I-\Pi^\sigma)A \cdot \xi )e^{-i\psi_\pm}_{<0}]_{<0}
\]
where the inner $<0$ truncation is on a slightly larger ball.  By
translation invariance we can discard the outer $<0$ truncation.  Also
here we do not need the $S^\sharp$ structure, and it suffices to work
with $N^*$.  Then we can also use the bound \eqref{dual_NN} to discard
$e^{-i\psi_\pm}_{<0}$. Hence we are left with having to prove a
bilinear estimate, namely
\begin{equation}
  (I-\Pi^\sigma)A^j \partial_j : N^* \to N 
\end{equation}
After a frequency and angular expansion of $A_j$, we obtain a sum
\[
\sum_{k <0} \sum_{l < \sigma k} \sum_{\omega} P^\omega_l A^j_k
P^{\omega'}_l \partial_j u_0
\]
where the multipliers $ P^\omega_l$ and $P^{\omega'}_l$ select $2^{l}$
sectors which are $2^l$ separated. To estimate the summands in $N^*$
we split the into low and high modulation,
\[
P^\omega_l A^j_k P^{\omega'}_l u_0 = P^\omega_l A^j_k P^{\omega'}_l
Q_{>k+2l-C} \partial_j u_0 + P^\omega_l A^j_k P^{\omega'}_l
Q_{<k+2l-C} \partial_j u_0
\]
For the first term on the right, we place it into $L_t^1 L_x^2$ by
using $L_t^2 L_x^\infty$ for the factor $ P^\omega_l A^j_k$ and
$l_{t,x}^2$ for the second factor $Q_{>k+2l-C} \partial_j u_0$. The
Coulomb condition results in a gain $2^l$ (due to the operator
$\partial_j$), and Bernstein's inequality via $P_k
P^\omega_lL_t^2L_x^6\subset L_t^2L_x^\infty$ yields a gain
$2^{\frac{k+l}{2}}$, whence recalling the definition of $N^*$, we
obtain a net gain of $2^{\frac{l}{2}}$, which suffices for summation
both over $l$ and $k$ due to the condition $l<\sigma k$.

For the second term on the right, we place the output into
$\dot{X}_1^{0,-\frac12}$, which is possible since its modulation is of
size $\sim 2^{k+2l}$ from elementary geometry. Placing the second
input $\partial_j u_0$, the numerology is the same as for the first
term.

{\bf The estimate for $\Diff_2$:} By the definition of
$S^\sharp_{\pm}$ we have
\[
i \partial_t \pm |D|: S^\sharp_{\pm}: N
\]
Then we can use directly the bound \eqref{symm_NN-dt}.

{\bf The estimates for $\Diff_3$ and $\Diff_4$:} Here we use
decomposability. Precisely, by \eqref{psi_decomp2} we have
$\partial_{t,x} \psi_{\pm} \in DL^2(L^\infty)$. It is also easy to see
that $A$ belongs to the same space. Then by \eqref{decomp_holder} we
can place the output into the energy space $L_t^1 L_x^2$.

{\bf The estimate for $\Diff_5$:} This term is handled exactly like
$\Diff_1$; this time we do not have the small angular separation
condition between the factors, but summation over $k$ becomes possible
since we essentially gain an extra low-frequency derivative:
\[ [A_j,S_{<0}]\partial_j u_0 = L(\nabla_{t,x}A^j, \partial_j u_0)
\]
where $L$ is a translation invariant bilinear operator with integrable
kernel.


\section{Proof of the Dispersive Estimates}

In this section we prove the mapping property \eqref{parametrix_bound}
in a series of steps.


\subsection{Proof of the basic Strichartz estimates}

First we show:
\begin{equation}
  e^{-i\psi_{\pm}}_{<0}(t,x,D): S_{0,\pm}^\sharp
  \  \to \ S^{str} \ . \label{str_ests}
\end{equation}
The first step is to reduce the problem to the case of homogeneous
waves. This is done by foliating with respect to $t$ for the $L^1 L^2$
part, and with respect to $\{\tau - |\xi|= \sigma\}$ slices for the
$X^{0,-\frac12}_1$ part,
\begin{equation}
  u^{\pm}(t) \ = \ \int e^{it\sigma} e^{\pm i t |D|}P_0 f^{\pm}(\tau)d\mu^\pm (\tau) \ ,
  \quad
  \int \lp{f^\pm(\tau)}{L^2}d\mu^\pm(\tau) \ \lesssim \ \lp{F}{X^{0,-\frac12}_1} \ . \notag  
\end{equation}
Then it remains to show that
\[
e^{-i\psi_{\pm}}_{<0}(t,x,D) e^{\pm it|D|} P_0 : L^2 \to S^{str}
\]
By a $TT^*$ argument this is equivalent to
\[
e^{-i\psi_{\pm}}_{<0}(t,x,D) e^{\pm i(t-s)|D|} P_0
e^{i\psi_{\pm}}_{<0}(D,y,s): L^{p'} L^{q'} \to L^p L^q
\]
for all pairs of Strichartz exponents $(p,q)$.  In the non-endpoint
case $p > 2$ his follows in a standard manner from an $L^2 \to L^2$
fixed time bound (see \eqref{l2})and and the dispersive estimate
\eqref{wave-decay0}. In the endpoint case $L^2 L^6$ we can use
Theorem~4 in \cite{Tat-c1}, which asserts that the endpoint case
follows from the non-endpoint case plus the dispersive estimate.


\subsection{Proof of the square summed Strichartz
  estimates}\label{ang_str_sect}

Fixing an angular scale $2^l$ and the corresponding modulation scale
$2^{2l}$ as well as a rectangle scale $2^{k'} \times (2^{k'+l'})^3$
with $k'\leq k,l' \leq 0$ and $2l \leq k'+l' \leq l$ we need to show
that
\[
\sum_{C \in \mathcal C_{k',l'}} \|P_C Q_{<2l}
e^{-i\psi_{\pm}}_{<0}(t,x,D) u \|_{S^{str}}^2 \lesssim \| u
\|_{S^\sharp}^2
\]
For this we write
\[
Q_{<2l} e^{-i\psi_{\pm}}_{<0}(t,x,D) u \ = \
Q_{<2l}(e^{-i\psi_\pm}_{<0} - e^{-i\psi_{\pm}}_{<2l-C})u + Q_{<2l}
e^{-i\psi_\pm}_{<2l-C} Q_{<2l+C }u \
\]
The first term is estimated in $X^{0,\frac12}_1$ using
\eqref{mod_loc}.  For the second one, the symbol of
$e^{-i\psi_\pm}_{<2l-C}$ is strongly localized so that non adiacent
rectangles do not interact. Then the square summation with respect to
cubes is inherited from $S^\sharp$, and it remains to prove that for a
single cube $C$ we have
\begin{equation}\label{Cstr}
  \|  e^{-i\psi_{\pm}}_{<2l-C}(t,x,D)P_C u \|_{S^{str}} \lesssim 
  \| u \|_{S^\sharp_{\pm}}
\end{equation}
We can freely discard $P_C$. Then the last bound follows by
translation invariance from \eqref{str_ests}.

\subsection{Proof of the square summed $L^2 L^\infty$ estimates}

The setting and the argument is identical to the one above up to the
counterpart of \eqref{Cstr}, which now reads
\begin{equation}\label{Cstr2i}
  \|  e^{-i\psi_{\pm}}_{<2l-C}(t,x,D) Q_{<2l+C}  P_C u \|_{L^2 L^\infty} \lesssim 
  2^{k'} \| u \|_{S^\sharp_{\pm}}
\end{equation}
We can no longer discard $P_C$ due to the presence of the scale factor
$2^{k'}$.  Instead we repeat the argument in the proof of
\eqref{str_ests}.  irs the problem reduces to an estimate for free
waves,
\[
\| e^{-i\psi_{\pm}}_{<2l-C}(t,x,D) e^{\pm it|D|} P_C u \|_{L^2
  L^\infty} \lesssim 2^{k'} \| u_0 \|_{L^2_x}
\]
Then by a $TT^*$ argument this is equivalent to
\[
\| e^{-i\psi_{\pm}}_{<2l-C}(t,x,D) e^{\pm i(t-s)|D|}
P_Ce^{i\psi_{\pm}}_{<0}(D,y,s) f \|_{L^2 L^\infty} \lesssim 2^{2k'} \|
f \|_{L^2 L^1}
\]
This in turns follows from the dispersive estimate
\eqref{wave-decay-cube0}, which shows that kernel of the operator
above is bounded by
\[
2^{4k'+3l'} \langle 2^{2(k'+l')} (t-s) \rangle^{-\frac32}
\]
which integrates to $2^{2k'+l'} \leq 2^{2k'}$.


\subsection{Proof of the plane wave norm estimates}

Again using the nesting \eqref{basic_norms}, at modulation $<2^{2l}$
it suffices to consider expressions involving $Q_{<2l}
e^{-i\psi_\pm}_{<2l-C}(t,x,D)$. By foliating as in the proof of
\eqref{str_ests}, we further reduce considerations to expressions $
e^{-i\psi_\pm}_{<2l-C}(t,x,D)e^{\pm it|D|}P_0 f$ for
$\lp{f}{L^2}\leqslant 1$. Localizing $e^{i\Psi}_{<k-C}e^{\pm it|D|}P_0
f$ by $P_\mathcal{C}$, we employ polar coordinates in Fourier space to
write:
\begin{equation}
  e^{-i\psi_\pm}_{<2l-C}(t,x,D)e^{\pm it|D|}P_{\mathcal{C}}f \ = \ 
  \frac{1}{(2\pi)^4}
  \int_{\mathbb{S}^3} e^{-i\psi(t,x;\omega)_\pm}_{<2l-C}f^\omega_\pm (t,x)d\omega
  \ , \notag
\end{equation}
where:
\begin{equation}
  f^\omega_\pm(t,x) \ = \ 
  \int e^{i(\pm t + x\cdot\omega)\lambda} p_{\mathcal{C}}
  (\lambda\omega )\widehat{f}(\lambda \omega) 
  \lambda ^3 d\lambda \ . \notag
\end{equation}
Then computation of the square sum $P\!W^\mp$ norms follows
immediately from these expressions, H\"older's inequality, and the
Plancherel's theorem.

\subsection{Proof of the null energy estimates}

As in the case of the square summed Strichartz estimates, the problem
reduces to proving that
\[
e^{-i\psi_{\pm}}_{<0}(t,x,D) : S^\sharp \to NE
\]
We fix a null direction $\omega$ and prove the null energy estimate in
its associated frame,
\[
\snabla e^{-i\psi_{\pm}}_{<0}(t,x,D) : S^\sharp \to L^\infty_\omega
L^2_{\omega^\perp}
\]
For simplicity we take $\omega = (1,1,0,0,0)$. The symbol of its
associated tangential gradient $\snabla$ then consists of $\tau -
\xi_1$ and $\xi'$.  Depending on whether we are near the $+$ cone or
near the $-$ cone we can replace
\[
\tau-\xi_1 = (\tau \pm |\xi|) - (\xi_1 \pm |\xi|)
\]
The contribution of the first term to the output is easily estimated
via the $N^*$ norm in $L^2$ and then in $L^\infty_\omega
L^2_{\omega^\perp}$ by Bernstein. It remains to bound
\[
(D_1 \pm |D|, D') e^{-i\psi_{\pm}}_{<0}(t,x,D) : S^\sharp_{\pm} \to
L^\infty_\omega L^2_{\omega^\perp}
\]
We fix the signs to $+$.  Since no time multipliers are involved any
more in the above operator, the reduction to the case of homogeneous
waves still applies, and we are left with proving that
\[
(D_1 \pm |D|, D') e^{-i\psi_{\pm}}_{<0}(t,x,D) e^{it|D|}: L^2 \to
L^\infty_\omega L^2_{\omega^\perp}
\]
The obvious course of action at this point is to use disposability
arguments to commute the derivatives inside, and then use a $TT^*$
argument. This unfortunately borderline fails due to the limited decay
of the wave kernel along null cones. Instead we take an extra step,
and perform a dyadic decomposition with respect to the angle between
$\xi$ and $\omega'= (1,0,0,0)$.  There are two cases we need to
consider separately.

{\bf For large $O(1)$ angles} we can simply discard the multipliers,
and show that
\[
P_{1}^{\omega'} e^{-i\psi_{\pm}}_{<0}(t,x,D) e^{it|D|}: L^2 \to
L^\infty_\omega L^2_{\omega^\perp}
\]
For this we commute $P_{1}^{\omega'}$ inside, using disposability to
estimate the error in $L^2$. Then, by translation invariance, it
remains to show that
\[
e^{-i\psi_{\pm}}_{<0}(t,x,D) e^{it|D|}P_{1}^{\omega'} : L^2 \to L^2(
\Sigma), \qquad \Sigma = \{ t = \omega' \cdot x\}
\]
By a $TT^*$ argument this is reduced to an estimate of the form
\[
e^{-i\psi_{\pm}}_{<0}(t,x,D) e^{i(t-s)|D|}P_{1}^{\omega'}
e^{i\psi_{\pm}}_{<0}(D,y,s): L^2( \Sigma) \to L^2(\Sigma)
\]
Due to the angular separation, we can use \eqref{off-angle} with $l=0$
to conclude that the kernel of the above operator decays rapidly off
diagonal, and the $L^2$ boundedness easily follows.

{\bf For small $\ll 1$ angles} we can write $D_1 \pm |D| = a(D) D'$
and then discard $a$, and we are left with proving that
\[
D' e^{-i\psi_{\pm}}_{<0}(t,x,D) e^{it|D|}: L^2 \to L^\infty_\omega
L^2_{\omega^\perp}
\]
The advantage of this is that multipliers which depend only on $D'$
are compatible with the $L^2_{\omega^\perp}$. Hence via a dyadic
decomposition in $D'$, we need to show that
\[
2^k P_k (D') e^{-i\psi_{\pm}}_{<0}(t,x,D) e^{it|D|}: L^2 \to l^2_k
L^2( \Sigma), \qquad k < -C
\]
We commute the multiplier inside,
\[
\begin{split}
2^k P_k (D') e^{-i\psi_{\pm}}_{<0}(t,x,D) e^{it|D|} = &\  \tilde P_k(D')
[2^k P_k (D'), e^{-i\psi_{\pm}}_{<0}(t,x,D)] e^{it|D|}  \\ &+  \tilde
P_k(D') e^{-i\psi_{\pm}}_{<0}(t,x,D) e^{it|D|}P_k (D')
\end{split}
\]
By the commutator estimate \eqref{spec_decomp1} the first term
satisfies the same bounds as the operator
\[
\tilde P_k(D') (\nabla' e^{-i\psi_{\pm}}_{<0})(t,x,D) e^{it|D|} = - i
\tilde P_k(D') (\nabla' \psi_{\pm} e^{-i\psi_{\pm}})_{<0}(t,x,D)
e^{it|D|}
\]
Since $\nabla \psi_{\pm} \in DL^2 L^r$ for some $r < \infty$, by
disposability we have
\[
(\nabla' \psi_{\pm} e^{-i\psi_{\pm}})_{<0}(t,x,D): L^\infty L^2 \to
L^2 L^{\frac{2r}{r+2}}
\]
Hence by Bernstein,
\[
\| \tilde P_k(D') (\nabla' e^{-i\psi_{\pm}}_{<0})(t,x,D)
e^{it|D|}\|_{L^2_x to L^2_{x,t} } \lesssim 2^{-\frac{4}{r} k}
\]
where the point is that there is some gain in $k$ in order to
guarantee summability.

It remains to show that
\[
\tilde P_k(D') e^{-i\psi_{\pm}}_{<0}(t,x,D) e^{it|D|}P_k (D'): L^2 \to
l^2_k L^2(\Sigma)
\]
We harmlessly discard $\tilde P_k(D')$. The square summability with
respect to $k$ is now provided by the multiplier on the right, so it
suffices to fix $k$.  By a $TT^*$ argument this reduces to
\[
e^{-i\psi_{\pm}}_{<0}(t,x,D) e^{i(t-s)|D|} 2^{2k} P_k^2
e^{i\psi_{\pm}}_{<0}(D,y,s): L^2( \Sigma) \to L^2(\Sigma)
\]
For the above operator we use the kernel bound provided by
\eqref{off-angle0} with $l = k$, namely
\[
2^{5l} \langle 2^{2l} |t-s| \rangle^{-N} \langle 2^l |x-y|
\rangle^{-N}
\]
This is an integrable kernel, so the $L^2 \to L^2$ bound easily
follows.


\section{Statements and proofs
 of the core multilinear estimates}

It remains to prove \eqref{Ai_null_est}, \eqref{Ai_null_Best},
\eqref{A0B_est2}, \eqref{quadphi_HL_null},
\eqref{gen_phi_lohi_quadest}, \eqref{gen_phi_Best},
\eqref{gen_phi_lohi_quadest0}, \eqref{quadphi_HL_Z0}, and finally
\eqref{quad_null_est} for each of the expressions
\eqref{quad_null1}--\eqref{quad_null3}.  This set of estimates can be
roughly grouped into two rough categories. The first involves
``null-form'' type estimates, and the second involves product
estimates without a microlocal gain from angular separation. The first
category of bounds boils down to:

\begin{thm}[Core null form  estimates]\label{null_thm}
  The following hold:
  \begin{align}
    &\lp{ P_{k_1}\mathcal{N}(\phi^{(2)}_{k_2}, \phi^{(3)}_{k_3}) }{N}
    \ \lesssim \ 2^{k_1}2^{\delta(k_1-\max\{k_2,k_3\})}
    2^{-\delta|k_2-k_3|}
    \lp{ \phi^{(2)}_{k_2} }{S^1}\lp{ \phi^{(3)}_{k_3} }{S^1} \ , \label{mult1}\\
    &\lp{
      (I-\mathcal{H}^*_{k_1})\mathcal{N}(\phi^{(1)}_{k_1},\phi^{(2)}_{k_2})
    }{N} \lesssim 2^{k_1}
    \lp{\phi^{(1)}_{k_1}}{S^1}\lp{\phi^{(2)}_{k_2}}{S^1} , \ k_1<k_2-C
    \ ,
    \  \label{mult2}\\
    &\lp{ \mathcal{H}^*_{k_1}
      \mathcal{N}(\phi^{(1)}_{k_1},\phi^{(2)}_{k_2}) }{L^1(L^2)}
    \lesssim 2^{k_1}
    \lp{\phi^{(1)}_{k_1}}{Z^{hyp}}\lp{\phi^{(2)}_{k_2}}{S^1} , \
    k_1<k_2-C \ ,
    \  \label{mult3}\\
    &\lp{(I-\mathcal{H}_{k_1})P_{k_1}
      \mathcal{N}(\phi^{(2)}_{k_2},\phi^{(3)}_{k_3}) }{\Box Z^{hyp}}
    \label{mult4}\\
    &\hspace{.4in} \lesssim 2^{k_1}2^{\delta(k_1-\max\{k_2,k_3\})}
    2^{-\delta|k_2-k_3|}
    \lp{\phi^{(2)}_{k_2}}{S^1}\lp{\phi^{(3)}_{k_3}}{S^1} \ ,  \notag\\
    &\lp{\mathcal{H}_{k_1}\mathcal{N}
      (\phi^{(2)}_{k_2},\phi^{(3)}_{k_3}) }{\Box Z^{hyp}}\label{mult5}\\
    &\hspace{.4in}\lesssim \ 2^{k_1}2^{-\delta|k_2-k_3|}
    \lp{\phi^{(2)}_{k_2}}{S^1}\lp{\phi^{(3)}_{k_3}}{S^1} , \
    k_1>\max\{k_2,k_3\}-C \ . \notag
  \end{align}
  In addition one has the following quadrilinear form bounds, which
  hold under the condition $ k< k_i-C$:
  \begin{align}
    \big|\big\langle \Box^{-1} \mathcal{H}_k
    (&\phi^{(1)}_{k_1}\cdot \partial_\alpha \phi^{(2)}_{k_2}) ,
    \mathcal{H}_k (\partial^\alpha \phi^{(3)}_{k_3}\cdot \psi_{k_4})
    \big\rangle\big| \label{mult6}\\
    \ &\lesssim \ 2^{ \delta(k-\min\{k_i\}) }
    \lp{\phi^{(1)}_{k_1}}{S^1}\lp{\phi^{(2)}_{k_2}}{S^1}\lp{\phi^{(3)}_{k_3}}{S^1}
    \lp{\psi_{k_4}}{N^*} \ ,  \notag\\
    \big|\big\langle (\Box\Delta)^{-1} &\mathcal{H}_k \partial_\alpha
    (\phi^{(1)}_{k_1}\cdot \partial^\alpha \phi^{(2)}_{k_2}) ,
    \partial_t \mathcal{H}_k (\partial_t \phi^{(3)}_{k_3}\cdot
    \psi_{k_4})
    \big\rangle\big| \label{mult7}\\
    \ &\lesssim \ 2^{ \delta(k-\min\{k_i\}) }
    \lp{\phi^{(1)}_{k_1}}{S^1}\lp{\phi^{(2)}_{k_2}}{S^1}\lp{\phi^{(3)}_{k_3}}{S^1}
    \lp{\psi_{k_4}}{N^*} \ ,    \notag\\
    \big|\big\langle (\Box\Delta)^{-1} &\nabla_x \mathcal{H}_k
    (\phi^{(1)}_{k_1}\cdot \nabla_x \phi^{(2)}_{k_2}) ,
    \mathcal{H}_k \partial_\alpha (\partial^\alpha
    \phi^{(3)}_{k_3}\cdot \psi_{k_4})
    \big\rangle\big| \label{mult8}\\
    \ &\lesssim \ 2^{ \delta(k-\min\{k_i\}) }
    \lp{\phi^{(1)}_{k_1}}{S^1}\lp{\phi^{(2)}_{k_2}}{S^1}\lp{\phi^{(3)}_{k_3}}{S^1}
    \lp{\psi_{k_4}}{N^*} \ .  \notag
  \end{align}
\end{thm}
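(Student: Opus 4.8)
The plan is to reduce each of \eqref{mult1}--\eqref{mult8} to a geometrically convergent sum over dyadic frequency, modulation and angular blocks, and on a single such block to play off the algebraic gain of the null symbol of $\mathcal{N}$ against either a bilinear $L^2(L^2)$ estimate for free waves — when some modulation in the interaction is large — or against the plane-wave and transversal-energy structure of the $S_k^\omega(l)$ norm in \eqref{Sl_def} — when all modulations are small. The one geometric fact driving everything is the resonance identity: two waves of frequencies $2^{k'}\geqslant 2^{k''}$ meeting at angle $2^l$ and producing a third, all near their respective light cones, force the largest of the three modulations to be $\gtrsim 2^{k_{\min}+2l}$, where $2^{k_{\min}}$ is the smallest of the three frequencies, while the null symbol of $\mathcal{N}_{ij}$ supplies an angular factor $\sim 2^l$. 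The off-diagonal weights $2^{\delta(k_1-\max\{k_2,k_3\})}$ and $2^{-\delta|k_2-k_3|}$ appearing throughout are exactly what is left after balancing these two gains against the Bernstein losses and summing, and they also guarantee that the bounds remain summable when inserted into the Picard scheme.

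For \eqref{mult1} we run the frequency trichotomy in $(k_1,k_2,k_3)$. In every regime in which the output, or one of the inputs, carries modulation $2^j\gtrsim 2^{k_{\min}}$, we place the critical factor in a suitable $X^{0,\pm\frac12}$ space and absorb the $2^{\pm j/2}$ weight using the angular gain $2^l\lesssim 2^{(j-k_{\min})/2}$ together with a bilinear $L^2(L^2)$ bound; here only the Strichartz and $X^{0,\frac12}_\infty$ components of $S_k$ are needed. In the complementary regime all three modulations are $\ll 2^{k_{\min}}$, the interaction is genuinely null with $2^l\sim 2^{(j-k_{\min})/2}$, and we estimate the output in $L^1(L^2)$ by pairing a factor concentrated on a plane-wave slab — controlled by $P\!W^\mp_\omega(l)$ — against a factor controlled by the transversal norm $N\!E$, both of which enter $S_k^\omega(l)$; the finer $\mathcal{C}_{k'}(l')$ blocks in \eqref{Sl_def} only improve the summation. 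Estimate \eqref{mult2} is the low$\times$high restriction of \eqref{mult1} after the piece $\mathcal{H}^*_{k_1}\mathcal{N}$ is removed: on the complement some modulation is $\gtrsim 2^{k_1}$, so either the resonance identity delivers a usable angular gain, or — in the genuinely non-resonant directions, where the angle is $O(1)$ — the large modulation lives on the low-frequency factor and is absorbed by the $X^{0,\frac12}_\infty$ component of $S^1$; but that configuration is precisely $\mathcal{H}^*_{k_1}\mathcal{N}$ and has been excised.

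The two $\Box Z^{hyp}$ bounds \eqref{mult4} and \eqref{mult5} are handled through the embedding $\Box^{-1}\ell^1 L^1(L^2)\subseteq Z^{hyp}$ of \eqref{B_embed}: in each regime the corresponding modulation-angle piece of $\mathcal{N}$ is placed in $L^1(L^2)$, using the bilinear $L^2$ bound in the high-modulation part of \eqref{mult4} and in the high$\times$high output case of \eqref{mult5} — where the frequency imbalance factor $2^{-\delta|k_2-k_3|}$ compensates for the absence of an output-frequency gain — and the slab/transversal structure in the low-modulation part of \eqref{mult4}. The one genuinely non-null piece among the first five is \eqref{mult3}: the truncation $\mathcal{H}^*_{k_1}$ puts all three modulations below $2^{k_1}$ with the largest on the low-frequency factor, so the resonance identity yields no lower bound on the angle and the null gain collapses. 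Here we instead expand $\phi^{(1)}_{k_1}$ into angular caps of size $2^l$, use the $Z^{hyp}$ norm to control each cap in $L^1(L^\infty)$ with weight $2^{l/2}$, and pair it against the high-frequency factor taken in $L^\infty(L^2)$ and bounded by the energy component of $S^1$; the derivative in $\mathcal{N}_{ij}$ produces the $2^{k_1}$ on the right, and the residual $2^{l/2}$ together with the square-sum over caps closes the estimate.

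Finally, for the quadrilinear forms \eqref{mult6}--\eqref{mult8} the first task is the algebraic manipulation indicated in the Appendix, which exposes the cancellation between the $\mathcal{H}A_x$ and $\mathcal{H}A_0$ contributions and rewrites each form so that it pairs $\Box^{-1}\mathcal{H}_k$ of a bilinear expression in $\phi^{(1)},\phi^{(2)}$ against an $\mathcal{H}_k$-localized bilinear expression in $\phi^{(3)},\psi$. The $\mathcal{H}_k$ truncations pin both interior bilinear expressions to frequency $2^k$ and modulation $\lesssim 2^k$, so $\Box^{-1}$ and $\Delta^{-1}$ act there like $2^{-2k}$, and each form reduces to a product of two bilinear $L^2(L^2)$ estimates of exactly the type already used for \eqref{mult1}, now with extra room from the separation $k<k_i-C$; summing over $k$ and the $k_i$ then produces the $2^{\delta(k-\min\{k_i\})}$ gain. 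I expect this last point to be the main obstacle: one must carry out the cancellation so that it survives the $\mathcal{H}_k$ modulation cutoffs, and then distribute the $\Box^{-1}\Delta^{-1}$ weights between two independent bilinear $L^2$ estimates without destroying the off-diagonal decay in any of the four frequency gaps. By contrast the null-form bounds \eqref{mult1}--\eqref{mult5}, while lengthy, follow the now-standard wave-maps template (see e.g.\ \cite{T2}).
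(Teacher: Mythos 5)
Your toolkit (modulation--angle dyadic decompositions, the resonance identity, the null symbol gain, $Z^{hyp}$, null frames) is the right one, and your treatment of \eqref{mult1}, \eqref{mult2}, \eqref{mult4} is broadly in line with the paper's reduction to fixed--modulation atomic bounds of the type \eqref{mod1}. The genuine gap is in \eqref{mult6}--\eqref{mult8}. Reducing each quadrilinear form to ``a product of two independent bilinear $L^2(L^2)$ estimates'' means applying Cauchy--Schwarz across the duality pairing, and this discards the $\partial_\alpha\cdots\partial^\alpha$ contraction between $\phi^{(2)}$ and $\phi^{(3)}$ --- which is the only null structure these terms retain (the Appendix notes explicitly that this mixed interaction is the one requiring null frames). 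Quantitatively, with all $k_i=0$ and output frequency $2^k$, output modulation $2^j\leq 2^{k+C}$ inside $\mathcal{H}_k$, the factor $\Box^{-1}$ costs $2^{-j-k}$, while each inner bilinear $L^2_{t,x}$ bound with both inputs at modulation $<j$ gives at best about $2^{k}$ (square-summed $L^2(L^\infty)$ blocks via \eqref{L2Linfty_sqsum} or \eqref{L2L6_bernstein} against $L^\infty(L^2)$); the product scheme therefore produces roughly $2^{k-j}$ at each modulation, which is not summable over $j\leq k+C$, whereas the needed dyadic bound carries a gain in $(j-k)$. The paper's proof escapes this precisely by regrouping across the pairing in the small-angle regime: it spends one power of $\angle(\phi^{(2)},\phi^{(3)})\bmod\pi$ from the contraction and estimates $\phi^{(2)}\phi^{(3)}$ in $L^2(L^2)$ with the $P\!W^\mp_\omega(l)\times N\!E$ null-frame pairing together with the microlocalized $L^2(L^\infty)$ block norm of \eqref{Sl_def}. (Ironically, this is the machinery you deploy for the low-modulation case of \eqref{mult1}, where the paper gets by with square-summed Strichartz blocks and $X^{0,\frac12}_\infty$; also note the cancellation between $\mathcal{H}A_x$ and $\mathcal{H}A_0$ is performed before the theorem is stated, so it is not part of its proof.)

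Two further steps would fail as written. For \eqref{mult3} you declare the estimate ``genuinely non-null'' and drop the angular gain, but it is indispensable: inside $\mathcal{H}^*_{k_1}$ the modulation $2^j$ of $\phi^{(1)}_{k_1}$ fixes the cap scale $2^{l}$ with $l=\frac12(j-k_1)$, Cauchy--Schwarz against the $2^{l/2}$-weighted square sum in $Z^{hyp}_{k_1}$ costs $2^{-l/2}$, and only the one power $2^{l}$ from the null symbol turns this into the net gain $2^{l/2}=2^{\frac14(j-k_1)}$ that makes the sum over $j<k_1+C$ converge; without it the sum diverges, and the ``residual $2^{l/2}$'' in your bookkeeping has no source. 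For \eqref{mult5}, routing through an $L^1(L^2)$ bound and the embedding \eqref{B_embed} is not available: in the $\mathcal{H}_{k_1}$ configuration both inputs have modulation below the output's, and with $S^1$-type information there is no admissible pairing ($L^2L^\infty\times L^2L^2$ or $L^2L^4\times L^2L^4$ are not controlled in $4+1$ dimensions) that places the product in $L^1(L^2)$; the paper instead estimates this piece directly in the $Z^{hyp}$ norm, multiplying two square-summed $L^2(L^\infty)$ block bounds into $L^1(L^\infty)$ on caps.
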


The second category of bounds is covered by:

\begin{thm}[Additional core product  estimates]\label{prod_thm}
  The following hold:
  \begin{align}
    &\lp{ (I-\mathcal{H}^*_{k_1})(Q_{<k_2-C}A_{k_1}\partial_t
      \phi_{k_2}) }{N}\ \lesssim\ 2^{\frac{3}{2}k_1}
    \lp{A_{k_1}}{L^2(L^2)}
    \lp{\phi_{k_2}}{S^1} ,  \quad k_1<k_2-C \ , \label{mult9}\\
    &\lp{ \mathcal{H}^*_{k_1} (A_{k_1}\partial_t \phi_{k_2})
    }{L^1(L^2)} \lesssim \lp{A_{k_1}}{Z^{ell}}\lp{\phi_{k_2}}{S^1} ,
    \quad k_1<k_2-C \ ,
    \  \label{mult10}\\
    &\lp{(I-\mathcal{H}_{k_1})P_{k_1}
      (\phi^{(2)}_{k_2}\partial_t \phi^{(3)}_{k_3}) }{\Delta Z^{ell}}
\lesssim 2^{\delta(k_1-k_2)}
    \lp{\phi^{(2)}_{k_2}}{S^1}\lp{\phi^{(3)}_{k_3}}{S^1} \ , \quad
    k_1\leqslant k_2-C \ , \label{mult11}
  \end{align}
\end{thm}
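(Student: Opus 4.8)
The plan is to handle the three estimates \eqref{mult10}, \eqref{mult11}, \eqref{mult9} in that order, from easiest to hardest. In all three the role of the $\mathcal H$, $\mathcal H^*$ projectors is to peel off the near-resonant low-output-modulation interaction, so that what remains always comes with either a modulation gain from the $X^{0,\pm\frac12}$ structure inside $S^1$ and $N$ or a favorable $L^1L^\infty$/$L^2L^2$ placement; the proofs are then a dyadic decomposition in the modulations of the two inputs and of the output, combined with H\"older, Bernstein and the Strichartz embeddings for $S_k$ recorded earlier. For \eqref{mult10}: by definition $\mathcal H^*_{k_1}(A_{k_1}\partial_t\phi_{k_2})$ only contains pieces in which $A_{k_1}$ sits at a modulation $2^j$ with $j<k_1+C$ which is the dominant modulation, while $\phi_{k_2}$ and the output are truncated below $2^{j-C}$; these are exactly the modulations summed in $\|A_{k_1}\|_{Z^{ell}_{k_1}}=\sum_{j<k_1+C}\|Q_jA_{k_1}\|_{L^1L^\infty}$. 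So for each $j$ I would put $Q_jA_{k_1}$ in $L^1L^\infty$ and $\partial_tQ_{<j-C}\phi_{k_2}$ in $L^\infty L^2$ (the energy, $\lesssim\|\phi_{k_2}\|_{S^1}$, the inner cutoff being harmless since $S_{k_2}$ controls $\|Q_{<j}\,\cdot\,\|_{L^\infty L^2}$), multiply into $L^1L^2\subset N$, and sum on $j$ against the $\ell^1$ built into $Z^{ell}$.

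For \eqref{mult11} the first observation is that, applied on frequency $2^{k_1}$, one has $\|\,\cdot\,\|_{\Delta Z^{ell}}=2^{-2k_1}\sum_{j<k_1+C}\|Q_j\,\cdot\,\|_{L^1L^\infty}$, which only measures output modulations $<2^{k_1+C}$. Hence it suffices to estimate the part of $(I-\mathcal H_{k_1})P_{k_1}(\phi^{(2)}_{k_2}\partial_t\phi^{(3)}_{k_3})$ with output modulation $2^j<2^{k_1+C}$, and removing $\mathcal H_{k_1}$ forces one of $\phi^{(2)}_{k_2},\phi^{(3)}_{k_3}$ to carry modulation $\gtrsim 2^{j-C}$. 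I would then use Bernstein $L^1L^2\to L^1L^\infty$ at frequency $2^{k_1}$, whose cost $2^{2k_1}$ cancels the $\Delta^{-1}$, reducing matters to an $L^1L^2$ bound on the restricted product. The high-modulation factor supplies a gain $2^{-j/2}$ from the $X^{0,\frac12}_\infty\subset S^1$ norm (summed over modulations $\ge 2^{j-C}$); for the rest I would use that, since $k_1\le k_2-C$ (forcing $k_3=k_2+O(1)$) with small output modulation, the interaction is of opposite-sign, nearly anti-parallel type, to which an $L^2$ bilinear estimate applies, producing the off-diagonal factor $2^{\delta(k_1-k_2)}$ together with the residual Bernstein loss, and the geometric sum over $j$ converges.

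For \eqref{mult9} — where the real work lies — I would decompose in the modulations $2^a$ of $A_{k_1}$ (with $a<k_2-C$ from the $Q_{<k_2-C}$ cutoff), $2^b$ of $\phi_{k_2}$, and $2^c$ of the output. The region $\{a<k_1+C$, $a$ strictly dominant$\}$ is precisely $\mathcal H^*_{k_1}$ and is subtracted. On the complement there are two regimes. If $a\ge k_1+C$, then the geometry of summing a low-frequency, high-modulation $A_{k_1}$ with a frequency-$2^{k_2}$ wave forces the output modulation to be $\sim 2^a$; placing the output in $X^{0,-\frac12}_1$ (gain $2^{-a/2}$), $Q_{\sim a}A_{k_1}$ in $L^2L^2$ Bernstein'd to $L^2L^\infty$ (cost $2^{2k_1}$) and $\partial_t\phi_{k_2}$ in the energy space, a Cauchy--Schwarz in $a$ turns $\sum_a 2^{-a/2}\|Q_{\sim a}A_{k_1}\|_{L^2L^2}$ into $2^{-k_1/2}\|A_{k_1}\|_{L^2L^2}$, for a net loss $2^{3k_1/2}$. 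If instead $a<k_1+C$ but $A_{k_1}$ is not strictly dominant, set $m=\max(a,b,c)<k_1+C$: the near-resonance pins the contributing angular sector of $A_{k_1}$ to angle $\lesssim 2^{(m-k_1)/2}$ about $\pm\xi_{k_2}$, so Bernstein $L^2_x\to L^\infty_x$ on that sector costs only $2^{2k_1}2^{3(m-k_1)/4}$; pairing this with the $2^{-m/2}$ modulation gain carried by whichever of $\phi_{k_2}$ or the output realizes $2^m$, and summing the geometrically convergent series in $m<k_1$, gives $2^{5k_1/4}\cdot 2^{k_1/4}=2^{3k_1/2}$, exactly the claimed loss.

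The main obstacle I expect is this last regime of \eqref{mult9}, together with the anti-parallel contribution in \eqref{mult11}: there the only available gain is the angular concentration of the low-frequency factor forced by the smallness of the output modulation, and one must extract it cleanly through the square-function and null-frame structure of $S^1$ and $N$, while checking that the bookkeeping produces exactly $2^{3k_1/2}$ (resp.\ the $2^{\delta(k_1-k_2)}$ off-diagonal factor) rather than the naive $2^{2k_1}$ one gets from plain Bernstein. Everything else reduces to routine H\"older--Bernstein--Strichartz manipulations of the type already used for the $L^p$ product estimates in subsection~\ref{subse:lp}.
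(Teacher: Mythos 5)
Your handling of \eqref{mult10} and \eqref{mult9} is essentially the paper's. For \eqref{mult10} the argument is exactly the one used there: the $\ell^1$ summation over modulations built into $Z^{ell}$ against $L^1(L^\infty)\times L^\infty(L^2)\to L^1(L^2)$. For \eqref{mult9}, your high-modulation regime (output in $X^{0,-\frac12}_1$, $Q_{\sim a}A_{k_1}$ Bernstein'd to $L^2(L^\infty)$, Cauchy--Schwarz in $a$) and your low-modulation regime (sector of angle $2^{\frac12(m-k_1)}$, Bernstein cost $2^{2k_1}2^{\frac34(m-k_1)}$ against the $2^{-\frac m2}$ gain, summed over $m<k_1+C$) reproduce the paper's Cases 1b and 2 with the same numerology; the paper's Case 2 uses precisely the square-summed bound $\big(\sum_{\mathcal C_{k_1}(l')}\|P_{\mathcal C_{k_1}(l')}Q_{<j-C}A_{k_1}\|_{L^2(L^\infty)}^2\big)^{1/2}\lesssim 2^{\frac34 j}2^{\frac54 k_1}\|A_{k_1}\|_{L^2(L^2)}$ with $l'=\frac12(j-k_1)$. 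One slip: your two regimes do not literally cover the configurations where $A_{k_1}$ sits below modulation $2^{k_1+C}$ but $\phi_{k_2}$ or the output carries modulation $\geq 2^{k_1+C}$ (nor the part of your first regime where $\phi_{k_2}$'s modulation is $\gtrsim 2^a$, so the output is not pinned at $\sim 2^a$). These are easy -- $L^2(L^\infty)\times L^2(L^2)\to L^1(L^2)$, respectively $L^2(L^\infty)\times L^\infty(L^2)$ with the output in $X^{0,-\frac12}_1$ -- and are the paper's Cases 1a and 3, but they must be included.

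The genuine gap is in \eqref{mult11}. After your Bernstein step $P_{k_1}\colon L^1(L^2)\to L^1(L^\infty)$ and H\"older, the dyadic piece at output modulation $2^j$ is controlled by $2^{-\frac j2}$ (from the factor at modulation $\geq 2^{j-C}$ placed in $L^2_{t,x}$) times quantities independent of $j$, with the other factor in $L^2(L^\infty)$ at frequency $2^{k_2}$. This not only loses a positive power of $2^{k_2}$ instead of gaining $2^{\delta(k_1-k_2)}$, but, more seriously, is not summable over $j<k_1+C$: $Z^{ell}$ is an $\ell^1$ sum over \emph{all} output modulations below $2^{k_1+C}$, so you need a positive power of $2^{j-k_1}$ per dyadic $j$, and $2^{-\frac j2}$ goes the wrong way; your claim that ``the geometric sum over $j$ converges'' is therefore unsubstantiated. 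The proposed fix -- an ``opposite-sign, nearly anti-parallel'' bilinear $L^2$ estimate -- is not available as stated, because one of the two inputs has modulation $\geq 2^{j-C}$, arbitrarily far from the cone, so there is no transversality/near-cone bilinear improvement to invoke, and in any case you never exhibit the required $j$-gain. What closes the estimate (and is the paper's argument) is to decompose \emph{both} frequency-$2^{k_2}$ inputs into radially oriented blocks of size $2^{k_1}\times(2^{k_1+l'})^3$ with $l'=\frac12(j-k_1)$, use that the output frequency $2^{k_1}$ pairs antipodal blocks so the block sums can be Cauchy--Schwarzed, and apply Bernstein only at the block scale: the high-modulation factor via $X^{0,\frac12}_\infty$ plus block Bernstein (cost $2^{2k_1+\frac32 l'}$, gain $2^{-\frac12 j}$), the low-modulation factor via the square-summed block bound \eqref{L2L6_bernstein}. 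This produces the dyadic bound $2^{2k_1}2^{\frac12(j-k_1)}2^{\frac16(k_1-k_2)}$, which is summable in $j$ and supplies the off-diagonal factor; without this block-scale Bernstein mechanism your proof of \eqref{mult11} does not close.
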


We conclude this Section with the application of Theorems
\ref{null_thm} and \ref{prod_thm} to the estimates listed above. This
is for the most part straight forward and left to the reader.

\begin{proof}[Proof that Theorem~\ref{null_thm} implies estimates
  \eqref{Ai_null_est}, \eqref{Ai_null_Best}, \eqref{quadphi_HL_null},
  \eqref{gen_phi_lohi_quadest}, \eqref{gen_phi_Best}, and
  \eqref{quad_null_est}]
  For \eqref{Ai_null_est} we use \eqref{mult1}, for \eqref{Ai_null_Best}
  use \eqref{mult4}, for \eqref{quadphi_HL_null} use \eqref{mult1}
  with $k_2\geq k_1+O(1)$, for \eqref{gen_phi_lohi_quadest} use
  \eqref{mult2}, for \eqref{gen_phi_Best} use \eqref{mult3}, and for
  \eqref{quad_null_est} use \eqref{mult6} - \eqref{mult8}.
\end{proof}

\begin{proof}[Proof that Theorem \ref{prod_thm} implies estimates
  \eqref{A0B_est2}, \eqref{gen_phi_lohi_quadest0}, and
  \eqref{quadphi_HL_Z0}]

For \eqref{A0B_est2} use \eqref{mult11}, for \eqref{gen_phi_lohi_quadest0} use \eqref{mult9}, and for \eqref{quadphi_HL_Z0} use \eqref{mult10}. 
 
\end{proof}


For the remainder of  this Section we prove Theorems \ref{null_thm} and
\ref{prod_thm}. We begin with a simple calculation that will be used a
number of times:

\begin{lem}[Square summed $L^2(L^6)\to L^2(L^\infty)$ estimate]\label{L2L6_lem}
  Let $j-C\leqslant k'+l'\leqslant \frac{1}{2}(j+k)+C$ with $l' <C$
  and $k'\leqslant k+C$. Then one has the following uniform estimate:
  \begin{equation}
    \big(\sum_{\mathcal{C}_{k'}(l')} 
    \lp{P_{\mathcal{C}_{k'}(l')}Q_{<j}\phi_k }{L^2(L^\infty)}^2
    \big)^\frac{1}{2} \ \lesssim \ 2^{\frac{2}{3}k'}2^{\frac{1}{2}l'}
    2^{\frac{5}{6}k}\lp{\phi_k}{S_k[L^2(L^6)]} 
    \ , \label{L2L6_bernstein}
  \end{equation}
  where $\mathcal{C}_{k'}(l')$ is a finitely overlapping collection of
  radially oriented rectangles of dimensions $(2^{k'+l'})^3\times
  2^{k'}$. Here $S_{k_2}[L^2(L^6)]$ refers to the $L^2(L^6)$ portion
  (including square sums) of the norm from lines \eqref{str_and_defn}
  and \eqref{Sl_def}.
\end{lem}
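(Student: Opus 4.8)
The plan is to prove \eqref{L2L6_bernstein} by a direct application of Bernstein's inequality on each rectangle $\mathcal{C}_{k'}(l')$, followed by reassembling the square sum. First I would fix a rectangle $\mathcal{C}=\mathcal{C}_{k'}(l')$ of dimensions $(2^{k'+l'})^3\times 2^{k'}$. On such a rectangle Bernstein's inequality yields, at fixed time,
\[
\lp{P_{\mathcal{C}}\, g}{L^\infty_x} \ \lesssim \
|\mathcal{C}|^{\frac16}\, \lp{P_{\mathcal{C}}\, g}{L^6_x}
\ = \ 2^{\frac16(3(k'+l')+k')}\lp{P_{\mathcal{C}}\, g}{L^6_x}
\ = \ 2^{\frac23 k'}2^{\frac12 l'}\lp{P_{\mathcal{C}}\, g}{L^6_x},
\]
where $|\mathcal{C}| = 2^{3(k'+l')}\cdot 2^{k'}$ is the measure of the box. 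Applying this with $g = Q_{<j}\phi_k$, taking $L^2$ in time, and squaring, we get
\[
\lp{P_{\mathcal{C}}Q_{<j}\phi_k}{L^2(L^\infty)}^2
\ \lesssim \ 2^{\frac43 k'}2^{l'}\lp{P_{\mathcal{C}}Q_{<j}\phi_k}{L^2(L^6)}^2.
\]
Now summing over the finitely overlapping collection $\{\mathcal{C}_{k'}(l')\}$, the factor $2^{\frac43 k'}2^{l'}$ is uniform in $\mathcal{C}$ and comes out, leaving the square sum $\big(\sum_{\mathcal{C}}\lp{P_{\mathcal{C}}Q_{<j}\phi_k}{L^2(L^6)}^2\big)^{\frac12}$.

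The second step is to control this $L^2(L^6)$ square sum by the right-hand side $2^{\frac56 k}\lp{\phi_k}{S_k[L^2(L^6)]}$. Here the point is that the boxes $\mathcal{C}_{k'}(l')$ arise with parameters satisfying exactly the admissibility constraints $j - C \leq k'+l' \leq \frac12(j+k)+C$, $l'<C$, $k'\leq k+C$, which match the range over which the $S_k^\omega(l)$ norms in \eqref{Sl_def} carry square-summed $L^2(L^6)$ (i.e. $L^2(\dot W^{6,1/6})$-type) information at the appropriate angular scale $l = \frac12(k'+l'-k)$ or a comparable scale. Since the modulation truncation $Q_{<j}$ places us in the regime where $Q_{<k+2l}$-type localization is in force (using $j \approx k + 2l + O(1)$ from $k'+l' \leq \frac12(j+k)+C$), the caps of size $2^l$ refine into the radial rectangles $\mathcal{C}_{k'}(l')$, and the square sum over these rectangles is dominated by the corresponding square-summed $S_k^{str}$ norm appearing inside \eqref{Sl_def}. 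The scaling factor $2^{\frac56 k}$ is simply the dyadic weight relating the $L^2(L^6)$ Strichartz norm on frequency $2^k$ to the normalized $S_k$ norm (consistent with the exponent $\frac1q+\frac4r - 2 = \frac56$ at $(q,r)=(2,6)$ in \eqref{str_and_defn}).

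The main obstacle I expect is the bookkeeping in the second step: one must verify that the admissible range of $(k',l')$ precisely lands inside the supremum over $l$ and the supremum over $(k',l')$ appearing in the definition \eqref{Sl_def} of $S_k^\omega(l)$, so that the square sum over $\mathcal{C}_{k'}(l')$ of the $L^2(L^6)$ norms is genuinely bounded by a single term in that definition rather than requiring an extra logarithmic loss. This is a matter of matching the constraint $j-C \leq k'+l' \leq \frac12(j+k)+C$ against the constraint $k+2l \leq k'+l' \leq k+l$ with $l = \frac12(k'-k)$ or thereabouts, together with using that $Q_{<j}$ with $j \leq k+2l+O(1)$ sits below the modulation cutoff $Q_{<k+2l}$ used in $S_k^{ang}$; the first (Bernstein) step is entirely routine.
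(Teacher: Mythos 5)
Your proposal is correct and is essentially the paper's own argument: the paper proves the lemma in one line by applying Bernstein's inequality on each block in the form $P_{\mathcal{C}_{k'}(l')}L^6\subseteq 2^{\frac{2}{3}k'}2^{\frac{1}{2}l'}L^\infty$ and then invoking the square-summed $L^2(L^6)$ component built into the $S_k$ norm via \eqref{Sl_def}. Your bookkeeping concern resolves exactly as you suspect: taking the angular scale $l=\frac12(j-k)$ makes $Q_{<j}=Q_{<k+2l}$ and turns the hypothesis $j-C\leq k'+l'\leq\frac12(j+k)+C$ into the constraint $k+2l\leq k'+l'\leq k+l$ of \eqref{Sl_def} up to constants, so no extra loss occurs.
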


\begin{proof}[Proof of estimate \eqref{L2L6_bernstein}]
 This follows immediately from the
  $L^2(L^6)$ estimate on line \eqref{Sl_def} and Bernstein's
  inequality in the form $P_{\mathcal{C}_{k'}(l')}L^6\subseteq
  2^{\frac{2}{3}k'}2^{\frac{1}{2}l'} L^\infty$.
\end{proof}

\subsection{Proof of the null form estimates}

We begin with estimates \eqref{mult1}--\eqref{mult3}. These can be
boiled down to an even more atomic form which is the following:

\begin{lem}[Core modulation estimates]
  The following estimate holds uniformly in the indices $j_i,k_i$, where
  $j_2,j_3=j_1+ O(1) $:
  \begin{multline}
    \big|\big\langle Q_{j_1} \phi_{k_1}^{(1)} , \mathcal{N}(Q_{<j_2}
    \phi_{k_2}^{(2)} , Q_{<j_3}\phi_{k_3}^{(3)})
    \big\rangle\big|\\
    \lesssim 2^{-\delta|j_1-k_2|}2^{-\delta|k_1-k_3|}
    2^{\min\{k_1,k_3\}}2^{2k_2}
    \lp{\phi^{(1)}_{k_1}}{X_\infty^{0,\frac{1}{2}}}
    \lp{\phi^{(2)}_{k_2}}{S_{k_2}}
    \lp{\phi^{(3)}_{k_3}}{L^\infty(L^2)} \ , \label{mod1}
  \end{multline}
    In addition, when $j>k_{min}+C$ one has the improved bound:
  \begin{equation}
    \big|\big\langle Q_j \phi_{k_1}^{(1)} , 
    \mathcal{N}( \phi_{k_2}^{(2)} , \phi_{k_3}^{(3)})
    \big\rangle\big| 
    \!\lesssim\!   2^{-\delta (j-k_{min})}
    2^{2k_{min}}2^{k_{max}}
    \lp{\!\phi^{(1)\!}_{k_1}}{X_\infty^{0,\frac{1}{2}}} 
    \lp{\!\phi^{(2)\!}_{k_2}}{S_{k_2}} 
    \lp{\!\phi^{(3)\!}_{k_3}}{N^*} \ . \label{mod2}
  \end{equation}
\end{lem}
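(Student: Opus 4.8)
The plan is to reduce both bounds to bilinear $L^2_{t,x}$ estimates and then exploit the interplay between the null--form symbol, the angle between the two input spatial frequencies, and the output modulation. For \eqref{mod1}: since $\phi^{(1)}_{k_1}$ is localized to modulation $2^{j_1}$, Cauchy--Schwarz in $L^2_{t,x}$ together with $\| Q_{j_1}\phi^{(1)}_{k_1}\|_{L^2_{t,x}}\lesssim 2^{-\frac12 j_1}\|\phi^{(1)}_{k_1}\|_{X^{0,\frac12}_\infty}$ reduces matters to
\[
\| Q_{j_1}\mathcal{N}( Q_{<j_2}\phi^{(2)}_{k_2}, Q_{<j_3}\phi^{(3)}_{k_3})\|_{L^2_{t,x}}\ \lesssim\ 2^{\frac12 j_1}\,2^{-\delta|j_1-k_2|}2^{-\delta|k_1-k_3|}\,2^{\min\{k_1,k_3\}}2^{2k_2}\,\|\phi^{(2)}_{k_2}\|_{S_{k_2}}\|\phi^{(3)}_{k_3}\|_{L^\infty_tL^2_x}\,.
\]
I would then decompose both inputs dyadically in the angle $\angle(\xi_2,\xi_3)\sim 2^l$ into $2^l$-caps, disposing of the essentially parallel region where the null symbol $|\xi_{2,i}\xi_{3,j}-\xi_{2,j}\xi_{3,i}|\lesssim 2^{k_2+k_3+l}$ is negligible. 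After cap localization the symbol is smooth on its own scale and can be discarded, leaving a square--summed product estimate over $2^l$-caps.

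The geometric heart is the resonance identity $\bigl|\pm_1|\xi_1|\mp_2|\xi_2|\mp_3|\xi_3|\bigr|\lesssim 2^{j_1}$, valid because all three space--time frequencies lie within $O(2^{j_1})$ of their cones. In the same--cone configuration this forces $\min\{2^{k_2},2^{k_3}\}\,2^{2l}\lesssim 2^{j_1}$, hence $2^l\lesssim(2^{j_1}/2^{k_{min}})^{1/2}$ (in the opposite--cone high$\times$high$\to$low regime one uses instead $2^{k_2}2^{k_3}2^{-k_1}\,2^{2l}\lesssim 2^{j_1}$). Substituting this bound on $2^l$ into the symbol bound $2^{k_2+k_3+l}$ and combining with the $2^{-\frac12 j_1}$ above renders the symbol contribution favorable; the remaining product is estimated by putting the cap--localized $\phi^{(2)}_{k_2}$ into the square--summed $L^2_tL^\infty_x$ (or null--frame $L^2_\omega L^\infty_{\omega^\perp}$) components of $S_{k_2}$ --- via the $S_k^\omega(l)$, $PW^\pm_\omega(l)$ and radial--cube norms in \eqref{Sl_def} and Lemma~\ref{L2L6_lem} --- against $\phi^{(3)}_{k_3}$ in plain energy $L^\infty_tL^2_x$, and summing over caps by Cauchy--Schwarz. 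Retaining a small margin $\delta$ in every Bernstein and Strichartz exponent on the caps (of size $2^{k}\times(2^{k+l})^3$) generates the off--diagonal gains $2^{-\delta|j_1-k_2|}$ and $2^{-\delta|k_1-k_3|}$. The one genuinely separate branch is the opposite--cone interaction in which the angle is not forced small but instead $2^{k_{min}}\lesssim 2^{j_1}$ (the low--frequency factor sitting below the output modulation); there one trades the missing angular gain for the corresponding power of $2^{j_1}$ directly.

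For \eqref{mod2} I would split each input into its $Q_{<j-C}$ and $Q_{\ge j-C}$ pieces. When a factor has modulation $\gtrsim 2^{j-C}$ I place it in $L^2_{t,x}$ using the $X^{0,\frac12}_\infty$ portion of its norm (for $\phi^{(3)}$ via $N^*\hookrightarrow X^{0,\frac12}_\infty$), picking up $2^{-j/2}$, and run a coarser product estimate --- still angularly decomposed where the null gain is needed --- the surplus modulation decay furnishing $2^{-\delta(j-k_{min})}$. When both inputs have modulation $<2^{j-C}$ we are in the setting of \eqref{mod1}, except that $j>k_{min}+C$ now makes the resonance identity pin $2^l$ to the single shell $(2^j/2^{k_{min}})^{1/2}$; summing over that one shell instead of all $l\le 0$ is exactly what produces the extra $2^{-\delta(j-k_{min})}$, while the $Q_{<j}$ truncations present in \eqref{mod1} are automatic here. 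For this piece the weaker bound $N^*\hookrightarrow L^\infty_tL^2_x$ on $\phi^{(3)}$ again suffices.

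I expect the main obstacle to be the case analysis rather than any single inequality: cleanly isolating same--cone from opposite--cone interactions in the high$\times$high$\to$low frequency regime, where the angle--versus--modulation heuristic is most fragile, and checking that the square--summed components of $S_k$ built from the $S_k^\omega(l)$, $PW^\pm_\omega(l)$, $NE$ and radial--cube norms are precisely strong enough to carry the cap sums with the stated $\delta$-gains. Keeping track of the Bernstein losses incurred in passing between $L^2$, $L^4$, $L^6$ and $L^\infty$ on the anisotropic caps is the bookkeeping core of the argument.
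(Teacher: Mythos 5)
Your treatment of \eqref{mod1} is in substance the paper's own argument: Cauchy--Schwarz on the $Q_{j_1}$ factor through $X^{0,\frac12}_\infty$, a sector/rectangle decomposition at the scale dictated by the output modulation, the null-form angular gain, the square-summed $L^2L^\infty$ bound of Lemma~\ref{L2L6_lem} for the $S_{k_2}$ factor against plain energy for $\phi^{(3)}$, and a cube decomposition via \eqref{L2Linfty_sqsum} in the regime where the angular constraint degenerates. The explicit exponents (e.g.\ factors like $2^{\frac14(j_1-k_1)}2^{\frac16(k_1-k_2)}$) are where the stated $\delta$-gains actually come from, so "keeping a margin in every Bernstein exponent" is fair as a sketch, though that bookkeeping is the bulk of the proof.

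There is, however, a concrete gap in your treatment of \eqref{mod2}, and a related sign error in the cone bookkeeping. First, the branch with no angular constraint is not the opposite-cone one: for $(+-)$ high$\times$high$\to$low interactions with inputs near their cones, the output modulation is automatically $\lesssim 2^{k_{\min}}$, so once $j>k_{\min}+C$ only the $(++)$/$(--)$ configuration survives, and there the output modulation is pinned at $2^{k_{\max}}$ while the inputs are antipodal to within the purely spatial angle $2^{k_{\min}-k_{\max}}$. Second, and more seriously, in the case of \eqref{mod2} where both inputs have modulation $<2^{j-C}$, your claimed resonance constraint $2^{l}\sim(2^{j}/2^{k_{\min}})^{1/2}$ is vacuous exactly because $j>k_{\min}+C$ makes that quantity exceed $1$, and "summing over that one shell instead of all $l\le 0$" cannot by itself produce the factor $2^{-\delta(j-k_{\min})}$: restricting a sum never improves the bound at the surviving shell. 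The paper handles this case by noting it forces $j=k_{\max}+O(1)$, putting the derivative in $\mathcal{N}$ on the low output frequency (so the symbol is bounded by $2^{k_{\min}+k_{\max}}$), decomposing into antipodal cubes of size $2^{k_{\min}}$ and invoking \eqref{L2Linfty_sqsum} --- which is where the $X^{0,\frac12}_\infty$ component of $S_{k_2}$ is essential --- so that the gain is read off from the resulting numerology together with the $2^{-\frac12 j}$ from the $Q_j$ factor. You need to replace your shell-restriction mechanism by an argument of this type; as written, that step fails.
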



\begin{proof}[Proof of estimate \eqref{mod1}]
  There are three cases depending on the relative separation of
  spatial frequencies. Each of these is further split into low and
  high modulation subcases.

  \case{1a}{$k_2=k_3+O(1)$ and $j_1<k_1$} Here the frequency angle
  separation between the second two factors is $\angle(\phi^2,\phi^3)
  \lesssim 2^{-k_2}2^{\frac{1}{2}(k_1+j_1)}:=2^l$.  The null form
  $\mathcal{N}$ saves one power of this angle.  On the other hand, the
  angle of interaction with the output is $\angle(\phi^1,\phi^2)
  \lesssim 2^{\frac{1}{2}(j_1-k_1)}:=2^{l'}$.  Breaking up the high
  frequency factors in the null form with respect to the $2^{k_1}$
  radial scale and the $2^l$ sector scale it remains to estimate the
  expression $Q_{j_1} F$ in $L^2$ where
\[
 F :=  P_{k_1}
      \sum_{\mathcal{C}_{k_1}(l')} \mathcal{N} \big(
      P_{\mathcal{C}_{k_1}(l')}Q_{<j_2} \phi^{(2)}_{k_2} ,
      P_{-\mathcal{C}_{k_1}(l')}
      Q_{<j_3} \phi^{(3)}_{k_3})
\]
Disposing of $\mathcal{N}$ we start with  the fixed time estimate
\[
\begin{split}
\| F(t)\|_{L^2}^2 \lesssim & \  2^{2(k_2+k_3+l)} 
  \big(\sum_{\mathcal{C}_{k_1}(l')}
\|  P_{\mathcal{C}_{k_1}(l')}Q_{<j_2} \phi^{(2)}_{k_2}(t)\|_{L^\infty} 
\|  P_{-\mathcal{C}_{k_1}(l')}  Q_{<j_3} \phi^{(3)}_{k_3}(t) \|_{L^2}\big)^2
\\
\lesssim & \ 2^{2k_3 + k_1 + j_1}  \left(  \sum_{\mathcal{C}_{k_1}(l')}
\|  P_{\mathcal{C}_{k_1}(l')}Q_{<j_2} \phi^{(2)}_{k_2}(t)\|_{L^\infty}^2 \right)
 \left(  \sum_{\mathcal{C}_{k_1}(l')}\|  P_{-\mathcal{C}_{k_1}(l')}  Q_{<j_3} \phi^{(3)}_{k_3}(t)\|^2_{L^2}\right)
\\
\lesssim & \ 2^{2k_3 + k_1 + j_1}  \left(  \sum_{\mathcal{C}_{k_1}(l')}
\|  P_{\mathcal{C}_{k_1}(l')}Q_{<j_2} \phi^{(2)}_{k_2}(t)\|_{L^\infty}^2 \right)
\| \phi^{(3)}_{k_3}\|^2_{L^\infty L^2}
\end{split}
\]
where at the last step we have used orthogonality in frequency and then
the $L^\infty L^2$ boundedness of $Q_{<j_3}$. Hence integrating in time 
we arrive at
\begin{equation}
\|F\|_{L^2}^2 \lesssim  \ 2^{2k_3 + k_1 + j_1} \| \phi^{(3)}_{k_3}\|^2_{L^\infty L^2} 
  \sum_{\mathcal{C}_{k_1}(l)}
\|  P_{\mathcal{C}_{k_1}(l')}Q_{<j_2} \phi^{(2)}_{k_2}(t)\|_{L^2 L^\infty}^2 
\end{equation}
Then applying   Lemma \ref{L2L6_lem} to put $\phi_{k_2}^{(2)}$ in $L^2(L^\infty)$,
  we see that:
\[
\begin{split}
\|Q_{j_1} F\|_{L^2} \lesssim & 2^{\frac{2}{3}k_1}2^{\frac{1}{2}l'}
    2^{\frac{5}{6}k_3}  2^{k_3 + \frac12(k_1 + j_1)}  \lp{\phi^{(2)}_{k_2}}{S_{k_2}[L^2(L^6)]}
    \lp{\phi^{(3)}_{k_3}}{L^\infty(L^2)}
\\ = & \ 2^{\frac{1}{2}j_1}2^{\frac{1}{4}(j_1-k_1)}2^{\frac{1}{6}
      (k_1-k_2)}2^{k_1}2^{k_2}2^{k_3} \lp{\phi^{(2)}_{k_2}}{S_{k_2}[L^2(L^6)]}
    \lp{\phi^{(3)}_{k_3}}{L^\infty(L^2)}
\end{split}
\]
concluding the proof in this case.

\case{1b}{$k_2=k_3+O(1)$ and $j_1\geqslant k_1$} Here integrating by
parts we get a factor of $2^{k_1+k_2}$ from the null form.  Then we
localize $\phi_{k_2}^{(2)}$ and $\phi_{k_3}^{(3)}$ with respect to
$2^{k_1}$ sized frequency cubes, but without any modulation
localization. For $ \phi_{k_2}^{(2)}$ we use the bound
\eqref{L2Linfty_sqsum}, and for $\phi_{k_3}^{(3)}$ we use the
$L^\infty L^2$ norm. Then the same computation as above for
\[
F = 
   \sum_{\mathcal{C}_{k_1}} \mathcal{N} \big(
      P_{\mathcal{C}_{k_1}}Q_{<j_2} \phi^{(2)}_{k_2} ,
      P_{-\mathcal{C}_{k_1}}
      Q_{<j_3} \phi^{(3)}_{k_3})
\]
yields
\[
\|F\|_{L^2} \lesssim 2^{2k_1+ \frac32 k_2} \|   \phi^{(2)}_{k_2}\|_{S_{k_2}} 
\|  \phi^{(3)}_{k_3}\|_{L^\infty L^2}
\]
which suffices.

  \case{2a}{$k_1=k_2+O(1)$ and $j_1 < k_3$} This case is mostly
  analogous to \textbf{Case 1a}. Integrating by parts one may place the
  null form between $\phi_{k_1}^{(1)}$ and $\phi_{k_2}^{(2)}$.  Then
  the angular decompositions between the inputs of these two terms,
  the angle of the output (frequency $2^{k_3}$ now) is the same as
  \textbf{Case 1} but with $k_1$ and $k_3$ transposed. 
With 
\[
 F :=  P_{k_3}
      \sum_{\mathcal{C}_{k_3}(l')} \mathcal{N} \big(
      P_{\mathcal{C}_{k_3}(l')}Q_{j_1} \phi^{(1)}_{k_1} ,
      P_{-\mathcal{C}_{k_3}(l')}
      Q_{<j_2} \phi^{(2)}_{k_2})
\]
the relevant
  computation becomes:
\[
\begin{split}
\| F\|_{L^1 L^2} \lesssim &  \  2^{k_1+\frac12(k_3+j_1)} 
\sum_{\mathcal{C}_{k_3}(l')} \|  P_{\mathcal{C}_{k_3}(l')}Q_{j_1} \phi^{(1)}_{k_1}\|_{L^2} \|   P_{-\mathcal{C}_{k_3}(l')}
      Q_{<j_2} \phi^{(2)}_{k_2}\|_{L^2 L^\infty}
\\
 \lesssim &  \  2^{k_1+\frac12(k_3+j_1)} 
\left(\sum_{\mathcal{C}_{k_3}(l')}
 \|  P_{\mathcal{C}_{k_3}(l')}Q_{j_1} \phi^{(1)}_{k_1}\|_{L^2}
\right)^\frac12\left(
\sum_{\mathcal{C}_{k_3}(l')} \|   P_{-\mathcal{C}_{k_3}(l')}
      Q_{<j_2} \phi^{(2)}_{k_2}\|_{L^2 L^\infty}\right)^\frac12
\\
\lesssim & \ 2^{\frac{1}{2}j_1}2^{\frac{1}{4}(j_1-k_3)}2^{\frac{1}{6}
      (k_3-k_2)}2^{k_1}2^{k_2}2^{k_3} \lp{Q_{j_1}\phi^{(1)}_{k_1}}{L^2(L^2)}
    \lp{\phi^{(2)}_{k_2}} {S_{k_2}[L^2(L^6)]} ,
\end{split}
\]
  which suffices.

  \case{2b}{$k_1=k_2+O(1)$ and $j_1 > k_3$} The same argument as in
  {\bf Case 2a} applies, with the only difference that
  $\phi^{(1)}_{k_1}$ and $\phi^{(2)}_{k_2}$ are now decomposed on the
  frequency scale $2^{k_3}$, and there is no modulation cutoff. Thus 
the bound \eqref{L2Linfty_sqsum} has to be  used for the latter.

 \case{3a}{$k_1=k_3+O(1)$ and $j < k_2$} The computation here is
  essentially the same as above, but with slightly different
  numerology. The main difference is now that $\angle(\phi^2,\phi^3)
  \lesssim 2^{\frac{1}{2}(j_1-k_2)}:=2^l$, and we sum over sectors of
  size $\sim 2^{k_2}\times (2^{k_2+l})^3$.  Then a computation similar 
to {\bf Case 1} gives us:
\[
\begin{split}
    \hbox{LHS}\eqref{mod1} \ \lesssim & \ 2^{-\frac{1}{2}j_1}
    2^{\frac{3}{2}l}2^{\frac{5}{2}k_2}2^{k_3}
    \lp{\phi^{(1)}_{k_1}}{X_\infty^{0,\frac{1}{2}}}
    \lp{\phi^{(2)}_{k_2}}{S_{k_2}[L^2(L^6)]}
    \lp{\phi^{(3)}_{k_3}}{L^\infty(L^2)} \ , \\
    = & \ 2^{\frac{1}{4}(j_1-k_2)}2^{2k_2}2^{k_3}
    \lp{\phi^{(1)}_{k_1}}{X_\infty^{0,\frac{1}{2}}}
    \lp{\phi^{(2)}_{k_2}}{S_{k_2}[L^2(L^6)]}
    \lp{\phi^{(3)}_{k_3}}{L^\infty(L^2)} \ . \notag
  \end{split}
\]

 \case{3b}{$k_1=k_3+O(1)$ and $j > k_2$} Here we directly
  use the easy product estimate:
  \begin{equation}
    \hbox{LHS}\eqref{mod1} \ \lesssim \ 2^{-\frac{1}{2}j_1}
    2^{\frac{5}{2}k_2}2^{k_3}
    \lp{\phi^{(1)}_{k_1}}{X_\infty^{0,\frac{1}{2}}} 
    \lp{\phi^{(2)}_{k_2}}{S_{k_2}} 
    \lp{\phi^{(3)}_{k_3}}{L^\infty(L^2)} 
    \ . \notag
  \end{equation}
\end{proof}


\begin{proof}[Proof of estimate \eqref{mod2}]
  There are two cases depending on the relation of the modulations of
  $\phi^{(2)}_{k_2}$ and $\phi^{(3)}_{k_3}$ to $2^j$.

  \case{1}{One of $\phi^{(2)}_{k_2}$ or $\phi^{(3)}_{k_3}$ has
    modulation comparable to $2^j$} This situation is symmetric. It
  suffices to show the single estimate:
  \begin{equation}
    \lp{P_{k_1}\mathcal{N}(Q_{>j-C}\phi^{(2)}_{k_2},
      \phi^{(3)}_{k_3})}{L^2(L^2)} \ \lesssim \
    2^{-\frac{1}{2}j }2^{3\min\{k_i\}}2^{\max\{k_i\}} 
    \lp{\phi^{(2)}_{k_2}}{X_\infty^{0,\frac{1}{2}}}
    \lp{\phi^{(3)}_{k_3}}{L^\infty(L^2)} \ , \notag
  \end{equation}
  which follows immediately from putting a derivative of $\mathcal{N}$
  on the lowest frequency and $L^2\to L^\infty$ Bernstein's inequality
  for the lowest frequency as well.

  \case{2}{Both $\phi^{(2)}_{k_2}$ and $\phi^{(3)}_{k_3}$ have
    modulation $\ll 2^j$} This can happen only when $k_1< k_2-C$, in
  which case it is a $(++)$ or $(--)$ type $High\times High\Rightarrow
  Low$ interaction between $\phi^{(2)}_{k_2}$ and
  $\phi^{(3)}_{k_3}$. Thus, $j=k_2+O(1)$ and it suffices to prove:
  \begin{equation}
    \lp{P_{k_1}\mathcal{N}(Q_{<j-C}\phi^{(2)}_{k_2},
      Q_{<j-C}\phi^{(3)}_{k_3})}{L^2(L^2)} \ \lesssim \
    2^{2k_1}2^{\frac{3}{2}k_2} 
    \lp{\phi^{(2)}_{k_2}}{S_{k_2}}\lp{\phi^{(3)}_{k_3}}{L^\infty(L^2)} \ . \notag
  \end{equation}
  This follows at once by putting a derivative on the low frequency
  output, and breaking the product into antipodal blocks
  $\mathcal{C}_{k_1}$ of scale $2^{k_1}$ while using the special
  $L^2(L^\infty)$ square sum bound \eqref{L2Linfty_sqsum} for the
  first factor. Note that the multiplier
  $Q_{<j-C}P_{\mathcal{C}_{k_1}}$ is disposable thanks to $j>k_1+C$.
\end{proof}\ret


\begin{proof}[Proof of estimate \eqref{mult1}]
 This follows  from  the next two bounds:
  \begin{align}
    \lp{ (I-\mathcal{H}_{k_1}- Q_{>k_1+C})P_{k_1}\mathcal{N}
      (\phi^{(2)}_{k_2}, \phi^{(3)}_{k_3}) }{L^1(L^2)} \ \lesssim \
    \hbox{LHS}\eqref{mult1}
    \ , \label{mult111}\\
    \lp{(Q_{>k_1+C}+ \mathcal{H}_{k_1})P_{k_1}\mathcal{N}
      (\phi^{(2)}_{k_2}, \phi^{(3)}_{k_3}) }{X_1^{0,-\frac{1}{2}}} \
    \lesssim \ \hbox{LHS}\eqref{mult1} \ . \label{mult12}
  \end{align}

  \case{1}{Estimate \eqref{mult111}} The restriction induced by $
  (I-\mathcal{H}_{k_1}- Q_{>k_1+C})$ means that one of the two input
  factors always has the leading modulation.  First permute notation
  so the output frequency is $k_3$ and the two inputs are
  $k_1,k_2$. Then by duality (note that $\mathcal{N}$ is skew-adjoint
  as a form) and symmetry of the estimate, it suffices to show bounds
  of the form:
  \begin{multline}
    \big|\big\langle Q_j \phi_{k_1}^{(1)} , \mathcal{N}(Q_{<j+O(1)}
    \phi_{k_2}^{(2)} , Q_{<j+O(1)}\psi_{k_3})
    \big\rangle\big|\\
    \lesssim \
    2^{-\delta|j-k_2|}2^{k_3}2^{\delta(k_3-\max\{k_1,k_2\})}
    2^{-\delta|k_1-k_2|} 2^{k_1}2^{k_2} \lp{\! \phi^{(1)}_{k_1}\!
    }{S_{k_1}}\lp{\! \phi^{(2)}_{k_2}\! }{S_{k_2}}
    \lp{\psi_{k_3}}{L^\infty(L^2)} \ , \notag
  \end{multline}
  which follows directly from \eqref{mod1}.

  \case{2a}{Estimate \eqref{mult12} for low modulations} Freezing the
  output modulation it suffices to show:
  \begin{multline}
    \lp{Q_j S_{k_1}\mathcal{N}
      (Q_{<j-C}\phi^{(2)}_{k_2}, Q_{<j-C}\phi^{(3)}_{k_3})}{X_1^{0,-\frac{1}{2}}}\\
    \lesssim \ 2^{-\delta
      |j-k_2|}2^{k_1}2^{\delta(k_1-\max\{k_2,k_3\})}
    2^{-\delta|k_2-k_3|} 2^{k_2}2^{k_3} \lp{\! \phi^{(2)}_{k_2}\!
    }{S_{k_2}}\lp{\! \phi^{(3)}_{k_3}\! }{S_{k_3}} \ , \notag
  \end{multline}
  which again is a direct consequence of \eqref{mod1}.

  \case{2b}{Estimate \eqref{mult12} for high modulations} In this case
  we use:
  \begin{equation}
    \lp{Q_j S_{k_1}\mathcal{N}
      (\phi^{(2)}_{k_2}, \phi^{(3)}_{k_3})}{X_1^{0,-\frac{1}{2}}}\\
    \lesssim \ 2^{-\delta |j-k_2|}2^{k_1}2^{\delta(k_1-\max\{k_2,k_3\})}
    2^{-\delta|k_2-k_3|} 2^{k_2}2^{k_3}
    \lp{\! \phi^{(2)}_{k_2}\! }{S_{k_2}}\lp{\! \phi^{(3)}_{k_3}\! }{S_{k_3}}
    \ , \notag
  \end{equation}
  follows immediately from \eqref{mod2} in the case $j>k_1+C$.
\end{proof}\ret


\begin{proof}[Proof of estimate \eqref{mult2}]
  There are two main cases:

  \case{1}{$\phi_{k_1}^{(1)}$ with highest modulation} Due to the
  restriction of $\mathcal{H}^*$, this can only occur when the second
  factor is $Q_{>k_1+C}\phi_{k_1}^{(1)}$. Then we use \eqref{mod2}
  which directly implies for this case:
  \begin{equation}
    \lp{\mathcal{N}(Q_j\phi_{k_1}^{(1)},\phi_{k_2}^{(2)} ) }{N} \ \lesssim \ 
    (2^{\delta(k_1-j)}+2^{-\delta|j-k_2|}) 2^{2k_1} 2^{k_2}
    \lp{\phi_{k_1}^{(1)}}{S_{k_1}}\lp{\phi_{k_2}^{(2)}}{S_{k_2}} \ . \notag
  \end{equation}

  \case{2}{Output or $\phi_{k_2}^{(2)}$ have the leading modulation}
  In this case we end up needing bounds of the form:
  \begin{align}
    \lp{Q_j \mathcal{N}(Q_{<j+O(1)}\phi_{k_1}^{(1)},
      Q_{<j+O(1)}\phi_{k_2}^{(2)} ) }{X_1^{0,-\frac{1}{2}}} \
    &\lesssim \ 2^{-\delta|j-k_1|} 2^{2k_1} 2^{k_2}
    \lp{\phi_{k_1}^{(1)}}{S_{k_1}}\lp{\phi_{k_2}^{(2)}}{S_{k_2}} \ . \notag\\
    \lp{Q_{<j+O(1)}\mathcal{N}(Q_{<j+O(1)}\phi_{k_1}^{(1)},
      Q_j\phi_{k_2}^{(2)} ) }{L^1(L^2)} \ &\lesssim \
    2^{-\delta|j-k_1|} 2^{2k_1} 2^{k_2}
    \lp{\phi_{k_1}^{(1)}}{S_{k_1}}\lp{\phi_{k_2}^{(2)}}{S_{k_2}} \ ,
    \notag
  \end{align}
  which are both immediate from \eqref{mod1}.
\end{proof}\ret


\begin{proof}[Proof of estimate \eqref{mult3}]
  This is a direct computation using angular decompositions. At a
  fixed modulation $2^j$ the angle of interaction is
  $\angle(\phi^1,\phi^2)\lesssim 2^{\frac{1}{2}(j-k_1)}:=2^l$, and
  disposing of the null form we have:
  \begin{multline}
    \lp{Q_{<j-C}\mathcal{N}(Q_j\phi^{(1)}_{k_1}, Q_{<j-C}\phi^{(2)}_{k_2})  }{L^1(L^2)}\\
    \lesssim \ 2^l 2^{k_1}2^{k_2}\sum_\omega \lp{P^\omega_l
      Q_{k_1+2l}\phi^{(1)}_{k_1}}{L^1(L^\infty)}\cdot \lp{P_l^\omega
      Q_{<j-C}\phi^{(2)}_{k_2} }{L^\infty(L^2)} \ . \notag
  \end{multline}
  Using Cauchy-Schwarz and:
  \begin{equation}
    \lp{P_l^\omega
      Q_{<j-C}\phi^{(2)}_{k_2}  }{L^\infty(L^2)}^2  \ \lesssim \ 
    \sum_{\omega'\subseteq \omega }\lp{P_{2^{\frac{1}{2}(j-k_2)}}^{\omega'}
      Q_{<j-C}\phi^{(2)}_{k_2}  }{L^\infty(L^2)}^2 \ , \notag
  \end{equation}
  we have:
  \begin{equation}
    \lp{Q_{<j-C}\mathcal{N}(Q_j\phi^{(1)}_{k_1}, Q_{<j-C}\phi^{(2)}_{k_2})  }{L^1(L^2)}\
    \lesssim \ 2^{\frac{1}{4}(j-k_1)} 2^{k_1}\lp{\phi_{k_1}^{(1)}}{Z_{k_1}^{hyp}}
    \lp{\phi_{k_2}^{(2)}}{S^1_{k_2}} \ , \notag
  \end{equation}
  which suffices.
\end{proof}\ret

\begin{proof}[Proof of estimate \eqref{mult4}]
  This follows immediately from \eqref{mult11} and
  \eqref{B_embed}. Notice that the lack of an $L^1(L^2)$ estimate for
  $Q_{>k_1+C}P_{k_1}\mathcal{N}(\phi^{(2)}_{k_2},\phi^{(3)}_{k_3})$ is
  irrelevant because definition of $Z^{hyp}$ limits modulations to
  $Q_{<k_1+C}P_{k_1}$.
\end{proof}\ret


\begin{proof}[Proof of estimate \eqref{mult5}]
  By symmetry we may assume $k_2<k_3+O(1)$, in which case
  $k_1=k_3+O(1)$. There are two subcases depending on the size of the
  output modulation $2^j$:

  \case{1}{$k_2>j-C$} In this case the angle of interaction between
  the two inputs is $\angle(\phi^2,\phi^3)\lesssim
  2^{\frac{1}{2}(j-k_2)}:=2^{l'}$.  On the other hand, the output
  sector localization of $2^l:=2^{\frac{1}{2}(j-k_1)}\approx
  2^{\frac{1}{2}(j-k_3)}$ is passed to the high frequency factor, so
  using Lemma \ref{L2L6_lem} we have:
  \begin{align}
    &\sum_\omega 2^l \lp{ P^{\omega}_l Q_j S_{k_1} \mathcal{N}
      (Q_{<j-C}\phi^{(2)}_{k_2},
      Q_{<j-C}\phi^{(3)}_{k_3})}{L^1(L^\infty)}^2
    \notag \\
    \ \lesssim \ &2^l 2^{2l'} 2^{2k_2}2^{2k_3}
    \sum_{\omega'}\sum_{\substack{\omega:\\
        \omega\subseteq \omega'}} \lp{P^{\omega'}_{l'}
      Q_{<j-C}\phi^{(2)}_{k_2}}{L^2(L^\infty)}^2
    \lp{P^\omega_l Q_{<j-C}\phi^{(3)}_{k_3}}{L^2(L^\infty)}^2 \ , \notag\\
    \lesssim \ &2^{2l}2^{3l'}2^{5k_2}2^{5k_3}
    \lp{\phi^{(2)}_{k_2}}{S_{k_2}[L^2(L^6)]}^2
    \lp{\phi^{(3)}_{k_3}}{S_{k_3}[L^2(L^6)]}^2 \ ,\notag\\
    \lesssim \ &(2^{k_1+j})^2\cdot 2^{2k_1} 2^{2(k_2-k_3)}
    \lp{\phi^{(2)}_{k_2}}{S_{k_2}^1}^2
    \lp{\phi^{(3)}_{k_3}}{S_{k_3}^1}^2 \ . \notag
  \end{align}

  \case{2}{$k_2<j-C$} Here the calculation is essentially the same as
  above, except that $l'=0$. This again gives:
  \begin{equation}
    \lp{ Q_j S_{k_1}
      \mathcal{N} (Q_{<j-C}\phi^{(2)}_{k_2}, Q_{<j-C}\phi^{(3)}_{k_3})}{Z^{hyp}}\
    \lesssim \ (2^{j+k_1})\cdot 2^{k_1}2^{(k_2-k_3)}
    \lp{\phi^{(2)}_{k_2}}{S_{k_2}^1}
    \lp{\phi^{(3)}_{k_3}}{S_{k_3}^1}
    \ . \notag
  \end{equation}
\end{proof}\ret


\begin{proof}[Proof of estimate \eqref{mult6}]
  Freezing the output modulation, we will show:
  \begin{multline}
    \big|\big\langle \Box^{-1} Q_j P_{k} (Q_{<j-C}
    \phi^{(1)}_{k_1}\cdot \partial_\alpha Q_{<j-C}\phi^{(2)}_{k_2}) ,
    Q_j P_{k} (\partial^\alpha Q_{<j-C}\phi^{(3)}_{k_3}\cdot
    Q_{<j-C}\psi_{k_4})
    \big\rangle\big| \\
    \lesssim \ 2^{\frac{1}{4}(j-k)}2^{\frac{1}{2}(k-\min\{k_i\})}
    \lp{\phi^{(1)}_{k_1}}{S^1}\lp{\phi^{(2)}_{k_2}}{S^1}\lp{\phi^{(3)}_{k_3}}{S^1}
    \lp{\psi_{k_4}}{N^*} \ . \label{main_null_dyadic}
  \end{multline}
  Here we are in the configuration $k_1=k_2+O(1)$, $k_3=k_4+O(1)$, and
  $k<k_i-C$.  Thus, the left and right products are summed over
  diametrically opposite angular sectors of size $2^k\times
  (2^{k+l})^3$, where $2^l:= 2^{\frac{1}{2}(j-k)}$. On the other hand,
  the null form between the second and third terms gains us
  $\angle(\phi^2,\phi^3)^2$, where this angle mod $\pi$ cannot exceed
  $2^{l+C}$.  Therefore we group the product of the two diagonal sums
  into dyadic values of $\angle(\phi^2,\phi^3)\!\!\!\mod \pi$ and
  break into three cases:

  \case{1}{$\angle(\phi^2,\phi^3)\!\!\!\mod \pi \lesssim 2^l
    2^{k-k_2}$} A little care is needed to use orthogonality in
  space. To gain this, at first keep the second diagonal sum under the
  time integral as follows:
  \begin{multline}
    \hbox{LHS}\eqref{main_null_dyadic}|_{\angle(\phi^2,\phi^3)\!\!\!\!\!\!\mod\!\pi
      \lesssim 2^l 2^{k-k_2}}\\
    \lesssim \ 2^{-k_2}2^{k_3} \sum_{\mathcal{C}_k(l)}
    \lp{P_{\mathcal{C}_k(l)}Q_{<j-C} \phi^{(1)}_{k_1}}{L^2(L^\infty)}
    \lp{P_{-\mathcal{C}_k(l)}Q_{<j-C} \phi^{(2)}_{k_2}}{L^2(L^\infty)}\\
    \times \sup_t \sum_{\mathcal{C}'_k(l)}
    \lp{P_{\mathcal{C}'_k(l)}Q_{<j-C} \phi^{(3)}_{k_3}(t)}{L^2_x}
    \lp{P_{-\mathcal{C}'_k(l)}Q_{<j-C} \psi_{k_4}(t)}{L^2_x} \
    . \notag
  \end{multline}
  The inner sum of the second factor on the RHS can easily be
  reconstructed after Cauchy-Schwarz by spatial orthogonality. On the
  other hand, for the two $L^2(L^\infty)$ norms we use Lemma
  \ref{L2L6_lem}. This gives us:
  \begin{align}
    &\hbox{LHS}\eqref{main_null_dyadic}
    |_{\angle(\phi^2,\phi^3)\!\!\!\!\!\!\mod\!\pi \lesssim 2^l
      2^{k-k_2}}
    \notag\\
    \lesssim \ &2^{\frac{4}{3}k}2^l
    2^{\frac{5}{6}k_1}2^{-\frac{1}{6}k_2}2^{k_3}
    \lp{\phi^{(1)}_{k_1}}{S_{k_1}[L^2(L^6)]}
    \lp{\phi^{(2)}_{k_2}}{S_{k_2}[L^2(L^6)]}
    \lp{\phi^{(3)}_{k_3}}{S_{k_3}}
    \lp{\psi_{k_4}}{N^*} \ , \notag\\
    \lesssim \ &2^{\frac{1}{2}(j-k)}2^{\frac{4}{3}(k-\min\{k_i\})}
    \lp{\phi^{(1)}_{k_1}}{S^1}\lp{\phi^{(2)}_{k_2}}{S^1}\lp{\phi^{(3)}_{k_3}}{S^1}
    \lp{\psi_{k_4}}{N^*} \ , \notag
  \end{align}
  which is even better than \eqref{main_null_dyadic}.

  \case{2}{$\angle(\phi^2,\phi^3)\!\!\!\mod \pi \lesssim 2^l
    2^{k-k_3}$} This is essentially the same as \textbf{Case 1} above,
  but since the angular gain is in frequency $2^{k_3}$ we put
  $\phi^{(3)}_{k_3}$ in a square summed $L^2(L^\infty)$ via Lemma
  \ref{L2L6_lem} and $\phi^{(1)}_{k_1}$ in $L^\infty(L^2)$
  instead. This gives:
  \begin{align}
    &\hbox{LHS}\eqref{main_null_dyadic}
    |_{\angle(\phi^2,\phi^3)\!\!\!\!\!\!\mod\!\pi \lesssim 2^l
      2^{k-k_3}}
    \notag\\
    \lesssim \ &2^{\frac{4}{3}k}2^l 2^{k_1} 2^{\frac{5}{6}k_2}
    2^{-\frac{1}{6}k_3} \lp{\phi^{(1)}_{k_1}}{L^\infty(L^2)}
    \lp{\phi^{(2)}_{k_2}}{S_{k_2}[L^2(L^6)]}
    \lp{\phi^{(3)}_{k_3}}{S_{k_3}[L^2(L^6)]}
    \lp{\psi_{k_4}}{N^*} \ , \notag\\
    \lesssim \ &2^{\frac{1}{2}(j-k)}2^{\frac{4}{3}(k-\min\{k_i\})}
    \lp{\phi^{(1)}_{k_1}}{S^1}\lp{\phi^{(2)}_{k_2}}{S^1}\lp{\phi^{(3)}_{k_3}}{S^1}
    \lp{\psi_{k_4}}{N^*} \ , \notag
  \end{align}
  which suffices. Note that a similar square summation for fixed time
  procedure as in \textbf{Case 1} was used here.

  \case{3}{$2^l 2^{k-\min\{k_2,k_3\}} \ll
    \angle(\phi^2,\phi^3)\!\!\!\mod\pi \lesssim 2^l$} In this case
  there is a definite angle between spatial frequencies in the blocks
  $\mathcal{C}_k(l),\mathcal{C}'_k(l)$ for fixed dyadic $2^{l'}=
  \angle(\phi^2,\phi^3)$.  Thus, we can use one power of
  $\angle(\phi^2,\phi^3)$ to put the product of $\phi^{(2)}_{k_2}$ and
  $\phi^{(3)}_{k_3}$ in $L^2(L^2)$ using null frames. A little more
  care is needed here to gain spatial orthogonality for
  $\psi_{k_4}$. Thus, we first fix time and compute:
  \begin{align}
    &\hbox{LHS}\eqref{main_null_dyadic}|_{\angle(\phi^2,\phi^3)
      \!\!\!\!\!\!\mod\!\pi\sim 2^{l'},t=const}\notag \\
    \lesssim \ &2^{-2(k+l)}2^{k_2}2^{k_3}2^{2l'} \hspace{-.3in}
    \sum_{\substack{ \mathcal{C}_k(l),\mathcal{C}'_k(l) :\\
        \angle(\mathcal{C}_k(l),\pm \mathcal{C}_k(l)')\sim 2^{l'} }}
    \hspace{-.2in} \lp{P_{\mathcal{C}_k(l)}Q_{<j-C}
      \phi^{(2)}_{k_2}(t)\cdot P_{\pm \mathcal{C}'_k(l)}Q_{<j-C}
      \phi^{(3)}_{k_3}(t)
    }{L^2_x}\notag\\
    &\ \ \ \ \times \lp{P_{\mathcal{C}_k(l)}Q_{<j-C}
      \phi^{(1)}_{k_1}(t)}{L^\infty}
    \lp{P_{\mp\mathcal{C}'_k(l)}Q_{<j-C} \psi_{k_4}(t)}{L^2_x}
    \ , \notag\\
    \lesssim \ &2^{-2(k+l)}2^{k_2}2^{k_3}2^{2l'} \lp{Q_{<j-C}
      \psi_{k_4}(t)}{L^2_x}\cdot \big(\sum_{ \mathcal{C}_k(l)}
    \lp{P_{\mathcal{C}_k(l)}Q_{<j-C} \phi^{(1)}_{k_1}(t)}{L^\infty}^2\big)^{\frac{1}{2}}\notag\\
    &\ \ \ \  \times \big(\hspace{-.3in}\sum_{\substack{ \mathcal{C}'_k(l) :\\
        \angle(\mathcal{C}_k(l),\pm \mathcal{C}_k(l)')\sim 2^{l'} }}
    \hspace{-.2in} \lp{P_{\mathcal{C}_k(l)}Q_{<j-C}
      \phi^{(2)}_{k_2}(t)\cdot P_{\pm \mathcal{C}'_k(l)}Q_{<j-C}
      \phi^{(3)}_{k_3}(t) }{L^2_x}^2\big)^\frac{1}{2} \ . \notag
  \end{align}
  Integrating and using Cauchy-Schwarz in time, and then using the
  special microlocalized $L^2(L^\infty)$ block norm from line
  \eqref{Sl_def} for $\phi^{(1)}_{k_1}$, we get:
  \begin{equation}
    \hbox{LHS}\eqref{main_null_dyadic}|_{\angle(\phi^2,\phi^3)
      \!\!\!\!\!\!\mod\!\pi\sim 2^{l'}}\
    \lesssim \ 2^{-k}2^{-2l}2^{l'} 2^{\frac{1}{2}k_1}2^{k_2}2^{k_3} I_{23}(l') 
    \lp{\phi^{(1)}_{k_1}}{S_{k_1}}
    \lp{\psi_{k_4}}{N^*} \ . \notag
  \end{equation}
  where:
  \begin{equation}
    I_{23}(l')^2 \ = \ 
    \hspace{-.3in}
    \sum_{\substack{ \mathcal{C}_k(l),\mathcal{C}'_k(l) :\\
        \angle(\mathcal{C}_k(l),\pm \mathcal{C}_k(l)')\sim 2^{l'} }}
    \hspace{-.2in}
    2^{2l'}\lp{P_{\mathcal{C}_k(l)}Q_{<j-C} \phi^{(2)}_{k_2}\cdot
      P_{\pm \mathcal{C}'_k(l)}Q_{<j-C} \phi^{(3)}_{k_3}
    }{L^2(L^2)}^2
    \ . \notag
  \end{equation}
  Since we already spent $2^{k_3}$ we put the second factor in
  $L^\infty_\omega(L^2_{\omega^\perp})$ and use
  $L^2_{\omega}(L^\infty_{\omega^\perp})$ for the first. This gives
  us:
  \begin{equation}
    I_{23}(l') \ \lesssim \ 2^{\frac{3}{2}(k+l)}
    \lp{\phi^{(2)}_{k_2}}{S_{k_2}}\lp{\phi^{(3)}_{k_3}}{S_{k_3}} \ , \notag
  \end{equation}
  and so:
  \begin{equation}
    \hbox{LHS}\eqref{main_null_dyadic}|_{\angle(\phi^2,\phi^3)
      \!\!\!\!\!\!\mod\!\pi\sim 2^{l'}}\
    \lesssim \ 2^{\frac{1}{2}l'}2^{\frac{1}{2}(k-\min\{k_i\})}
    \lp{\phi^{(1)}_{k_1}}{S^1}
    \lp{\phi^{(2)}_{k_2}}{S^1}\lp{\phi^{(3)}_{k_3}}{S^1}
    \lp{\psi_{k_4}}{N^*} \ . \notag
  \end{equation}
  Summing this over all $l'<l+C$ gives \eqref{main_null_dyadic} for
  this case.
\end{proof}\ret


\begin{proof}[Proof of estimate \eqref{mult7}]
  Freezing the output modulation, we'll show:
  \begin{multline}
    \big|\big\langle (\Box\Delta)^{-1} Q_j
    P_k \partial_t \partial_\alpha
    (Q_{<j-C}\phi^{(1)}_{k_1}\cdot \partial^\alpha
    Q_{<j-C}\phi^{(2)}_{k_2}) , (\partial_t
    Q_{<j-C}\phi^{(3)}_{k_3}\cdot Q_{<j-C}\psi_{k_4})
    \big\rangle\big| \\
    \ \lesssim \ 2^{\frac{1}{2}(j-k)}2^{ \frac{1}{6}(k-k_1) }
    \lp{\phi^{(1)}_{k_1}}{S^1}
    \lp{\phi^{(2)}_{k_2}}{S^1}\lp{\phi^{(3)}_{k_3}}{S^1}
    \lp{\psi_{k_4}}{N^*} \ . \notag
  \end{multline}
  We will use an $L^\infty(L^2)$ estimate for both the third and
  fourth factors. Thus, via H\"older's inequality, a simple weight
  calculation, and the Leibniz rule we have reduced matters to:
  \begin{align}
    \lp{Q_j P_k (\partial_\alpha
      Q_{<j-C}\phi^{(1)}_{k_1}\cdot \partial^\alpha
      Q_{<j-C}\phi^{(2)}_{k_2})}{L^1(L^\infty)} &\lesssim
    2^{j+2k}2^{\frac{1}{2}(j-k)}2^{ \frac{1}{3}(k-k_1) }
    \lp{\phi^{(1)}_{k_1}}{S^1}
    \lp{\phi^{(2)}_{k_2}}{S^1} \ , \label{Q_0_L1Linfty}\\
    \lp{Q_j P_k (Q_{<j-C}\phi^{(1)}_{k_1}\cdot \Box
      Q_{<j-C}\phi^{(2)}_{k_2})}{L^1(L^\infty)} &\lesssim
    2^{j+2k}2^{\frac{3}{2}(j-k)}2^{ \frac{1}{6}(k-k_1) }
    \lp{\phi^{(1)}_{k_1}}{S^1} \lp{\phi^{(2)}_{k_2}}{S^1} \
    . \label{Box_L1Linfty}
  \end{align}

  To prove \eqref{Q_0_L1Linfty}, note that the null form gains two
  powers of the angle $\angle(\phi^1,\phi^2)\lesssim
  2^{\frac{1}{2}(k+j)}2^{-k_1}:=2^l$. Therefore, breaking the product
  into a sum over antipodal radially directed blocks of dimension
  $2^{k}\times (2^{k+l'})^3$, where $l'=\frac{1}{2}(j-k)$, and using
  \eqref{L2L6_bernstein} for both factors we have:
  \begin{align}
    \hbox{LHS}\eqref{Q_0_L1Linfty} \ &\lesssim \
    2^{2l}2^{l'}2^{\frac{4}{3}k}
    2^{\frac{11}{6}k_1}2^{\frac{11}{6}k_2}
    \lp{\phi^{(1)}_{k_1}}{S_{k_1}[L^2(L^6)]}\lp{\phi^{(2)}_{k_2}}{S_{k_2}[L^2(L^6)]} \ , \notag\\
    &\lesssim \ 2^{j+2k} 2^{\frac{1}{2}(j-k)} 2^{\frac{1}{3}(k-k_1)}
    2^{k_1}2^{k_2}\lp{\phi^{(1)}_{k_1}}{S_{k_1}}\lp{\phi^{(2)}_{k_2}}{S_{k_2}}
    \ . \notag
  \end{align}

  To prove \eqref{Box_L1Linfty} we use the same calculations as above,
  except for the second factor we trade \eqref{L2L6_bernstein} for:
  \begin{equation}
    \big(\sum_{\mathcal{C}_{k}(l')} 
    \lp{ P_{\mathcal{C}_{k}(l')} \Box Q_{<j-C}\phi_{k_2}^{(2)} }{L^2(L^\infty)}^2
    \big)^\frac{1}{2} \ \lesssim \ 2^{2k}2^{\frac{3}{2}l'} 2^{\frac{1}{2}j}2^{k_2}
    \lp{\phi_{k_2}^{(2)}}{X_\infty^{0,\frac{1}{2}}} 
    \ , \label{Xsb_bernstein}
  \end{equation}
  which is an immediate consequence or Bernstein's inequality and
  $\Box Q_{<j-C}P_{k_2} X_\infty^{0,\frac{1}{2}} \subseteq
  2^{\frac{1}{2}j}2^{k_2}L^2(L^2)$.  This gives:
  \begin{equation}
    \hbox{LHS}\eqref{Box_L1Linfty} \ \lesssim \ 
    2^{j+2k} 2^{\frac{3}{2}(j-k)} 2^{\frac{1}{6}(k-k_1)}
    2^{k_1}2^{k_2}\lp{\phi^{(1)}_{k_1}}{S_{k_1}}\lp{\phi^{(2)}_{k_2}}{S_{k_2}} \ . \notag
  \end{equation}
\end{proof}


\begin{proof}[Proof of estimate \eqref{mult8}]
  Freezing the output modulation, our goal here is to show:
  \begin{multline}
    \big|\big\langle (\Box\Delta)^{-1} \nabla_x Q_j P_k
    (Q_{<j-C}\phi^{(1)}_{k_1}\cdot \nabla_x Q_{<j-C} \phi^{(2)}_{k_2})
    , Q_j P_k \partial_\alpha (\partial^\alpha
    Q_{<j-C}\phi^{(3)}_{k_3}\cdot Q_{<j-C} \psi_{k_4})
    \big\rangle\big| \\
    \ \lesssim \ 2^{\frac{1}{2}(j-k)}2^{ \frac{1}{6}(k-\min\{k_i\}) }
    \lp{\phi^{(1)}_{k_1}}{S^1}
    \lp{\phi^{(2)}_{k_2}}{S^1}\lp{\phi^{(3)}_{k_3}}{S^1}
    \lp{\psi_{k_4}}{N^*} \ . \notag
  \end{multline}
  Expanding the null form and computing the weights, it suffices to
  have:
  \begin{align}
    \lp{Q_j P_k (Q_{<j-C}\phi^{(1)}_{k_1}\cdot \nabla_x Q_{<j-C}
      \phi^{(2)}_{k_2})}{L^2(L^2)} &\lesssim
    2^{\frac{1}{2}k}2^{\frac{1}{4}(j-k)}2^{ \frac{1}{6}(k-k_1) }
    \lp{\phi^{(1)}_{k_1}}{S^1}
    \lp{\phi^{(2)}_{k_2}}{S^1} \ , \label{nabla_L2}\\
    \lp{Q_j P_k (\partial_\alpha Q_{<j-C}\phi^{(3)}_{k_3}\cdot
      \partial^\alpha Q_{<j-C}\psi_{k_4})}{L^2(L^2)} &\lesssim \
    2^j2^{\frac{3}{2} k}2^{\frac{1}{4}(j-k)}2^{\frac{1}{6}(k-k_3)}
    \lp{\phi^{(3)}_{k_3}}{S^1}\lp{\psi_{k_4}}{N^*} \ , \label{Q_0_L2}\\
    \lp{Q_j P_k (\Box Q_{<j-C}\phi^{(3)}_{k_3}\cdot Q_{<j-C}
      \psi_{k_4})}{L^2(L^2)} &\lesssim \ 2^j
    2^{\frac{3}{2}k}2^{\frac{1}{4}(j-k)}
    \lp{\phi^{(3)}_{k_3}}{S^1}\lp{\psi_{k_4}}{N^*} \ . \label{Box_L2}
  \end{align}
  The proof of these three estimates follows from the same angular
  decompositions and antipodal block sums as in the last
  proof. Estimate \eqref{nabla_L2} follows by using
  \eqref{L2L6_bernstein} for the first factor and $L^\infty(L^2)$ for
  the second.  Estimate \eqref{Q_0_L2} follows similarly once once the
  null form is taken into account. Note that here the antipodal block
  sum for $\psi_{k_4}$ needs to be reconstructed for fixed time using
  orthogonality.  Finally, estimate \eqref{Box_L2} is the same as
  \eqref{Q_0_L2} but uses \eqref{Xsb_bernstein} instead for
  $\phi_{k_3}^{(3)}$.  Further details are left to the reader.
\end{proof}


\subsection{Proof of the additional product estimates}

These estimates use the same kind of modulation/angular-sum
decompositions as in previous proofs, so we leave more of the details
to the reader.

\begin{proof}[Proof of estimate \eqref{mult9}]
  There are three main cases:

  \case{1}{$A_{k_1}$ with highest modulation} Due to the restriction
  of $\mathcal{H}^*$, this can only occur when the second factor is
  $Q_{k_1+C<\cdot < k_2-C}A_{k_1}$. Then there are two subcases:

  \case{1a}{Contribution of $\partial_tQ_{>k_1-C} \phi_{k_2}$} Here we
  use a product of the two bounds:
  \begin{equation}
    \lp{Q_{k_1+C<\cdot < k_2-C} A_{k_1}}{L^2(L^\infty)} \ \lesssim \ 
    2^{2k_1}\lp{A_{k_1}}{L^2(L^2)} \ , \ \
    \lp{\partial_tQ_{>k_1-C}\phi_{k_2}}{L^2(L^2)} \ \lesssim \ 2^{-\frac{1}{2}k_1}
    \lp{\partial_t \phi_{k_2}}{X_\infty^{0,\frac{1}{2}}} \ . \notag
  \end{equation}

  \case{1b}{Contribution of $\partial_tQ_{<k_1-C} \phi_{k_2}$, and
    output modulation $\gtrsim 2^{k_1}$} Note that this is the
  remaining case because an output modulation of $\ll 2^{k_1}$ is
  impossible due to the restrictions on $Q_{k_1+C<\cdot < k_2-C}
  A_{k_1}$ and $\partial_tQ_{<k_1-C} \phi_{k_2}$.  Here we use a
  product of the previous $L^2(L^\infty)$ estimate for the first and
  $L^\infty(L^2)$ for the latter.

  \case{2}{Output or $\phi_{k_2}$ have the leading modulation and
    $j<k_1+C$} In this case we can reduce things to the bounds:
  \begin{align}
    \lp{Q_j (Q_{<j+O(1)}A_{k_1}
      \partial_t Q_{<j+O(1)}\phi_{k_2} ) }{X_1^{0,-\frac{1}{2}}} \
    &\lesssim \ 2^{\frac{1}{4}(j-k_1)} 2^{\frac{3}{2}k_1} 2^{k_2}
    \lp{A_{k_1}}{L^2(L^2)}\lp{\phi_{k_2}}{S_{k_2}} \ . \notag\\
    \lp{Q_{<j+O(1)} (Q_{<j+O(1)}\phi_{k_1}^{(1)}\partial_t
      Q_j\phi_{k_2}^{(2)} ) }{L^1(L^2)} \ &\lesssim \
    2^{\frac{1}{4}(j-k_1)} 2^{\frac{3}{2} k_1} 2^{k_2}
    \lp{A_{k_1}}{L^2(L^2)}\lp{\phi_{k_2}}{S_{k_2}} \ . \notag
  \end{align}
  Both of these bounds follow from the usual angular sum
  decompositions and the Bernstein type estimate:
  \begin{equation}
    \big(\sum_{\mathcal{C}_{k_1}(\frac{1}{2}(j-k_1))} 
    \lp{ P_{\mathcal{C}_{k_1}(\frac{1}{2}(j-k_1))} Q_{<j-C}A_{k_1} }{L^2(L^\infty)}^2
    \big)^\frac{1}{2} \ \lesssim \  2^{\frac{3}{4}j}2^{\frac{5}{4}k_1}
    \lp{A_{k_1}}{L^2(L^2)} 
    \ . \notag
  \end{equation}

  \case{3}{Output or $\phi_{k_2}$ have the leading modulation and
    $j\geqslant k_1+C$} This is analogous to the last case except
  there are no angular decompositions and instead we simply use:
  \begin{equation}
    \lp{ Q_{<j-C}A_{k_1} }{L^2(L^\infty)}
    \ \lesssim \  2^{2k_1}
    \lp{A_{k_1}}{L^2(L^2)} 
    \ , \notag
  \end{equation}
  instead of the previous square sum bound.
\end{proof}\ret


\begin{proof}[Proof of estimate \eqref{mult10}]
  This follows at one from the definition of $Z^{ell}$, which allows
  one to sum over the modulation localizations enforced by
  $\mathcal{H}^*_{k_k}$.
\end{proof}


\begin{proof}[Proof of estimate \eqref{mult11}]
  Freezing the output modulation of the product, and getting rid of
  the contribution of $\mathcal{H}_{k_1}$, it suffices to show:
  \begin{equation}
    \lp{Q_j P_{k_1} T_a}{L^1(L^\infty)} \ \lesssim \ 2^{2k_1}2^{\frac{1}{2}(j-k_1)}2^{\frac{1}{6}(k_1-k_2)}
    \lp{\phi^{(2)}_{k_2}}{S^1}\lp{\phi^{(3)}_{k_3}}{S^1} \ , \notag
  \end{equation}
  for either one of:
  \begin{equation}
    T_1 \ = \ Q_{\geqslant j-C}\phi^{(2)}_{k_2} \partial_t \phi^{(3)}_{k_3} \ , 
    \qquad T_2 \ = \ Q_{< j-C}\phi^{(2)}_{k_2} Q_{\geqslant j-C} \partial_t \phi^{(3)}_{k_3}
    \ . \notag
  \end{equation}
  Both cases are essentially the same due to the matching
  $k_2=k_3+O(1)$. After breaking the sum into angular sectors one
  uses:
  \begin{equation}
    \big(\sum_{\mathcal{C}_{k_1}(l')} 
    \lp{ P_{\mathcal{C}_{k_1}(l')}  Q_{\geqslant j-C}\phi_{k_2}^{(2)} }{L^2(L^\infty)}^2
    \big)^\frac{1}{2} \ \lesssim \ 2^{2k_1}2^{\frac{3}{2}l'} 2^{-\frac{1}{2}j}
    \lp{\phi_{k_2}^{(2)}}{X_\infty^{0,\frac{1}{2}}} 
    \ , \notag
  \end{equation}
  for the high modulation factor and \eqref{L2L6_bernstein} for the
  low modulation factor, both with $l'=\frac{1}{2}(j-k_1)$. Note that
  $\partial_t \phi^{(3)}_{k_3}$ can be further broken into high and
  low modulation pieces, and the product of two high modulations is
  even more favorable.
\end{proof}


\section*{Appendix: A multilinear null form for MKG-CG}\label{Ap1}

Schematically we have:
\begin{equation}
  \Box A_i \ = \ -\mathcal{P}_i \Im(\phi \overline{\nabla\phi}) \ , 
  \qquad \Delta A_0 \ = \ -\Im(\phi \overline{\partial_t\phi}) \ , \notag
\end{equation}
where $\mathcal{P}$ is the Hodge projection. In 3D one can write this
operator conveniently in terms of the vector product
$\nabla\times$. In higher dimensions it is easier to use the simple
formula:
\begin{equation}
  \mathcal{P} \ = \ I - \nabla\Delta^{-1}\nabla \ , \notag
\end{equation}
where the first $\nabla$ is grad and the second is div.

Now investigate the expression:
\begin{equation}
  \mathcal{N}(A,\phi) \ = \ A^\alpha\partial_\alpha \phi \ . \notag
\end{equation}
Directly plugging things in we have:
\begin{equation}
  \mathcal{N}(A,\phi) \ = \ \Delta^{-1}\Im(\phi\overline{\partial_t\phi})\cdot \partial_t\phi -
  \Box^{-1}\Im(\phi\overline{\partial_i \phi})\cdot \partial^i\phi + \frac{\partial^i\partial^j}{\Delta \Box}
  \Im(\phi\overline{\partial_i\phi})\cdot \partial_j\phi \ . \notag
\end{equation}
To uncover the null structure, we add and subtract the expression
$\Box^{-1}\Im(\phi\overline{\partial_t \phi})\partial_t\phi$. This
gives us:
\begin{equation}
  \mathcal{N}(A,\phi) \ = \ -\mathcal{Q}_1(A,\phi) + \mathcal{N}_2(A,\phi) \ , \notag
\end{equation}
where:
\begin{align}
  \mathcal{Q}_1(A,\phi) \ &= \
  \Box^{-1}\Im(\phi\overline{\partial_\alpha\phi})\cdot
  \partial^\alpha\phi
  \ , \notag\\
  \mathcal{N}_2(A,\phi) \ &= \
  \Delta^{-1}\Im(\phi\overline{\partial_t\phi})\cdot \partial_t\phi -
  \Box^{-1}\Im(\phi\overline{\partial_t \phi})\cdot \partial_t\phi
  +\frac{\partial^i\partial^j}{\Delta\Box}
  \Im(\phi\overline{\partial_i\phi})\cdot \partial_j\phi \ . \notag
\end{align}
To continue the computation we use
$\Delta^{-1}-\Box^{-1}=-\partial_t^2 \Delta^{-1}\Box^{-1}$ so the
second term simplifies to:
\begin{equation}
  \mathcal{N}_2(A,\phi) \ = \ -\Delta^{-1}\Box^{-1} \partial_t^2 \Im(\phi\overline{\partial_t\phi}) 
  \cdot \partial_t\phi
  + \Delta^{-1}\Box^{-1} 
  \partial^i\partial^j\Im(\phi\overline{\partial_i\phi})\cdot \partial_j\phi  \ . \notag
\end{equation}
This can be further reduced to the sum of $Q_0$ null structures by
adding and subtracting (for instance)
$\partial^i\partial_t\Im(\phi\overline{\partial_i\phi})\partial_t\phi$,
which gives:
\begin{equation}
  \mathcal{N}_1(A,\phi) \ = \ \mathcal{Q}_2(A,\phi) + \mathcal{Q}_3(A,\phi) \ , \notag
\end{equation}
where:
\begin{align}
  \mathcal{Q}_{2}(A,\phi) \ &= \
  \Delta^{-1}\Box^{-1} \partial_t\partial_\alpha
  \Im(\phi\overline{\partial^\alpha\phi}) \cdot \partial_t \phi
  \ , \notag\\
  \mathcal{Q}_{3}(A,\phi) \ &= \ \Delta^{-1}\Box^{-1}
  \partial_\alpha \partial^i\Im(\phi\overline{\partial_i\phi})\cdot \partial^\alpha\phi
  \ . \notag
\end{align}

There is a conceptual way to visualize this by duality. The expression
$A^\alpha\partial_\alpha \phi$ is a sum of three $Q_0$ null
structures: The interaction is between the first two factors, the
second two factors, or mixed between $\phi^2$ and $\phi^3$. The latter
is the main one which involves null frames.

\end{document}